\title[Transformation theorems]{Almost splitting maps, transformation theorems and smooth fibration theorems}
\author{Hongzhi Huang and Xian-Tao Huang}
\address{Hongzhi Huang \\ Department of Mathematics\\ Jinan University\\ Guangzhou 510632}
\email{\href{mailto:huanghz@jnu.edu.cn}{huanghz@jnu.edu.cn}
}
\address{Xian-Tao Huang\\School of Mathematics\\ Sun Yat-sen University\\ Guangzhou 510275}
\email{\href{mailto:hxiant@mail2.sysu.edu.cn
	}{hxiant@mail2.sysu.edu.cn
}}
\newtheorem{thm}{Theorem}[section]
\newtheorem{prop}[thm]{Proposition}
\newtheorem{lem}[thm]{Lemma}
\newtheorem{cor}[thm]{Corollary}
\newtheorem{conj}[thm]{Conjecture}
\theoremstyle{definition}
\theoremstyle{remark}
\newtheorem{defn}[thm]{Definition}
\newtheorem{rem}[thm]{Remark}
\numberwithin{equation}{section}
\newcommand {\vol }{\mathrm{vol}}
\newcommand {\myd }{\mathrm{d}}
\newcommand {\supp }{\mathrm{supp}}
\newcommand {\R}{\mathbb{R}}
\newcommand {\N}{\mathbb{N}}
\newcommand {\inj}{\mathrm{inj}}
\newcommand {\Ric}{\mathrm{Ric}}
\newcommand {\myarrow}[1]{\mathop{\longrightarrow}\limits^{#1}}
\newcommand {\RCD}{\mathrm{RCD}}
\newcommand\tbbint{{-\mkern -16mu\int}}
\newcommand\dbbint{{-\mkern -19mu\int}}
\newcommand\bbint{
{\mathchoice{\dbbint}{\tbbint}{\tbbint}{\tbbint}}
}
\newcommand{\XXint}[3]{{
		\setbox0=\hbox{$#1{#2#3}{\int}$}
		\vcenter{\hbox{$#2#3$}}\kern-.5\wd0}}
\begin{document}

\maketitle
\begin{abstract} In this paper, we introduce a notion, called generalized Reifenberg condition, under which we prove a smooth fibration theorem for collapsed manifolds with Ricci curvature bounded below, which gives a unified proof of smooth fibration theorems in many previous works (including the ones proved by Fukaya and Yamaguchi respectively).
A key tool in the proof of this fibration theorem is the transformation technique for almost splitting maps, which originates from Cheeger-Naber (\cite{CN}) and Cheeger-Jiang-Naber (\cite{CJN21}).
More precisely, we show that a transformation theorem of Cheeger-Jiang-Naber (see Proposition 7.7 in \cite{CJN21}) holds for possibly collapsed manifolds.
Some other applications of the transformation theorems are given in this paper.

\vspace*{10pt}
\noindent {\it 2010 Mathematics Subject Classification}: 	53C21, 53C23, 53B21.

\vspace*{10pt}
\noindent{\it Keywords}: Gromov-Hausdorff convergence, fibration theorem, transformation theorem, harmonic function.

\end{abstract}

\section{Introduction}  

\subsection{Smooth fibration theorems via almost splitting maps}

The almost splitting theorem, developed by Cheeger and Colding (see \cite{CC97}), is a quantitative version of Cheeger-Gromoll's splitting theorem for manifolds with non-negative Ricci curvature, and is fundamental in the study of Ricci limit space (i.e. measured Gromov-Hausdorff limits of Riemannian manifolds with Ricci curvature uniformly bounded from below).
Meanwhile, the technical tool, almost splitting maps, arisen from the proof of the almost splitting theorem, has turned out to be a powerful geometric tool in studying manifolds with Ricci curvature uniformly bounded from below (see \cite{KW}, \cite{CN}, \cite{CJN21} etc.).
Roughly speaking, an almost splitting map is a harmonic map approximating a coordinate projection in $W^{2,2}$-sense.
Therefore, without further assumptions, less can be told about infinitesimal information at a given point from an almost splitting map.
For example, an almost splitting map may be degenerated at some points even in the case of uniformly bounded Ricci curvature (see \cite{An}).
However, in the case of uniformly bounded sectional curvature, using elliptic estimate on tangent spaces, it is not hard to show that an almost splitting map is close to a coordinate map in $C^{1,\alpha}$-sense.
Specially, by gluing locally defined almost splitting maps, this provides a new proof of Fukaya's smooth fibration theorem (\cite{Fu87}) for manifolds with uniformly bounded sectional curvature (the readers can refer to Appendix B of \cite{NZ} for details).
However, in the case of uniformly sectional curvature bounded below, due to lack of a uniform $C^{1,\alpha}$-bound, it is not direct to know the non-degeneracy of an almost splitting map.
In this article, we will give an affirmative answer to the non-degeneracy of an almost splitting map in this case.
We have the following non-degeneracy theorem, which is conjectured by Xiaochun Rong.

\begin{thm}\label{NonDegeneracyofSplittingMaps}
	For any $n,\epsilon>0$, there exists $\delta>0$ depending only on $n$ and $\epsilon$, such that the following holds.
Suppose $(M,g,p)$ is an $n$-dimensional manifold with sectional curvature $\mathrm{sec}\geq -\delta$ and there exists a $(\delta,k)$-splitting map (see Definition \ref{def-harm-split} for the definition) $u : B_{3}(p)\rightarrow\mathbb{R}^{k}$ (with $1\leq k\leq n$), then for any $x\in B_{1}(p)$, $du : T_{x} M\rightarrow\mathbb{R}^{k}$ is non-degenerate.
In addition, for any $x,y\in B_{\frac12}(p)$, we have
\begin{equation}
	(1-\epsilon)d(x,u^{-1}(u(y)))^{1+\epsilon}\le |u(x)-u(y)|\le (1+\epsilon)d(x,u^{-1}(u(y))).
\end{equation}
\end{thm}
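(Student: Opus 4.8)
The plan is to argue by contradiction and blow-up, exploiting the one-sided sectional curvature bound together with the structure theory of Gromov--Hausdorff limits of manifolds with $\mathrm{sec}\ge -\delta$. Suppose the statement fails; then for some fixed $n,\epsilon$ there are $\delta_i\to 0$, pointed $n$-manifolds $(M_i,g_i,p_i)$ with $\mathrm{sec}\ge -\delta_i$, and $(\delta_i,k)$-splitting maps $u_i:B_3(p_i)\to\mathbb{R}^k$, such that either $du_i$ degenerates at some $x_i\in B_1(p_i)$, or the distance distortion inequality is violated at some pair $x_i,y_i\in B_{1/2}(p_i)$. Passing to a subsequence, $(M_i,g_i,p_i)\to(X,d,p)$ in the pointed Gromov--Hausdorff sense, where $X$ is an Alexandrov space with curvature $\ge 0$ (here one uses that a lower sectional bound is preserved in the limit, unlike a mere Ricci bound). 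Meanwhile the $u_i$ converge uniformly on compact sets to a map $u_\infty:B_3(p)\to\mathbb{R}^k$, and the almost-splitting hypothesis forces $u_\infty$ to be a Riemannian submersion onto $\mathbb{R}^k$ in the appropriate weak sense — more precisely, $X$ splits isometrically as $\mathbb{R}^k\times Y$ with $u_\infty$ the projection, because the $W^{2,2}$-smallness of $\mathrm{Hess}\,u_i^a$ and the near-orthonormality of $\langle\nabla u_i^a,\nabla u_i^b\rangle$ pass to the limit (this is the standard Cheeger--Colding consequence of a splitting map, valid also in the collapsed setting via the earlier sections of the paper).

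The core of the argument is then local and lives on the tangent cones. Fix the potentially bad point $x_i$; after rescaling so that the relevant scale becomes unit and passing to a further blow-up limit, the tangent cone at the limit point is $\mathbb{R}^k\times C(Z)$ for a metric cone $C(Z)$, and $u_\infty$ restricted there is the $\mathbb{R}^k$-projection, which is manifestly non-degenerate as a map of metric spaces: the fibers $u_\infty^{-1}(u_\infty(y))$ are exactly the $\{v\}\times Y$ slices, and for these $d(x,u_\infty^{-1}(u_\infty(y)))=|u_\infty(x)-u_\infty(y)|$ holds with no distortion at all. The contradiction is obtained by showing that the smooth quantities controlling $du_i$ (its smallest singular value) and the metric distortion along the $u_i$-fibers are \emph{stable} under Gromov--Hausdorff convergence when the curvature is bounded below: for the non-degeneracy, I would use the gradient estimate $|\nabla u_i^a|\le 1+\Psi(\delta_i)$ together with the fact that the $\nabla u_i^a$ are $W^{1,2}$-close to a parallel orthonormal $k$-frame, so $\inf_{|\xi|=1}|du_i(\xi)|$ cannot decay to $0$ without contradicting the limit being the split projection; for the distortion inequality, I would combine the upper bound (which follows directly from $|\nabla u_i|\le 1+\Psi$ by integrating along a minimizing geodesic from $x_i$ to its nearest point on $u_i^{-1}(u_i(y_i))$) with a lower bound obtained by running the gradient flow of $u_i$, using the segment/Laplacian comparison valid under $\mathrm{sec}\ge -\delta_i$ (equivalently $\mathrm{Ric}\ge -(n-1)\delta_i$) to show that the flow moves points at rate $\ge 1-\Psi$ in the $u_i$-direction, so that the time needed to reach the fiber is at most $(1+\Psi)|u_i(x_i)-u_i(y_i)|$; the exponent $1+\epsilon$ absorbs the $\Psi(\delta_i)$-errors and the mild anisotropy coming from possible collapse of $Y$.

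The main obstacle I anticipate is the lower distance bound $d(x,u^{-1}(u(y)))^{1+\epsilon}\le (1-\epsilon)^{-1}|u(x)-u(y)|$ in the genuinely \emph{collapsed} regime: when the fibers of $u_i$ collapse, one cannot simply quote a non-collapsed Cheeger--Colding fiber estimate, and one must instead track how the splitting-map transformation theorem (the collapsed analogue of Cheeger--Jiang--Naber's Proposition~7.7, established earlier in this paper) upgrades the $W^{2,2}$-control of $u_i$ at \emph{all} scales between the given one and the degeneracy scale. Concretely, I expect to need: at every scale $r\in(0,1]$ and every $x\in B_{1/2}(p_i)$, a lower-triangular transformation $T_{x,r}\in\mathrm{GL}(k)$ with $T_{x,r}\circ u_i$ a $(\Psi(\delta_i),k)$-splitting map on $B_r(x)$, together with a quantitative bound on how $T_{x,r}$ can shrink as $r\to 0$; iterating this across a dyadic sequence of scales and summing the geometric-series of errors is what produces the $r^{1+\epsilon}$ rather than $r$ on the left-hand side, and controlling that sum uniformly — in particular ruling out that $T_{x,r}$ degenerates faster than any fixed power of $r$, which would destroy non-degeneracy of $du_i(x)$ — is the crux. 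Once the transformation theorem is in hand this becomes a (delicate but) bookkeeping argument; without it, the one-sided curvature bound alone does not obviously prevent polynomial-in-$r$ decay of the smallest singular value, which is precisely the phenomenon Theorem~\ref{NonDegeneracyofSplittingMaps} rules out.
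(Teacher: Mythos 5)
Your overall skeleton is right — contradiction, blow-up, and the recognition that the collapsed transformation theorem must be iterated across all scales is indeed the crux of the argument. However, there is a genuine gap in the way you propose to conclude pointwise non-degeneracy of $du_i$ at the bad point $x_i$. You write that the smallest singular value of $du_i$ is ``stable under Gromov--Hausdorff convergence'' and that one can conclude from the $W^{1,2}$-closeness of $\nabla u_i^a$ to a parallel frame, together with the limit being a split projection. This is not true as stated: these are integral or metric-space-level controls, and they do not survive down to pointwise $C^1$ information at a single fixed point (indeed this is precisely the failure mode in Anderson's example, which the introduction invokes). Identifying tangent cones and observing that the limit projection is non-degenerate as a map of metric spaces does not rule out $du_i$ vanishing at $x_i$ along the sequence.

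The mechanism the paper actually uses to close this gap is a blow-up to a \emph{smooth} Euclidean scale rather than to a tangent cone. Concretely, one picks $r_i\downarrow 0$ so small that $(M_i, r_i^{-2}g_i, z_i)$ converges in $C^\infty$-Cheeger--Gromov to $(\mathbb{R}^n, g_{\mathrm{Eucl}},0^n)$ and admits harmonic coordinates with small $C^{1,\alpha}$-norm of $g-\delta$. The transformation theorem then provides lower-triangular $T^{(i)}$ making $T^{(i)}u_i$ a $\Psi(\delta_i)$-splitting map on $B_{100 r_i}(z_i)$; writing the harmonicity of the rescaled map in harmonic coordinates, the Schauder estimate upgrades the $W^{2,2}$ smallness to a uniform $C^{1,\alpha}$ bound, so that $\frac{1}{r_i}T^{(i)}u_i - \psi_i$ converges to $0$ in $C^{1,\alpha}$. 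It is this elliptic-regularity step at the smooth scale — not GH-stability — that forces $\operatorname{rank}(du_i|_{z_i})=k$, since $T^{(i)}$ is invertible. Your gradient-flow proposal for the lower distance estimate is likewise not what the paper does (it uses a maximal-function argument and a surjectivity lemma), but that is a lesser divergence; the decisive missing idea is the harmonic-coordinate/Schauder step. Finally, note that the paper does not argue directly with Alexandrov limits: Theorem~\ref{NonDegeneracyofSplittingMaps} is deduced from the Ricci-case Theorem~\ref{NonDegeneracyofSplittingMaps-RicCase} via Proposition~\ref{prop-smp-1}, which translates $\sec\geq-\delta$ into the $(\Phi;k,\delta)$-generalized Reifenberg condition through Theorem~\ref{thm-LN20}; the sectional hypothesis never enters the non-degeneracy argument itself, only the verification of the Reifenberg-type condition.
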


Theorem \ref{NonDegeneracyofSplittingMaps} makes up for blanks in using almost-splitting-maps technique to prove a smooth fibration theorem in the case of uniformly lower sectional curvature bound.

As mentioned above, almost-splitting-maps technique provides a new proof of Fukaya's smooth fibration theorem for manifolds with uniformly bounded sectional curvature.
In \cite{Hua}, the first author uses almost splitting maps to prove a smooth fibration theorem for manifolds with locally bounded Ricci covering geometry.
The key ingredient in \cite{Hua} is the canonical Reifenberg theorem of Cheeger, Jiang and Naber (see Theorem 7.10 in \cite{CJN21}), which is a type of non-degeneracy theorem of almost splitting maps.
Hence by substituting the Reifenberg theorem of \cite{CJN21} in the argument in \cite{Hua} by Theorem \ref{NonDegeneracyofSplittingMaps}, we can recover the existence part of Yamaguchi's smooth fibration theorem (see \cite{Y91}).
We point out that in Yamaguchi's fibration theorem, an almost Riemannian submersion property is proved.
But in Theorem \ref{NonDegeneracyofSplittingMaps} we fail to gain such a regularity, and instead, we derive a bi-H\"{o}lder property (\ref{BiHolder}).
This is partially due to the fact that an almost splitting map is not sensitive to a sectional curvature lower bound.
However, by employing the technique of the almost-splitting-maps, we are enable to study fibration theorems in a more extensive class of collapsed manifolds in the context of Ricci curvature bounded below, as demonstrated in Theorem \ref{FiberBundleThm} below.

Note that in \cite{An}, Anderson constructs a sequence of $n$-manifolds $(M_{i},g_i)$ ($n\ge 4$) with two sided Ricci curvature bound $|\Ric_{g_{i}}|\rightarrow0$, and $M_{i}$ collapses to a torus but every $M_{i}$ admits no fibration over tori.
Hence to guarantee a fibration theorem in Ricci case, extra conditions are needed.
There have been many previous works generalizing smooth fibration theorem to collapsing manifolds with Ricci conditions, see \cite{DWY,Wei97,NZ,HKRX,Hua,HW20,Ro22} etc.
In this paper, we introduce a notion, called {$(\Phi, r_{0}; k,\delta)$-generalized Reifenberg condition}, under which we can prove a smooth fibration theorem on manifolds with Ricci curvature bounded below, see Theorem \ref{FiberBundleThm}.
The generalized Reifenberg condition is implied by sectional curvature lower bound, or implied by the other geometric assumptions in the above mentioned paper where smooth fibration theorems hold.

Recall that, given an $n$-Riemannian manifold $M$, we say $p\in M$ is a $(\delta,r)$-Reifenberg point (where $\delta,r>0$), if for any $s\le r$, $d_{GH}(B_s(p),B_s(0^n))\le \delta s$ (here $d_{\mathrm{GH}}$ denotes the  Gromov-Hausdorff distance);
we say $M$ is uniformly $(\delta,r)$-Reifenberg, if each point of $M$ is $(\delta,r)$-Reifenberg.
Cheeger and Colding showed that, given any $\epsilon>0$, there exists a sufficiently small $\delta>0$ depending on $n,\epsilon$ so that, suppose the $n$-dimensional manifold $M$ satisfies $\Ric_M\ge-\delta$ and there is some $r\in(0,1]$ such that $d_{GH}(B_r(p),B_r(0^n))\le \delta r$, then $p$ is $(\epsilon,r)$-Reifenberg (see Theorem A.1.5 of \cite{CC97}).
Also by \cite{CC97}, we known that, to control topology of manifolds with lower Ricci curvature bound, the uniformly Reifenberg condition is a good candidate, since it provides sufficient rigidity to control local topology and it only imposes bi-h\"older regularity for the distance.
However, the Reifenberg condition is uneasy to come in to study collapsed manifolds.
The generalized Reifenberg condition introduced in this paper is aimed to provide a candidate of Reifenberg condition for possibly collapsed manifolds.

\begin{defn}\label{defn-k-euc}
Given a Riemannian manifold $M$, we say $B_{r}(p)\subset M$ is $(\delta,k)$-Euclidean, if there exists a metric space $(Z,d_{Z})$ such that
\begin{align}
d_{GH}(B_{r}(p),B_{r}((0^{k},x))\leq\delta r,
\end{align}
where $(0^{k},x)\in \mathbb{R}^{k}\times Z$.
\end{defn}

\begin{defn}\label{K-Reifenberg}
Given a Riemannian manifold $(M, g)$, $p\in M$ and $k\in \mathbb{Z}^{+}$, we define
\begin{align}
\theta^{(M,g)}_{p,k}(r)=\inf\{\epsilon>0\mid B_{r}(p)\text{ is }(\epsilon,k)\text{-Euclidean}\},
\end{align}
\begin{align}\label{1.4}
\Theta^{(M,g)}_{p,k}(r)=\sup_{s\in[0,r]} \theta^{(M,g)}_{p,k}(s).
\end{align}

We will use the notations $\theta_{p,k}(r)$, $\Theta_{p,k}(r)$ if there is no ambiguity on $(M,g)$.
\end{defn}

\begin{defn}\label{defn1.6}
Given a Riemannian manifold $(M, g)$ and $p\in M$, if there exist $\delta\geq 0$, $k\in \mathbb{Z}^{+}$, $r_{0}>0$, and a function $\Phi:\R^+\to \R^+$ with $\lim_{\delta'\to0}\Phi(\delta')=0$, so  that
\begin{equation}\label{CollapsingReifenberg}
\Theta^{(M,g)}_{p,k'}(r)\le \Phi(\max\{\delta,\theta^{(M,g)}_{p,k'}(r)\})
\end{equation}
holds for every $r\in(0,r_0]$ and every integer $k'\geq k$,
then we say $p$ satisfies the {$(\Phi, r_{0}; k,\delta)$-generalized Reifenberg condition}.

If (\ref{CollapsingReifenberg}) holds for every $p\in U\subset M$, $r\in(0,r_0]$ and $k'\geq k$, then we say $U$ satisfies the {$(\Phi,r_0; k,\delta)$-generalized Reifenberg condition}.

When $r_{0}=1$, we will say $(\Phi; k,\delta)$-generalized Reifenberg condition for simplicity.
We will say
$(k,\delta)$-generalized Reifenberg condition if there is no ambiguity on $\Phi$ and $r_0$.
\end{defn}

Roughly speaking, the $(k,\delta)$-generalized Reifenberg condition says that if there is a geodesic ball which almost splits off an $\R^{k}$ factor, then it still almost splits off (in a quantized sense) an $\R^{k}$ factor in any concentric geodesic balls with smaller radius.
As mentioned above, by Cheeger-Colding's theory, an $n$-dimensional manifold with $\Ric\geq -\delta$ always satisfies the {$(\Phi; n,\delta)$-generalized Reifenberg condition} for a positive function $\Phi(\cdot)$ depending only on $n$.
In Section \ref{sec-5}, we will study $(k,\delta)$-generalized Reifenberg condition and give more sufficient conditions for $(k,\delta)$-generalized Reifenberg condition.

Now we can state the non-degeneracy theorem under the $(k,\delta)$-generalized Reifenberg condition.

\begin{thm}\label{NonDegeneracyofSplittingMaps-RicCase}
Given $\epsilon>0$ and a function $\Phi:\R^+ \to \R^+$ with $\lim_{\delta\to0}\Phi(\delta)=0$, there exists $\delta_{0}>0$ depending on $n$, $\epsilon$ and $\Phi$ so that the following holds for every $\delta\in(0,\delta_{0})$.
Suppose $(M,g,p)$ is an $n$-dimensional manifold with $\Ric\geq -(n-1)\delta$ and there exists a $(\delta,k)$-splitting map $u : B_{3}(p)\rightarrow\mathbb{R}^{k}$ (with $1\leq k\leq n$).
Suppose in addition that $B_{1}(p)$ satisfies the $(\Phi; k,\delta)$-generalized Reifenberg condition,
then for any $x\in B_{1}(p)$, $du : T_{x} M\rightarrow\mathbb{R}^{k}$ is non-degenerate.
And for any $x,y\in B_{\frac12}(p)$, we have
\begin{equation}\label{BiHolder}
	(1-\epsilon)d(x,u^{-1}(u(y)))^{1+\epsilon}\le |u(x)-u(y)|\le (1+\epsilon)d(x,u^{-1}(u(y))).
\end{equation}
\end{thm}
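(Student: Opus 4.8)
The plan is to run a contradiction/compactness argument in the spirit of Cheeger--Jiang--Naber and of the first author's work \cite{Hua}, using the transformation theorem for almost splitting maps (valid here for possibly collapsed manifolds) together with the $(\Phi; k,\delta)$-generalized Reifenberg condition to supply the rigidity that, in the sectional-curvature case, comes from elliptic estimates on tangent cones. First I would reduce the two conclusions — non-degeneracy of $du$ at every $x\in B_1(p)$ and the bi-H\"older estimate \eqref{BiHolder} on $B_{1/2}(p)$ — to a single quantitative statement: for every $\epsilon>0$ there is $\delta_0$ so that if $\delta<\delta_0$, then for all $x\in B_1(p)$ and all $r\le 1$ the rescaled map $r^{-1}u$ restricted to $B_r(x)$ is a $(\Psi(\delta),k)$-splitting map with $\Psi(\delta)\to 0$, and moreover its differential has smallest singular value bounded below by $1-\epsilon$ on $B_r(x)$. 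The point is that the $(\Phi;k,\delta)$-generalized Reifenberg condition forces $\Theta_{x,k}(r)\le\Phi(\max\{\delta,\theta_{x,k}(r)\})$ at \emph{every} scale and \emph{every} center in the ball, so once $u$ is split at one scale one gets, via the transformation theorem, that $B_r(x)$ is $(\Psi',k)$-Euclidean with $\Psi'$ small, and then the Reifenberg condition propagates this Euclideanness uniformly down all scales.

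The key steps, in order, are as follows. (1) Fix $x\in B_{1/2}(p)$ and a scale $r\in(0,1]$. Using that $u:B_3(p)\to\R^k$ is $(\delta,k)$-splitting and Cheeger--Colding's estimates, show $B_{2r}(x)$ is $(\Psi_1(\delta),k)$-Euclidean, hence $\theta_{x,k}(2r)\le\Psi_1(\delta)$. (2) Apply the generalized Reifenberg condition at $x$: $\Theta_{x,k}(2r)\le\Phi(\max\{\delta,\Psi_1(\delta)\})=:\eta(\delta)$, with $\eta(\delta)\to0$ as $\delta\to0$. Thus \emph{all} balls $B_s(x)$, $s\le 2r$, are $(\eta(\delta),k)$-Euclidean. (3) Invoke the transformation theorem (the collapsed analogue of \cite[Prop.~7.7]{CJN21}) to find, for the rescaled ball $B_{2r}(x)$, a lower-triangular matrix $T=T_{x,r}\in\GL(k)$ with $\|T\|,\|T^{-1}\|$ controlled so that $T\circ(r^{-1}u)$ is a $(\Psi_2(\delta),k)$-splitting map on $B_r(x)$; the Euclideanness from step (2) at all intermediate scales is exactly what makes the matrices $T_{x,s}$ change slowly in $s$, giving a uniform two-sided bound and $T_{x,s}\to \op{id}$ (up to normalization) as $\delta\to 0$. (4) Feed the uniform control at all scales into a Reifenberg-type gluing/summation — as in \cite[Thm.~7.10]{CJN21} and \cite{Hua} — to conclude $du_x$ is non-degenerate with singular values in $(1-\epsilon,1+\epsilon)$ and to integrate the infinitesimal estimate along minimal geodesics between $x$ and $u^{-1}(u(y))$, yielding \eqref{BiHolder}. (5) Extend the non-degeneracy conclusion from $B_{1/2}(p)$ to $B_1(p)$ by a covering argument, re-centering the splitting map at points of $B_1(p)$ using that $u$ is defined on $B_3(p)$.

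The main obstacle I expect is step (3): establishing the transformation theorem without any non-collapsing hypothesis, i.e.\ verifying that the construction of the matrices $T_{x,r}$ and their slow dependence on scale survives when $B_r(x)$ is only $(\delta,k)$-Euclidean rather than $(\delta,k)$-splitting/non-collapsed. In \cite{CJN21} the non-collapsing is used to guarantee that the relevant Hessian/gradient averages (which define the correction matrix) are controlled and that the $k$-dimensional ``splitting directions'' do not drift; in the collapsed setting one must replace volume-based arguments by the generalized Reifenberg hypothesis \eqref{CollapsingReifenberg}, using that the $\R^k$-factor of the Euclidean splitting is genuinely $k$-dimensional at every scale. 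Once the transformation theorem is in hand for collapsed manifolds (which, per the introduction, is one of the paper's main technical contributions), the remaining steps are close to the non-collapsed arguments of \cite{CJN21} and the adaptation in \cite{Hua}, and the role of $\Phi$ is precisely to make all the error terms in the scale-summation telescope to something $\le\epsilon$ after choosing $\delta_0$ small depending on $n,\epsilon,\Phi$.
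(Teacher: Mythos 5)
Your overall strategy --- contradiction via compactness, propagating the $(\Phi;k,\delta)$-generalized Reifenberg condition down scales, and invoking the collapsed transformation theorem --- matches the paper's framework, but the proposal has two serious gaps and one substantive error.

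\textbf{Missing: the smooth-regularization step.} The transformation theorem only gives that $T_{x,r}u$ is $(\Psi,k)$-splitting on $B_r(x)$, which is \emph{integral} ($W^{1,2}$) control: $\bbint_{B_r(x)}|\langle\nabla (Tu)^a,\nabla(Tu)^b\rangle-\delta^{ab}|\le\Psi$. This does not by itself yield \emph{pointwise} non-degeneracy of $du$ at $x$. The paper's proof closes this gap by a step your proposal omits entirely: after fixing a contradiction point $z_i$, one chooses $r_i\downarrow 0$ so small that $(M_i,r_i^{-2}g_i,z_i)$ converges in $C^\infty$-Cheeger--Gromov to $(\R^n,g_{\mathrm{Eucl}},0^n)$, builds harmonic coordinates there, and then applies interior Schauder estimates to the harmonic function $\tilde v_i^l-\tilde x_i^l$ to upgrade the $W^{1,2}$-smallness to uniform $C^{1,\alpha}$-smallness. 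Only then does one get $|\langle\nabla\tilde v_i^a,\nabla\tilde v_i^b\rangle-\delta_{ab}|\to 0$ \emph{uniformly}, hence $\mathrm{rank}(d\tilde v_i|_{z_i})=k$. Your ``Reifenberg-type gluing/summation as in Thm.~7.10 of \cite{CJN21}'' produces at best a bi-H\"older homeomorphism, never pointwise non-degeneracy of $du$, so step (4) as written would fail.

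\textbf{Error: the claimed uniform bounds on $T_{x,s}$ and on the singular values of $du$.} You assert $\|T_{x,s}\|,\|T_{x,s}^{-1}\|$ are uniformly controlled with $T_{x,s}\to\mathrm{id}$, and that $du_x$ has singular values in $(1-\epsilon,1+\epsilon)$. Both are false: Lemma \ref{lem-Ts} only gives $|T_r\circ T_{2r}^{-1}-\mathrm{Id}|\le C\epsilon$ at each doubling, and telescoping over $\sim\log(1/r)$ scales yields $|T_r|\le r^{-C\epsilon}$ (Lemma \ref{lem-Ts-growth-estimate}). The singular values of $du$ can therefore degenerate by a factor $r^{\pm C\epsilon}$, which is precisely what produces the H\"older exponent $1+\epsilon$ in \eqref{BiHolder}. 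If your uniform bound were correct one would get a bi-Lipschitz estimate, which the theorem does not claim. Relatedly, your plan to ``integrate the infinitesimal estimate along minimal geodesics'' for the lower bound in \eqref{BiHolder} does not work directly because of this drift; the paper instead rescales to scale $r=d(x,u^{-1}(u(y)))$, uses $|T_r|\le r^{-\Psi}$, and invokes a surjectivity/level-set lemma (Lemma \ref{Surjectivity20220411}, itself proved via a maximal-function argument and Lemma \ref{lem4.9}) to bound $|T_r u(x)-T_r u(z)|$ from below by $d(x,u^{-1}(u(z)))/r-\Psi$. That surjectivity lemma is a genuine ingredient you have not accounted for.
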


According to Propositions \ref{prop-smp-1}, \ref{prop-smp-3}, Theorem \ref{NonDegeneracyofSplittingMaps-RicCase} includes Theorem 7.10 of \cite{CJN21} and Theorem \ref{NonDegeneracyofSplittingMaps} as special cases.
Note that we do not assume any non-collapsed condition here.

Utilizing Theorem \ref{NonDegeneracyofSplittingMaps-RicCase} and arguing as in \cite{Hua}, we can prove the following smooth fibration theorem.

\begin{thm}\label{FiberBundleThm}
	Given $\epsilon>0$ and a function $\Phi:\R^+\to \R^+$ with $\lim_{\delta\to0}\Phi(\delta)=0$, there exists $\delta_{0}>0$ depending on $n$, $\epsilon$ and $\Phi$ so that the following holds for every $\delta\in(0,\delta_{0})$.
Let $M$ be a compact $n$-dimensional manifold with $\Ric_M\ge-(n-1)$, and $N$ be a $k$-dimensional manifold with $|\sec_N|\le 1$ and $\inj_N\ge 1$.
Furthermore, suppose $M$ satisfies the $(\Phi;k,\delta)$-generalized Reifenberg condition and $d_{GH}(M,N)\le \delta$, then there exists a $C(n)$-Lipschitz map $f:M\to N$ which is both a smooth fibration and an $\epsilon$-Gromov-Hausdorff approximation.
\end{thm}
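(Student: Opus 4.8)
The plan is to build the fibration $f:M\to N$ by gluing together locally defined $(\delta',k)$-splitting maps, using Theorem 1.7 to guarantee that these local maps are smooth submersions with well-controlled fibers. First I would fix a maximal $\tau$-net $\{q_\alpha\}$ in $N$ for a small scale $\tau=\tau(n,\epsilon)$, and pull it back via the Gromov-Hausdorff approximation to points $p_\alpha\in M$. Since $d_{GH}(M,N)\le\delta$ and $N$ has $|\sec_N|\le 1$, $\inj_N\ge 1$, each ball $B_{3\tau}(q_\alpha)$ in $N$ is $C^{1,\alpha}$-close to a Euclidean $k$-ball; composing the harmonic coordinate chart on $N$ near $q_\alpha$ with the Gromov-Hausdorff approximation and solving the Dirichlet problem on $M$, one produces a harmonic map $u_\alpha:B_{2\tau}(p_\alpha)\to\mathbb{R}^k$ which, after rescaling to unit scale, is a $(\delta',k)$-splitting map with $\delta'\to 0$ as $\delta\to 0$ and $\tau$ fixed (this is the standard construction; one needs the segment inequality and the existence of a large $k$-splitting on $M$ inherited from the Reifenberg-type structure of $N$). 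The generalized Reifenberg condition on $M$ is exactly what lets me apply the rescaled Theorem 1.7 at every center $p_\alpha$ and every scale $\le 2\tau$: each $u_\alpha$ is then non-degenerate, hence a submersion on $B_\tau(p_\alpha)$, with fibers that are bi-Hölder (indeed almost bi-Lipschitz, with constant $1+\epsilon$) to $(k)$-Euclidean level sets.

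Next I would patch the local submersions into a global map to $N$. On overlaps $B_\tau(p_\alpha)\cap B_\tau(p_\beta)$, the two charts $\phi_\alpha^{-1}\circ u_\alpha$ and $\phi_\beta^{-1}\circ u_\beta$ (where $\phi_\alpha$ are the harmonic charts on $N$) differ by a map which is $C^0$-close to the transition map of $N$ and whose derivative is controlled because both $u_\alpha,u_\beta$ are close to the same coordinate projections in the $W^{2,2}$/$C^0$ sense and are non-degenerate with definite lower bound on $du$ by Theorem 1.7. Using a partition of unity on $N$ subordinate to $\{B_\tau(q_\alpha)\}$, I define $f=\sum_\alpha (\psi_\alpha\circ f_0)\cdot(\phi_\alpha^{-1}\circ u_\alpha)$ appropriately (performing the averaging in the manifold $N$ via its exponential map, legitimate since the images all lie in a ball far smaller than $\inj_N$). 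The averaging perturbs each $du_\alpha$ by an error that is small relative to its lower bound, so $df$ remains surjective everywhere; thus $f$ is a smooth submersion, and being a submersion from a compact manifold it is a smooth fiber bundle. The Lipschitz bound $C(n)$ comes from the gradient estimate for splitting maps (each $|\nabla u_\alpha|\le 1+\delta'$ plus the bounded geometry of the charts $\phi_\alpha$), and the $\epsilon$-GHA property follows from combining $d_{GH}(M,N)\le\delta$ with the bi-Hölder fiber estimate (1.5) applied at scale $\tau$: points at distance $\ge\epsilon$ in $M$ that are not separated by $f$ would lie in a common fiber, contradicting that fibers have diameter $\lesssim\tau\ll\epsilon$ once one sums the diameters of the $(k)$-Euclidean level sets across a chain of charts.

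The main obstacle I expect is the patching step: verifying that the $C^1$-closeness of the local splitting maps (which a priori is only $C^0$-plus-$W^{2,2}$ closeness, since there is no sectional curvature lower bound to upgrade regularity via elliptic estimates on tangent cones) is nevertheless enough to keep $df$ nondegenerate after averaging. The resolution is that Theorem 1.7 gives more than nondegeneracy at a point — its estimate (1.5) is scale-invariant, so applying it at all scales $\le\tau$ around $x$ shows the fibers of $u_\alpha$ through $x$ form a genuine topological/metric submanifold with controlled geometry, and a Reifenberg-type argument (as in the canonical Reifenberg theorem of \cite{CJN21}, here replaced by our generalized version) shows the fibers of $u_\alpha$ and $u_\beta$ coincide up to an error much smaller than $\tau$ on the overlap; transition maps are then automatically close to those of $N$ in a quantitative sense strong enough for the partition-of-unity average to stay a submersion. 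A secondary technical point is checking that the hypotheses of Theorem 1.7 genuinely hold at every center and scale, i.e. that the pulled-back maps $u_\alpha$ really are $(\delta,k)$-splitting after rescaling and that $M$ inherits the $(\Phi;k,\delta)$-generalized Reifenberg condition at all the relevant scales $\le 2\tau$ — but this is immediate from Definition 1.5, since the condition is assumed for all $r\in(0,1]$ and is scale-covariant.
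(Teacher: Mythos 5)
The overall plan matches the paper's: use the Gromov--Hausdorff approximation to pull back harmonic coordinates from $N$, produce local $(\Psi(\delta),k)$-splitting maps near a net of centers, glue them by a center-of-mass/Karcher average in $N$, and deduce nondegeneracy from Theorem~\ref{NonDegeneracyofSplittingMaps-RicCase}. You have correctly identified the crux: individual splitting maps are only $C^0$ and $W^{1,2}$ close to coordinate projections, not $C^1$ close, so it is not immediate that a convex combination has nondegenerate differential.

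Your proposed resolution --- use the bi-H\"older estimate~(\ref{BiHolder}) and a Reifenberg-type argument to show the \emph{level sets} of $u_\alpha$ and $u_\beta$ coincide up to a small error, and then conclude the transition maps are ``close enough'' to make the average a submersion --- is a genuine gap. Closeness of fibers as subsets does not control the pointwise differential: two splitting maps can have overlapping fibers while their derivatives at a given point are far apart (the derivative data lives in $W^{1,2}$, and the level-set geometry is essentially a $C^0$ statement). To carry your argument through one would need to upgrade ``fibers are close'' to ``$du_\alpha$ and $du_\beta$ are pointwise close up to a fixed linear map,'' which is exactly the $C^1$-type estimate you note is unavailable without a sectional curvature bound. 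As written, the step from ``fibers coincide'' to ``the partition-of-unity average stays a submersion'' does not go through.

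The paper (following \cite{Hua}) avoids this entirely via Lemma~\ref{JacobianEstimate}. The center-of-mass map $f$ is defined implicitly by $\partial_y E(x,y)=0$, so its differential at a fixed point $p$ can be computed via the implicit function theorem, giving $\myd f^\alpha(p)=\sum_\lambda C^\alpha_{\lambda,\beta}\,\myd v_\lambda^\beta(p)$ with \emph{constant} (in $x$) coefficients $C^\alpha_{\lambda,\beta}$ that are quantitatively close to the cutoff weights, and with each $v_\lambda$ a $(\Psi(\delta),k)$-splitting map in the chart centered at $f(p)$. One then observes that $v:=\sum_\lambda C^\alpha_{\lambda,\beta}v_\lambda^\beta$ is \emph{itself} a $(\Psi(\delta),k)$-splitting map on $B_{1/2}(p)$ --- since all the $v_\lambda$ are $C^0$-close to the same projection $\Phi_{f(p)}^{-1}\circ h$, and the coefficients nearly sum to the identity --- so $\myd f(p)=\myd v(p)$ is nondegenerate by Theorem~\ref{NonDegeneracyofSplittingMaps-RicCase}. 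No $C^1$-closeness of the individual $u_\lambda$ is needed; the $C^0$/$W^{1,2}$-closeness suffices because the nondegeneracy theorem is invoked on the single function $v$ rather than compared against the $u_\lambda$'s. You should replace the Reifenberg-type patching argument with this implicit-function-theorem computation of the Jacobian.
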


By combining with Propositions \ref{prop-smp-1}, \ref{prop-smp-3}, and \ref{cor-smp-3}, Theorem \ref{FiberBundleThm} recovers the (existence part of) smooth fibration theorems in \cite{Fu87,Y91,DWY,Wei97,NZ,HKRX,Hua,HW20,Ro22} and others.

Note that in the work of generalizing the collapsing fibration theorem to the context of Ricci curvature prior to Theorem \ref{FiberBundleThm}, all fiber structures are nilpotent, i.e. a fiber is an infra-nilmanifold and the structural group is affine, which can be seen as a generalization of Fukaya's nilpotent fibration.
As far as we know, the most extensive condition to guarantee the fibration to be nilpotent, under the assumption of Ricci curvature bounded below, is the locally bounded Ricci covering geometry, which is first proposed to investigate by Rong (\cite{Ro18}).
Recently, Rong (\cite{Ro22}) demonstrated the existence of a nilpotent fibration on such a class of manifolds.
In Rong's work, due to the lack of a smoothing technique for this particular class of manifolds, a crucial step involves providing a proof for a generalization of Gromov's almost flat manifolds within the context of locally bounded Ricci covering geometry, independent of Gromov's original work (\cite{Gr78}).

However, due to the fact that the generalized Reifenberg condition is implied by the sectional curvature lower bound, we cannot expect the fiber type of Theorem \ref{FiberBundleThm} to be an infra-nilmanifold.
A simple counterexample is a sequence of round spheres with diameter tending to $0$.
Hence, in this aspect, Theorem \ref{FiberBundleThm} fills the blank left by the absence of a generalization of Yamaguchi's smooth fibration theorem in the context of Ricci curvature.
The fiber properties in Theorem \ref{FiberBundleThm} are investigated in our upcoming work \cite{HH}, where we have proved that the fibers share similar topological properties as the ones in \cite{Y91}.

\subsection{Transformation theorems for collapsed manifolds.}

The proof of Theorem \ref{NonDegeneracyofSplittingMaps-RicCase} is enlightened by the proof of the canonical Reifenberg theorem in \cite{CJN21}, using a transformation technique.

Transformation theorems roughly say that, on a manifold with almost nonnegative Ricci curvature, under certain assumptions, a $(\delta, k)$-splitting harmonic map is $(\epsilon,k)$-splitting in smaller scales up to a transformation by a lower triangle matrix with positive diagonal entries.

In \cite{CN}, Cheeger and Naber prove a transformation theorem under an analytic condition (see Theorem 1.11 in \cite{CN}), which is a key in their proof of the codimension 4 conjecture.
Later on, in \cite{CJN21}, Cheeger, Jiang and Naber prove a geometric transformation theorem (Theorem 7.2 in \cite{CJN21}).
Recently, in \cite{BNS22}, Bru\`{e}, Naber and Semola give a transformation theorem on non-collapsed $\textmd{RCD}(K,N)$ spaces.
Recall that $\textmd{RCD}(K,N)$ spaces are metric measure spaces with generalized Ricci curvature bounded from below by $K$ and dimension bounded from above by $N$, these spaces include all $n$-manifolds with $\Ric \geq K, n\leq N$ and their Ricci limit spaces, see \cite{AGMR15,EKS15,Gig13} etc. for the $\mathrm{RCD}$ theory, and see \cite{DePGil18} etc. for non-collapsed $\mathrm{RCD}$ theory.

Recently, the idea of transformation is used in other works and has many interesting applications, see  \cite{HP22} \cite{WZh21} etc.

Our proof of Theorem \ref{NonDegeneracyofSplittingMaps-RicCase} follows the ideas of the canonical Reifenberg theorem in \cite{CJN21}, whose proof uses a transformation theorem, i.e. Proposition 7.7 in \cite{CJN21}.
Note that in the statement of Proposition 7.7 of \cite{CJN21}, they require the manifolds to be non-collapsed.
To prove Theorem \ref{NonDegeneracyofSplittingMaps-RicCase}, we need a version of transformation theorem which holds for possibly collapsed manifolds, see the following theorem:

\begin{thm}\label{thm-splitting-stable}
For any $N \geq 1$, $\epsilon>0$ and $\eta>0$, there exists $\delta_{0}=\delta_0(N, \epsilon, \eta)>0$ such that, for every $\delta\in(0,\delta_{0})$, the following holds.
Suppose $(X, d, m)$ is an $\mathrm{RCD}(-(N-1)\delta,N)$ space, and there is some $s\in(0,1)$ such that, for any $r\in[s,1]$, $B_{r}(p)$ is $(\delta, k)$-Euclidean but not $(\eta, k+1)$-Euclidean.
Let $u : (B_{1}(p),p)\rightarrow(\mathbb{R}^{k},0^{k})$ be a $(\delta, k)$-splitting map,
then for each $r\in[s,1]$, there exists a $k\times k$ lower triangle matrix $T_{r}$ with positive diagonal entries so that $T_{r}u : B_{r}(p)\rightarrow\mathbb{R}^{k}$ is a $(\epsilon,k)$-splitting map, and
for any $t, r \in [s,1]$,
\begin{align}
| T_{r}\circ T_{t}^{-1}|\leq  (1+C\epsilon) \max\{\bigl(\frac{t}{r}\bigr)^{C\epsilon}, \bigl(\frac{r}{t}\bigr)^{C\epsilon}\}
\end{align}
holds for a constant $C=C(N) > 1$, where $|\cdot|$ means $L^{\infty}$-norm of a matrix.
\end{thm}
We remark that Proposition 7.7 of \cite{CJN21} is just a part of the geometric transformation theorem of Cheeger-Jiang-Naber, i.e. Theorem 7.2 in \cite{CJN21}.
Under the non-collapsing condition, Theorem 7.2 in \cite{CJN21} contains an estimate of the Hessian of $T_{r}u$ (see (7.1) in \cite{CJN21}), which is more deeper.
It is an interesting question that whether there is some Hessian estimate for $T_{r}u$ on possibly collapsed manifolds.

The proof of Theorem \ref{thm-splitting-stable} follows the idea of Cheeger, Jiang and Naber's proof closely: a contradicting sequence will converge to a harmonic function with almost linear growth on the limit space,
and then we need to verify that by suitable choice of parameters in the theorem, this almost linear harmonic function must be linear (we call it the gap property in the following), and then we will obtain a contradiction.
The only difference between the two proofs is the gap property.
More precisely, in the proof of \cite{CJN21}, under the non-collapsing assumption, the above limit spaces are always metric cones, and the harmonic functions on them behave well, basing on which the gap property (i.e. Lemma 7.8 in \cite{CJN21}) is proved.
Instead, to obtain the gap property on possibly collapsed non-compact manifolds (or $\mathrm{RCD}$ spaces), our starting point is a theorem on the characterization of linear growth harmonic functions on manifolds with nonnegative Ricci curvature, which is proved by Cheeger, Colding and Minicozzi in \cite{CCM}.
Firstly we generalize Cheeger-Colding-Minicozzi's theorem to $\RCD(0,N)$ spaces in Proposition \ref{thm-split-infin}, and then we use a method to utilize Proposition \ref{thm-split-infin} to study the almost linear growth harmonic functions.
See Section \ref{sec-3} for details.

Combining Theorem \ref{thm-splitting-stable} with the $(k,\delta)$-generalized Reifenberg condition (\ref{CollapsingReifenberg}), we can prove the following theorem.

\begin{thm}\label{TransformationThmUderSplittingMonotonicity}
Given $\epsilon>0$, $n, k\in \mathbb{Z}^{+}$ with $k\leq n$, and a positive function $\Phi(\cdot)$ with $\lim_{\delta\to0^+}\Phi(\delta)=0$, there exists $\delta_{0}>0$ depending on $n$, $\epsilon$ and $\Phi$ so that the following holds for every $\delta\in(0,\delta_{0})$.
Suppose $(M,g)$ is an $n$-dimensional Riemannian manifold with $\Ric\geq -(n-1)\delta$ and $p\in M$ satisfies the $(\Phi; k,\delta)$-generalized Reifenberg condition.
Let $u : (B_{2}(p),p)\rightarrow(\mathbb{R}^{k},0^{k})$ (with $1\leq k\leq n$) be a $(\delta,k)$-splitting map.
Then for any $r\in(0,1]$, there exists a $k\times k$ lower triangle matrix $T_{r}$ with positive diagonal entries such that $T_{r}u : B_{r}(p)\rightarrow\mathbb{R}^{k}$ is an $(\epsilon,k)$-splitting map, and
for any $t, r \in (0,1]$,
\begin{align}
| T_{r}\circ T_{t}^{-1}|\leq  (1+C\epsilon) \max\{\bigl(\frac{t}{r}\bigr)^{C\epsilon}, \bigl(\frac{r}{t}\bigr)^{C\epsilon}\}
\end{align}
holds for a constant $C=C(n) > 1$.
\end{thm}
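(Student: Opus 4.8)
The plan is to deduce Theorem \ref{TransformationThmUderSplittingMonotonicity} from Theorem \ref{thm-splitting-stable} by cutting the scale range $(0,1]$ into finitely many subintervals on which the effective ``splitting dimension'' is constant, applying (a rescaled form of) Theorem \ref{thm-splitting-stable} on each of them, and gluing the resulting transformations across the finitely many ``dimension jumps''. We may assume $\Phi$ is nondecreasing.

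First I would use the generalized Reifenberg condition to upgrade the hypothesis on $u$ to all small scales. Restricting $u$ to $B_1(p)$ still gives a $(C(n)\delta,k)$-splitting map, which forces $B_1(p)$ to be $(\psi(\delta|n),k)$-Euclidean with $\psi(\delta|n)\to0$ as $\delta\to0$; then \eqref{CollapsingReifenberg} with $k'=k$ gives that $B_r(p)$ is $(\delta_1,k)$-Euclidean for \emph{every} $r\in(0,1]$, where $\delta_1:=\Phi(\max\{\delta,\psi(\delta|n)\})\to0$ as $\delta\to0$. Fix a small $\eta=\eta(n,\epsilon,\Phi)>0$ (to be pinned down so that Theorem \ref{thm-splitting-stable} is applicable with the quality bounds below), and require $\delta<\delta_0$ so small that $\delta_1<\eta$. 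For $k\le j\le n$ put
$$\hat\rho_j:=\sup\{\rho\in(0,1]\mid B_\rho(p)\text{ is }(\eta,j)\text{-Euclidean}\}$$
(with $\hat\rho_j:=0$ if this set is empty), and $j^\ast:=\max\{j\mid\hat\rho_j>0\}\le n$. Since an $(\eta,j+1)$-Euclidean ball is $(\eta,j)$-Euclidean, one has $1=\hat\rho_k\ge\hat\rho_{k+1}\ge\cdots\ge\hat\rho_{j^\ast}>0$ (the equality $\hat\rho_k=1$ uses $\delta_1<\eta$). The decisive consequence of \eqref{CollapsingReifenberg} is: if $B_\rho(p)$ is $(\eta,j)$-Euclidean, then $B_s(p)$ is $(\Phi(\max\{\delta,\eta\}),j)$-Euclidean for all $s\le\rho$; since the supremum defining $\hat\rho_j$ is approached by such $\rho$, we get that $B_s(p)$ is $(\delta_2,j)$-Euclidean for all $s\le\hat\rho_j$, with $\delta_2:=\max\{\delta_1,\Phi(\max\{\delta,\eta\})\}$, while $B_s(p)$ is \emph{not} $(\eta,j+1)$-Euclidean for $s>\hat\rho_{j+1}$. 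Writing $I_j:=(\hat\rho_{j+1},\hat\rho_j]$ with $\hat\rho_{j^\ast+1}:=0$, this yields a decomposition $(0,1]=\bigcup_{j=k}^{j^\ast}I_j$ into at most $n-k+1$ intervals such that for $r\in I_j$ the ball $B_r(p)$ is $(\delta_2,j)$-Euclidean but not $(\eta,j+1)$-Euclidean.

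Then I would run an induction on $j$ from $k$ to $j^\ast$, carrying along a $j$-valued $(\epsilon_{(j)},j)$-splitting map $W^{(j)}$ defined near the scale $\hat\rho_j$, together with a $k\times k$ lower triangular matrix $P^{(j)}$ with positive diagonal such that the first $k$ components of $W^{(j)}$ equal $P^{(j)}u$; the base case is $W^{(k)}=u$, $P^{(k)}=\mathrm{Id}$, $\epsilon_{(k)}=\delta$. For the inductive step, after rescaling $\hat\rho_j$ to unit size one applies Theorem \ref{thm-splitting-stable} to $W^{(j)}$ (its hypotheses hold because the balls over $I_j$ are $(\delta_2,j)$-Euclidean but not $(\eta,j+1)$-Euclidean, provided $\delta_2$ and $\epsilon_{(j)}$ lie below the admissibility threshold there), obtaining for each $r\in I_j$ a $j\times j$ lower triangular matrix $S_r$ with positive diagonal such that $S_rW^{(j)}$ is an $(\epsilon',j)$-splitting map on $B_r(p)$. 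Since $S_r$ is lower triangular, its first $k$ rows involve only the first $k$ components of $W^{(j)}$, so the first $k$ components of $S_rW^{(j)}$ equal $S_r'\,P^{(j)}u$, where $S_r'$ (the upper-left $k\times k$ block of $S_r$) is again lower triangular with positive diagonal; as a sub-collection of an $(\epsilon',j)$-splitting map is an $(\epsilon',k)$-splitting map, $T_r:=S_r'P^{(j)}$ is the matrix required on $I_j$. To continue the induction (when $j<j^\ast$), note that $B_{\hat\rho_{j+1}}(p)$ is $(\delta_2,j+1)$-Euclidean and carries the $(\epsilon',j)$-splitting map $V:=S_{\hat\rho_{j+1}}W^{(j)}$; by the standard procedure of adjoining one coordinate to a splitting map when an extra Euclidean factor is present — solving a Dirichlet problem for the new coordinate, followed if necessary by a lower triangular positive-diagonal correction whose leading $j\times j$ block is the identity (so that $V$ is unchanged) — one obtains $W^{(j+1)}=(V,\bar V)$, an $(\epsilon_{(j+1)},j+1)$-splitting map with $\epsilon_{(j+1)}=\psi(\max\{\epsilon',\delta_2\}|n)$, whose first $k$ components are $P^{(j+1)}u$ for $P^{(j+1)}:=S_{\hat\rho_{j+1}}'P^{(j)}$, again lower triangular with positive diagonal. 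Since there are at most $n-k$ steps, the finitely many losses $\epsilon_{(j)}\mapsto\epsilon'\mapsto\psi(\max\{\epsilon',\delta_2\}|n)$ compose to a bound depending only on $n$, and choosing first $\epsilon'$ small (in terms of $n,\epsilon$), then $\eta$ (hence $\delta_2$) small, and finally $\delta_0$ small (in terms of $n,\epsilon,\Phi$) makes every $T_ru$ an $(\epsilon,k)$-splitting map.

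The step I expect to be the main obstacle is the handling of the dimension jumps, in two respects. First, because \eqref{CollapsingReifenberg} propagates Euclideanness only with the loss $\theta\mapsto\Phi(\max\{\delta,\theta\})$ — so $\delta_2\approx\Phi(\eta)$, which in general does not beat $\eta$ — the effective splitting dimension is only \emph{eventually} monotone in the scale, and one must verify that each of the at most $n-k$ restarts of Theorem \ref{thm-splitting-stable} is legitimate; this means arranging the constants so that the quality $\delta_2$ of the freshly built higher-dimensional splitting maps stays below the admissibility threshold $\delta_0(N,\epsilon',\eta)$ of Theorem \ref{thm-splitting-stable}, which dictates a careful order in fixing $\epsilon'$, $\eta$ and $\delta_0$ (and is why $\delta_0$ must be allowed to depend on $\Phi$). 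Second, the coordinate-adjunction at each jump must be carried out so that the first $k$ components of the enlarged splitting map remain \emph{exactly} a lower triangular, positive-diagonal transform of $u$, so that the output matrices $T_r$ retain the required shape. The attendant adjustments of ball radii needed to chain the construction are routine, being controlled by the scale- and radius-stability of the splitting and Euclidean conditions.
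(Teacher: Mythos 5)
Your scale-decomposition strategy --- cut $(0,1]$ into intervals $I_j$ of constant effective splitting dimension, apply Theorem \ref{thm-splitting-stable} on each, and glue across the dimension jumps by adjoining coordinates --- is genuinely different in structure from the paper's proof (a reverse induction on the splitting dimension wrapped in a contradiction argument, stated as Theorem \ref{thm-splitting-unstable}), but the step you flag as ``the main obstacle'' is a real gap that your single-$\eta$ scheme cannot close. On the interval $I_j$ with $j>k$, Theorem \ref{thm-splitting-stable} with parameters $(\epsilon',\eta)$ has an admissibility threshold $\delta_0^{\mathrm{TS}}(n,\epsilon',\eta)$, and the balls $B_r(p)$, $r\in I_j$, must be $(\delta'',j)$-Euclidean with $\delta'' < \delta_0^{\mathrm{TS}}(n,\epsilon',\eta)$. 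The only input you have at scales just below $\hat\rho_j$ is $\theta_{p,j}\le\eta$, so \eqref{CollapsingReifenberg} yields only $\delta''\le\Phi(\max\{\delta,\eta\})=\Phi(\eta)$ once $\delta<\eta$ --- a fixed positive number that does not shrink no matter how small you take $\delta_0$. You would therefore need $\Phi(\eta)<\delta_0^{\mathrm{TS}}(n,\epsilon',\eta)$ for some $\eta$; but $\delta_0^{\mathrm{TS}}(n,\epsilon',\eta)\to 0$ as $\eta\to 0$ (necessarily, since the hypothesis ``not $(\eta,j+1)$-Euclidean'' degenerates as $\eta\to0$ and the conclusion then fails, as the Remark after Theorem \ref{TransformationThmUderSplittingMonotonicity} shows), while $\Phi$ is an arbitrary modulus; for $\Phi$ decaying slowly relative to $\delta_0^{\mathrm{TS}}(n,\epsilon',\cdot)$ there is no admissible $\eta$. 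No ``careful order in fixing $\epsilon',\eta,\delta_0$'' rescues this, because $\delta_0$ constrains $\delta$, not $\Phi(\eta)$.

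The paper's reverse induction is precisely the device that avoids this. After locating the jump scale $s_i$ where $(\eta,k+1)$-Euclideanness first appears, the scales below $s_i$ are dispatched wholesale by the inductive hypothesis for dimension $k+1$, whose admissibility threshold $\delta_0^{(k+1)}(n,\epsilon_0/2,\Phi)$ is already a fixed positive number independent of the $\eta$ being chosen in the current step; one simply takes $\eta$ below it. The contradiction set-up makes $\delta_i\to0$, so the splitting-map quality at scales above $s_i$ is $\Psi(\Phi(\delta_i)\,|\,n,\eta)\to0$ and is never an issue. The degradation from \eqref{CollapsingReifenberg} is thus absorbed inside the induction black box instead of being tracked explicitly through the scales as in your iteration, where it accumulates as a fixed, nonvanishing loss. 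Your forward iteration could be salvaged by replacing the single $\eta$ with a strictly decreasing cascade $\eta_{n+1}>\eta_n>\cdots>\eta_k$ (with $\eta_{n+1}$ a universal dimensional constant as in Remark \ref{rem1.15} using \cite{MN19}, and each $\eta_j$ chosen so small that $\Phi(\eta_j)<\delta_0^{\mathrm{TS}}(n,\epsilon'_j,\eta_{j+1})$, which is solvable because the right side no longer depends on $\eta_j$), but this is, morally, a rediscovery of the paper's induction.
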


\begin{rem}
A necessary condition of Theorem \ref{TransformationThmUderSplittingMonotonicity} is that,
\begin{equation}\label{k-Reifenberg}
	\text{for any } r\in(0,1],\, B_r(p) \text{ is } (\tilde{\delta},k)\text{-Euclidean for some sufficiently small }\tilde{\delta}>0.
	\end{equation}
However, as showed in the following example, if the $(\Phi; k,\delta)$-generalized Reifenberg condition is replaced by condition (\ref{k-Reifenberg}), then we cannot conclude that any $(\delta,k)$-splitting map $u:(B_{2}(p),p)\rightarrow(\mathbb{R}^{k},0^{k})$ satisfies the conclusion of Theorem \ref{TransformationThmUderSplittingMonotonicity}.

By Theorem 1.4 in \cite{CN13}, there exists a Riemannian manifold $(M,g,p)$ with non-negative Ricci curvature, maximal volume growth and $M$ does not split off any $\R$-factor, while there exists a sequence of $R_i\to\infty$ such that $(M_{i},g_{i},p_{i})=(M,R_i^{-2}g,p)$ converges to a metric cone which splits off an $\mathbb{R}$-factor.
Up to a scaling down, we may assume $B_1(p)$ is not $(\eta,1)$-Euclidean for some $\eta>0$.
We may also assume there exist $\delta_i$-splitting functions $u_i:(B_1(p_{i}),p_{i})\rightarrow(\R,0)$ for some $\delta_i\to0$.
Putting $(\tilde{M}_i,\tilde{g}_{i},\tilde{p}_i):=(M_{i}\times\R,g_{i}+dt^{2},(p_{i},0))$ and $v_i(x,t):=u_i(x)$ for $(x,t)\in \tilde{M}_i$,
then $\Ric_{\tilde{M}_i}\ge0$, $B_r(\tilde{p}_i)$ is $(0,1)$-Euclidean for any $r\in(0,1]$, and there exist $\delta_i$-splitting functions $v_i:B_1(\tilde{p}_i)\rightarrow\R$.
We will show that, for $i$ sufficiently large, such $(\tilde{M}_i,\tilde{g}_i,v_i)$ does not satisfy the conclusion of Theorem \ref{TransformationThmUderSplittingMonotonicity}.
Otherwise, take $r_i=2R_i^{-1}$, then for large $i$, there exists a positive number $T_{r_{i}}$ such that $T_{r_i}v_i:B_{r_i}(p_i)\rightarrow\R$ is a $\Psi_{1}(\delta_{i})$-splitting function.
This implies $T_{r_i}(R_{i}u_i):B_2(p)\to\R$ is a $\Psi_{1}(\delta_{i})$-splitting function, and thus $B_1(p)$ is $(\Psi_{2}(\delta_{i}),1)$-Euclidean.
Thus we obtain a contradiction when $i$ is sufficiently large.
	
However, it is unclear whether the condition (\ref{k-Reifenberg}) guarantees that \textbf{there exists} a $(\delta,k)$-splitting map $u:B_{\frac{1}{2}}(p)\to\R^k$ satisfying the conclusion of Theorem \ref{TransformationThmUderSplittingMonotonicity}.
\end{rem}

Theorem \ref{TransformationThmUderSplittingMonotonicity} is proved by an induction and contradiction argument basing on Theorem \ref{thm-splitting-stable}, see Section \ref{sec-4.2} for details.
Basing on Theorem \ref{TransformationThmUderSplittingMonotonicity}, one can finish the proof of Theorem \ref{NonDegeneracyofSplittingMaps-RicCase}, see Section \ref{sec-4.3}.

\subsection{Other applications of the Transformation theorems}

The following theorem can be viewed as a blowdown version of Theorem \ref{thm-splitting-stable}.

\begin{thm}\label{BlowupTransformation}
Given $\eta>0$ and $N \geq 1$, there exists $\epsilon(N,\eta)>0$ such that, for any $\epsilon\in(0,\epsilon(N,\eta))$, there exists $\delta(N,\eta,\epsilon)>0$ such that, every $\delta\in(0, \delta(N,\eta,\epsilon))$ satisfies the following.
Let $(X,p,d,m)$ be an $\mathrm{RCD}(0,N)$ space.
Suppose there exist $k\in \mathbb{Z}^{+}$ with $1\leq k\leq N$ and $R_{0}\geq1$ so that
\begin{align}\label{1.10}
B_R(p)\text{ is }(\delta,k)\text{-Euclidean and not }(\eta,k+1)\text{-Euclidean for every }R\ge R_{0}.
\end{align}
Putting
\begin{align}
\mathcal{H}_{1+\epsilon}(X,p)=\{u\in W^{1,2}_{\mathrm{loc}}(X)|\Delta u=0, u(p)=0, |u(x)|\le Cd(x,p)^{1+\epsilon}+C \text{ for some }C>0\},\nonumber
\end{align}
 then
\begin{itemize}
\item [(1)] there exist $u^1,\ldots,u^k\in\mathcal{H}_{1+\epsilon}(X,p)$ such that for any $R\ge R_{0}$, there exists a lower diagonal matrix $T_R$ with positive diagonal entries such that $T_R\circ(u^1,\ldots,u^k):B_R(p)\rightarrow\R^k$ is $(\epsilon,k)$-splitting;
\item [(2)] for every $R\ge R_{0}$, $T_R$ satisfies
    \begin{align}\label{tr-growth-1}
     |T_{R}| \le R^{\epsilon} \quad \text{and}\quad     |T_{R}^{-1}|\le R^{\epsilon};
    \end{align}
\item [(3)] $\mathcal{H}_{1+\epsilon}(X,p)$ has dimension $k$.
\end{itemize}

\end{thm}

Theorem \ref{BlowupTransformation} is proved in Section \ref{sec-7}.
We remark that in Section \ref{sec-3}, we prove a Liouville-type result, i.e. Proposition \ref{thm-liou-har-RCD}, which is related to Theorem \ref{BlowupTransformation}.

A sufficient condition of (\ref{1.10}) on a manifold $M$ with nonnegative Ricci curvature is that it has almost maximal volume growth.
In this case, Theorem \ref{BlowupTransformation} provides a harmonic map $u:M \to \R^n$, whose components form a basis of $\mathcal{H}_{1+\epsilon}(M,p)$.
With a bit extra work, we can observe that $u$ is a diffeomorphism. 

\begin{prop}\label{thm-can-diff}
	There exists $\delta(n)>0$, such that the following holds for every $\delta\in(0,\delta(n))$.
Suppose $(M,g)$ is an $n$-dimensional manifold with nonnegative Ricci curvature, and
\begin{align}
\lim_{R\rightarrow+\infty}\frac{\mathrm{vol}(B_R(p))}{\mathrm{vol}(B_R(0^n))}\ge 1-\delta.
\end{align}
Then there exists a proper harmonic map $u:M \to \R^n$ with at most $(1+\Psi(\delta|n))$-growth so that $u$ is a diffeomorphism.
\end{prop}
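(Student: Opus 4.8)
The plan is to apply Theorem \ref{BlowupTransformation} with $N = n$ and $k = n$, exploiting the almost-maximal volume growth assumption. First I would observe that by Bishop-Gromov, the volume ratio $\mathrm{vol}(B_R(p))/\mathrm{vol}(B_R(0^n))$ is monotone non-increasing, so the hypothesis forces this ratio to lie in $[1-\delta, 1]$ for \emph{all} $R > 0$. By Colding's volume convergence and the almost rigidity of the volume cone, this means every ball $B_R(p)$ is $(\Psi(\delta|n), n)$-Euclidean; in particular, $B_R(p)$ is $(\delta', n)$-Euclidean for every $R \geq R_0 = 1$ (say), with $\delta' = \Psi(\delta|n)$ as small as we like by shrinking $\delta$. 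Moreover, since the dimension is exactly $n$, no ball can be $(\eta, n+1)$-Euclidean for $\eta$ small (this follows by a dimension/Hausdorff-measure argument: an $(\eta, n+1)$-Euclidean ball of a manifold of dimension $n$ would force, in a rescaled limit, a splitting $\R^{n+1} \times Z$, contradicting the $n$-dimensional Hausdorff measure being positive and finite). Thus the hypotheses of Theorem \ref{BlowupTransformation} are met with $k = n$, $R_0 = 1$.

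Next I would invoke Theorem \ref{BlowupTransformation}(1) and (3): there exist harmonic functions $u^1, \ldots, u^n \in \mathcal{H}_{1+\epsilon}(M, p)$ spanning an $n$-dimensional space, and for every $R \geq 1$ a lower-triangular matrix $T_R$ with positive diagonal such that $T_R \circ u : B_R(p) \to \R^n$ is $(\epsilon, n)$-splitting, where $u = (u^1, \ldots, u^n)$. The key point now is non-degeneracy of $du$ at every point of $M$. Since $M$ has nonnegative Ricci curvature and volume ratio $\geq 1 - \delta$, each ball $B_3(x)$ for $x \in B_R(p)$ is itself $(\Psi(\delta|n), n)$-Euclidean (again by Bishop-Gromov applied at $x$ together with the doubling/volume-comparison bound relating $\mathrm{vol}(B_3(x))$ to $\mathrm{vol}(B_{R+3}(p))$ and the maximal-volume-growth hypothesis). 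Thus by the non-degeneracy statement, i.e. Theorem \ref{NonDegeneracyofSplittingMaps-RicCase} applied with $k = n$ (the generalized Reifenberg condition being automatic in the maximal-volume-growth setting with $\Phi$ depending only on $n$, by Cheeger-Colding), the restricted splitting map $T_R u$ has non-degenerate differential on $B_{R/3}(x)$; since $T_R$ is invertible, $du$ itself is non-degenerate at $x$. As $x$ and $R$ are arbitrary, $du_x : T_x M \to \R^n$ is a linear isomorphism everywhere, so $u$ is a local diffeomorphism.

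To upgrade $u$ to a global diffeomorphism I would argue properness and then use Cartan-Hadamard-type covering-map reasoning. Properness: the bi-Hölder estimate \eqref{BiHolder} from Theorem \ref{NonDegeneracyofSplittingMaps-RicCase} (or rather its scaled version on each $B_R(p)$, transported through $T_R$) gives, after unwinding the growth bound \eqref{tr-growth-1} on $|T_R|, |T_R^{-1}|$, a two-sided comparison of the form $c\, d(x,p)^{1+\Psi(\delta)} \leq |u(x)| \leq C\, d(x,p)^{1+\Psi(\delta)}$ for $d(x,p)$ large; in particular $|u(x)| \to \infty$ as $d(x,p) \to \infty$, so $u$ is proper. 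A proper local diffeomorphism from a connected manifold to $\R^n$ is a covering map, and since $\R^n$ is simply connected it must be a one-sheeted covering, i.e. a diffeomorphism. The at-most-$(1+\Psi(\delta|n))$-growth is exactly the membership $u^j \in \mathcal{H}_{1+\epsilon}$ with $\epsilon = \Psi(\delta|n)$, which is recorded along the way.

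I expect the main obstacle to be establishing non-degeneracy of $du$ uniformly over all of $M$ rather than only near $p$: the transformation theorem produces the matrices $T_R$ only for balls centered at the fixed point $p$, so one must separately verify that the generalized Reifenberg / $(\delta,n)$-Euclidean condition (and hence Theorem \ref{NonDegeneracyofSplittingMaps-RicCase}) applies at balls centered at \emph{arbitrary} $x \in M$. This is where the maximal volume growth hypothesis is essential — it is a \emph{global} condition that, via Bishop-Gromov monotonicity and volume comparison, propagates almost-Euclideanness to every ball at every center and every scale, which a pointwise hypothesis at $p$ alone would not give. A secondary technical point is keeping the composition of $\Psi$-errors under control so that a single choice of $\delta(n)$ makes all the intermediate smallness requirements (volume rigidity, splitting-map transformation, non-degeneracy threshold) simultaneously valid.
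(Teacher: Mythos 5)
Your proposal follows the same route as the paper's proof: Cheeger--Colding volume rigidity gives that every $B_R(p)$ is $(\Psi(\delta|n),n)$-Euclidean and not $(\tfrac12,n+1)$-Euclidean, Theorem~\ref{BlowupTransformation} with $k=n$ produces the harmonic map $u$ of almost linear growth together with the lower-triangular transforms $T_R$, non-degeneracy of $du$ everywhere comes from Theorem~\ref{NonDegeneracyofSplittingMaps-RicCase} applied to the rescaled splitting maps $T_R u$ on $B_R(p)$ as $R\to\infty$, and then a proper surjective local diffeomorphism onto $\R^n$ must be a diffeomorphism. The paper indeed argues exactly this way, by reducing to the second half of the proof of Proposition~\ref{NonnegativeSecSplitting}.

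Two small remarks on the details. First, the generalized Reifenberg condition with $k=n$ holds for \emph{any} $n$-manifold with $\Ric\geq-(n-1)\delta$ (Proposition~\ref{prop-smp-3}(1), as noted after Definition~\ref{defn1.6}); it is not the maximal-volume-growth hypothesis that supplies it, so the anticipated obstacle about propagating the Reifenberg condition to arbitrary centers is not actually present. Second, the bi-H\"older bound \eqref{BiHolder} compares $|u(x)-u(y)|$ with $d(x,u^{-1}(u(y)))$, not with $d(x,y)$, so taking $y=p$ only yields $|u(x)|\gtrsim d(x,u^{-1}(0))^{1+\epsilon}$ and an extra step is needed to replace $d(x,u^{-1}(0))$ by $d(x,p)$. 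The cleaner route, and the one the paper uses, is to exploit directly that $T_R u:B_{R/2}(p)\to\R^n$ is a $(\Psi R)$-Gromov--Hausdorff approximation, which gives $\bigl||T_Ru(x)|-d(x,p)\bigr|\leq\Psi R$; combining this with the growth bound \eqref{tr-growth-1} on $|T_R^{-1}|$ immediately forces $|u(x)|\to\infty$ as $d(x,p)\to\infty$. Neither issue changes the overall validity of your argument, but both are worth fixing in a final write-up.
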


\begin{rem}
For an $n$-manifold $(M,g)$ with nonnegative Ricci curvature and almost maximal volume growth, it is proved by Cheeger and Colding (see \cite{CC97}) that $M$ is diffeomorphic to $\mathbb{R}^{n}$, and the diffeomorphism is constructed by the Reifenberg method.
It is an interesting question that whether there are diffeomorphisms constructed from other geometric tools or geometric analysis tools.
Cheeger-Jiang-Naber's canonical Reifenberg theorem (Theorem 7.10 in \cite{CJN21}) gives harmonic diffeomorphisms from $B_{R}(p)$ onto its image for every $R>0$.
Proposition \ref{thm-can-diff} is a global version of Theorem 7.10 in \cite{CJN21}.
We remark that by generalizing Perelman's pseudo-locality theorem (see \cite{Per02}), Wang is able to construct a diffeomorphism to $\mathbb{R}^{n}$ by the immortal Ricci flow solution initiated from $(M,g)$, see \cite{Wang20}.

We also remark that, a similar result still holds on non-collapsed $\RCD$ spaces (in the sense of \cite{DePGil18}).
That is, let $(X,p,d,\mathcal{H}^n)$ be a noncompact $\RCD(0,n)$ space such that $\mathcal{H}^n(B_R(p))\ge(1-\delta)\mathcal{H}^n(B_R(0^n))$ holds for any $R>0$, then there exists a homeomorphism $u:M\to\R^n$ which is given by a harmonic map with at most $(1+\Psi(\delta|n))$-growth.
Note that by Theorem 1.3 of \cite{KM20}, such $X$ is known to homeomorphic to $\R^n$.
\end{rem}

We also give some applications of Theorem \ref{BlowupTransformation} in the nonnegative-sectional-curvature case as follows.

\begin{prop}\label{NonnegativeSecSplitting}
There exists $\delta(n)>0$ such that the following holds.
Suppose $M$ is a complete $n$-manifold with non-negative sectional curvature.
Let $C_{\infty,1}M$ be the unit ball centered at the cone point in the tangent cone at infinity of $M$.
Suppose $C_{\infty,1}M$ is $(\delta(n),k)$-Euclidean, then there exists a harmonic map $u:M\to \R^k$ which is non-degenerate.
Furthermore, if we assume $d_{GH}(C_{\infty,1}M,B_1(0^k))\le\delta(n)$, where $B_{1}(0^k)$ is the unit ball in $\R^{k}$, then there exists a proper harmonic map $u:M\to\R^k$ which is a trivial fiber bundle with compact fibers.
\end{prop}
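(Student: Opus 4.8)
\textbf{Proof plan for Proposition \ref{NonnegativeSecSplitting}.}
The plan is to feed the tangent cone at infinity of $M$ into Theorem \ref{BlowupTransformation} and then upgrade the resulting almost splitting map on the manifold via the sectional-curvature version of the non-degeneracy theorem. Since $M$ has nonnegative sectional curvature, in particular $\Ric_M\ge 0$, and by the Cheeger--Gromoll splitting and the structure of nonnegatively curved manifolds, the tangent cone at infinity $C_\infty M=\lim_{R\to\infty}(M,R^{-2}d,p)$ exists as a metric cone $C(Y)$, and it is an $\RCD(0,n)$ space. Let $k_0$ be the largest integer for which $C_{\infty,1}M$ is $(\delta,k_0)$-Euclidean with $\delta=\delta(n)$ small; by hypothesis $k_0\ge k$, and since $C_\infty M$ is a cone the scale-invariance gives some $\eta=\eta(n)>0$ so that $B_R(p)$ (in the rescaled pictures, equivalently $C_{\infty,R}M$) is $(\delta,k_0)$-Euclidean but not $(\eta,k_0+1)$-Euclidean for all $R$ large. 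Applying Theorem \ref{BlowupTransformation} with $N=n$ to $C_\infty M$, we obtain $u^1,\dots,u^{k_0}\in\mathcal H_{1+\epsilon}(C_\infty M,p)$ whose span (after the triangular transformations $T_R$) is $(\epsilon,k_0)$-splitting at every large scale, and $\mathcal H_{1+\epsilon}$ has dimension exactly $k_0$.

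Next I would transfer this back to $M$. Because the sequence $(M,R_i^{-2}d,p)\to C_\infty M$ converges in the pointed measured Gromov--Hausdorff sense, and harmonic functions with controlled polynomial growth are stable under such convergence (elliptic estimates plus the gradient/Hessian bounds for splitting maps), the linear-growth harmonic functions $u^j$ on $C_\infty M$ are limits of harmonic functions on $M$; equivalently, $M$ itself carries a $(\delta',k)$-splitting map $u:B_3(q)\to\R^k$ at every sufficiently large scale around any fixed $q$, with $\delta'=\delta'(n)$ as small as we like after shrinking $\delta(n)$. Rescaling so that this splitting map lives on a unit-size ball, the sectional curvature bound becomes $\sec\ge -\delta'$, so Theorem \ref{NonDegeneracyofSplittingMaps} applies and yields that $du$ is non-degenerate at every point. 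Running this at all scales and patching gives a globally defined harmonic map $u:M\to\R^k$ with $du$ non-degenerate everywhere, which proves the first assertion. (One must choose a single harmonic map globally: since $\mathcal H_{1+\epsilon}(C_\infty M,p)$ has dimension $k$, picking a basis there and taking the corresponding harmonic functions on $M$ — which exist and are unique up to the finite-dimensional ambiguity by Cheeger--Colding--Minicozzi / Proposition \ref{thm-split-infin} — produces the desired $u$.)

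For the second, stronger conclusion, assume $d_{GH}(C_{\infty,1}M,B_1(0^k))\le\delta(n)$, i.e. the tangent cone at infinity is $\R^k$ itself (it is a cone $\epsilon$-close to $\R^k$, hence isometric to $\R^k$ by the rigidity of cones). Then $k_0=k$ and the $(\epsilon,k)$-splitting maps are genuine almost-isometries at every large scale; combined with the non-degeneracy from Theorem \ref{NonDegeneracyofSplittingMaps} and the bi-Hölder estimate \eqref{BiHolder}, the map $u:M\to\R^k$ is a submersion and is proper (the bi-Hölder bound forces $|u(x)|\to\infty$ as $d(x,p)\to\infty$, since $d(x,u^{-1}(u(p)))$ grows linearly once the fiber through $p$ is bounded — which it is, being a level set whose diameter is controlled by the $(\eta,k+1)$-gap, equivalently by the fact that $C_\infty M=\R^k$ leaves no room for a noncompact fiber). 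A proper smooth submersion onto $\R^k$ is, by Ehresmann's fibration theorem, a locally trivial fiber bundle with compact fibers, and since the base $\R^k$ is contractible the bundle is trivial. The fibers are compact $(n-k)$-manifolds. This completes the proof.

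\textbf{Main obstacle.} The delicate point is the passage from the tangent-cone level to the manifold: one must show that the finitely many linear-growth harmonic functions produced on $C_\infty M$ by Theorem \ref{BlowupTransformation} genuinely pull back to a \emph{single} harmonic map $u$ on $M$ whose differential is non-degenerate \emph{simultaneously at all points and all scales}, rather than only in a sequence of rescalings. This requires a compactness/diagonal argument coupling the mGH convergence $(M,R_i^{-2}d,p)\to C_\infty M$ with the scale-invariant non-degeneracy of Theorem \ref{NonDegeneracyofSplittingMaps}, together with uniqueness of the linear-growth harmonic functions (via Proposition \ref{thm-split-infin}) to guarantee the limits match up across scales; the properness and compact-fiber statements then follow comparatively cheaply from \eqref{BiHolder} and Ehresmann.
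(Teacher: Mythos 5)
Your central step is applying Theorem \ref{BlowupTransformation} to the tangent cone $C_\infty M$ and then attempting to "transfer back" the resulting $\mathcal H_{1+\epsilon}(C_\infty M,p)$ to $M$. That is the wrong direction and is where the proof breaks down. Convergence $(M,R_i^{-2}d,p)\to C_\infty M$ lets you send a single fixed linear-growth harmonic function on $M$ forward to a linear-growth harmonic function on $C_\infty M$ (this is how Proposition \ref{thm-split-infin} is proved), but there is no mechanism for the reverse: a linear-growth harmonic function on the cone does not lift to a global harmonic function on $M$. The transplantation result (Theorem \ref{thm-transplant}) only produces harmonic functions on a ball $B_R(p_i)$ for each fixed $R$, and there is no canonical way to make these agree as $R$ and $i$ vary. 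You correctly flag this passage as the "main obstacle," but the obstacle is not one to be overcome by a compactness/diagonal argument -- it should be avoided entirely.

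The paper's route is to verify the hypotheses of Theorem \ref{BlowupTransformation} directly on $M$ and apply it there. Since $M$ has nonnegative sectional curvature, its tangent cone at infinity is unique; writing it as $\R^{k_0}\times C(Y)$ with $C(Y)$ line-free (your $k_0$ is the paper's $k+s$), the cone structure produces a fixed $\eta>0$ so that, after scaling $M$ down, $B_R(p)$ is $(\delta',k_0)$-Euclidean but not $(\eta/4,k_0+1)$-Euclidean for \emph{all} $R\ge 1$, with $\delta'$ as small as desired. Then Theorem \ref{BlowupTransformation} applied to $(M,p)$ itself (as the $\RCD(0,n)$ space in that theorem) yields the harmonic map $u:M\to\R^{k_0}$ with the transformed $(\epsilon,k_0)$-splitting property at every scale. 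Non-degeneracy at each point then comes from Theorem \ref{NonDegeneracyofSplittingMaps} applied to $T_R u$ after rescaling, for every $R$, exactly as you describe -- but now $u$ lives on $M$ from the start, so no transfer is needed. (The paper phrases the first assertion as a contradiction argument purely to fix a uniform $\eta$ along a sequence; your direct phrasing works once you apply BlowupTransformation at the manifold level.)

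For properness in the second part, the paper uses the estimate $|T_R|\le R^{\epsilon}$ from Theorem \ref{BlowupTransformation}(2) together with the fact that $T_Ru$ is a $\Psi\cdot R$-GH-approximation at scale $R$; the combination $|u(x)|\ge C\,d(x,p)^{1-\Psi}$ is immediate from this. Your appeal to the bi-H\"older estimate \eqref{BiHolder} gestures in the same direction but needs the same rescaling bookkeeping to become quantitative; the $|T_R|$ growth bound is the cleaner tool here. Your invocation of Ehresmann's fibration theorem is a perfectly good alternative to the paper's more hands-on argument, given properness and non-degeneracy.
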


We don't know whether Proposition \ref{NonnegativeSecSplitting} holds for non-negatively curved Alexandrov spaces.

\begin{prop}\label{prop-1.14}
Given $\epsilon> 0$ and $k, n\in \mathbb{Z}^+$ with $k\leq n$, there exists $\delta(n,\epsilon) > 0$ such that the following holds for every $\delta\in(0,\delta(n,\epsilon)]$.
Suppose $(M,g)$ is a complete $n$-dimensional Riemannian manifold with non-negative sectional curvature and $h_{1+\delta}(M,p)=k$ (where $p\in M$), then there exists some $R_0\geq1$ such that $B_{R}(p)$ is $(\epsilon,k)$-Euclidean for every $R\geq R_0$.
\end{prop}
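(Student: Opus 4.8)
The plan is to argue by contradiction: one passes to a tangent cone at infinity (a metric cone since $\mathrm{sec}\geq0$), counts the almost linear growth harmonic functions there via eigenfunctions of the cross section using the $\RCD$ generalization of Cheeger--Colding--Minicozzi (Proposition \ref{thm-split-infin}, which also powers Theorem \ref{BlowupTransformation}), and then contradicts the choice of the cone by an eigenvalue pinching and rigidity argument. In detail, suppose the proposition fails for some $\epsilon>0$ and $k\le n$ (here $h_{1+\delta}(M,p)$ denotes the dimension of the space $\mathcal{H}_{1+\delta}(M,p)$ of Theorem \ref{BlowupTransformation}). Then there are complete $n$-manifolds $(M_j,g_j,p_j)$ with $\mathrm{sec}\geq0$ and $\delta_j\to0$ such that $h_{1+\delta_j}(M_j,p_j)=k$ while $B_R(p_j)$ is not $(\epsilon,k)$-Euclidean for arbitrarily large $R$. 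Fixing $j$ and choosing such radii tending to infinity, a subsequence of the rescalings converges in the pointed Gromov--Hausdorff sense to a metric measure cone $C(Y_j)$ with vertex $o_j$ --- a tangent cone at infinity of $M_j$, an $\RCD(0,n)$ space over a compact $\RCD$ cross section $Y_j$ --- and, $(\epsilon,k)$-Euclideanity being essentially closed under such limits, $C(Y_j)$ is not $(\epsilon/2,k)$-Euclidean at any scale.

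I would then count harmonic functions on the cone. By Proposition \ref{thm-split-infin} (in the spirit of \cite{CCM}), $h_{1+\delta_j}(M_j,p_j)$ is bounded above by the dimension of the space of $(1+\delta_j')$-growth harmonic functions on $C(Y_j)$ vanishing at $o_j$, for some $\delta_j'\to0$. On a metric measure cone, separation of variables identifies this space with the span of the homogeneous harmonic functions $r^{\alpha}\phi(y)$ with $0<\alpha\le1+\delta_j'$, where $\phi$ is an eigenfunction of $Y_j$ with eigenvalue $\mu$ and $\alpha(\alpha+N_j-2)=\mu$ ($N_j\le n$ the essential dimension of the cone). Thus $k\le\#\{i\ge1:\mu_i(Y_j)\le\Lambda_j\}$ counted with multiplicity, where $\Lambda_j:=(1+\delta_j')(N_j-1+\delta_j')$; in particular $\mu_k(Y_j)\le\Lambda_j$. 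Passing to a subsequence with $N_j\equiv N$ fixed, the lower eigenvalue bound for $\RCD(N-2,N-1)$ spaces gives $\mu_1(Y_j)\ge N-1$, and since $\Lambda_j\to N-1$ we conclude $\mu_i(Y_j)\to N-1$ for $i=1,\dots,k$.

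Finally, pass to a further subsequence $Y_j\to Y_\infty$ in the measured Gromov--Hausdorff sense, so $Y_\infty$ is $\RCD(N-2,N-1)$ and, by continuity of the spectrum under such convergence, $\mu_i(Y_\infty)=N-1$ for $i=1,\dots,k$. The rigidity case of the $\RCD$ Lichnerowicz--Obata theorem (the first eigenvalue attaining the threshold with multiplicity at least $k$) forces $Y_\infty$ to be a $k$-fold spherical suspension, equivalently $C(Y_\infty)=\mathbb{R}^k\times C(W)$ for some space $W$. As $C(Y_j)\to C(Y_\infty)$, this makes $C(Y_j)$ $(\epsilon/2,k)$-Euclidean for all large $j$, contradicting the previous step; this proves Proposition \ref{prop-1.14}, and the compactness argument also makes $R_0$ depend only on $n$ and $\epsilon$.

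The step I expect to be the main obstacle is transporting the whole $k$-dimensional space $\mathcal{H}_{1+\delta_j}(M_j,p_j)$ onto the tangent cone at infinity without losing dimension: this is exactly the delicate normalization/injectivity at the heart of the Cheeger--Colding--Minicozzi argument, and its version for possibly collapsed manifolds is what Proposition \ref{thm-split-infin} supplies. The concluding rigidity for $\RCD$ spaces is by now standard; the one extra piece of bookkeeping is the collapsed case, where $n$ must be replaced throughout by the essential dimension $N$ of the cone.
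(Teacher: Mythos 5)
Your proposal takes a genuinely different route from the paper's proof, but it contains a real gap at exactly the step you flag as the main obstacle, and that gap is not merely technical --- it is a circularity.

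Your plan is to transplant the full $k$-dimensional space $\mathcal{H}_{1+\delta_j}(M_j,p_j)$ onto the tangent cone at infinity $C(Y_j)$ without losing dimension, and then read off $k$ small eigenvalues of the link $Y_j$. The difficulty is that the only mechanism in the paper for carrying almost-linear-growth harmonic functions out to the blowdown limit without degeneration is Theorem \ref{BlowupTransformation}, and that theorem \emph{assumes} that $B_R(p)$ is $(\delta,k)$-Euclidean and not $(\eta,k+1)$-Euclidean for all large $R$ --- i.e.\ precisely the conclusion of Proposition \ref{prop-1.14} (plus an upper non-splitting control). Without that hypothesis the transformation matrices $T_R$ may have no uniform behaviour and the candidate limits can collapse to a lower-dimensional span. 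Proposition \ref{thm-split-infin} does not fill this gap: it concerns $h_1$ (true linear growth), not $h_{1+\delta}$, and it only asserts that the cone at infinity is at least $h_1$-splitting --- it gives no upper bound on the almost-linear harmonic functions on the cone, and no control over what happens to the genuinely superlinear elements of $\mathcal{H}_{1+\delta}$. So as written, "$h_{1+\delta_j}(M_j,p_j)$ is bounded above by the dimension of almost-linear-growth harmonic functions on $C(Y_j)$" is not justified by the cited results, and the rest of the eigenvalue-pinching / Obata-rigidity argument hangs on it.

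The paper avoids this circularity with a descending induction that never needs to transplant all $k$ functions at once. Starting from the failure of $(\epsilon,k)$-Euclideanity and Theorem \ref{thm-LN20} (nonnegative curvature is essential here, since it makes the failure of $(\eta,k)$-Euclideanity propagate to \emph{all} large scales), the paper applies the contrapositive of the $(k-1,k)$ case of Theorem \ref{BlowupTransformation}: if $B_R(p)$ \emph{were} $(\delta_1,k-1)$-Euclidean at all large scales while not $(\eta_1,k)$-Euclidean, Theorem \ref{BlowupTransformation} would force $h_{1+\Psi(\delta_1)}=k-1<k$. This produces a scale at which $(k-1)$-Euclideanity fails, Theorem \ref{thm-LN20} upgrades that to failure at all large scales, and one repeats, stripping one almost-$\mathbb{R}$-factor at a time until $k'=1$, where the Liouville-type Proposition \ref{thm-liou-har-RCD} yields $h_{1+\gamma}=0$ and the contradiction. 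Each step uses Theorem \ref{BlowupTransformation} with a \emph{splitting number strictly below} the one it then refutes, which is why the hypotheses are available when needed. Your spectral-rigidity endgame is also never used in the paper, but that is a secondary point compared to the circularity; and the closing remark that the compactness argument "makes $R_0$ depend only on $n$ and $\epsilon$" is not something the proposition claims nor something the argument as written establishes --- $R_0$ is allowed to depend on $(M,g,p)$, and in the paper's proof it does.
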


Proposition \ref{prop-1.14} still holds for non-negatively curved Alexandrov spaces, and we make the following conjecture, which can be viewed as a quantified version of Proposition \ref{thm-split-infin}.

\begin{conj}\label{conj7.2}
Given $\epsilon> 0$ and $N\geq 1$, there exists $\delta(N,\epsilon) > 0$ such that the following holds for every $\delta\in(0,\delta(N,\epsilon)]$.
Suppose $(X,d,m)$ is an $\mathrm{RCD}(0,N)$ space with $h_{1+\delta}(X,p)=k$ (where $p\in X$, $k\leq N$), then there exists some $R_0\geq1$ such that $B_{R}(p)$ is $(\epsilon,k)$-Euclidean for every $R\geq R_0$.
\end{conj}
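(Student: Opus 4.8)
\textbf{Proof proposal for Conjecture \ref{conj7.2}.} The plan is to argue by contradiction, blowing down a putative counterexample and invoking Theorem \ref{BlowupTransformation} together with Proposition \ref{thm-split-infin}. Suppose the statement fails for some fixed $\epsilon>0$ and $N\geq 1$. Then there is a sequence of $\mathrm{RCD}(0,N)$ spaces $(X_i,d_i,m_i)$ with marked points $p_i$ and $h_{1+\delta_i}(X_i,p_i)=k$ for some $\delta_i\to 0$, but such that for each $i$ there exists $R_i\to\infty$ with $B_{R_i}(p_i)$ not $(\epsilon,k)$-Euclidean. (One must first check that $k$ can be taken constant along the sequence, passing to a subsequence, since $k\leq N$.) The idea is that the hypothesis $h_{1+\delta_i}(X_i,p_i)=k$ should force, for every sufficiently large radius, an $(\eta,k)$-Euclidean behaviour with $\eta\to 0$ as $i\to\infty$, so that the obstruction radii $R_i$ produce a genuine contradiction. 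To make this precise I would combine a rescaling with a limiting argument: consider the tangent cones at infinity, or more directly the rescaled pointed spaces, and use that $\mathrm{RCD}(0,N)$ is preserved under blow-down limits.

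The key structural input is Proposition \ref{thm-split-infin}, which characterizes linear growth harmonic functions on $\mathrm{RCD}(0,N)$ spaces: if the dimension of the space of harmonic functions with at most linear growth (equivalently $h_1(X,p)$, or the relevant $h$-invariant in the limit) equals $k$, then in large scales the space splits off an $\mathbb{R}^k$-factor in a suitably quantitative way. The scheme is: first, show that $h_{1+\delta}(X,p)=k$ for small $\delta$ implies $h_1(X,p)=k$ — this is where one needs the gap / rigidity phenomenon, analogous to the gap property discussed after Theorem \ref{thm-splitting-stable}, that an almost-linear growth harmonic function on a nonnegatively curved space is genuinely linear once the growth exponent is close enough to $1$; here the bound $h_{1+\delta}\geq h_1$ is automatic, so the content is the reverse inequality for $\delta$ small, which in turn reduces (by a contradiction/compactness argument across the sequence) to the corresponding statement on the blow-down limit, where Proposition \ref{thm-split-infin} applies. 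Second, having $h_1(X_i,p_i)=k$ (at least in the limit), deduce via Theorem \ref{BlowupTransformation} (or its proof) that for all large $R$ the ball $B_R(p_i)$ is $(\epsilon,k)$-Euclidean — note Theorem \ref{BlowupTransformation} already yields, under an $(\delta,k)$-but-not-$(\eta,k+1)$-Euclidean hypothesis at all large scales, $k$ harmonic functions of almost-linear growth whose appropriate transformation is $(\epsilon,k)$-splitting, and an $(\epsilon,k)$-splitting map on $B_R(p)$ forces $B_R(p)$ to be $\Psi(\epsilon)$-Euclidean. The contradiction with the choice of $R_i$ then closes the argument, after a careful bookkeeping of how the parameters $\delta$, $\eta$, $\epsilon$ in Theorem \ref{BlowupTransformation} depend on one another.

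A delicate point is the interplay between the $(k+1)$-non-Euclidean hypothesis needed to apply Theorem \ref{BlowupTransformation} and the conclusion one wants: one must rule out that the space is in fact $(\eta,k+1)$-Euclidean in large scales, for otherwise $h_{1+\delta}$ would be at least $k+1$, contradicting $h_{1+\delta}(X,p)=k$ (again using the near-rigidity that enough Euclidean factors produce enough almost-linear harmonic functions). So the logical flow is: from $h_{1+\delta}(X,p)=k$ one gets, for all large $R$, that $B_R(p)$ is not $(\eta,k+1)$-Euclidean (for a suitable $\eta=\eta(N,\epsilon)$), while also the nonnegative-curvature structure forces $B_R(p)$ to be at least $(\delta',k)$-Euclidean for $\delta'$ small in large scales — this last being precisely a quantitative form of Proposition \ref{thm-split-infin}, obtainable by the same contradiction-and-blow-down scheme. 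Then Theorem \ref{BlowupTransformation} applies verbatim and delivers the desired $(\epsilon,k)$-Euclidean conclusion at all large radii.

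The main obstacle I expect is establishing the quantitative rigidity that turns the qualitative equality $h_{1+\delta}(X,p)=k$ into uniform estimates valid at all large scales for the whole sequence: the invariant $h_{1+\delta}$ is defined via a growth condition and is not obviously continuous under pointed measured Gromov-Hausdorff convergence of blow-downs, so controlling it requires exactly the kind of almost-linear-implies-linear gap lemma that underlies Theorem \ref{thm-splitting-stable}, now applied in the blow-down rather than the blow-up regime. Making the dependence $\delta(N,\epsilon)$ effective — rather than merely producing \emph{some} threshold by compactness — is the technically heaviest part; for the conjecture as stated, a pure compactness argument suffices, since one only needs existence of $\delta(N,\epsilon)$, and that is what the contradiction scheme above provides.
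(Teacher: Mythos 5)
The statement you are attempting is labelled a \emph{conjecture} in the paper: the authors prove the analogous result only for manifolds with nonnegative sectional curvature (Proposition~\ref{prop-1.14}) and for Alexandrov spaces (noted just before Conjecture~\ref{conj7.2}), and explicitly leave the general $\mathrm{RCD}(0,N)$ case open, describing it as ``a quantified version of Proposition~\ref{thm-split-infin}.'' So there is no proof in the paper to compare against; the question is whether your sketch actually closes the gap.

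It does not, and the obstruction is precisely the one you flag as a ``delicate point'' and then assume away. Your scheme needs two uniform-in-scale statements to feed into Theorem~\ref{BlowupTransformation}: (a) that $B_R(p)$ is not $(\eta,k+1)$-Euclidean for all large $R$, and (b) that $B_R(p)$ is $(\delta',k)$-Euclidean for all large $R$. Your justification of (a) is that otherwise $h_{1+\delta}\geq k+1$; but producing $k+1$ harmonic functions of almost-linear growth from balls that are $(\eta,k+1)$-Euclidean only along some subsequence $R_j\to\infty$ is not possible without control at \emph{all} large scales, since the construction of such functions (as in Theorem~\ref{BlowupTransformation}) relies on stable splitting behaviour along the whole blow-down filtration. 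Conversely, the ``gap lemma'' you invoke to pass from $h_{1+\delta}=k$ to $h_1=k$ (to then apply Proposition~\ref{thm-split-infin}) is Theorem~\ref{thm-gap-har-RCD} or Proposition~\ref{thm-liou-har-RCD}, and both of these \emph{require} the non-$(\eta,\cdot)$-Euclidean hypothesis at every scale $r\geq1$ as an input — that is, they require (a). The argument is circular: (a) is needed to establish the gap, and the gap is used to establish (a). A general $\mathrm{RCD}(0,N)$ space can, a priori, have oscillating splitting behaviour across scales, with large balls close to $k$-splitting along some radii and to $(k+1)$-splitting along others, and nothing in your sketch rules this out.

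This is exactly what the Alexandrov structure provides and the general $\mathrm{RCD}$ structure does not. In the proof of Proposition~\ref{prop-1.14}, Theorem~\ref{thm-LN20} (Li--Naber) supplies a one-sided monotonicity — once $B_3(p)$ is $(\delta,k)$-Euclidean, every smaller ball is $(\epsilon,k)$-Euclidean — which rigidifies the splitting behaviour across scales and breaks the circularity above; it also underlies the Alexandrov-only versions of the gap and Liouville results (Propositions~\ref{thm-gap-har-Alex}, \ref{thm-liouville-Alex}, \ref{thm-split-alex}), which drop the all-scales non-Euclidean hypothesis. No analogue of Theorem~\ref{thm-LN20} is known for general $\mathrm{RCD}(0,N)$ spaces, and finding one (or some substitute) is the actual content of the conjecture. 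Your final remark that ``a pure compactness argument suffices'' does not help: the contradicting sequence consists of \emph{different} spaces $X_i$, the invariant $h_{1+\delta}$ is not semicontinuous under pointed measured Gromov--Hausdorff convergence of blow-downs (as you yourself observe), and there is no a priori compactness of the relevant moduli of harmonic functions across the sequence, so the contradiction scheme does not terminate.
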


\vspace*{10pt}

We point out that transformation theorems \ref{thm-splitting-stable} and \ref{TransformationThmUderSplittingMonotonicity} still holds for some functions which are not necessary harmonic, see \cite{Hua22} (see also \cite{BNS22} \cite{HP22} \cite{WZh21}).
As applications, in \cite{Hua22} some finite topological type theorems on certain non-compact manifolds with nonnegative Ricci curvature are proved.
One can expect more applications of the transformation technique in the study of manifolds with Ricci curvature lower bound in the future.

\subsection{Some other remarks}

At the end of this introduction, we give some additional remarks on the notions and results of this paper.

\begin{rem}\label{rem1.15}
Definitions \ref{K-Reifenberg} and \ref{defn1.6} can be applied to metric spaces $(X,d)$, including Ricci limit space, $n$-dimensional Alexandrov space, and more generally on $\mathrm{RCD}(K,N)$.
Suppose $(X,d,m)$ is an $\mathrm{RCD}(-(n-1),n)$ spaces (where $n\in\mathbb{Z}^{+}$), by \cite{MN19}, there exists $0<\theta(n)<<1$ such that, for any $p\in X$, $N\geq n+1$ and $0<r<1$, we have $\theta^{(X,d)}_{p,N}(r)\geq \theta(n)$.
Hence in this case, in the definition of $(\Phi, r_{0}; k,\delta)$-generalized Reifenberg condition, we only need to consider $k\leq k'\leq n$ in (\ref{CollapsingReifenberg}).

We also remark that some results of this paper, involving generalized Reifenberg condition, still hold on some metric (measure) spaces.
For example, by \cite{LN20}, we know that $n$-dimensional Alexandrov spaces with curvature bounded from below by $-\delta$ satisfy the $(\Phi; k,\delta)$-generalized Reifenberg condition.
One can check that the proof of Theorem \ref{TransformationThmUderSplittingMonotonicity} does not use the smooth structure, thus we have the following result:
\begin{prop}
For any $N \in \mathbb{Z}^{+}$, and $\epsilon>0$, there exists $\delta_{0}=\delta(N, \epsilon)$ such that, for any $\delta\in(0,\delta_{0})$ the following holds.
Suppose $(X,d,m)$ is an $\RCD(-(N-1),N)$ space and $(X,d)\in \mathrm{Alex}^{n}(-\delta)$ (where $n\leq N$).
Suppose $u : (B_{2}(p),p)\rightarrow (\mathbb{R}^{k},0^{k})$ is a $(\delta,k)$-splitting map (with $k\leq n$), then for any $r\in(0,1]$, there exists a $k\times k$ lower triangle matrix $T_{r}$ with positive diagonal entries so that
$T_{r}u : B_{r}(p)\rightarrow\mathbb{R}^{k}$ is a $(\epsilon,k)$-splitting map.
\end{prop}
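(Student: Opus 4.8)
The plan is to note that the proof of Theorem \ref{TransformationThmUderSplittingMonotonicity} never uses the smooth Riemannian structure, and to supply its hypotheses from the Alexandrov condition. Inspecting the argument of Section \ref{sec-4.2}, its only ingredients are: (i) the notion of a $(\delta,k)$-splitting map, which is phrased via harmonic functions and integral Hessian bounds and makes sense verbatim on $\mathrm{RCD}$ spaces; (ii) the transformation Theorem \ref{thm-splitting-stable}, already stated for $\mathrm{RCD}(-(N-1)\delta,N)$ spaces; (iii) measured Gromov--Hausdorff precompactness of the relevant class of spaces together with stability of harmonic functions and of the quantities $\theta_{p,k'}$, $\Theta_{p,k'}$ under mGH convergence; and (iv) the $(\Phi;k,\delta)$-generalized Reifenberg condition, used only as the black box (\ref{CollapsingReifenberg}). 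So it suffices to check that a space $(X,d,m)$ as in the statement provides (i)--(iv).

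Items (i) and (iii) are immediate from $(X,d,m)$ being $\mathrm{RCD}(-(N-1),N)$. For (ii), recall that by the theorems of Petrunin and of Zhang--Zhu an $n$-dimensional Alexandrov space with curvature $\geq -\delta$ is $\mathrm{RCD}(-(n-1)\delta,n)$, hence (as $n\leq N$ and $\delta<1$) $\mathrm{RCD}(-(N-1)\delta,N)$ --- exactly the curvature input needed to invoke Theorem \ref{thm-splitting-stable} at each scale after the routine rescaling. The one delicate point here is the reference measure: one works with $m=\mathcal{H}^{n}$, i.e.\ with the canonical Dirichlet structure of the Alexandrov space, so that the Alexandrov curvature bound really does produce an almost nonnegative synthetic Ricci bound for the same measure with respect to which $u$ is harmonic. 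For (iv), one invokes \cite{LN20}: an $n$-dimensional Alexandrov space with curvature $\geq -\delta$ satisfies the $(\Phi;k,\delta)$-generalized Reifenberg condition for a modulus $\Phi=\Phi_{n}$ depending only on $n$. By Remark \ref{rem1.15} it is enough to have (\ref{CollapsingReifenberg}) for $k\leq k'\leq n$: for $k'\geq n+1$ the ball $B_{r}(p)$ is never $(\eta,k')$-Euclidean once $\eta<\theta(n)$, so (\ref{CollapsingReifenberg}) holds automatically provided $\Phi_{n}\geq 2$ there; and for each $k'$ with $k\leq k'\leq n$ the required propagation of the $(\epsilon,k')$-Euclidean property down to all smaller scales is precisely the content of \cite{LN20}.

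Granting (i)--(iv), one reruns the induction-and-contradiction argument of Section \ref{sec-4.2} verbatim. If the conclusion failed one would get a sequence $(X_{i},d_{i},m_{i},p_{i})$ of Alexandrov $\mathrm{RCD}$ spaces with $\delta_{i}\to 0$, $(\delta_{i},k)$-splitting maps $u_{i}:B_{2}(p_{i})\to\R^{k}$, and scales $r_{i}\in(0,1]$ admitting no lower-triangular transformation as required; the generalized Reifenberg condition propagates the $(\Psi(\delta_{i}),k)$-Euclidean property of $B_{2}(p_{i})$ to all scales $\leq 1$, and then for each $s>0$ either $B_{r}(p_{i})$ fails to be $(\eta,k+1)$-Euclidean for every $r\in[s,1]$, in which case Theorem \ref{thm-splitting-stable} produces the missing transformation and a contradiction, or $B_{\rho}(p_{i})$ becomes $(\eta,k+1)$-Euclidean at some scale $\rho$, in which case one restricts to $B_{\rho}(p_{i})$ and applies the case $k+1$ of the statement (available by downward induction on $k$, starting from $k=n$ where the first alternative always occurs), transforming a $(\Psi(\eta),k+1)$-splitting map that extends the first $k$ components of $u_{i}$ and again contradicting the choice of $r_{i}$; letting $s\to 0$ covers every $r\in(0,1]$. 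I expect the only real work to be bookkeeping: checking that \cite{LN20} yields the generalized Reifenberg condition in exactly the quantitative shape of Definition \ref{defn1.6}, with one modulus $\Phi_{n}$ good for all target dimensions $k'$ at once, and matching the curvature and measure conventions of the proposition to those of Theorem \ref{thm-splitting-stable}; once these are settled, the proof of Theorem \ref{TransformationThmUderSplittingMonotonicity} transfers without change.
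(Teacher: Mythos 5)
Your overall strategy matches the paper's: the proof of Theorem \ref{TransformationThmUderSplittingMonotonicity} is measure-theoretic and therefore transfers to $\RCD$ spaces once the generalized Reifenberg input is supplied from \cite{LN20} (Proposition \ref{prop-smp-1}, Theorem \ref{thm-LN20}); this is exactly what the paper says in Remark \ref{rem1.15}, and your handling of items (i), (iii) and (iv) — including the observation that (\ref{CollapsingReifenberg}) holds vacuously for $k'\geq n+1$ via the $\theta(n)$-lower bound from \cite{MN19} — is correct.

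However, the resolution you offer for item (ii) is a genuine gap. You assert ``one works with $m=\mathcal{H}^{n}$,'' so that the Alexandrov bound $\mathrm{Alex}^{n}(-\delta)$ supplies, via Petrunin/Zhang--Zhu, an almost-nonnegative $\RCD$ bound for the measure with respect to which $u$ is harmonic. But the Proposition deliberately does \emph{not} assume $m=\mathcal{H}^{n}$: the paper stresses in Section \ref{sec-2} (just before Theorem \ref{thm-LN20}) that one considers $\RCD(K,N)$ structures $(X,d,m)$ with $(X,d)\in\mathrm{Alex}^{n}(k)$ and $m$ \emph{not necessarily} $\mathcal{H}^{n}$ — these are precisely the collapsed mGH limits of higher-dimensional Alexandrov spaces, which is the interesting case. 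If you add $m=\mathcal{H}^{n}$, the hypothesis ``$(X,d,m)$ is $\RCD(-(N-1),N)$'' becomes automatic and the Proposition collapses to a strictly weaker statement. Your proof therefore proves a special case, not the Proposition.

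The correct way to supply the curvature input for Theorem \ref{thm-splitting-stable} is not through the Alexandrov bound at all: you already have the $\RCD(-(N-1),N)$ bound for $(X,d,m)$ by hypothesis, and this bound improves to $\RCD(-(N-1)\rho^{2},N)$ under the rescaling $d\mapsto d/\rho$. Concretely, set $\rho=\delta^{1/(4N)}$. For $r\in[\rho,1]$ one takes $T_{r}=\mathrm{Id}$: by Lemma \ref{lem2.12} the given $(\delta,k)$-splitting map is already $(C\sqrt{\delta},k)$-splitting on $B_{r}(p)$ for all $r\geq 2\delta^{1/(2N)}$, which is $<\epsilon$ once $\delta$ is small. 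For $r<\rho$ one rescales: $(X,d/\rho,m)$ is $\RCD(-(N-1)\delta^{1/(2N)},N)$, $(X,d/\rho)\in\mathrm{Alex}^{n}(-\delta^{1+1/(2N)})$, the map $u/\rho$ is a $(\Psi(\delta|N),k)$-splitting map on $B^{d/\rho}_{2}(p)$, and the generalized Reifenberg condition for $(X,d/\rho)$ on scales $(0,1]$ follows from Proposition \ref{prop-smp-1}; now the proof of Theorem \ref{thm-splitting-unstable} runs in the $\RCD$ category exactly as you sketch in your last paragraph. The Alexandrov hypothesis enters \emph{only} through the Reifenberg modulus $\Phi_{n}$, never through the synthetic Ricci bound for $m$.
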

\end{rem}

In the following we give the outline of this paper.
Section \ref{sec-2} contains some notions and background results which are used in this paper.
In Section \ref{sec-3}, we mainly prove the gap theorem \ref{thm-gap-har-RCD} for harmonic functions with almost linear growth. Some Liouville-type theorems are also obtained in this section.
In Section \ref{sec-4}, we prove the transformation theorems \ref{thm-splitting-stable} and \ref{TransformationThmUderSplittingMonotonicity} and the non-degeneracy theorem \ref{NonDegeneracyofSplittingMaps-RicCase}.
In Section \ref{sec-5}, we give some fundamental properties of the generalized Reifenberg condition and give sufficient conditions to guarantee the generalized Reifenberg condition.
In Section \ref{sec-6}, we prove the smooth fibration theorem \ref{FiberBundleThm} and a local version of fibration theorem \ref{localFiberBundleThm}.
In Section \ref{sec-7}, we prove Theorem \ref{BlowupTransformation}, Propositions \ref{thm-can-diff}, \ref{NonnegativeSecSplitting} and \ref{prop-1.14}.
Section \ref{sec-8} is an appendix, where we prove a generalized covering lemma, which is used in Section \ref{sec-5}.

\section{Preliminary} \label{sec-2} 

Some of the results in this paper are stated in the setting of $\RCD$ spaces.
$\textmd{RCD}(K,N)$ spaces are metric measure spaces $(X,d,m)$ with generalized Ricci curvature bounded from below by $K$ and dimension bounded from above by $N$, see \cite{AGMR15,EKS15,Gig13} etc.
$\textmd{RCD}(K,N)$ spaces include all $n$-manifolds with $\Ric \geq K, n\leq N$ and their Ricci limit spaces.
Recently, the theory of $\textmd{RCD}$ develops fast, many classical results on manifolds with lower Ricci curvature bound and theorems of Ricci limit spaces have been generalized to $\RCD$ theory.
If $N\in \mathbb{Z}^{+}$, and $m=\mathcal{H}^{N}$, then we say $(X,d,m)$ is a non-collapsed $\textmd{RCD}(K,N)$ space, see \cite{DePGil18} etc.

For general results on (measured) Gromov-Hausdorff convergence, the readers can refer to \cite{Gr81,CC97,GMS15} etc.
We use $d_{\mathrm{GH}}$ and $d_{\mathrm{mGH}}$ to denote the Gromov-Hausdorff distance and measured Gromov-Hausdorff distance respectively.

For results on calculus in general metric measure spaces, the readers can refer to \cite{AGS14-1,C99,Gig15} etc.

In the following, we recall some notions and properties on convergence of functions defined on varying metric measure spaces.

\begin{defn}
Suppose $(Z_{i}, z_{i}, d_{i})$ pointed Gromov-Hausdorff converges to $(Z_{\infty}, z_{\infty}, d_{\infty})$ with a sequence of $\epsilon_{i}$-Gromov-Hausdorff approximations $\Phi_{i}: Z_{i}\rightarrow Z_{\infty}$, where $\epsilon_{i}\rightarrow0$.
Suppose $f_{i}$ is a function on $X_{i}$ and $f_{\infty}$ is a function on $Z_{\infty}$.
Given $K\subset Z_{\infty}$.
If for every $\epsilon>0$, there exists $\delta>0$ such that $|f_{i}(x_{i})-f_{\infty}(x_{\infty})|<\epsilon$ holds for every $i\geq\delta^{-1}$, $x_{i}\in Z_{i}$, $x_{\infty}\in K$ with $d_{\infty}(\Phi_{i}(x_{i}),x_{\infty})< \delta$, then we say $f_{i}$ converges to $f_{\infty}$ uniformly on $K$.
\end{defn}

The following theorem is a generalization of the classical Arzela-Ascoli Theorem, see Proposition 27.20 in \cite{Vi09} or Proposition 2.12 in \cite{MN19}.
\begin{thm}\label{AA}
Suppose $(X_{i}, p_{i}, d_{i})\xrightarrow{pGH}(X_{\infty}, p_{\infty}, d_{\infty})$, $R\in(0,\infty]$.
Suppose for every $i$, $f_{i}$ is a Lipschitz function defined on $B_{R}(p_{i})\subset X_{i}$ and $\mathrm{Lip}f_{i}\leq L$ on $B_{R}(p_{i})$, $| f_{i}(p_{i})|\leq C$  for some uniform constants $L$ and $C$.
Then there exists a subsequence of $f_{i}$, still denoted by $f_{i}$, and a Lipschitz function $f_{\infty}: B_{R}(p_{\infty})\rightarrow\mathbb{R}$, such that $f_{i}$ converges uniformly to $f_{\infty}$ on $\overline{B_{r}(p_{\infty})}$ for every $r<R$.
\end{thm}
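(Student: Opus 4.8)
The plan is to run the classical Arzel\`a--Ascoli argument, with "evaluation at a fixed point" replaced by "evaluation at approximate preimages under the Gromov--Hausdorff approximations." First I would fix a sequence of $\epsilon_i$-Gromov--Hausdorff approximations $\Phi_i:B_R(p_i)\to B_R(p_\infty)$ with $\epsilon_i\to 0$ and $d_\infty(\Phi_i(p_i),p_\infty)\le\epsilon_i$, and choose a countable set $\{x_j\}_{j\in\N}$ dense in $B_R(p_\infty)$ (this uses only that $B_R(p_\infty)$ is separable, and when $R=\infty$ one takes a dense set of the whole limit ball). By near-surjectivity of $\Phi_i$ one may pick, for each $j$, points $y_{i,j}\in B_R(p_i)$ with $d_\infty(\Phi_i(y_{i,j}),x_j)\le\epsilon_i$.

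Next I would observe that for each fixed $j$ the real sequence $(f_i(y_{i,j}))_i$ is bounded: when $R<\infty$ one has $d_i(y_{i,j},p_i)\le d_\infty(x_j,p_\infty)+2\epsilon_i$, hence $|f_i(y_{i,j})|\le |f_i(p_i)|+L\,d_i(y_{i,j},p_i)\le C+L(R+1)$ for $i$ large; when $R=\infty$ the same estimate holds with $R$ replaced by any fixed radius exceeding $d_\infty(x_j,p_\infty)$. A diagonal extraction then yields a subsequence (not relabeled) along which $f_i(y_{i,j})$ converges for every $j$, and I set $f_\infty(x_j):=\lim_i f_i(y_{i,j})$. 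To see $f_\infty$ is $L$-Lipschitz on $\{x_j\}$, I would use the distance-distortion property $|d_i(y_{i,j},y_{i,l})-d_\infty(\Phi_i(y_{i,j}),\Phi_i(y_{i,l}))|\le\epsilon_i$ and $d_\infty(\Phi_i(y_{i,j}),x_j)\le\epsilon_i$ to get $d_i(y_{i,j},y_{i,l})\to d_\infty(x_j,x_l)$; passing to the limit in $|f_i(y_{i,j})-f_i(y_{i,l})|\le L\,d_i(y_{i,j},y_{i,l})$ gives $|f_\infty(x_j)-f_\infty(x_l)|\le L\,d_\infty(x_j,x_l)$. Consequently $f_\infty$ extends uniquely to an $L$-Lipschitz function on $\overline{B_r(p_\infty)}$ for every $r<R$, hence to $B_R(p_\infty)$.

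Finally, for uniform convergence on $\overline{B_r(p_\infty)}$ with $r<R$: given $\eta>0$, total boundedness of $\overline{B_r(p_\infty)}$ lets me pick finitely many indices $j_1,\dots,j_m$ so that $\{x_{j_s}\}$ is $\eta$-dense in $\overline{B_r(p_\infty)}$, and then choose $N$ so that for $i\ge N$ one has $\epsilon_i<\eta$ and $|f_i(y_{i,j_s})-f_\infty(x_{j_s})|<\eta$ for all $s$. For an arbitrary $x\in B_r(p_i)$, choosing $x_{j_s}$ with $d_\infty(\Phi_i(x),x_{j_s})<2\eta$ and combining the Lipschitz bounds on $f_i$ and $f_\infty$ with the estimate $|d_i(x,y_{i,j_s})-d_\infty(\Phi_i(x),x_{j_s})|\le 2\epsilon_i$ gives $|f_i(x)-f_\infty(\Phi_i(x))|\le C(L+1)\eta$; since $\eta$ is arbitrary this is exactly the claimed uniform convergence on $\overline{B_r(p_\infty)}$.

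The argument is entirely soft, so I do not expect a genuine obstacle; the only points requiring care are the systematic bookkeeping of the $\epsilon_i$-errors when transferring distance estimates between $(X_i,d_i)$ and $(X_\infty,d_\infty)$ through the maps $\Phi_i$, and — in the case $R=\infty$ — exhausting $B_\infty(p_\infty)$ by the compact sets $\overline{B_r(p_\infty)}$ and running the diagonal extraction over a fixed countable dense subset of the entire limit ball.
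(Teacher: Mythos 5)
Your argument is correct: it is the standard Arzel\`a--Ascoli proof transported through the Gromov--Hausdorff approximations, i.e.\ transfer a countable dense subset of $B_R(p_\infty)$ to approximate preimages in the $X_i$, use the uniform Lipschitz and pointwise bounds to run a diagonal extraction, verify that the resulting limit function inherits the $L$-Lipschitz bound from the distance-distortion estimate $|d_i(y_{i,j},y_{i,l})-d_\infty(x_j,x_l)|\le 3\epsilon_i$, and then upgrade pointwise to uniform convergence on $\overline{B_r(p_\infty)}$ by total boundedness plus the two Lipschitz bounds. Note that the paper itself does not give a proof of Theorem~\ref{AA}; it simply invokes Proposition~27.20 of \cite{Vi09} and Proposition~2.12 of \cite{MN19}, which prove the result by exactly this argument, so there is no alternative route here to compare against. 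The only points you pass over lightly are harmless: separability of $B_R(p_\infty)$ is automatic for a pointed GH limit (each ball is totally bounded, being a GH limit of totally bounded balls), and when checking the paper's definition of uniform convergence on $K=\overline{B_r(p_\infty)}$ one needs the one extra line that $|f_\infty(\Phi_i(x))-f_\infty(x_\infty)|\le L\,d_\infty(\Phi_i(x),x_\infty)$ converts your estimate $|f_i(x)-f_\infty(\Phi_i(x))|\le C(L+1)\eta$ into the required $|f_i(x)-f_\infty(x_\infty)|<\epsilon$; you use the Lipschitz bound on $f_\infty$ already, so this is merely a matter of stating it in the form the definition asks for.
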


\begin{defn}
Suppose $(Z_{i}, z_{i}, d_{i}, m_{i})\xrightarrow{pmGH}(Z_{\infty}, z_{\infty}, d_{\infty}, m_{\infty})$.
Given any $p\in(1,\infty)$.
Suppose that $f_{i}$ is a Borel function on $Z_{i}$, we say $f_{i}\rightarrow f :Z_{\infty}\rightarrow\mathbb{R}$ in the weak sense if for any uniformly converging sequence of compactly supported Lipschitz functions $\varphi_{i}\rightarrow \varphi$, we have
\begin{align}
\lim_{i\rightarrow\infty}\int f_{i}\varphi_{i} dm_{i}=\int f\varphi dm_{\infty}.
\end{align}
We say $f_{i}\rightarrow f$ in the weak $L^{p}$-sense if in addition $f_{i}, f$ have uniformly bounded $L^{p}$ integrals.
We say $f_{i}\rightarrow f$ in the $L^{p}$-sense if $f_{i}\rightarrow f$ in the weak $L^{p}$-sense and
\begin{align}
\lim_{i\rightarrow\infty}\int |f_{i}|^{p} dm_{i}=\int |f|^{p} dm_{\infty}.
\end{align}
Suppose $f_{i}\in W^{1,p}(Z_{i})$, $f\in W^{1,p}(Z)$, then we say $f_{i}\rightarrow f$ in the $W^{1,p}$-sense if $f_{i}\rightarrow f$ in the $L^{p}$-sense and
\begin{align}
\lim_{i\rightarrow\infty}\int |\nabla f_{i}|^{p} dm_{i}=\int |\nabla f|^{p} dm_{\infty}.
\end{align}
For $p = 1$, given $f_{i} \in L^{1}(Z_{i})$, $f\in L^{1}(Z_{\infty})$, we say $f_{i}\rightarrow f$ in the $L^{1}$-sense if $\sigma\circ f_{i}\rightarrow \sigma\circ f$ in the $L^{2}$-sense, where $\sigma(z) = \mathrm{sign}(z)\sqrt{|z|}$ is the signed square root.
\end{defn}

It is not hard to check that uniform convergence implies $L^{p}$ convergence for any $1\leq p < \infty$.

The readers can refer to \cite{AH17} for basic properties of $L^{p}$-convergence (especially Section 3 in \cite{AH17}).
Especially, in the proof of the transformation theorem \ref{thm-splitting-stable}, we need the following basic property, whose proof is based on the properties in Section 3 of \cite{AH17}.
\begin{thm}\label{2.7777777iruiquo}
Suppose $(X_{i}, p_{i}, d_{i}, m_{i})\xrightarrow{pmGH}(X_{\infty}, p_{\infty}, d_{\infty}, m_{\infty})$, and $f_{i}:B_{R}(p_{i})\rightarrow \mathbb{R}$ (with $f_{i}\in L^{2}(B_{R}(p_{i}))$) converges to $f_{\infty}:B_{R}(p_{\infty})\rightarrow \mathbb{R}$ in the $L^{2}$-sense, then for any constant $A\geq 0$,
$|f_{i}^{2}-A|$ converges to $|f_{\infty}^{2}-A|$ in the $L^{1}$-sense on $B_{R}(p_{\infty})$.
\end{thm}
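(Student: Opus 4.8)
The plan is to unwind the definition of $L^{1}$-convergence and reduce the statement to an $L^{2}$-convergence statement for a continuous composition. By definition, $|f_{i}^{2}-A|\to|f_{\infty}^{2}-A|$ in the $L^{1}$-sense on $B_{R}(p_{\infty})$ means $\sigma\circ|f_{i}^{2}-A|\to\sigma\circ|f_{\infty}^{2}-A|$ in the $L^{2}$-sense, where $\sigma(z)=\mathrm{sign}(z)\sqrt{|z|}$. Since $|f_{i}^{2}-A|\ge0$ and $|f_{\infty}^{2}-A|\ge0$, this is exactly $\sqrt{|f_{i}^{2}-A|}\to\sqrt{|f_{\infty}^{2}-A|}$ in the $L^{2}$-sense. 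Thus, setting $\phi_{A}(t):=\sqrt{|t^{2}-A|}$, which is continuous on $\R$ with $0\le\phi_{A}(t)\le|t|+\sqrt{A}$, the theorem amounts to showing that $\phi_{A}\circ f_{i}\to\phi_{A}\circ f_{\infty}$ in the $L^{2}$-sense, given $f_{i}\to f_{\infty}$ in the $L^{2}$-sense.

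One route is direct: from Section 3 of \cite{AH17}, $L^{2}$-strong convergence of $f_{i}$ to $f_{\infty}$ on $B_{R}(p_{i})$ implies, along a realization of the $pmGH$ convergence in a common space, convergence in measure of $f_{i}$ to $f_{\infty}$, together with equi-integrability of $\{f_{i}^{2}\}$ (the latter because $\int|f_{i}|^{2}\,dm_{i}\to\int|f_{\infty}|^{2}\,dm_{\infty}$ by the definition of $L^{2}$-convergence, which combined with convergence in measure and Vitali's theorem forces equi-integrability). Since $\phi_{A}$ is continuous, $\phi_{A}\circ f_{i}\to\phi_{A}\circ f_{\infty}$ in measure; since $\phi_{A}(t)^{2}\le2(t^{2}+A)$ and $m_{i}(B_{R}(p_{i}))\to m_{\infty}(B_{R}(p_{\infty}))$ (for a.e.\ $R$, or after shrinking $R$ slightly), the family $\{(\phi_{A}\circ f_{i})^{2}\}$ is equi-integrable. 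Convergence in measure plus equi-integrability of the squares is precisely $L^{2}$-strong convergence, which gives the claim.

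A second, more modular route relies on the algebraic identity $|t^{2}-A|=\big(|t|+\sqrt{A}\big)\big||t|-\sqrt{A}\big|$. Since $z\mapsto|z|$ is $1$-Lipschitz and vanishes at the origin, the composition rule for Lipschitz functions in \cite[Section 3]{AH17} gives $|f_{i}|\to|f_{\infty}|$ in the $L^{2}$-sense, hence $|f_{i}|\pm\sqrt{A}\to|f_{\infty}|\pm\sqrt{A}$ in the $L^{2}$-sense by linearity (again using convergence of the total masses to handle the constant $\sqrt{A}$). The product of two $L^{2}$-strongly convergent sequences converges in the $L^{1}$-sense, so $f_{i}^{2}-A=\big(|f_{i}|+\sqrt{A}\big)\big(|f_{i}|-\sqrt{A}\big)\to f_{\infty}^{2}-A$ in the $L^{1}$-sense. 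Applying the composition rule for the $1$-Lipschitz map $z\mapsto|z|$ once more — now for $L^{1}$-convergence, which unwinds through $\sigma$ back to the $L^{2}$ Lipschitz rule — yields $|f_{i}^{2}-A|\to|f_{\infty}^{2}-A|$ in the $L^{1}$-sense, as desired.

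The main obstacle is not conceptual but a matter of bookkeeping: one must check that the relevant statements of \cite[Section 3]{AH17} — the characterization of $L^{p}$-strong convergence via convergence in measure plus equi-integrability of $p$-th powers, the Lipschitz composition rule vanishing at the origin (both for $L^{2}$- and, through $\sigma$, for $L^{1}$-convergence), and the product rule $L^{2}\times L^{2}\to L^{1}$ — are applied on the varying balls $B_{R}(p_{i})$ rather than on a fixed space, and that the constant function $\sqrt{A}$ genuinely converges, which needs $m_{i}(B_{R}(p_{i}))\to m_{\infty}(B_{R}(p_{\infty}))$; the latter holds for all but countably many $R$, and for the remaining radii one replaces $R$ by a slightly smaller radius and lets it increase to $R$.
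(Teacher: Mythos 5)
The paper does not actually present a proof of this theorem; it only remarks that the ``proof is based on the properties in Section 3 of \cite{AH17}'' and moves on. So there is no paper argument to compare against line by line. That said, your two routes are both correct elaborations of what such a proof looks like in the Ambrosio--Honda framework, and they are not wildly different from one another: both reduce to (i) unwinding the definition of $L^{1}$-convergence through the signed square root $\sigma$, (ii) using that $1$-Lipschitz maps fixing the origin preserve $L^{2}$-strong convergence, and (iii) keeping track of the constant $A$ through the convergence of $m_{i}(B_{R}(p_{i}))$ to $m_{\infty}(B_{R}(p_{\infty}))$ for a.e.\ $R$, which you correctly flag. Your second route is the more self-contained one, as it stays entirely within the stability lemmas of \cite[Section 3]{AH17} (Lipschitz composition, $L^{2}\times L^{2}\to L^{1}$ product rule, and the $\sigma$-unwinding of $L^{1}$-convergence) without invoking a convergence-in-measure characterization along a realization; this is likely the route the authors had in mind. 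Your first route via convergence in measure plus equi-integrability is also valid, though it implicitly uses an embedding of the varying spaces into a common ambient metric space to make ``convergence in measure'' precise, which you do gesture at. Two small cosmetic points: $|t^{2}-A|\le t^{2}+A$ directly (no factor $2$ needed), and in the product-rule step it is worth stating explicitly that both factors $|f_{i}|\pm\sqrt{A}$ are treated as functions on $B_{R}(p_{i})$ so that the product rule applies on a common domain. No genuine gap.
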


The following theorem is also used in the proof of the transformation theorem \ref{thm-splitting-stable}.

\begin{thm}[see Theorem 4.4 in \cite{AH18}]\label{2.7777777}
Suppose the $\mathrm{RCD}(K,N)$ spaces $(X_{i}, p_{i}, d_{i}, m_{i})$ converges in the pointed measured Gromov-Hausdorff distance to $(X_{\infty}, p_{\infty}, d_{\infty}, m_{\infty})$.
Suppose $f_{i}$ is defined on $B_{R}(p_{i})$ such that $f_{i}\in W^{1,2}(B_{R}(p_{i}))\cap D_{B_{R}(p_{i})}(\Delta)$
and
\begin{align}
\bbint_{B_{R}(p_{i})}|f_{i}|^{2} +\bbint_{B_{R}(p_{i})}|\nabla f_{i}|^{2} +\bbint_{B_{R}(p_{i})}|\Delta f_{i}|^{2} \leq C
\end{align}
for some uniform constant $C$.
If $f_{i}$ converges in the $L^{2}$-sense to some $W^{1,2}$-function $f: B_{R}(p_{\infty})\rightarrow \mathbb{R}$, then
\begin{description}
  \item[(1)] $f_{i}\rightarrow f$ in the $W^{1,2}$-sense over $B_{R}(p_{\infty})$;
  \item[(2)] $\Delta f_{i}\rightarrow \Delta f$ in the weak $L^{2}$-sense;
  \item[(3)] $|\nabla f_{i}|^{2}$ converges to $|\nabla f|^{2}$ in the $L^{1}$-sense on any $B_{r}(p)$ with $r<R$.
\end{description}
\end{thm}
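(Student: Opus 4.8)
The plan is to derive Theorem~\ref{2.7777777} from the stability of first-order calculus under pointed measured Gromov--Hausdorff convergence developed in \cite{AH17}, together with the Mosco convergence of (localized) Cheeger energies along $\RCD(K,N)$ sequences (as in \cite{GMS15}). Since the three assertions are local, I would first fix a radius $\rho<R$ with $m_\infty(\partial B_\rho(p_\infty))=0$, establish all of them on $B_\rho(p_\infty)$, and then exhaust $B_R(p_\infty)$ by letting $\rho\uparrow R$ through such good radii; for these $\rho$ the Cheeger energies on $B_\rho(p_i)$ Mosco-converge to the one on $B_\rho(p_\infty)$, which legitimizes the limiting arguments. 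All limits below are understood along a subsequence until uniqueness of the limit forces the full sequence.

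\emph{Identification of weak limits, and assertion (2).} From the uniform bound $\bbint_{B_R(p_i)}|\nabla f_i|^2+\bbint_{B_R(p_i)}|\Delta f_i|^2\le C$ and the weak-$L^2$ compactness of \cite[\S3]{AH17}, there are an $L^2$ vector field $V$ on $B_\rho(p_\infty)$ and $g\in L^2(B_\rho(p_\infty))$ with $\nabla f_i\rightharpoonup V$ and $\Delta f_i\rightharpoonup g$ in the weak $L^2$-sense. The closure of the gradient under $L^2$-convergence (an $L^2$-converging sequence with uniformly bounded gradient norms has weak-$L^2$ gradient limit equal to the gradient of the limit) identifies $V=\nabla f$ and gives $f\in W^{1,2}(B_\rho(p_\infty))$. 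For $g$, pick a Lipschitz $\varphi$ with compact support in $B_\rho(p_\infty)$ and a recovery sequence $\varphi_i$ (compactly supported Lipschitz, $\varphi_i\to\varphi$ uniformly, $\nabla\varphi_i\to\nabla\varphi$ in the strong $L^2$-sense); passing to the limit in $\int\langle\nabla f_i,\nabla\varphi_i\rangle\,dm_i=-\int(\Delta f_i)\varphi_i\,dm_i$, using weak$\times$strong on the left and weak$\times$uniform on the right, gives $\int\langle\nabla f,\nabla\varphi\rangle\,dm_\infty=-\int g\varphi\,dm_\infty$ for every such $\varphi$, i.e.\ $f\in D_{B_\rho(p_\infty)}(\Delta)$ with $\Delta f=g$. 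Uniqueness of the limit then promotes this to convergence of the whole sequence, which is assertion (2).

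\emph{Convergence of Dirichlet energies, and assertions (1), (3).} Take a cutoff $0\le\varphi\le1$ with $\varphi\equiv1$ on $B_\rho(p_\infty)$ and compact support in $B_R(p_\infty)$, and a recovery sequence $\varphi_i$. Then $f_i\varphi_i\in W^{1,2}$ with compact support in $B_R(p_i)$, so the defining identity for the local Laplacian gives
\[
\int|\nabla f_i|^2\varphi_i\,dm_i=-\int(\Delta f_i)f_i\varphi_i\,dm_i-\int f_i\,\langle\nabla f_i,\nabla\varphi_i\rangle\,dm_i .
\]
On the right-hand side $f_i\to f$ and hence $f_i\varphi_i\to f\varphi$ in the $L^2$-sense, and $f_i\nabla\varphi_i\to f\nabla\varphi$ in the strong $L^2$-sense, by the product rules of \cite[\S3]{AH17}; combined with $\Delta f_i\rightharpoonup\Delta f$ and $\nabla f_i\rightharpoonup\nabla f$ weakly, the right-hand side tends to $-\int(\Delta f)f\varphi\,dm_\infty-\int f\,\langle\nabla f,\nabla\varphi\rangle\,dm_\infty=\int|\nabla f|^2\varphi\,dm_\infty$, the last equality by integration by parts on $B_R(p_\infty)$. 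Hence $\int|\nabla f_i|^2\varphi_i\,dm_i\to\int|\nabla f|^2\varphi\,dm_\infty$ for every such cutoff. Letting the cutoffs decrease to $\chi_{\overline{B_\rho(p_\infty)}}$ and using $m_\infty(\partial B_\rho)=0$ yields $\limsup_i\int_{B_\rho(p_i)}|\nabla f_i|^2\,dm_i\le\int_{B_\rho(p_\infty)}|\nabla f|^2\,dm_\infty$, while the lower semicontinuity of the Cheeger energy on $B_\rho$ gives the matching $\liminf$. Therefore $\int_{B_\rho(p_i)}|\nabla f_i|^2\,dm_i\to\int_{B_\rho(p_\infty)}|\nabla f|^2\,dm_\infty$, which together with $f_i\to f$ in $L^2$ is exactly $W^{1,2}$-convergence on $B_\rho(p_\infty)$; letting $\rho\uparrow R$ gives (1). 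Finally, by the characterization in \cite[\S3]{AH17}, $W^{1,2}$-convergence implies strong $L^2$-convergence of the gradient vector fields $\nabla f_i\to\nabla f$, hence $|\nabla f_i|\to|\nabla f|$ in the $L^2$-sense, i.e.\ $|\nabla f_i|^2\to|\nabla f|^2$ in the $L^1$-sense on each $B_r(p_\infty)$ with $r<R$; this is (3).

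The step I expect to be the main obstacle is the one forced by working with the \emph{local} Laplacian domain $D_{B_R(p_i)}(\Delta)$ on an open ball rather than with a closed operator on a closed manifold: the integration-by-parts identities above are valid only against functions with compact support inside $B_R$, so one is forced onto concentric subballs $B_\rho$ ($\rho<R$), must know that the localized Cheeger energies Mosco-converge there, and must arrange good radii $\rho$ with $m_\infty(\partial B_\rho)=0$ --- this is precisely why assertions (1) and (3) are stated on $B_\rho$ with $\rho<R$ and not up to $\partial B_R$. The other point requiring the full strength of \cite[\S3]{AH17} is the passage from weak to strong $L^2$-convergence of the gradients via convergence of the Dirichlet energies, which rests on the compactness and lower-semicontinuity package there.
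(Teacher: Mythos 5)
The paper does not prove this statement; it is quoted, with attribution, from Ambrosio--Honda \cite{AH18} (Theorem 4.4 there), so there is no argument in the paper to compare against. Your sketch follows the route one would expect from the calculus-under-convergence package of \cite{AH17} together with Mosco convergence of localized Cheeger energies: extract weak $L^2$-limits of $\nabla f_i$ and $\Delta f_i$, identify them against recovery sequences, derive convergence of Dirichlet energies from a cutoff/integration-by-parts identity, and upgrade to strong $L^2$-convergence of $|\nabla f_i|$. Up to the final passage this is sound, with one detail worth making explicit: the assertion that $f_i\nabla\varphi_i\to f\nabla\varphi$ strongly in $L^2$ relies on the uniform $L^\infty$-bound on $|\nabla\varphi_i|$, available because the recovery sequence has uniformly bounded Lipschitz constants; a product of two merely strongly $L^2$-convergent sequences need not be strongly $L^2$-convergent.

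The sentence ``letting $\rho\uparrow R$ gives (1)'' is, however, a genuine jump. What you have established is $\int_{B_\rho(p_i)}|\nabla f_i|^2\,dm_i\to\int_{B_\rho(p_\infty)}|\nabla f|^2\,dm_\infty$ for every good $\rho<R$, and lower semicontinuity then yields $\liminf_i\int_{B_R(p_i)}|\nabla f_i|^2\,dm_i\ge\int_{B_R(p_\infty)}|\nabla f|^2\,dm_\infty$. But the matching upper bound $\limsup_i\int_{B_R(p_i)}|\nabla f_i|^2\,dm_i\le\int_{B_R(p_\infty)}|\nabla f|^2\,dm_\infty$ does not follow by exhausting $B_R$ with the $B_\rho$'s: nothing you have written forbids a definite amount of gradient energy from persisting, uniformly in $i$, in the shrinking annuli $B_R(p_i)\setminus B_\rho(p_i)$. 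This is exactly what makes assertion (1), stated on $B_R$, strictly stronger than assertion (3), stated on the $B_r$ with $r<R$; closing it requires an additional quantitative use of the uniform $L^2$-bound on $\Delta f_i$ to rule out boundary concentration of the Dirichlet energy, and one should look at how \cite{AH18} handles this point rather than treat it as a soft exhaustion.
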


The following theorem allows us to transplant harmonic functions on the limit space back to the convergence sequence of spaces.
\begin{thm}[see \cite{AH17,ZZ17}; see \cite{Ding04} for the Ricci-limit case]\label{thm-transplant}
Suppose $\RCD(-(N-1),N)$ spaces $(X_{i}, p_{i}, d_{i}, m_{i})\xrightarrow{pmGH}  ({X}_{\infty}, {p}_{\infty}, {d}_{\infty}, {m}_{\infty})$.
If $f , u \in L^{2}({X}_{\infty})$ have compact support, and assume $\Delta u = f$ and f is Lipschitz.
Then for any $R > 0$, there exist solutions $\Delta u_{i} = f_{i}$ on $B_{R}(p_{i})$ such that $u_{i}$ and $f_{i}$ converge respectively to $u$ and $f$ locally uniformly in $B_{R}(p_{\infty})$.
\end{thm}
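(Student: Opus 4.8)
The plan is to transplant the right-hand side $f$ and solve Dirichlet problems on balls, passing to the limit via the $W^{1,2}$-compactness theory for pointed measured Gromov--Hausdorff convergence. Fix $R_{0}$ with $\supp u\cup\supp f\Subset B_{R_{0}-2}(p_{\infty})$; since locally uniform convergence in a large ball restricts to a smaller one, it suffices to treat $R\ge R_{0}$. First I would transplant $f$: using $\epsilon_{i}$-Gromov--Hausdorff approximations $\Phi_{i}\colon X_{i}\to X_{\infty}$ and the fact that $f$ is Lipschitz with compact support, one produces Lipschitz functions $f_{i}$ on $X_{i}$, supported in $B_{R_{0}-1}(p_{i})$ for large $i$, with $\mathrm{Lip}(f_{i})\le C\,\mathrm{Lip}(f)$ and $f_{i}\to f$ uniformly -- for instance, push the values of $f$ onto a fine net of $X_{i}$ along $\Phi_{i}$, extend by a McShane extension, and multiply by a fixed Lipschitz cutoff. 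In particular $\|f_{i}\|_{L^{\infty}}$ and $\|f_{i}\|_{L^{2}}$ are uniformly bounded and $f_{i}\to f$ in $L^{2}$.

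Next, for $i$ large let $u_{i}\in W^{1,2}_{0}(B_{R}(p_{i}))$ solve $\Delta u_{i}=f_{i}$ in $B_{R}(p_{i})$ with zero boundary values, which is well posed by the local Poincar\'e inequality and Lax--Milgram. Testing with $u_{i}$ and using the Poincar\'e inequality gives uniform bounds $\|u_{i}\|_{W^{1,2}(B_{R}(p_{i}))}+\|\Delta u_{i}\|_{L^{2}(B_{R}(p_{i}))}\le C(R,N)$. The Cheng--Yau/De Giorgi--Nash--Moser gradient estimate for the Poisson equation on $\RCD(-(N-1),N)$ spaces then bounds $\mathrm{Lip}(u_{i})$ uniformly on each $B_{r}(p_{i})$, $r<R$, so by Theorem~\ref{AA} a subsequence of $u_{i}$ converges locally uniformly in $B_{R}(p_{\infty})$ to a locally Lipschitz function $v$; with the uniform $W^{1,2}$-bound one gets $u_{i}\to v$ in $L^{2}$ and $v\in W^{1,2}(B_{R}(p_{\infty}))$. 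Theorem~\ref{2.7777777} now applies and yields $u_{i}\to v$ in $W^{1,2}$ and $\Delta u_{i}\to\Delta v$ weakly in $L^{2}$; since $\Delta u_{i}=f_{i}\to f$, this forces $\Delta v=f$ on $B_{R}(p_{\infty})$.

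It remains to identify $v$ with $u$. The $W^{1,2}$-convergence $u_{i}\to v$ together with $u_{i}\in W^{1,2}_{0}(B_{R}(p_{i}))$ forces $v\in W^{1,2}_{0}(B_{R}(p_{\infty}))$ by the stability of the zero Dirichlet condition under pointed measured Gromov--Hausdorff convergence (Mosco convergence of the Cheeger energies). On the other hand $u|_{B_{R}(p_{\infty})}$ lies in $W^{1,2}_{0}(B_{R}(p_{\infty}))$ -- because $\supp u\Subset B_{R}(p_{\infty})$ and $u$ is continuous by elliptic regularity -- and solves $\Delta u=f$ there with zero boundary data. Uniqueness for the Dirichlet problem gives $v=u|_{B_{R}(p_{\infty})}$, so the full sequence $u_{i}\to u$ locally uniformly in $B_{R}(p_{\infty})$, and $f_{i}\to f$ uniformly by construction. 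The two points that carry real weight are the $L^{\infty}$/gradient estimate for the Poisson equation in the $\RCD$ (rather than smooth Riemannian) setting, and the stability of the zero boundary condition along the convergence; both are by now standard (see \cite{AH17,AH18,ZZ17}, and \cite{Ding04} for the Ricci-limit case), which is why the theorem is quoted here.
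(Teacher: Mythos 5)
Theorem~\ref{thm-transplant} is quoted in the paper (attributed to \cite{AH17,ZZ17,Ding04}) and is not re-proved there, so there is no in-paper argument against which to compare your proposal; I can only assess it on its own terms. Your route --- transplant $f$ by McShane extension along the GH-approximations, solve zero-boundary Dirichlet problems on $B_R(p_i)$, extract a locally Lipschitz limit $v$ via Theorem~\ref{AA} and Theorem~\ref{2.7777777}, and identify $v=u$ by uniqueness in $W^{1,2}_0(B_R(p_\infty))$ --- is a standard and correct approach, and you are right to single out the interior gradient estimate for the Poisson equation on $\RCD(-(N-1),N)$ spaces and the $W^{1,2}_0$-stability (Mosco convergence of the local Dirichlet forms) as the two ingredients that carry the real analytic weight; both are available in the cited references.

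One step is left too implicit and should be spelled out: you pass from locally uniform convergence of $u_i$ (on compact subsets of $B_R(p_\infty)$) to convergence in $L^2$ \emph{over the whole ball}, which is the hypothesis of Theorem~\ref{2.7777777}. Locally uniform convergence alone does not forbid $L^2$-mass of $u_i$ from concentrating near $\partial B_R(p_i)$. To close this, note two facts: first, the barrier/De~Giorgi--Nash--Moser argument applied to the Dirichlet solution gives a uniform bound $\|u_i\|_{L^\infty(B_R(p_i))}\le C(N,R)\|f_i\|_{L^\infty}$ up to the boundary (not only on interior balls); second, on any $\RCD(-(N-1),N)$ space the Bishop--Gromov inequality forces $r\mapsto m(B_r(p))$ to be continuous, hence $m_\infty(\{d(\cdot,p_\infty)=R\})=0$ for every $R$, so the annulus $B_R(p_\infty)\setminus B_r(p_\infty)$ has small measure as $r\uparrow R$ and the tail of $\|u_i\|_{L^2(B_R)}$ is uniformly small. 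Combined with the interior locally uniform convergence this yields the global $L^2$-convergence you need, and then the rest of your argument (weak $L^2$-convergence of Laplacians, Mosco stability of the zero boundary condition, uniqueness of the Dirichlet problem) goes through. As a side remark, the references you cite more commonly realize $u_i$ via the resolvent $(\lambda-\Delta_i)^{-1}$ applied to a transplanted right-hand side, which sidesteps the $\partial B_R$ discussion entirely; your Dirichlet-problem version is equivalent once the point above is addressed.
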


The notion of $\delta$-splitting maps on manifolds or Ricci-limit spaces, which originates from Cheeger and Colding's works in \cite{CC96,CC97} etc., was introduced in \cite{CN}.
The notion of $\delta$-splitting maps on $\RCD$ spaces was first given in \cite{BPS19}.

\begin{defn}\label{def-harm-split}
Let $(X,d,m)$ be an $\mathrm{RCD}(-(N-1),N)$ space, $p\in X$ and $\delta > 0$ be fixed.
We say that $u:=(u_{1},\ldots,u_{k}) : B_{r}(p)\to \mathbb{R}^{k}$ is a $(\delta,k)$-splitting map if $\Delta u=0$ and
\begin{description}
  \item[(i)] $|\nabla u_{a}| \leq  1+\delta$;
  \item[(ii)] $r^{2}\bbint_{B_{r}(p)}|\mathrm{Hess}u_{a}|^{2}dm <\delta$ ;
  \item[(iii)] $\bbint_{B_{r}(p)}|\langle\nabla u_{a},\nabla u_{b}\rangle-\delta_{ab}|dm <\delta$ for any $a,b\in\{1,\ldots,k\}$.
\end{description}
We use $u : (B_{r}(p),p)\to (\mathbb{R}^{k},0^{k})$ to denote a map $u : B_{r}(p)\to \mathbb{R}^{k}$ with $u(p)=0^{k}\in \R^{k}$.
\end{defn}

We note that similar to the manifolds case, if $(X,d,m)$ is an $\mathrm{RCD}(-(N-1)\delta,N)$ space, and $u: B_{2r}(p)\rightarrow \mathbb{R}^{k}$ is harmonic, and conditions (i) and (iii) in Definition \ref{def-harm-split} hold, then by the Bochner inequality and the existence of  good cut-off functions, condition (ii) holds automatically.

We recall the following functional version of the splitting theorem, which originates from Gigli's proof of splitting theorem on $\RCD(0,N)$ spaces (\cite{Gig13}).

\begin{thm}[see \cite{Gig13,Han18-2} etc.]\label{thm-split-RCD}
Let $(X,d,m)$ be an $\mathrm{RCD}(0,N)$ space and suppose there exist functions $u^{i} : X\rightarrow\mathbb{R}$ $(1\leq i\leq k)$ such that $\Delta u^{i}=0$ and $\langle\nabla u^{i},\nabla u^{j} \rangle\equiv \delta^{ij}$.
Then $(X,d,m)$ is isomorphic to $(\mathbb{R}^{k}\times Y, d_{\mathrm{Eucl}}\times d _{Y}, \mathcal{L}^{k}\otimes m_{Y})$, where $(Y,d_{Y}, m_{Y})$ is an $\mathrm{RCD}(0, N-k)$ space or $Y=\{\mathrm{Pt}\}$.
\end{thm}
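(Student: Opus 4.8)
The plan is to argue by induction on $k$, the substantive case being $k=1$, which is exactly the functional form of Gigli's splitting theorem on $\RCD(0,N)$ spaces; the inductive step then only requires checking that the remaining functions descend to the complementary factor and retain the hypotheses there. So first I would reduce everything to producing, from a single harmonic function $u$ with $|\nabla u|^2\equiv 1$, an isometric splitting $X\cong\mathbb{R}\times Y$ with $Y$ an $\RCD(0,N-1)$ space.

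For this $k=1$ case, write $u=u^1$. The first step is to apply the Bochner (Bakry--Émery) inequality in its measure-valued form available on $\RCD(0,N)$ spaces: since $\Delta u=0$ and $\Ric\ge 0$ in the synthetic sense,
\begin{equation}
\tfrac12\Delta|\nabla u|^2\ \ge\ |\mathrm{Hess}\,u|^2+\langle\nabla u,\nabla\Delta u\rangle\ =\ |\mathrm{Hess}\,u|^2\ \ge\ 0
\end{equation}
as measures; but $|\nabla u|^2\equiv 1$ forces $\Delta|\nabla u|^2=0$, hence $\mathrm{Hess}\,u=0$ $m$-a.e., so $\nabla u$ is a parallel unit vector field. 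The second step is to integrate this infinitesimal information: the gradient flow $F_t$ of $\nabla u$ is well defined, its flow lines are unit-speed geodesics (because $|\nabla u|\equiv 1$ and $\mathrm{Hess}\,u=0$), and vanishing of the Hessian makes $\{F_t\}_{t\in\mathbb{R}}$ a one-parameter group of measure-preserving isometries of $X$ with $u\circ F_t=u+t$. Taking $Y:=u^{-1}(0)$ with the restricted distance and with $m_Y$ the disintegration of $m$ along $u$, the map $(t,y)\mapsto F_t(y)$ is then an isomorphism $(\mathbb{R}\times Y,d_{\mathrm{Eucl}}\times d_Y,\mathcal{L}^1\otimes m_Y)\to(X,d,m)$, and $(Y,d_Y,m_Y)$ inherits the $\RCD(0,N-1)$ condition (either from the tensorization/de Rham-type characterization of $\RCD$ under products, or directly from the slice structure), unless $Y$ reduces to a point.

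For the inductive step, suppose $(u^1,\dots,u^{k-1})$ has already produced an isomorphism $X\cong\mathbb{R}^{k-1}\times Z$ with $Z$ an $\RCD(0,N-k+1)$ space. Since $\langle\nabla u^k,\nabla u^i\rangle\equiv 0$ for $i<k$, the function $u^k$ is invariant under the commuting gradient flows of $u^1,\dots,u^{k-1}$, hence descends to a function $\bar u^k$ on $Z$; as the Laplacian and gradient decompose over products, $\Delta_Z\bar u^k=0$ and $|\nabla_Z\bar u^k|^2\equiv 1$. Applying the $k=1$ case to $Z$ gives $Z\cong\mathbb{R}\times Y$ with $Y$ an $\RCD(0,N-k)$ space (or a point), and reassembling yields the claim. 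The main obstacle is the $k=1$ case, and within it the rigorous passage from ``$\mathrm{Hess}\,u=0$'' to ``the gradient flow is a flow of isometries'' together with the identification of the synthetic curvature-dimension bound of the slice $Y$: in the smooth setting this is the classical de Rham/Cheeger--Gromoll argument, but in the $\RCD$ category it requires the full first-order calculus on metric measure spaces (infinitesimal Hilbertianity, the measure-valued Laplacian, test plans, regularity of $u$), which is precisely what is carried out in \cite{Gig13} and iterated in \cite{Han18-2}; by contrast the descent of $u^k$ and the splitting of $\Delta$ and $\nabla$ over products are routine.
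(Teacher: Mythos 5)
The paper does not prove this statement itself; it is cited verbatim from \cite{Gig13,Han18-2}. Your sketch follows exactly the route of those references (reduction to $k=1$; Bochner self-improvement $\Rightarrow \mathrm{Hess}\,u=0$; integration of the parallel gradient field to an isometric $\R$-action; inductive descent to the quotient factor), and you correctly isolate where the real technical work lies — making "$\mathrm{Hess}\,u=0\Rightarrow$ isometric flow" rigorous in the nonsmooth $\RCD$ calculus — so as a high-level account of the cited proof it is sound.

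Two small cautions worth flagging. First, in the $\RCD(0,N)$ setting the Bochner inequality with the explicit Hessian term, $\tfrac12\Delta|\nabla u|^2\ge|\mathrm{Hess}\,u|^2+\langle\nabla u,\nabla\Delta u\rangle$, is not the axiomatic form of the curvature-dimension condition but a consequence of the self-improvement of the Bakry--\'Emery estimate (due to Savar\'e/Gigli); you tacitly invoke it, and one should cite it. Second, "the gradient flow $F_t$ of $\nabla u$ is well defined, its flow lines are unit-speed geodesics" is the crux, not a given: on a bare metric measure space one does not have an ODE flow of a vector field, and Gigli instead constructs the translation map through a combination of Regular Lagrangian Flow theory, the Hopf--Lax/heat semigroup machinery, and a careful analysis showing the map $b\mapsto b+t$ on the function level induces a measure-preserving isometry of $X$. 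Your phrasing of this as "routine integration of infinitesimal information" undersells exactly the portion you later, correctly, identify as the hard step; for a self-contained write-up that step must be either reproduced or explicitly delegated to \cite{Gig13}.
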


There is a local version of the functional splitting theorem, see Theorem 3.4 in \cite{BNS22}.
By a compactness argument, the local version of functional splitting theorem gives the following theorem, see Theorem 3.8 in \cite{BNS22}.

\begin{thm}\label{thm-split-GHisom}
Let $1\leq N<\infty$ be fixed.
For every positive number $\delta \ll 1$, there exists $\epsilon(N,\delta)>0$ such that any $\epsilon\in(0,\epsilon(N,\delta))$ satisfies the followings.
If $(X,d,m)$ is an $\mathrm{RCD}(-\epsilon(N-1),N)$ space, $p\in X$, then
\begin{description}
  \item[(1)] if $d_{\mathrm{mGH}}(B_{4}(p),B_{4}^{\mathbb{R}^{k}\times Z}(0^{k},z))\leq \epsilon$ for some integer $k$ and some pointed metric measure space $(Z,z,d_{Z},m_{Z})$, then there exists a $(\delta,k)$-splitting map $u=(u^{1},\ldots,u^{k}) : B_{1}(p)\rightarrow\mathbb{R}^{k}$;
  \item[(2)] if there exists an $(\epsilon,k)$-splitting map $u : B_{4}(p)\rightarrow\mathbb{R}^{k}$ for some integer $k$, then $d_{\mathrm{mGH}}(B_{1}(p),B_{1}^{\mathbb{R}^{k}\times Z}(0^{k},z))\leq \delta$ for some pointed metric measure space $(Z,z,d_{Z},m_{Z})$; moreover, there exists $f:B_{1}(p)\rightarrow Z$ such that $(u-u(p),f):B_{1}(p)\rightarrow B_{1}^{\mathbb{R}^{k}\times Z}(0^{k},z)$ is a $\delta$-Gromov-Hausdorff isometry.
\end{description}
\end{thm}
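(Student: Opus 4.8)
The plan is to prove both parts of Theorem \ref{thm-split-GHisom} by a compactness-and-contradiction argument, reducing each statement to the limiting situation where the relevant "almost" hypotheses become exact, and then invoking the functional splitting theorem (Theorem \ref{thm-split-RCD}) and its local version (Theorem 3.4 in \cite{BNS22}). In both parts, if the conclusion fails for a fixed $\delta$, one produces a sequence $\epsilon_i\to 0$ and $\mathrm{RCD}(-\epsilon_i(N-1),N)$ spaces $(X_i,d_i,m_i,p_i)$ witnessing the failure; after normalizing the measures so that $m_i(B_1(p_i))=1$ and passing to a subsequence, the compactness of $\mathrm{RCD}(K,N)$ spaces gives $(X_i,d_i,m_i,p_i)\xrightarrow{pmGH}(X_\infty,d_\infty,m_\infty,p_\infty)$, an $\mathrm{RCD}(0,N)$ space.

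For part (1), the hypothesis $d_{\mathrm{mGH}}(B_{4}(p_i),B_{4}^{\mathbb{R}^{k}\times Z_i}(0^k,z_i))\le\epsilon_i\to 0$ forces, after a further subsequence in which the slices $(Z_i,z_i)$ converge, that $B_4(p_\infty)$ is isometric (as a metric measure space) to a product ball $B_{4}^{\mathbb{R}^{k}\times Z_\infty}(0^k,z_\infty)$. On this product the coordinate functions $b^{a}:=x_a$, $1\le a\le k$, are harmonic with $|\nabla b^{a}|\equiv 1$, $\mathrm{Hess}\,b^{a}\equiv 0$ and $\langle\nabla b^{a},\nabla b^{b}\rangle\equiv\delta_{ab}$ on $B_3(p_\infty)$. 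Multiplying by a good cutoff function supported in $B_3(p_\infty)$ and equal to $1$ on $B_2(p_\infty)$, one obtains compactly supported functions whose Laplacians are Lipschitz, so by Theorem \ref{thm-transplant} there exist harmonic $u_i^{a}$ on $B_2(p_i)$ converging to $b^a$ locally uniformly, hence in $L^2$, on $B_2(p_\infty)$. Then Theorem \ref{2.7777777} yields $u_i^{a}\to b^{a}$ in $W^{1,2}$, $\langle\nabla u_i^{a},\nabla u_i^{b}\rangle\to\delta_{ab}$ in $L^1$, and $|\nabla u_i^a|^2\to 1$ in $L^1$; combined with the interior gradient estimate for harmonic functions this gives $|\nabla u_i^a|\le 1+\delta$ on $B_1(p_i)$ for large $i$, while the self-improved Bochner inequality together with the $W^{1,2}$-convergence forces $\bbint_{B_1(p_i)}|\mathrm{Hess}\,u_i^{a}|^2\to 0$. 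Hence $u_i=(u_i^1,\dots,u_i^k)$ is a $(\delta,k)$-splitting map on $B_1(p_i)$ for $i$ large, a contradiction.

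For part (2), the given $\epsilon_i$-splitting maps $u_i:B_4(p_i)\to\mathbb{R}^k$ have uniformly bounded Lipschitz constant, so after subtracting $u_i(p_i)$ and applying Theorem \ref{AA} we may assume $u_i\to u_\infty:B_4(p_\infty)\to\mathbb{R}^k$ with $u_\infty(p_\infty)=0$. By Theorem \ref{2.7777777} again, $u_\infty$ is harmonic on $B_3(p_\infty)$ with $\langle\nabla u_\infty^{a},\nabla u_\infty^{b}\rangle\equiv\delta_{ab}$ and $\mathrm{Hess}\,u_\infty\equiv 0$. The local functional splitting theorem then shows $B_2(p_\infty)$ is isometric to a product ball $B_2^{\mathbb{R}^k\times Z}(0^k,z)$ with first factor realized by $u_\infty$; letting $f_\infty:B_2(p_\infty)\to Z$ be the projection onto the $Z$-factor, $(u_\infty,f_\infty)$ is a metric measure isometry onto $B_1^{\mathbb{R}^k\times Z}(0^k,z)$, so $d_{\mathrm{mGH}}(B_1(p_\infty),B_1^{\mathbb{R}^k\times Z})=0$. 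Precomposing $f_\infty$ with the Gromov-Hausdorff approximations $X_i\to X_\infty$ produces maps $f_i:B_1(p_i)\to Z$ for which $(u_i-u_i(p_i),f_i)$ is a $\delta$-Gromov-Hausdorff isometry and $d_{\mathrm{mGH}}(B_1(p_i),B_1^{\mathbb{R}^k\times Z})\le\delta$ once $i$ is large, contradicting the choice of the sequence.

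The main obstacle is concentrated in two places. First, on the "hard" side of part (2), upgrading \emph{exact} splitting functions on a ball to an honest isometric product splitting of that ball is precisely the content of the local functional splitting theorem, i.e. the localization of Gigli's global theorem; it requires careful use of good cutoff functions, the self-improvement of the Bochner inequality, and the attendant rigidity statement. Second, on the "soft" side, one must make sure the transplanted functions in (1) (respectively the limit in (2)) genuinely inherit the Hessian smallness (respectively its vanishing): this rests on the Mosco/$\Gamma$-convergence of the Cheeger energies and the stability of the measure-valued Hessian and of $|\nabla\cdot|^2$ under pointed measured Gromov-Hausdorff convergence, which is exactly what Theorems \ref{2.7777777} and \ref{2.7777777iruiquo}, together with the remark following Definition \ref{def-harm-split}, are designed to provide. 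The remaining steps are routine compactness bookkeeping.
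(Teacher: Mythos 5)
The paper offers no proof of this theorem: it states it and points to Theorem 3.8 of \cite{BNS22}, remarking only that it follows ``by a compactness argument'' from the local functional splitting theorem (Theorem 3.4 of \cite{BNS22}). Your blind proof is exactly that compactness argument, so in spirit you are reproducing the route the cited reference takes, and part (2) of your sketch is fine as written (the vanishing of $\mathrm{Hess}\,u_\infty$ indeed follows from $\langle\nabla u_\infty^a,\nabla u_\infty^b\rangle\equiv\delta^{ab}$ and harmonicity via the self-improved Bochner inequality, so you do not even need lower semicontinuity of the Hessian energy).

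Two steps in part (1) need more care than you give them. First, Theorem \ref{thm-transplant} produces solutions of $\Delta u_i^a=f_i$ with $f_i$ converging locally uniformly to $\Delta(\phi b^a)$, a quantity that vanishes only on the region where the cutoff is constant; the transplants are therefore merely \emph{almost} harmonic. You must follow them with a harmonic replacement on $B_{1.9}(p_i)$ (the correction being small by the maximum principle and the local energy estimate, since $f_i\to0$ there), or transfer boundary data by the Gromov-Hausdorff approximations and solve the Dirichlet problem directly. Relatedly, a generic good cutoff on an $\mathrm{RCD}$ space has bounded but not automatically \emph{Lipschitz} Laplacian, so to meet the hypothesis of Theorem \ref{thm-transplant} you should take $\phi$ to depend only on the $\mathbb{R}^k$-coordinates of the product, making $\Delta(\phi b^a)$ a smooth function of $x$. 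Second, the pointwise bound $|\nabla u_i^a|\le 1+\delta$ is not an immediate consequence of ``the interior gradient estimate'': the Cheng-Yau estimate only yields a dimensional constant. The sharp bound comes from the sub-mean-value inequality for the almost-subharmonic function $|\nabla u_i^a|^2$ applied over balls $B_r(x)$ of a \emph{fixed} small radius $r$, combined with Bishop-Gromov (to keep the volume ratio $m(B_2)/m(B_r(x))$ uniformly bounded) and the $L^1$-smallness of $|\nabla u_i^a|^2-1$ for large $i$. With these adjustments your argument is correct and matches the approach the paper attributes to \cite{BNS22}.
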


\begin{rem}
We remark that, in (1) of Theorem \ref{thm-split-GHisom}, if the assumption $$d_{mGH}(B_{4}(p),B_{4}^{\mathbb{R}^{k}\times Z}(0^{k},z))\leq \epsilon$$ is replaced by $d_{GH}(B_{4}(p),B_{4}^{\mathbb{R}^{k}\times Z}(0^{k},z))\leq \epsilon$, then the existence of $(\delta,k)$-splitting map $u: B_{1}(p)\rightarrow\mathbb{R}^{k}$ is no longer true, which can be seen from the following example:

According to Example 1.24 of \cite{CC97}, there exists a sequence of Riemannian manifolds $(M_{i},g_{i})$ such that each $M_{i}$ is diffeomorphic to $\R^{2}$ and each $g_{i}$ has nonnegative curvature, and their pointed measured Gromov-Hausdorff limit is $[0,\infty)$ equipped with the Euclidean distance and a measure given by integration of the $1$-form $rdr$.
Thus there exists a sequence of geodesic balls $B_{4}(p_{i})\subset M_{i}$ converging in the Gromov-Hausdorff sense to $(1,9)\subset [0,\infty)$.
If there exists $(\delta_{i},1)$-splitting map $u_{i}: (B_{1}(p_{i}),p_i)\rightarrow(\mathbb{R},5)$, where $\delta_{i}\downarrow 0$, then by the $W^{1,2}$-convergence, we will obtain a harmonic function (with respect to the limit measure) $u_{\infty}:(4,6)\rightarrow\mathbb{R}$ with $|\nabla u_{\infty}|\equiv 1$.
$|\nabla u_{\infty}|\equiv 1$ implies $u_{\infty}$ is linear with respect to the $\R$-factor, but in this case it is easy to check that $u_{\infty}$ is not harmonic with respect to the measure $rdr$.

However, by the splitting theorem (\cite{Gig13}), if $(X,d,m)$ is an $\mathrm{RCD}(-\epsilon(N-1),N)$ space, $p\in X$, and $B_{4}(p)$ is $(\epsilon,k)$-Euclidean, then there exists a $(\Psi(\epsilon|N),k)$-splitting map $u: B_{\sqrt{\epsilon}}(p)\rightarrow\mathbb{R}^{k}$.
\end{rem}

The following two lemmas, which will be used many times in this paper, are easy to check.

\begin{lem}\label{lem2.11}
For any positive number $\delta \ll 1$, suppose $(X,d)$ is a metric space, $B_{r}(p)\subset X$ is $(\delta,k)$-Euclidean, then for any $s\in[r\delta^{\frac{1}{2}},r]$, $B_{s}(p)$ is $(\sqrt{\delta},k)$-Euclidean.
\end{lem}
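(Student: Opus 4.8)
The plan is to restrict the defining Gromov--Hausdorff approximation at scale $r$ to the sub-balls of radius $s$; the point is that the improvement of the constant from $\delta$ to $\sqrt{\delta}$ exactly compensates the loss of scale, since $s\ge r\sqrt{\delta}$ forces $\delta r=\sqrt{\delta}\,(\sqrt{\delta}\,r)\le\sqrt{\delta}\,s$, so that an error of order $\delta r$ at scale $r$ is an error of order $\sqrt{\delta}\,s$ at scale $s$.

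Concretely, unwinding Definition~\ref{defn-k-euc}, I would fix a metric space $(Z,d_Z)$, a point $x\in Z$, and, using $d_{GH}(B_r(p),B_r((0^k,x)))\le\delta r$, isometric embeddings of $B_r(p)$ and of $B_r((0^k,x))\subset\mathbb{R}^k\times Z$ into a common metric space $W$ whose images lie at Hausdorff distance $\le\delta r$. A short preliminary step is that the centres essentially match, i.e.\ $d_W(p,(0^k,x))\le C\delta r$ for a universal $C$: a ball in a length space contains points almost at its full radius, so if $p$ were far from $(0^k,x)$ in $W$ then a point of $B_r(p)$ lying ``on the far side'' of $p$ would have no $\delta r$-companion in $B_r((0^k,x))$. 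Taking $Z'=Z$ and $x'=x$, it then remains to bound $d_H^W(B_s(p),B_s((0^k,x)))$. Given $a\in B_s(p)$ I would pick $b\in B_r((0^k,x))$ with $d_W(a,b)\le\delta r$, observe $d_W(b,(0^k,x))\le s+(C+1)\delta r$, and retract $b$ radially toward $(0^k,x)$ --- a length-space operation inside $\mathbb{R}^k\times Z$ --- into $B_s((0^k,x))$, moving it by at most $(C+1)\delta r$; the symmetric argument handles points of $B_s((0^k,x))$. This gives $d_{GH}(B_s(p),B_s((0^k,x)))\le C'\delta r\le C'\sqrt{\delta}\,s$, which (after the routine optimisation of constants, or simply because only an estimate of the form $\Psi(\delta)\,s$ with $\Psi(\delta)\to 0$ is ever needed below) yields that $B_s(p)$ is $(\sqrt{\delta},k)$-Euclidean.

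The only real obstacle is the classical one that Gromov--Hausdorff approximations need not respect metric balls: the approximation may fold a thick collar of one space into a thin collar of the other, and could a priori misalign the two centres. Both defects are controlled here at the level $O(\delta r)$ by the two elementary length-space inputs above --- existence of almost-full-radius points, and inward retraction of annular collars by a controlled distance --- which is precisely why the loss stays within the $\sqrt{\delta}\,s$ budget for $s$ in the stated range. Finally, $s=r$ is trivial (take $x'=x$), so only $s<r$ requires any argument.
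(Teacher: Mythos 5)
The paper disposes of this lemma with the single sentence ``follows directly from the definition,'' so there is no detailed argument to compare against; your strategy --- restrict the Gromov--Hausdorff approximation at scale $r$, align centres, and retract the annular collar into $B_s$ --- is certainly the intended one. However, two of your claims do not survive scrutiny, and both arise from the same source: $Z$ (and $X$) are only assumed to be metric spaces.

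First, your centre-alignment step sets $Z'=Z$ and $x'=x$ and asserts $d_W(p,(0^k,x))\le C\delta r$. This is false in general. Writing $q'=(v',z')\in B_r((0^k,x))$ for a point with $d_W(p,q')\le\delta r$, the Hausdorff bound together with the antipodal points $(\pm(r-\epsilon)u,x)$ in the $\mathbb R^k$-factor only forces $|\langle v',u\rangle|\le 2\delta r+\epsilon$ for all unit $u$, hence $|v'|\le 2\delta r$; the $Z$-component $z'$ is constrained only by $|v'|^2+d_Z(z',x)^2\le (r+2\delta r)^2-(r-\epsilon)^2$, i.e.\ $d_Z(z',x)=O(\sqrt{\delta}\,r)$, which is the same order as $s$ and so cannot be absorbed. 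The fix is to change the reference point: set $x':=z'$ (so $q'':=(0^k,z')$), and then $d_W(p,q'')\le d_W(p,q')+|v'|\le 3\delta r$. The ability to re-point $Z$ is exactly what the existential quantifier in Definition~\ref{defn-k-euc} is for, and you must use it. (Relatedly, you prove the centre bound by producing a far-side point of $B_r(p)$, which requires $X$ to be a length space; it is cleaner, and needs nothing about $X$, to take the antipodal pair inside the $\mathbb R^k$-factor of $B_r((0^k,x))$ as above.)

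Second, the radial retraction inside $\mathbb R^k\times Z$ of a point $b=(v,z)$ with $d(b,q'')\le s+O(\delta r)$ back into $B_s(q'')$ moves $b$ by $O(\delta r)$ only if the product is a length space, i.e.\ only if $Z$ is. If $Z$ is arbitrary one can only scale the $\mathbb R^k$-component, and when $d_Z(z,z')$ is close to $s$ the displacement $|v|-\lambda|v|$ can be as large as $\sqrt{(s+O(\delta r))^2-s^2}=O(\sqrt{\delta rs})$. For $s$ near the lower end $\sqrt{\delta}\,r$ this is $O(\delta^{3/4}r)$, which exceeds the allowed budget $\sqrt{\delta}\,s=\delta r$. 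The symmetric direction (pulling a point of $B_r(p)$ matched to $B_s(q'')$ back into $B_s(p)$) has the same issue with $X$ in place of $Z$. These gaps are harmless for every use the paper actually makes of the lemma, since there $X$ is a manifold or an $\RCD$ space and $Z$ is produced by Cheeger--Colding/RCD almost-splitting, hence is always a geodesic space; but as stated for bare metric spaces your proof does not close. A safer route that avoids retraction altogether is to work with a correspondence $R$ of distortion $\le 2\delta r$: restrict to $R_s:=R\cap(B_s(p)\times B_r(q))$, note $B_{s-2\delta r}(q')\subset\pi_2(R_s)\subset B_{s+2\delta r}(q')$, translate by $-v'$ to centre at $q''=(0^k,z')$, and then either invoke the length structure of $Z$, or (in full generality) adjust $Z$ itself so that the intermediate set becomes an exact $s$-ball. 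Finally, even after the fixes the estimate one gets is $(C\sqrt{\delta},k)$-Euclidean for a dimension-free constant $C$ rather than $(\sqrt{\delta},k)$-Euclidean on the nose; as you yourself observe, only an estimate of the form $\Psi(\delta)\,s$ with $\Psi(\delta)\to 0$ is ever used, so this is cosmetic.
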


\begin{lem}\label{lem2.12}
Let $1\leq N<\infty$ be fixed.
For any positive number $\delta \ll 1$, let $(X,d,m)$ be an $\mathrm{RCD}(-(N-1),N)$ space, $p\in X$, and suppose $u:B_{r}(p)\rightarrow\mathbb{R}^{k}$ is a $(\delta,k)$-splitting map, where $r\leq 1$,  then for any $s\in[r\delta^{\frac{1}{2N}},r]$, $u:B_{s}(p)\rightarrow\mathbb{R}^{k}$ is a $(C\sqrt{\delta},k)$-splitting map, where $C>0$ depending only on $N$.
\end{lem}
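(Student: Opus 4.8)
The plan is to verify each of the three conditions in Definition \ref{def-harm-split} for the restriction $u : B_s(p) \to \mathbb{R}^k$ directly, controlling the loss of constants by a volume-ratio (doubling) argument. First I note that condition (i), the gradient bound $|\nabla u_a| \le 1+\delta \le 1 + C\sqrt\delta$, is scale-invariant and pointwise, so it transfers verbatim to any subball $B_s(p)$; nothing needs to be done here. The harmonicity $\Delta u = 0$ is likewise preserved on any subdomain. So the content is in passing conditions (ii) and (iii), both of which involve the normalized integrals $\bbint_{B_r(p)}$, from scale $r$ to scale $s$.

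The key observation is that for $s \in [r\delta^{1/(2N)}, r]$, the Bishop--Gromov inequality on an $\mathrm{RCD}(-(N-1),N)$ space (applied on $B_1(p)$, using $r \le 1$) gives a lower bound $m(B_s(p)) \ge c(N)\, (s/r)^N\, m(B_r(p))$, hence $m(B_r(p))/m(B_s(p)) \le c(N)^{-1} (r/s)^N \le c(N)^{-1} \delta^{-1/2}$. Therefore, for any nonnegative integrable function $h$ on $B_r(p)$,
\begin{equation}
\bbint_{B_s(p)} h\, dm \;=\; \frac{1}{m(B_s(p))}\int_{B_s(p)} h\, dm \;\le\; \frac{m(B_r(p))}{m(B_s(p))} \cdot \frac{1}{m(B_r(p))}\int_{B_r(p)} h\, dm \;\le\; c(N)^{-1}\delta^{-1/2}\bbint_{B_r(p)} h\, dm. \nonumber
\end{equation}
Applying this with $h = |\mathrm{Hess}\, u_a|^2$ and using that $s^2 \le r^2$ together with the hypothesis $r^2 \bbint_{B_r(p)}|\mathrm{Hess}\,u_a|^2\, dm < \delta$, I get $s^2 \bbint_{B_s(p)}|\mathrm{Hess}\,u_a|^2\, dm \le c(N)^{-1}\delta^{-1/2}\cdot \delta = c(N)^{-1}\sqrt\delta$, which is condition (ii) at scale $s$ with constant $C\sqrt\delta$ for $C = C(N)$. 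Applying it with $h = |\langle \nabla u_a, \nabla u_b\rangle - \delta_{ab}|$ and using $\bbint_{B_r(p)} h\, dm < \delta$ gives $\bbint_{B_s(p)} h\, dm \le c(N)^{-1}\sqrt\delta$, which is condition (iii) at scale $s$ with the same type of constant. Taking $C = C(N)$ to be the maximum of the constants produced and noting $1+\delta \le 1 + C\sqrt\delta$ for $\delta \ll 1$ finishes the proof.

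The only mildly delicate point — and the place I would be careful — is the volume-ratio estimate: one must invoke Bishop--Gromov in the correct form for $\mathrm{RCD}(-(N-1),N)$ spaces and check that, because $r \le 1$ and hence all radii in play are $\le 1$, the comparison constant $c(N)$ depends only on $N$ (the curvature lower bound $-(N-1)$ contributes only a bounded multiplicative factor on the unit ball). Everything else is a routine rescaling of normalized integrals, and the exponent $\delta^{1/(2N)}$ in the hypothesis is precisely what is needed to absorb the factor $(r/s)^N \le \delta^{-1/2}$ so that the final constants come out as $C\sqrt\delta$.
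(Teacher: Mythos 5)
Your proof is correct and follows exactly the route the paper has in mind: the paper dispatches this lemma with the single remark that it ``follows from the definition of $(\delta,k)$-splitting maps and the volume comparison theorem,'' which is precisely the Bishop--Gromov volume-ratio argument you spelled out. The choice of the exponent $\delta^{1/(2N)}$ to make $(r/s)^N\le\delta^{-1/2}$ absorb into $\sqrt\delta$ is indeed the whole point, and your handling of conditions (i)--(iii) is sound.
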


Lemma \ref{lem2.11} follows directly from the definition of $({\delta},k)$-Euclidean, and Lemma \ref{lem2.12} follows from the definition of $({\delta},k)$-splitting maps and the volume comparison theorem.

Finally, we use $\mathrm{Alex}^{n}(k)$ to denote the space of $n$-dimensional Alexandrov spaces with curvature bounded from below by $k$.
Recall that, for any $(X,d)\in \mathrm{Alex}^{n}(k)$, by \cite{Pet11} \cite{ZZ10}, $(X,d,\mathcal{H}^{n})$ is an $\RCD((n-1)k,n)$ space.
In this paper, we will consider $\RCD(K,N)$ spaces $(X,d,m)$ with $(X,d)\in \mathrm{Alex}^{n}(k)$ for some $n\leq N$, where the measure $m$ is not necessary $\mathcal{H}^{n}$.
Such spaces naturally arise as the (collapsed) measured Gromov-Hausdorff limit of a sequence of $l$-dimensional Alexandrov spaces $(X_{i},p_{i},d_{i},\frac{1}{\mathcal{H}^{l}(B_{1}(p_{i}))}\mathcal{H}^{l})$ with uniform curvature lower bound (where $n< l\leq N$).

We recall the following useful theorem.

\begin{thm}[see Corollary 5.2 in \cite{LN20}]\label{thm-LN20}
For any $n, k\in \mathbb{Z}^{+}$ and $\epsilon> 0$, there exists $\delta=\delta(n,\epsilon) > 0$ so that if $X\in \mathrm{Alex}^{n}(-\delta)$ and $B_{3}(p)$ is $(\delta,k)$-Euclidean, then $B_{r}(x)$ is $(\epsilon, k)$-Euclidean for every $x\in B_{1}(p)$ and every $r\in(0,1]$.
\end{thm}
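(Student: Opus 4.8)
The plan is to prove this by a contradiction-and-compactness argument, which is the natural approach given that the conclusion is an $\epsilon$-$\delta$ statement about Alexandrov spaces and that the class $\mathrm{Alex}^n(-\delta)$ is precompact in the pointed Gromov-Hausdorff topology. Suppose the statement fails for some fixed $n,k$ and $\epsilon>0$. Then there exist a sequence $\delta_i\to 0$, spaces $X_i\in\mathrm{Alex}^n(-\delta_i)$, points $p_i\in X_i$ with $B_3(p_i)$ being $(\delta_i,k)$-Euclidean, and points $x_i\in B_1(p_i)$ together with radii $r_i\in(0,1]$ such that $B_{r_i}(x_i)$ is \emph{not} $(\epsilon,k)$-Euclidean. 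First I would pass to a subsequence so that $(X_i,p_i)$ converges in the pointed Gromov-Hausdorff sense to a limit $(X_\infty,p_\infty)$; since the curvature lower bounds $-\delta_i$ tend to $0$, the limit $X_\infty$ is a nonnegatively curved Alexandrov space of dimension $\le n$ (here one uses the stability of lower curvature bounds under GH-convergence, and the possibility of collapse, which only lowers the dimension). The hypothesis that $B_3(p_i)$ is $(\delta_i,k)$-Euclidean passes to the limit to give that $B_3(p_\infty)$ is isometric to $B_3((0^k,z))\subset \mathbb{R}^k\times Z$ for some metric space $Z$.

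Next I would analyze the structure of $X_\infty$. The key geometric input is a splitting/rigidity phenomenon for nonnegatively curved Alexandrov spaces: if a ball $B_3(p_\infty)$ in a nonnegatively curved Alexandrov space splits off an $\mathbb{R}^k$-factor isometrically on that ball, then in fact the whole space (or at least a neighborhood large enough to contain $B_2(p_\infty)$) splits as $\mathbb{R}^k\times Z'$, and moreover every ball $B_r(x)$ with $x$ near $p_\infty$ and $r\le 1$ is itself exactly $(0,k)$-Euclidean — because the $\mathbb{R}^k$-directions are infinitesimally and locally present everywhere. Concretely, the local splitting propagates: geodesics in the $\mathbb{R}^k$-directions through points of $B_3(p_\infty)$ extend, and by the Toponogov comparison in nonnegative curvature one upgrades the approximate product structure on $B_3(p_\infty)$ to an honest local product containing $B_2(p_\infty)$. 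In particular, for every $x_\infty\in \overline{B_1(p_\infty)}$ and every $r\in(0,1]$, $B_r(x_\infty)$ is $(0,k)$-Euclidean, i.e. $d_{GH}(B_r(x_\infty),B_r((0^k,z_x)))=0$ for a suitable $\mathbb{R}^k\times Z_x$.

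Now I would extract the contradiction. Pass to a further subsequence so that $x_i\to x_\infty\in\overline{B_1(p_\infty)}$ and $r_i\to r_\infty\in[0,1]$. If $r_\infty>0$, then $B_{r_i}(x_i)\to B_{r_\infty}(x_\infty)$ in the GH-sense, and since the limit is $(0,k)$-Euclidean, for $i$ large $B_{r_i}(x_i)$ is $(\epsilon/2,k)$-Euclidean, contradicting the choice of $x_i,r_i$. The remaining case $r_\infty=0$ is where one has to work a little: here I would rescale. Consider $(r_i^{-1}X_i, x_i)$, whose curvature lower bound $-r_i^2\delta_i$ still tends to $0$; the rescaled ball $B_3(p_i)$ (now of enormous radius around $x_i$, or rather: the unit ball around $x_i$ in the rescaled metric sits inside the rescaled $(\delta_i r_i^{-1}, k)$-Euclidean region) remains approximately $(\delta_i',k)$-Euclidean on scale $1$ with $\delta_i'\to 0$, because $(\delta,k)$-Euclideanity of $B_3(p_i)$ transfers to $(\Psi(\delta_i)/r_i\cdot\text{const},k)$-Euclideanity of small balls $B_{r_i}(x_i)$ after rescaling — more carefully, one uses Lemma \ref{lem2.11} type scaling together with the fact that on scale $1$ around $x_i$ in the rescaled space the Euclidean defect is controlled. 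Then the rescaled spaces converge to a nonnegatively curved Alexandrov space which is $(0,k)$-Euclidean on the unit ball, hence that unit ball is $(\epsilon/2,k)$-Euclidean for $i$ large, again contradicting that $B_{r_i}(x_i)$ is not $(\epsilon,k)$-Euclidean.

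The main obstacle is the rigidity step in the second paragraph: upgrading the \emph{approximate} product structure on the fixed ball $B_3(p_\infty)$ to a \emph{genuine local} product containing $B_2(p_\infty)$, and thereby deducing that \emph{all} small balls around points of $B_1(p_\infty)$ are exactly $(0,k)$-Euclidean. On smooth manifolds with $\sec\ge 0$ this is the splitting theorem plus the observation that a local $\mathbb{R}^k$-splitting forces a global one on the universal cover; on Alexandrov spaces one needs the Alexandrov splitting theorem together with a careful argument that a splitting valid on one ball (not a whole line) still forces the product structure on a controlled neighborhood — this uses that in nonnegative curvature, the "strainer" directions coming from the $\mathbb{R}^k$-factor on $B_3(p_\infty)$ extend to unit-speed geodesics of definite length and the comparison angles are rigidly $\pi/2$. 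Alternatively, one can sidestep the sharpest form of this by arguing directly with $(\delta,k)$-splitting maps: the limit of the $(\delta_i,k)$-Euclidean structure gives, via the splitting theorem (Theorem \ref{thm-split-RCD}, valid since Alexandrov spaces are $\RCD$), an honest $\mathbb{R}^k$-splitting of $X_\infty$ on $B_2(p_\infty)$, from which $(0,k)$-Euclideanity of every $B_r(x_\infty)$, $x_\infty\in\overline{B_1(p_\infty)}$, $r\le 1$, is immediate. I would take this $\RCD$-splitting route, as it keeps the rigidity step clean and localizes everything to the behavior of the limit space.
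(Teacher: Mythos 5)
The paper does not prove this statement; it is quoted directly from \cite{LN20}, so there is no internal proof to compare against, and I evaluate your argument on its own. The compactness set-up is correct, and the step you flag as the ``main obstacle'' is actually immediate: once $B_3(p_\infty)$ is isometric (not just close) to $B_3((0^k,z))\subset\mathbb{R}^k\times Z$, then for any $x_\infty\in B_1(p_\infty)$ and $r\le 1$ the ball $B_r(x_\infty)\subset B_2(p_\infty)$ is carried by that very isometry onto $B_r((v,w))\subset\mathbb{R}^k\times Z$, which after recentering the $\mathbb{R}^k$-coordinate is a ball $B_r((0^k,w))$. No further local-to-global splitting argument is needed, and the case $r_i\to r_\infty>0$ closes correctly.

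The genuine gap is the case $r_i\to 0$, which is the whole content of the theorem. You assert that after rescaling by $r_i^{-1}$ the unit ball around $x_i$ is $(\delta_i',k)$-Euclidean with $\delta_i'\to 0$, invoking ``Lemma \ref{lem2.11}-type scaling''; but Lemma \ref{lem2.11} only propagates $(\delta_i,k)$-Euclideanity from scale $3$ down to scales $\gtrsim\sqrt{\delta_i}$, and in general the relative defect at scale $s$ is of order $\delta_i/s$, which blows up when $r_i\ll\delta_i$. Nothing in the pointed GH convergence of $(X_i,p_i)$ at unit scale controls $X_i$ at scale $r_i$ once $r_i$ is below the GH error, and indeed the conclusion ``$B_r(x)$ is $(\epsilon,k)$-Euclidean for \emph{all} $r\in(0,1]$'' is simply false under a Ricci or RCD lower bound alone --- this failure is precisely why the paper introduces the generalized Reifenberg condition. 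To repair the argument one needs an Alexandrov-specific input. For instance, rescale not by $r_i^{-1}$ but by $s_i^{-1}$ where $s_i:=\sup\{t\in[r_i,1]: B_t(x_i) \text{ is not }(\epsilon,k)\text{-Euclidean}\}$; then $r_i\le s_i\lesssim \delta_i/\epsilon\to 0$, the rescaled limit $(Y,y)\in\mathrm{Alex}^m(0)$ has $B_R(y)$ $(\epsilon,k)$-Euclidean for every $R\ge1$, and one invokes a gap theorem (for $\epsilon$ small depending on $n$, a noncompact nonnegatively curved Alexandrov space all of whose balls $B_R(y)$, $R\ge 1$, are $(\epsilon,k)$-Euclidean must split off $\mathbb{R}^k$ globally; this uses the tangent cone at infinity, monotonicity of comparison angles, and the Toponogov splitting theorem) to contradict the failure at scales just below $s_i$. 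Alternatively, and closer to what \cite{LN20} actually does, one extracts $(k,\eta)$-strainers from the hypothesis at scale $3$ and propagates them to all smaller scales via angle monotonicity, with no limit argument at all. Either way, the step that transfers the scale-$3$ hypothesis to arbitrarily small scales is missing from your write-up.
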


\section{Harmonic functions on $k$-splitting $\mathrm{RCD}(0,N)$ spaces} \label{sec-3}

On an $\RCD(0,N)$ space $(X,d,m)$, given $p\in X$, $k>0$, we consider the linear space
$$\mathcal{H}_{k}(X,p)=\{u\in W^{1,2}_{\mathrm{loc}}(X)\mid \Delta u=0, u(p)=0, |u(x)|\leq C(d(x,p)^{k}+1) \text{ for some }C>0\},$$
and denote by $h_{k}(X,p)= \mathrm{dim}\mathcal{H}_{k}(X,p)$.
We will also use the notations
$$\mathcal{H}_{k}(X)=\{u\in W^{1,2}_{\mathrm{loc}}(X)\mid \Delta u=0, |u(x)|\leq C(d(x,p)^{k}+1) \text{ for some }C>0\}$$
and $h_{k}(X)= \mathrm{dim}\mathcal{H}_{k}(X)$.

By \cite{J14}, we can take the locally Lipschitz representative of $u\in\mathcal{H}_{k}(X)$.

Let $(X,d,m)$ be a non-compact $\mathrm{RCD}(0,N)$ space.
Given $\epsilon>0$, we say a harmonic function $u : X\to \mathbb{R}$ has $\epsilon$-almost linear growth or at most $(1+\epsilon)$-growth if
\begin{align}\label{2.114455}
|u(x)|\leq Cd(x,p)^{1+\epsilon}+ C
\end{align}
for some $C> 0$.

To discuss harmonic functions on $k$-splitting $\mathrm{RCD}(0,N)$ spaces, we need the following lemma:

\begin{lem}\label{partial_deri}
Suppose $(X,d,m)$ is an $\mathrm{RCD}(0,N)$ space which is $k$-splitting ($k\leq N$).
Let $x^{1},\ldots,x^{k}$ be the standard coordinates in the $\mathbb{R}^{k}$-factor.
Then for any harmonic function $f : X\to \mathbb{R}$, we have $\langle\nabla f,\nabla x^{i}\rangle\in W^{1,2}_{\mathrm{loc}}(X)\cap D(\Delta)$ and $\Delta(\langle \nabla f,\nabla x^{i}\rangle)=0$.
\end{lem}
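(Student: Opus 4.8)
The plan is to use the product structure of $X$ together with the Bochner/improved-Bochner formula on $\mathrm{RCD}(0,N)$ spaces. Since $(X,d,m)$ is $k$-splitting, Theorem \ref{thm-split-RCD} gives an isomorphism $X \cong \mathbb{R}^k \times Y$ with $m = \mathcal{L}^k \otimes m_Y$, and the coordinate functions $x^1,\dots,x^k$ satisfy $\Delta x^i = 0$, $\langle \nabla x^i, \nabla x^j\rangle = \delta^{ij}$, and crucially $\mathrm{Hess}\,x^i = 0$. The parallel gradient fields $\nabla x^i$ therefore generate a flow of measure-preserving isometries of $X$ (translation in the $i$-th Euclidean direction); I would first record that the derivative of a harmonic function along such a flow is again harmonic, which at the formal level already gives $\Delta \langle \nabla f, \nabla x^i\rangle = 0$. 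To make this rigorous in the $\mathrm{RCD}$ calculus I would instead argue via the self-improved Bochner inequality: for $f$ harmonic one has $\mathrm{Hess}\,f \in L^2_{\mathrm{loc}}$ (using good cut-off functions and $\Delta f = 0$), so $\langle \nabla f, \nabla x^i\rangle = \sum_a \partial_a f \,\delta_{a}^{i}$-type expression has a well-defined weak gradient, namely $\nabla \langle \nabla f, \nabla x^i\rangle = \mathrm{Hess}\,f(\nabla x^i, \cdot)$ (here the term involving $\mathrm{Hess}\,x^i$ vanishes), which lies in $L^2_{\mathrm{loc}}$; this gives $\langle \nabla f, \nabla x^i\rangle \in W^{1,2}_{\mathrm{loc}}(X)$.

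Next I would identify the Laplacian. Using the formula for the Laplacian of a scalar product of gradients, which on $\mathrm{RCD}$ spaces holds in the measure-valued sense,
\[
\Delta \langle \nabla f, \nabla x^i \rangle = \langle \nabla \Delta f, \nabla x^i \rangle + \langle \nabla f, \nabla \Delta x^i \rangle + 2\langle \mathrm{Hess}\, f, \mathrm{Hess}\, x^i\rangle + 2\,\mathrm{Ric}(\nabla f, \nabla x^i),
\]
every term on the right vanishes: $\Delta f = 0$, $\Delta x^i = 0$, $\mathrm{Hess}\,x^i = 0$, and $\mathrm{Ric}(\nabla f, \nabla x^i) = 0$ because $\nabla x^i$ is a parallel vector field tangent to a flat Euclidean factor splitting off isometrically (so the Ricci tensor annihilates it). Hence $\langle \nabla f, \nabla x^i\rangle \in D(\Delta)$ with $\Delta \langle \nabla f, \nabla x^i\rangle = 0$. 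Since this should be done locally, I would work on balls $B_R(p)$, using that the above identities are available in the local form of the $\mathrm{RCD}$ Bochner formula, which is why the conclusion is stated with $W^{1,2}_{\mathrm{loc}}$ and $D(\Delta)$ understood locally.

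The main obstacle is making the $L^2_{\mathrm{loc}}$ Hessian bound and the measure-valued identity above fully rigorous in the $\mathrm{RCD}$ framework, since $f$ is only assumed harmonic (not a priori in $W^{2,2}$ globally, nor bounded): one must localize with good cut-off functions, use that harmonicity upgrades interior regularity so that $\mathrm{Hess}\,f \in L^2_{\mathrm{loc}}$ and $\mathrm{Ric}(\nabla f, \nabla f)$ is a locally finite measure, and then justify testing the Bochner identity against the parallel field $\nabla x^i$ (for which $\mathrm{Hess}\,x^i = 0$ removes the most delicate term). Once these regularity facts are in place — all of which are standard consequences of the improved Bochner inequality and the existence of good cut-offs on $\mathrm{RCD}(0,N)$ spaces — the vanishing of $\Delta \langle \nabla f, \nabla x^i\rangle$ is immediate from the splitting. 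I would expect the write-up to cite the relevant calculus results (e.g. from \cite{Gig15}, \cite{Han18-2}) rather than reprove them.
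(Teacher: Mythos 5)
Your proposal is correct in spirit but takes a genuinely different route from the paper. The paper's proof is a one-line citation to Corollary 4.14 of Gigli's splitting paper \cite{Gig13}, which states that for $u\in W^{1,2}_{\mathrm{loc}}(X)$ with $\Delta u=0$, $|\nabla u|\equiv1$ one has $\int \Delta g\,\langle\nabla u,\nabla f\rangle\,dm=\int g\,\langle\nabla u,\nabla\Delta f\rangle\,dm$ for all suitable $f,g$; taking $u=x^i$ and $\Delta f=0$ makes the right-hand side vanish, giving weak harmonicity of $\langle\nabla f,\nabla x^i\rangle$ directly. You instead re-derive the conclusion from the polarized measure-valued Bochner identity, using $\mathrm{Hess}\,x^i=0$, $\Delta x^i=0$, and the vanishing of the cross-Ricci term $\mathbf{Ric}(\nabla f,\nabla x^i)$. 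This is sound: $\mathbf{Ric}(\nabla x^i,\nabla x^i)=\Gamma_2(x^i)-|\mathrm{Hess}\,x^i|^2 m=0$ since $|\nabla x^i|^2\equiv1$ and $\mathrm{Hess}\,x^i=0$, and then nonnegativity of $\mathbf{Ric}$ on an $\mathrm{RCD}(0,N)$ space forces the polar term $\mathbf{Ric}(\nabla f,\nabla x^i)$ to vanish by a Cauchy-Schwarz/quadratic-in-$t$ argument. The trade-off is that your route requires assembling more of Gigli's nonsmooth $\Gamma_2$-calculus (measure-valued Ricci, improved Bochner, localization via good cut-offs to handle the unboundedness of $x^i$ and $f$), whereas the paper's citation is exactly the black box Gigli built for this situation inside the proof of the splitting theorem; in effect your argument reconstructs a special case of the content of that corollary. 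Both are legitimate, and you correctly flag that the details are standard consequences of the improved Bochner inequality and good cut-offs.
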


Lemma \ref{partial_deri} just means $\mathrm{Hess}(x^{i})=0$.
It is a corollary of an intermediate step in Gigli's proof of the splitting theorem on $\RCD(0,N)$ spaces.
More precisely, in Corollary 4.14 of \cite{Gig13}, it says that on an $\mathrm{RCD}(0,N)$ space $(X,d,m)$, suppose a function $u\in W^{1,2}_{\mathrm{loc}}(X)$ satisfies $\Delta u=0$ and $|\nabla u|\equiv 1$, then for any $f\in W^{1,2}_{\mathrm{loc}}(X)\cap D(\Delta)$ with $\Delta f\in W^{1,2}_{\mathrm{loc}}(X)$ and $g\in D(\Delta)$, it holds
\begin{align}\label{3.01}
\int \Delta g\langle \nabla u,\nabla f\rangle dm =\int g\langle \nabla u,\nabla \Delta f\rangle dm.
\end{align}
Lemma \ref{partial_deri} follows if we take $u=x^{i}$ in (\ref{3.01}).

In the proof of Proposition \ref{thm126}, we need some calculus rules of $\nabla$ and $\Delta$ on a splitting $\RCD(0,N)$ spaces $(X,d,m)=(\mathbb{R}\times Y, d_{\mathrm{Eucl}}\times d _{Y}, \mathcal{L}^{1}\otimes m_{Y})$. More precisely, we use Theorem 3.13, which roughly says that for any $f\in W^{1,2}_{\mathrm{loc}}(X)$, it holds that $\nabla^{X} f=\nabla^{\R}  f+\nabla^{Y} f$, and use Corollary 3.17 of \cite{GR18}, which roughly says that a separation of variables formula for $\Delta$ holds on $X$, i.e. $\Delta_{X}=\Delta_{\R}+\Delta_{Y}$.
The authors can refer to Section 3.2 of \cite{GR18} for more details.

\subsection{Structure for harmonic functions with polynomial growth on $k$-splitting spaces}

\begin{prop}\label{thm126}
Let $(X,d_{X},m_{X})$ be an $\mathrm{RCD}(0,N)$ space which is $k$-splitting.
Suppose $X=\mathbb{R}^{k}\times Y$, $d_{X}=d_{\mathrm{Eucl}}\times d _{Y}$, $m_{X}=\mathcal{L}^{k}\otimes m_{Y}$, where $(Y,d_{Y},m_{Y})$ is an $\mathrm{RCD}(0,N-k)$ space.
Let $p=(0^{k},y_{0})\in X$.
If $u : X\to \mathbb{R}$ is a non-constant harmonic function satisfying
\begin{align}\label{1.111111}
|u(x)|\leq C(1+d_{X}(x,p)^{s+\epsilon})
\end{align}
for some $s\in \mathbb{Z}^{+}$, $\epsilon\in[0,1)$ and $C>0$.
Then
\begin{align}
u(r_{1},r_{2},\ldots,r_{k},y)=\bar{P}_{s}(r_{1},\ldots,r_{k})+ \sum_{j=1}^{s-1}\sum_{|\alpha|=j}\bar{P}_{\alpha}(r_{1},\ldots,r_{k})\bar{Q}_{\alpha}(y)+\bar{Q}_{0}(y),
\end{align}
where $\bar{P}_{s}$ is a homogeneous polynomial function of degree $s$; for each index $\alpha=(\alpha_{1},\ldots,\alpha_{k})$ with $\alpha_{i}\in \mathbb{Z}^{+}\cup\{0\}$ and $\sum_{i=1}^{k}\alpha_{i}:=|\alpha|=j\in\{1,\ldots,s-1\}$, $\bar{P}_{\alpha}(r_{1},\ldots,r_{k})=\prod_{i=1}^{k} r_{i}^{\alpha_{i}}$, $\bar{Q}_{\alpha}\in W^{1,2}_{\mathrm{loc}}(Y)\cap D(\Delta_{Y,\mathrm{loc}})$ satisfies $|\bar{Q}_{\alpha}(y)|\leq \bar{C}(1+d_{Y}(y,y_{0})^{s-j+\epsilon})$, and $\Delta_{Y}\bar{Q}_{\alpha}\in L_{\mathrm{loc}}^{2}(Y)$; $\bar{Q}_{0}\in W^{1,2}_{\mathrm{loc}}(Y)\cap D(\Delta_{Y,\mathrm{loc}})$ satisfies $|\bar{Q}_{0}(y)|\leq \bar{C}(1+d_{Y}(y,y_{0})^{s+\epsilon})$, and $\Delta_{Y}\bar{Q}_{0}\in L_{\mathrm{loc}}^{2}(Y)$ for some $\bar{C}>0$.
\end{prop}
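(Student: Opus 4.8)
The plan is to prove Proposition \ref{thm126} by induction on the growth degree $s$, using separation of variables on the product $X=\mathbb{R}^k\times Y$. First I would set up the base case $s=1$: if $u$ is harmonic with $|u(x)|\le C(1+d_X(x,p)^{1+\epsilon})$, then by Lemma \ref{partial_deri} each $\partial_i u := \langle\nabla u,\nabla x^i\rangle$ is again harmonic, and moreover it has sublinear (in fact $\epsilon$-almost-linear, hence essentially bounded on balls with controlled growth) growth; a gradient/Caccioppoli estimate on the product together with the mean value property forces $\partial_i u$ to be constant — this is where I would invoke the characterization of sublinear-growth harmonic functions (ultimately the $\RCD(0,N)$ analogue of Cheeger--Colding--Minicozzi cited in the introduction, or more elementarily the fact that a harmonic function which is $o(d(x,p))$ with a suitable bound must be constant on an $\RCD(0,N)$ space). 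Writing $u = \sum_i a_i x^i + (u - \sum_i a_i x^i)$, the remainder $\bar Q_0(y)$ then depends only on $y$ by a second application of Lemma \ref{partial_deri} (its $\mathbb{R}^k$-derivatives vanish), is harmonic on $Y$ via the splitting $\Delta_X=\Delta_{\mathbb{R}^k}+\Delta_Y$ from Corollary 3.17 of \cite{GR18}, and inherits the $(1+\epsilon)$-growth bound from $u$ and the linear term. This gives the desired form with $\bar P_1 = \sum_i a_i r_i$ and no cross terms.

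For the inductive step, assume the statement for all growth exponents below $s$. Given $u$ with $|u(x)|\le C(1+d_X(x,p)^{s+\epsilon})$, apply $\partial_i = \langle\nabla\cdot,\nabla x^i\rangle$ for each $i\in\{1,\dots,k\}$: by Lemma \ref{partial_deri} and the calculus rules in Section 3.2 of \cite{GR18}, $\partial_i u$ is harmonic, and a standard interior gradient estimate on the product (harmonic functions satisfy $\sup_{B_r}|\nabla f|\lesssim r^{-1}\,\mathrm{osc}_{B_{2r}} f$ on $\RCD(0,N)$) upgrades the bound $|u|\le C(1+d^{s+\epsilon})$ to $|\partial_i u|\le C'(1+d^{s-1+\epsilon})$. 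By the inductive hypothesis, each $\partial_i u$ has the claimed expansion of degree $s-1$ on $X$. The main work is then to integrate these $k$ first-order expansions back up to a single degree-$s$ expansion for $u$: I would write $\partial_i u$ in the stated normal form, match the coefficient functions using the compatibility relations $\partial_i\partial_j u = \partial_j\partial_i u$ and the harmonicity $\Delta_{\mathbb{R}^k}\bar P_\alpha \cdot \bar Q_\alpha + \bar P_\alpha\cdot\Delta_Y\bar Q_\alpha = 0$ for the coefficients, and reconstruct the polynomial-in-$r$ coefficients $\bar P_\alpha(r)=\prod r_i^{\alpha_i}$ together with the corresponding $\bar Q_\alpha\in W^{1,2}_{\mathrm{loc}}(Y)\cap D(\Delta_{Y,\mathrm{loc}})$. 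Concretely: $u - \sum_{|\alpha|\ge 1}\bar P_\alpha(r)\bar Q_\alpha(y)$ is constructed to have all $\mathbb{R}^k$-partials vanishing, hence equals a function $\bar Q_0(y)$ of $y$ alone; harmonicity of $u$ plus the separation $\Delta_X=\Delta_{\mathbb{R}^k}+\Delta_Y$ forces $\Delta_Y\bar Q_0 = -\sum(\Delta_{\mathbb{R}^k}\bar P_\alpha)\bar Q_\alpha$, which lies in $L^2_{\mathrm{loc}}(Y)$ by the regularity already obtained for the $\bar Q_\alpha$; and the growth bound $|\bar Q_0(y)|\le\bar C(1+d_Y(y,y_0)^{s+\epsilon})$ follows by restricting $u$ and the lower-order terms to the slice $\{0^k\}\times Y$. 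The top term $\bar P_s$ arises as the component of $u$ whose $\bar Q_\alpha$'s are all constant, i.e. the genuinely polynomial-in-$r$ part; that it is a homogeneous polynomial of degree exactly $s$ (rather than lower degree) comes from tracking which $\partial_i u$ contribute constant coefficient functions $\bar Q$, together with harmonicity in the $\mathbb{R}^k$ variables.

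The delicate points, and where I expect the main obstacle, are two. First, the passage from $|u|\le C(1+d^{s+\epsilon})$ to $|\partial_i u|\le C'(1+d^{s-1+\epsilon})$ needs an interior gradient estimate for harmonic functions on $\RCD(0,N)$ spaces that loses only one power of the radius; this is available (Jiang \cite{J14}, and the Cheng--Yau type estimate on spaces with nonnegative Ricci curvature), but one must be careful that the constant is uniform across the dyadic annuli so that the growth exponent — not merely the growth order — is preserved. Second, and more seriously, reassembling the $k$ one-variable expansions into a consistent multi-index expansion requires knowing that the $\bar Q_\alpha$ produced for different $i$ agree where they should; this is really the statement that mixed partials commute, which at the level of $\RCD$ calculus is encoded in $\mathrm{Hess}(x^i)=0$ and the second-order calculus of Gigli, and I would invoke Lemma \ref{partial_deri} together with the product calculus rules of \cite{GR18} (Theorem 3.13, Corollary 3.17) rather than reprove it. The remaining steps — propagating the growth bounds to each $\bar Q_\alpha$ by restriction to coordinate slices, and checking $\Delta_Y\bar Q_\alpha\in L^2_{\mathrm{loc}}$, $\Delta_Y\bar Q_0\in L^2_{\mathrm{loc}}$ from the separation-of-variables form of the equation $\Delta_X u=0$ — are routine once the commutation and the one-power gradient estimate are in hand.
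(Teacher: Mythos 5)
Your overall plan coincides with the paper's: induction on $s$, with the base case disposed of by the gradient estimate (a harmonic function of growth order strictly less than $1$ has gradient $\lesssim R^{\epsilon-1}\to 0$, hence is constant), and the inductive step driven by Lemma \ref{partial_deri} (each $\partial_i u=\langle\nabla u,\nabla x^i\rangle$ is again harmonic), the one-power-loss gradient estimate (which gives $|\partial_i u|\lesssim 1+d^{s-1+\epsilon}$), and the $\mathbb{R}^k\times Y$ calculus of \cite{GR18}. Where you diverge, and where your sketch is genuinely weaker, is in the reassembly step. You propose to expand all $k$ functions $\partial_1 u,\dots,\partial_k u$ independently by the inductive hypothesis and then ``match the coefficient functions'' using $\partial_i\partial_j u=\partial_j\partial_i u$; you yourself flag this as the delicate point. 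The paper sidesteps the matching problem entirely by a telescoping, one-variable-at-a-time integration:
\begin{align}
u(r_1,\dots,r_k,y)&=\int_0^{r_1}\partial_1 u(s_1,r_2,\dots,r_k,y)\,ds_1+u(0,r_2,\dots,r_k,y),\nonumber\\
u(0,r_2,\dots,r_k,y)&=\int_0^{r_2}\partial_2 u(0,s_2,r_3,\dots,r_k,y)\,ds_2+u(0,0,r_3,\dots,r_k,y),\nonumber
\end{align}
and so on down to $u(0,\dots,0,y)=\bar Q_0(y)$. At each stage one substitutes the inductive expansion of the relevant $\partial_i u$ (restricted to the appropriate coordinate slice) and integrates a polynomial in one real variable, raising its degree by one; the terms simply accumulate and no cross-consistency between the $k$ expansions ever has to be checked. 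This also makes the degree-$s$ top term $\bar P_s$ transparently a constant-coefficient homogeneous polynomial (it is built by repeatedly antidifferentiating the constant-coefficient top terms $P_l^{(i)}$), and the growth bound for $\bar Q_0$ is immediate by restriction to $\{0^k\}\times Y$ as you say. So your ingredients and the base case are right, but you should replace the coefficient-matching idea with the sequential integration; as written, the matching step is an unproved lemma and is harder than the thing it is meant to establish.
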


\begin{proof}
We prove the proposition by induction.
If $s=0$, by the gradient estimate, $u$ is a constant function.
The conclusion holds for this case.

Suppose Proposition \ref{thm126} holds for $s=l$, we will prove it holds for $s=l+1$.
Since $u\in \mathcal{H}_{l+1+\epsilon}(\mathbb{R}^{k}\times Y)$, by gradient estimate and Lemma \ref{partial_deri}, for each $i\in\{1,\ldots,k\}$,
$\langle\nabla u,\nabla r_{i}\rangle\in \mathcal{H}_{l+\epsilon}(\mathbb{R}^{k}\times Y)$.
Thus by the induction assumption,
\begin{align}
\langle\nabla u,\nabla r_{i}\rangle= {P}_{l}^{(i)}(r_{1},\ldots,r_{k})+ \sum_{j=1}^{l-1}\sum_{|\alpha|=j}{P}_{\alpha}^{(i)}(r_{1},\ldots,r_{k}){Q}_{\alpha}^{(i)}(y)+{Q}_{0}^{(i)}(y),
\end{align}
with ${P}_{l}^{(i)}$ is a homogeneous polynomial function of degree $l$; for each index $\alpha$ with $|\alpha|=j\in\{1,\ldots,l-1\}$, ${P}_{\alpha}^{(i)}$ is a polynomial of degree $j$, ${Q}_{\alpha}^{(i)}\in W^{1,2}_{\mathrm{loc}}(Y)\cap D(\Delta_{Y,\mathrm{loc}})$ satisfies $|{Q}_{\alpha}^{(i)}(y)|\leq \bar{C}_{i}(1+d_{Y}(y,y_{0})^{l-j+\epsilon})$, and $\Delta_{Y}{Q}_{\alpha}^{(i)}\in L_{\mathrm{loc}}^{2}(Y)$; ${Q}_{0}^{(i)}\in W^{1,2}_{\mathrm{loc}}(Y)\cap D(\Delta_{Y,\mathrm{loc}})$ satisfies $|{Q}_{0}^{(i)}(y)|\leq \bar{C}(1+d_{Y}(y,y_{0})^{l+\epsilon})$, and $\Delta_{Y}{Q}_{0}^{(i)}\in L_{\mathrm{loc}}^{2}(Y)$ for some $\bar{C}_{i}>0$.
Thus
\begin{align}\label{a1.11111}
u(r_{1},r_{2},\ldots,r_{k},y)=&\int_{0}^{r_{1}}\langle\nabla u,\nabla s_{1}\rangle(s_{1},r_{2},\ldots,r_{k},y) ds_{1}+u(0,r_{2},\ldots,r_{k},y),
\\=&\tilde{P}^{(1)}_{l+1}(r_{1},\ldots,r_{k})+ \sum_{j=1}^{l-1}\sum_{|\alpha|=j}\tilde{P}^{(1)}_{\alpha}(r_{1},\ldots,r_{k})\tilde{Q}_{\alpha}^{(1)}(y) +{Q}_{0}^{(1)}(y)r_{1}\nonumber
\\ &+u(0,r_{2},\ldots,r_{k},y), \nonumber
\end{align}
\begin{align}\label{a1.11112}
u(0,r_{2},\ldots,r_{k},y)=&\int_{0}^{r_{2}}\langle\nabla u(0,s_{2},\ldots,r_{k},y),\nabla s_{2}\rangle ds_{2}+u(0,0,r_{3},\ldots,r_{k},y)\\
=&\tilde{P}^{(2)}_{l+1}(r_{2},\ldots,r_{k})+ \sum_{j=1}^{l-1}\sum_{|\alpha|=j}\tilde{P}^{(2)}_{\alpha}(r_{2},\ldots,r_{k})\tilde{Q}_{\alpha}^{(2)}(y)\nonumber\\
&+{Q}_{0}^{(2)}(y)r_{2}+u(0,0,r_{3},\ldots,r_{k},y),\nonumber
\end{align}
\begin{align}
\ldots\nonumber
\end{align}
\begin{align}\label{a1.11113}
u(0,\ldots,0,r_{k},y)=&\int_{0}^{r_{k}}\langle\nabla u(0,\ldots,0,s_{k},y),\nabla s_{k}\rangle ds_{k}+u(0,\ldots,0,y)\\
=&\tilde{P}^{(k)}_{l+1}(r_{k})+ \sum_{i=1}^{l-1}\sum_{|\alpha|=j}\tilde{P}^{(k)}_{\alpha}(r_{k})\tilde{Q}_{\alpha}^{(k)}(y) +{Q}_{0}^{(k)}(y)r_{2}\nonumber\\
&+u(0,\ldots,0,y),\nonumber
\end{align}
where for every $i=1,\ldots,k$, $\tilde{P}_{l+1}^{(i)}$ is a homogeneous polynomial function of degree $l+1$, $\tilde{P}_{\alpha}^{(i)}$ is a polynomial function of degree $|\alpha|+1$, $\tilde{Q}_{\alpha}^{(i)}={Q}_{\alpha}^{(i)}$.
Obviously, $|u(0,\ldots,0,y)|\leq C(1+d_{Y}(y,y_{0})^{l+1+\epsilon})$.
In (\ref{a1.11111})-(\ref{a1.11113}), we use the rule in Theorem 3.13 of \cite{GR18}, and we note that (\ref{a1.11111})-(\ref{a1.11113}) hold for every $(r_{1},r_{2},\ldots,r_{k},y)$ as we have assumed $u$ and $\langle\nabla u,\nabla r_{i}\rangle$ are all Lipschitz.

By (\ref{a1.11111})-(\ref{a1.11113}), we obtain
\begin{align}
u(r_{1},r_{2},\ldots,r_{k},y)=\bar{P}_{l+1}(r_{1},\ldots,r_{k})+ \sum_{j=1}^{l}\sum_{|\alpha|=j}\bar{P}_{\alpha}(r_{1},\ldots,r_{k})\bar{Q}_{\alpha}(y)+\bar{Q}_{0}(y) \nonumber
\end{align}
for suitable polynomials $\bar{P}_{l+1}$, $\bar{P}_{\alpha}$ on $\mathbb{R}^{k}$ and functions $\bar{Q}_{\alpha}$, $\bar{Q}_{0}$ on $Y$ satisfying the conclusions in the theorem.
Note that to conclude $\Delta_{Y}\bar{Q}_{\alpha}\in L_{\mathrm{loc}}^{2}(Y)$ and $\Delta_{Y}\bar{Q}_{0}\in L_{\mathrm{loc}}^{2}(Y)$ we use the rules in Corollary 3.17 of \cite{GR18}.

The proof is completed.
\end{proof}

\begin{rem}\label{cor126}
On a $k$-splitting $\mathrm{RCD}(0,N)$ space $(X, d_{X}, m_{X}) =(\mathbb{R}^{k}\times Y, d_{\mathrm{Eucl}}\times d _{Y}, \mathcal{L}^{k}\otimes m_{Y})$,
if $u : X\to \mathbb{R}$ is a non-constant harmonic function satisfying
\begin{align}
\lim_{d(x,p)\rightarrow\infty}\frac{|u(x)|}{d(x,p)^{s+1}}=0,
\end{align}
for some $s\in \mathbb{Z}^{+}$, then similar to Proposition \ref{thm126}, we also have
\begin{align}
u(r_{1},r_{2},\ldots,r_{k},y)=\bar{P}_{s}(r_{1},\ldots,r_{k})+ \sum_{j=1}^{s-1}\sum_{|\alpha|=j}\bar{P}_{\alpha}(r_{1},\ldots,r_{k})\bar{Q}_{\alpha}(y)+\bar{Q}_{0}(y),
\end{align}
where $\bar{P}_{s}$, $\bar{P}_{\alpha}$ are polynomial functions on $\R^k$ satisfying the same properties as in Proposition \ref{thm126}, and $\bar{Q}_{\alpha}$, $\bar{Q}_{0}$ are functions on $Y$ satisfying suitable asymptotic growth property.
\end{rem}

The following result is important in the proof of the gap theorems \ref{thm-gap-har-RCD} and \ref{thm-gap-har-Alex}.

\begin{cor}\label{cor3.6}
Suppose $(X,d,m)$ is an $\mathrm{RCD}(0,N)$ space which is $k$-splitting.
If $u : X\to \mathbb{R}$ is a non-constant harmonic function satisfying
\begin{align}
\lim_{d(x,p)\rightarrow\infty}\frac{|u(x)|}{d(x,p)^{2}}=0,
\end{align}
then for any $\mathbb{R}$-coordinate function $v$, $\langle\nabla u, \nabla v\rangle$ is a constant function.
\end{cor}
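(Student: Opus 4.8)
The plan is to reduce the statement to a Liouville-type property for sublinear harmonic functions. After identifying $X$ with $\mathbb{R}^k\times Y$ as in the hypothesis, an $\mathbb{R}$-coordinate function $v$ may, after an affine change, be taken to be $v=r_i$ for some $i$, so $|\nabla v|\equiv 1$ and $\Delta v=0$. Set $w:=\langle\nabla u,\nabla v\rangle$. By Lemma \ref{partial_deri} we have $w\in W^{1,2}_{\mathrm{loc}}(X)\cap D(\Delta)$ with $\Delta w=0$, and by \cite{J14} we may work with the Lipschitz representative of $w$. The goal is to show $w$ is constant.

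First I would control the growth of $w$. Since $u$ is harmonic on the $\mathrm{RCD}(0,N)$ space $X$ and $|u(x)|/d(x,p)^2\to 0$, the Cheng--Yau type gradient estimate for harmonic functions (a consequence of the Bochner inequality together with good cut-off functions, valid on $\mathrm{RCD}(0,N)$ spaces) gives, for $q$ with $\rho:=d(q,p)$ large and $R=\rho/2$,
\begin{align}
|\nabla u|(q)\ \le\ \frac{C(N)}{R}\,\mathrm{osc}_{B_{2R}(q)}u\ \le\ \frac{C(N)}{\rho}\cdot o(\rho^{2})\ =\ o(\rho).\nonumber
\end{align}
Since $|\nabla v|\equiv 1$, this yields $|w|(q)\le|\nabla u|(q)=o(d(q,p))$; that is, $w$ is a harmonic function on $X$ with strictly sublinear growth.

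It then remains to see that a harmonic function $w$ on $X$ with sublinear growth is constant. The cleanest route is to apply the same gradient estimate to $w$ itself: fixing any $q$ and letting $R\to\infty$,
\begin{align}
|\nabla w|(q)\ \le\ \frac{C(N)}{R}\,\mathrm{osc}_{B_{2R}(q)}w\ \le\ \frac{C(N)}{R}\cdot o(R)\ \xrightarrow[R\to\infty]{}\ 0,\nonumber
\end{align}
so $|\nabla w|\equiv 0$, and since $X$ is connected, $w$ is constant. Alternatively, and more in the spirit of this section, one may invoke Remark \ref{cor126} with $s=1$ (the $o(d^2)$ bound on $w$ suffices) to write $w(r_1,\dots,r_k,y)=\bar P_1(r_1,\dots,r_k)+\bar Q_0(y)$ with $\bar P_1$ linear; restricting to $r=0$ shows $|\bar Q_0(y)|=o(d_Y(y,y_0))$, restricting to $y=y_0$ forces $\bar P_1\equiv 0$, and $\Delta_X=\Delta_{\mathbb{R}^k}+\Delta_Y$ makes $\bar Q_0$ harmonic on the $\mathrm{RCD}(0,N-k)$ space $Y$, hence constant by the gradient estimate on $Y$.

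The only point requiring genuine care --- the main (though modest) obstacle --- is the quantitative passage from growth of $u$ to growth of $\nabla u$, and from growth of $w$ to vanishing of $\nabla w$; both rest on the gradient estimate for harmonic functions on $\mathrm{RCD}(0,N)$ spaces, applied legitimately to $w$ only because Lemma \ref{partial_deri} guarantees $w$ is genuinely harmonic. Everything else is bookkeeping with the product structure $X=\mathbb{R}^k\times Y$ and the splitting of $\nabla$ and $\Delta$ recalled from \cite{GR18}.
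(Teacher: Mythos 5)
Your proof is correct, but it takes a genuinely different route from the paper's. The paper's proof applies Remark \ref{cor126} (with $s=1$) directly to $u$, obtaining the decomposition $u(r_1,\ldots,r_k,y)=\sum_i c_i r_i+\bar Q_0(y)$ with $\bar Q_0$ a function on the $Y$-factor alone, from which $\langle\nabla u,\nabla r_1\rangle\equiv c_1$ follows immediately by orthogonality of the split factors; no gradient estimates are used. Your primary argument instead bypasses the decomposition of Proposition \ref{thm126}/Remark \ref{cor126} entirely: after using Lemma \ref{partial_deri} to see that $w=\langle\nabla u,\nabla v\rangle$ is harmonic, you apply the Cheng--Yau gradient estimate twice --- once to $u$ to deduce from $|u|=o(d^2)$ that $w$ has sublinear growth, and once to $w$ itself to conclude $\nabla w\equiv 0$. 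This self-contained Liouville argument is arguably more elementary, but it brings in the gradient estimate as the key analytic ingredient and extracts less structure from $u$ than the paper's decomposition (which is reused elsewhere in this section). Your alternative route (decomposing $w$ by Remark \ref{cor126}) sits between the two but is still not the paper's: the paper decomposes $u$, not $w$, and never needs to control $|\nabla u|$ explicitly. Both of your arguments are sound; the one thing you should supply is an explicit reference for the Cheng--Yau gradient estimate on $\mathrm{RCD}(0,N)$ spaces, which is the analytic input the paper's one-line proof avoids.
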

\begin{proof}
Without loss of generality, we assume $v=r_{1}$ is the first $\mathbb{R}$-coordinate of the $k$-splitting $\mathrm{RCD}(0,N)$ space $(X, d_{X}, m_{X}) =(\mathbb{R}^{k}\times Y, d_{\mathrm{Eucl}}\times d _{Y}, \mathcal{L}^{k}\otimes m_{Y})$.
By Proposition \ref{thm126} or Remark \ref{cor126},
$u(r_{1},r_{2},\ldots,r_{k},y)=\sum_{i=1}^{k} c_{i}r_{i}+\bar{Q}_{0}(y)$, where $c_{i}$'s are constants.
Thus $\langle\nabla u, \nabla v\rangle\equiv c_{1}$.
\end{proof}

\subsection{Harmonic functions with linear growth}

It is proved in \cite{HKX13} that, on a noncompact $\mathrm{RCD}(0,N)$ space (where $N\in \mathbb{Z}^{+}$), $h_{1}(X,p)\leq N$.
This generalizes a theorem in \cite{LT89}.
The following proposition considers the relation of $h_{1}(X,p)$ and the tangent cone at infinity of $X$, and Corollary \ref{thm-har-rigidity} considers the case $h_{1}(X,p)=N$.
These two results generalize the main theorems in \cite{CCM}.

\begin{prop}\label{thm-split-infin}
Suppose $(X,d,m)$ is a noncompact $\mathrm{RCD}(0,N)$ space.
Suppose $h_{1}(X,p)=k$.
Then any tangent cone at infinity of $X$ is $k$-splitting.
\end{prop}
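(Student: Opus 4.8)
The plan is to follow the classical Cheeger--Colding--Minicozzi strategy of \cite{CCM}, adapted to the $\mathrm{RCD}$ setting. Let $(X,d,m)$ be a noncompact $\mathrm{RCD}(0,N)$ space with $h_1(X,p)=k$, and let $u^1,\dots,u^k$ be a basis of $\mathcal{H}_1(X,p)$. Fix a sequence $R_i\to\infty$ and consider the rescaled pointed spaces $(X_i,p_i,d_i,m_i)=(X,p,R_i^{-1}d,\tilde m_i)$ (with $m$ suitably renormalized), which converge in the pointed measured Gromov--Hausdorff sense, after passing to a subsequence, to a tangent cone at infinity $(X_\infty,p_\infty,d_\infty,m_\infty)$; this is again an $\mathrm{RCD}(0,N)$ space and (being a tangent cone at infinity of a nonnegatively curved space) a metric cone. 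First I would show that each normalized harmonic function $\tilde u^a_i := c_i^{-1}(R_i)\,u^a$ (with $c_i(R_i)$ chosen so that $\fint_{B_1(p_i)}|\tilde u^a_i|^2=1$, say) has uniformly bounded Lipschitz constant and uniformly bounded $W^{1,2}$-energy on balls of fixed radius — this is a consequence of the Cheng--Yau type gradient estimate on $\mathrm{RCD}(0,N)$ spaces together with the linear growth hypothesis — so that by Theorem \ref{AA} and Theorem \ref{2.7777777} a subsequence converges uniformly and in $W^{1,2}_{\mathrm{loc}}$ to a harmonic function $u^a_\infty$ on $X_\infty$ of linear growth, and the limit functions remain linearly independent.

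The key step is then to upgrade these $k$ limit functions to a genuine splitting. On the metric cone $X_\infty$, a harmonic function with linear growth that vanishes at the vertex must be homogeneous of degree one (this follows from the separation-of-variables structure of the Laplacian on a cone, together with the fact that linear growth is the lowest nonconstant homogeneity — one uses that eigenfunctions of the cross-section contribute homogeneities that are bounded below away from $1$ unless they correspond to the $\mathbb{R}$-directions, by the Lichnerowicz--Obata rigidity available on $\mathrm{RCD}$ cross-sections). From the degree-one homogeneity and harmonicity one extracts, via the Bochner formula / rigidity in the equality case of the Laplacian comparison, that each $u^a_\infty$ has $|\nabla u^a_\infty|$ constant and $\mathrm{Hess}\,u^a_\infty=0$; more carefully, one shows the span of $\{u^a_\infty\}$ contains $k$ functions with mutually orthonormal gradients. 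Gram--Schmidt in the inner product $\langle\nabla u^a_\infty,\nabla u^b_\infty\rangle$ (which is constant on $X_\infty$ by Corollary \ref{cor3.6}, since linear growth certainly forces $|u_\infty(x)|/d(x,p_\infty)^2\to 0$ once we know $X_\infty$ itself splits off whatever it splits off — or more directly, by the cone structure) produces $k$ harmonic functions $v^1_\infty,\dots,v^k_\infty$ with $\langle\nabla v^a_\infty,\nabla v^b_\infty\rangle\equiv\delta^{ab}$. Then Theorem \ref{thm-split-RCD} (the functional splitting theorem on $\mathrm{RCD}(0,N)$ spaces) applies and yields $X_\infty\cong\mathbb{R}^k\times Y$ with $Y$ an $\mathrm{RCD}(0,N-k)$ space, i.e. $X_\infty$ is $k$-splitting.

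The main obstacle I expect is establishing that a linear-growth harmonic function on the tangent cone at infinity is exactly degree-one homogeneous, and simultaneously controlling that no \emph{spurious} extra dimension of linear-growth harmonic functions appears on $X_\infty$ beyond the $k$ we transplanted. The first point requires the spectral-gap / Obata-type rigidity for the cross-section of the cone in the $\mathrm{RCD}$ category, which is available but must be invoked carefully (and one must handle the possibility that the cone has lower-dimensional or collapsed cross-section). The second point — that $h_1(X_\infty,p_\infty)$ is not strictly larger than $k$, which would be needed if one wanted an exact count — is actually \emph{not} needed for this proposition: we only need that $X_\infty$ splits off at least $\mathbb{R}^k$, and for that it suffices that the $k$ transplanted limits are linearly independent and, after Gram--Schmidt, have orthonormal constant-length gradients. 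So the real work is entirely in the homogeneity/Bochner rigidity step on the cone; the transplantation and compactness parts are routine given the preliminaries already assembled in Section \ref{sec-2}.
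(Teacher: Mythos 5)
There is a genuine gap, and it is in the first sentence of your second paragraph: you assert that the tangent cone at infinity $(X_\infty,p_\infty)$ is a metric cone because it is a blow-down of a nonnegatively curved space. This is false for general $\mathrm{RCD}(0,N)$ (or Ricci-limit) spaces. Unlike the Alexandrov case, blow-down limits of manifolds with $\Ric\ge 0$ need not be metric cones unless one has an additional volume-growth assumption (e.g.\ maximal volume growth / non-collapsing). Since your entire strategy after the compactness step --- homogeneity of degree one via separation of variables on the cone, Lichnerowicz--Obata rigidity on the cross-section, Bochner rigidity to force $\mathrm{Hess}\,u^a_\infty=0$, and even the fallback invocation of Corollary \ref{cor3.6} ``by the cone structure'' --- relies on $X_\infty$ being a cone, the argument does not go through in the stated generality. (The appeal to Corollary \ref{cor3.6} is also circular as written: that corollary needs the space to already be $k$-splitting.)

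The paper sidesteps the cone structure entirely. The key ingredient it uses, and the one your proposal is missing, is the mean value theorem for \emph{bounded subharmonic functions} on $\mathrm{RCD}(0,N)$ spaces (Theorem 5.4 of \cite{HKX13}): for any nonconstant $f\in\mathcal{H}_1(X,p)$, one has $\lim_{R\to\infty}\bbint_{B_R(p)}|\nabla f|^2 = \sup|\nabla f|^2$. This furnishes a well-defined asymptotic inner product $(f_i,f_j)=\lim_{R\to\infty}\frac14\bbint_{B_R(p)}\langle\nabla f_i,\nabla f_j\rangle$ on $\mathcal{H}_1(X,p)$. Taking an orthonormal basis $\{u_j\}$, rescaling, and passing to the limit along any blow-down sequence (your compactness step is exactly right here), one finds that the limit functions $u_j^{(\infty)}$ satisfy $\bbint_{B_R(\tilde p_\infty)}|\nabla u_j^{(\infty)}|^2 = 1$ for \emph{every} $R$, while also $\sup|\nabla u_j^{(\infty)}|^2 = 1$. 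The coincidence of the supremum and the average on every ball immediately forces $|\nabla u_j^{(\infty)}|^2\equiv 1$, and similarly $\langle\nabla u_j^{(\infty)},\nabla u_l^{(\infty)}\rangle\equiv 0$ for $j\neq l$. Then the functional splitting theorem (Theorem \ref{thm-split-RCD}) applies directly. No homogeneity, no cone, no Obata rigidity. If you replace the cone/homogeneity machinery with the \cite{HKX13} mean value theorem and the normalization of the basis at the start (before passing to the limit, not after), your outline becomes the paper's proof.
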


\begin{proof}
By the gradient estimate, for any non-zero function $f\in{\mathcal{H}}_{1}(X,p)$, there is some $C>0$ such that $\sup|\nabla f|^{2}= C$.
By the mean value theorem for bounded subharmonic functions on $\mathrm{RCD}(0,N)$ spaces (see Theorem 5.4 in \cite{HKX13}, see also \cite{L86} for a theorem on manifolds), we have
\begin{align}
\lim_{R\rightarrow\infty} \bbint_{B_{R}(p)}|\nabla f|^{2}=\sup|\nabla f|^{2}.
\end{align}
Then we can introduce an inner product on ${\mathcal{H}}_{1}(X,p)$ by
\begin{align}
( f_{i}, f_{j})&:=\lim_{R\rightarrow\infty} \frac{1}{4}\bbint_{B_{R}(p)}\bigl(|\nabla (f_{i}+f_{j})|^{2}-|\nabla (f_{i}-f_{j})|^{2}\bigr)dm \\
&=\lim_{R\rightarrow\infty} \frac{1}{4}\bbint_{B_{R}(p)}\langle\nabla f_{i},\nabla f_{j}\rangle dm.\nonumber
\end{align}

Now we fix an orthonormal base $\{u_{j}\}_{1\leq j\leq k}\subset {\mathcal{H}}_{1}(X,p)$ with respect to this inner product.

Let $(\tilde{X}_{\infty},\tilde{p}_{\infty},\tilde{d}_{\infty},\tilde{m}_{\infty})$ be any tangent cone at infinity of $X$.
Suppose $(\tilde{X}_{i},\tilde{p}_{i},\tilde{d}_{i},\tilde{m}_{i})=(X,p,\frac{1}{r_{i}}d,\frac{1}{m(B_{r_{i}}(p))}m)$ (where $r_{i}\rightarrow+\infty$) converges to $(\tilde{X}_{\infty},\tilde{p}_{\infty},\tilde{d}_{\infty},\tilde{m}_{\infty})$ in the pointed measured Gromov-Hausdorff distance.
Denote by $u_{j}^{(i)}=\frac{1}{r_{i}}u_{j}$, then $\sup|\tilde{\nabla} u_{j}^{(i)}|^{2}=1$, and
$\sup|\tilde{\nabla} (u_{j}^{(i)}+u_{l}^{(i)})|^{2}=2$ for $j\neq l$.
By Theorems \ref{AA} and \ref{2.7777777}, up to passing to a subsequence, $u_{j}^{(i)}$ converges locally uniformly and locally $W^{1,2}$ to a harmonic function $u_{j}^{(\infty)}: \tilde{X}_{\infty}\rightarrow \mathbb{R}$.
Thus for any $R>0$,
\begin{align}\label{3.14}
\bbint_{{B}_{R}(\tilde{p}_{\infty})}|\tilde{\nabla} u_{j}^{(\infty)}|^{2}=\lim_{i\rightarrow\infty} \bbint_{{B}_{R}(\tilde{p}_{i})}|\tilde{\nabla} u_{j}^{(i)}|^{2}=\lim_{i\rightarrow\infty} \bbint_{B_{r_{i}R}(p)}|\nabla u_{j}|^{2}=1,
\end{align}
and
\begin{align}\label{3.15}
\bbint_{B_{R}(\tilde{p}_{\infty})}\langle\tilde{\nabla} u_{j}^{(\infty)},\tilde{\nabla} u_{l}^{(\infty)}\rangle =\lim_{i\rightarrow\infty} \bbint_{B_{R}(\tilde{p}_{i})}\langle\tilde{\nabla} u_{j}^{(i)},\tilde{\nabla} u_{l}^{(i)}\rangle =\lim_{i\rightarrow\infty} \bbint_{B_{r_{i}R}(p)}\langle\nabla u_{j},\nabla u_{l}\rangle=0
\end{align}
for any $j\neq l$.

Note that $\sup|\tilde{\nabla} u_{j}^{(\infty)}|^{2}=1$, hence by (\ref{3.14}), $|\tilde{\nabla} u_{j}^{(\infty)}|^{2}\equiv1$ for each $j$.
Then by (\ref{3.15}) and the fact that $\sup|\tilde{\nabla} (u_{j}^{(\infty)}+u_{l}^{(\infty)})|^{2}=2$ for $j\neq l$, we have $\langle\tilde{\nabla} u_{j}^{(\infty)}, \tilde{\nabla} u_{l}^{(\infty)}\rangle\equiv 0$.
Thus by Theorem \ref{thm-split-RCD},
$(\tilde{X}_{\infty},\tilde{p}_{\infty},\tilde{d}_{\infty},\tilde{m}_{\infty})$ is $k$-splitting.
\end{proof}

\begin{cor}\label{thm-har-rigidity}
Given $N\in\mathbb{Z}^{+}$.
Suppose $(X,d,m)$ is a noncompact $\mathrm{RCD}(0,N)$ space with
$h_{1}(X)=N+1$.
Then $(X,d,m)$ is isomorphic to $(\mathbb{R}^{N}, d_{\mathrm{Eucl}},c\mathcal{H}^{N})$ for some constant $c>0$.
\end{cor}

\begin{proof}
By Proposition \ref{thm-split-infin}, every tangent cone at infinity of $(X,d,m)$ is isomorphic to $(\mathbb{R}^{N}, d_{\mathrm{Eucl}},c_{1}\mathcal{H}^{N})$ for some constant $c_{1}>0$.
Then by the lower semicontinuity of essential dimensions with respect to pointed measured Gromov-Hausdorff convergence (see Theorem 1.5 of \cite{Ki19}), we know the essential dimension of $X$ is equal to $N$.
Thus $(X,d,m)$ is weakly non-collapsed in the sense of \cite{DePGil18}.
By the main result of a recent paper \cite{BGHZ23}, we know weakly non-collapsed $\RCD$ spaces are non-collapsed in the sense of \cite{DePGil18}, i.e. $m=c\mathcal{H}^{N}$ for some constant $c>0$.
Then by Theorem 1.6 in \cite{DePGil18}, it is easy to see that $(X,d,m)$ is isomorphic to $(\mathbb{R}^{N}, d_{\mathrm{Eucl}},c\mathcal{H}^{N})$.
\end{proof}

Proposition \ref{thm-split-infin} can be strengthened when $(X,d)$ is an Alexandrov space with nonnegative curvature:

\begin{prop}\label{thm-split-alex}
Suppose $(X,d,m)$ is an $\RCD(0,N)$ space and $(X,d)\in \mathrm{Alex}^{n}(0)$ ($n\leq N$).
If $u : X\to \mathbb{R}$ is a non-constant harmonic function with linear growth, then $X$ splits off an $\mathbb{R}$-factor.
\end{prop}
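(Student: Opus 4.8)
The plan is to combine Proposition \ref{thm-split-infin} with the extra rigidity coming from the Alexandrov structure. The starting point is that a non-constant harmonic function $u$ with linear growth determines, via the line of reasoning in the proof of Proposition \ref{thm-split-infin}, that some tangent cone at infinity $(\tilde X_\infty, \tilde p_\infty, \tilde d_\infty, \tilde m_\infty)$ is $1$-splitting: indeed $h_1(X,p)\ge 1$, and the mean-value argument on $\RCD(0,N)$ spaces produces, after blow-down, a harmonic function $u^{(\infty)}$ on $\tilde X_\infty$ with $|\tilde\nabla u^{(\infty)}|^2\equiv 1$, so by Theorem \ref{thm-split-RCD} the cone splits an $\R$-factor. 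The point of this proposition is to push the splitting back from the tangent cone at infinity onto $X$ itself, which fails in general $\RCD$ but should hold under the Alexandrov hypothesis.

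The key mechanism I would use is the following: because $(X,d)\in\mathrm{Alex}^n(0)$, the tangent cone at infinity $(\tilde X_\infty,\tilde d_\infty)$ is itself an Alexandrov space with nonnegative curvature, and the $\R$-factor split off above is reflected in the existence of a \emph{line} in $\tilde X_\infty$. A line in the asymptotic cone of a nonnegatively curved Alexandrov space, by the Toponogov splitting theorem for Alexandrov spaces, forces $\tilde X_\infty$ to split isometrically as $\R\times \tilde Y_\infty$; but more is true in the Alexandrov setting — the presence of a line at infinity, together with the soul theory / the fact that $X$ itself contains rays, should be promoted to an actual line in $X$. Concretely, I would argue that the gradient flow of $u$ (or of the Busemann-type function associated to the direction picked out by $u^{(\infty)}$) produces a geodesic line in $X$: the linear growth of $u$ together with $|\nabla u|$ bounded and the no-focal-points behavior in Alexandrov geometry should give a complete geodesic along which $u$ is affine, hence a line. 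Then the classical Cheeger–Gromoll splitting theorem for Alexandrov spaces (valid since $(X,d)$ has curvature $\ge 0$) yields $X = \R\times X'$ isometrically, and one checks this metric splitting is compatible with the measure $m$ so that it is an isomorphism of metric measure spaces in the sense of Theorem \ref{thm-split-RCD}.

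The main obstacle I anticipate is precisely the promotion step: extracting a genuine line \emph{in $X$} from a line in the tangent cone at infinity. A priori the splitting direction only exists asymptotically, and one must rule out that $u$ is, say, a bounded-gradient function whose level sets wander without any minimizing geodesic realizing the separation. The cleanest route is probably to show directly, using the $\RCD(0,N)$ Laplacian comparison plus the Alexandrov concavity of distance functions, that $\langle\nabla u,\nabla u\rangle$ is constant on $X$ (not merely on the cone) — this is where the $n$-dimensional Alexandrov assumption, rather than bare $\RCD(0,N)$, is essential, since it gives two-sided control (semiconcavity of distance functions and the absence of the measure-degeneration phenomenon that obstructs the analogous statement in general $\RCD$). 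Once $|\nabla u|$ is shown to be a nonzero constant, Theorem \ref{thm-split-RCD} applies verbatim and finishes the proof. An alternative, possibly shorter, route is to invoke the structure theory of Alexandrov spaces with a splitting asymptotic cone (e.g. via quasigeodesics and the fact that extremal subsets behave well under blow-down), but the Busemann-function / splitting-theorem approach is the most self-contained given what is available in the excerpt.
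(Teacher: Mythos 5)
You have the right skeleton: blow down along $r_i\to\infty$, use the mean-value argument from the proof of Proposition~\ref{thm-split-infin} to get $|\tilde\nabla u^{(\infty)}|^2\equiv 1$ on the asymptotic cone $\tilde X_\infty$, then invoke Theorem~\ref{thm-split-RCD} to split $\tilde X_\infty$. You also correctly flag the only real issue: pushing that splitting from the blow-down limit back onto $X$ itself. But the proposal leaves this step unresolved, and the two fixes you sketch do not close it cleanly. The ``gradient flow of $u$ gives a geodesic line'' idea is essentially circular: for the integral curves of $\nabla u$ to be geodesics one already needs $\mathrm{Hess}\,u=0$, which is precisely the splitting statement. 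The alternative — show $|\nabla u|$ is constant on $X$ directly — also stalls: $|\nabla u|^2$ is subharmonic (Bochner) and its ball-averages converge to the supremum (the mean value theorem of \cite{HKX13}), but neither fact forces the supremum to be attained in the interior, which is what you would need to run a strong maximum principle.

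The paper closes the gap in one stroke with Theorem~\ref{thm-LN20} (Corollary 5.2 of \cite{LN20}): in $\mathrm{Alex}^{n}(-\delta)$, if $B_3(p)$ is $(\delta,1)$-Euclidean then $B_r(x)$ is $(\epsilon,1)$-Euclidean for all $r\in(0,1]$ and $x\in B_1(p)$. Since the blow-down splits, after rescaling one finds that $B_{r_i}(p)$ is $(\delta_i,1)$-Euclidean with $\delta_i\to 0$; Theorem~\ref{thm-LN20} then propagates this to every fixed scale, so $B_r(p)$ is $(\epsilon,1)$-Euclidean for every $r$ and every $\epsilon>0$, and hence $X$ splits. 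This is exactly the quantitative ``monotonicity of Euclidean-ness down in scale'' phenomenon specific to Alexandrov spaces (in contrast to general $\RCD$), and it is the tool you were reaching for but did not name. Your Busemann-function/Toponogov-splitting route can in principle be made to work — two rays whose comparison angle tends to $\pi$ produce a line in nonnegative curvature — but that is a longer detour and is not what the paper does; the intended argument is the direct appeal to Theorem~\ref{thm-LN20}.
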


\begin{proof}

After normalization, we can assume that $u(p)=0$ and $\sup|\nabla u|^{2}= 1$.
Following the proof of Theorem \ref{thm-split-infin}, suppose for some sequence $r_{i}\rightarrow+\infty$, $(\tilde{X}_{i},\tilde{p}_{i},\tilde{d}_{i},\tilde{m}_{i})=(X,p,\frac{1}{r_{i}}d,\frac{1}{m(B_{r_{i}}(p))}m) \xrightarrow{pmGH}(\tilde{X}_{\infty},\tilde{p}_{\infty},\tilde{d}_{\infty},\tilde{m}_{\infty})$,
with $(\tilde{X}_{\infty},\tilde{p}_{\infty},\tilde{d}_{\infty},\tilde{m}_{\infty})$ being an $\RCD(0,N)$ space and $(\tilde{X}_{\infty},\tilde{d}_{\infty})\in \mathrm{Alex}^{n'}(0)$ ($n'\leq n\leq N$),
and $u^{(i)}=\frac{1}{r_{i}}u$ converges to a harmonic function $u^{(\infty)}: \tilde{X}_{\infty}\rightarrow \mathbb{R}$ in locally uniform and locally $W^{1,2}$-sense, and $|\tilde{\nabla} u^{(\infty)}|^{2}\equiv1$.
Then by Theorem \ref{thm-split-RCD}, $\tilde{X}_{\infty}$ splits off a line, and then by Theorem \ref{thm-LN20}, $X$ itself splits off a line.
\end{proof}

\subsection{A gap theorem for harmonic functions with almost linear growth}\label{subsec3.3}

Based on Proposition \ref{thm-split-infin}, we will prove a gap theorem for harmonic functions with almost linear growth in this section, see Theorem \ref{thm-gap-har-RCD} below.
This gap theorem plays an important role in this paper.

\begin{thm}\label{thm-gap-har-RCD}
Given $\eta> 0$ and $N\geq k\geq1$ with $k\in \mathbb{Z}^{+}$, there exists $\epsilon= \epsilon(N,\eta) > 0$ such that the following holds.
Suppose $(X,d,m)$ is an $\mathrm{RCD}(0,N)$ space which is $k$-splitting, and $B_{r}(p)(\subset X)$ is not $(\eta,k+1)$-Euclidean for any $r\geq 1$.
If $u : X\to \mathbb{R}$ is a harmonic function with $\epsilon$-almost linear growth,
then $u$ is a linear combination of the $\mathbb{R}^{k}$-coordinates in $X$.
In particular, $h_{1+\epsilon}(X)=k+1$.
\end{thm}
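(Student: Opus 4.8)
The plan is to argue by contradiction and compactness, mimicking the proof of Theorem~\ref{thm-split-infin}. Suppose the statement fails; then there exist $\eta>0$, a sequence $\epsilon_i\to 0$, $k$-splitting $\mathrm{RCD}(0,N)$ spaces $(X_i,d_i,m_i,p_i)$ with $B_r(p_i)$ not $(\eta,k+1)$-Euclidean for every $r\ge 1$, and harmonic functions $u_i:X_i\to\mathbb{R}$ with $\epsilon_i$-almost linear growth, i.e. $|u_i(x)|\le C_i d(x,p_i)^{1+\epsilon_i}+C_i$, such that $u_i$ is \emph{not} a linear combination of the $\mathbb{R}^k$-coordinates. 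Write $X_i=\mathbb{R}^k\times Y_i$ with $Y_i$ an $\mathrm{RCD}(0,N-k)$ space. After subtracting the ``linear part'' $\sum_a\langle\nabla u_i,\nabla x^a\rangle(p_i)x^a$ (a legitimate harmonic function by Lemma~\ref{partial_deri}), and normalizing, I would reduce to the case $u_i(p_i)=0$ and, crucially, that $u_i$ has \emph{no} $\mathbb{R}^k$-linear component at $p_i$ in the sense $\langle\nabla u_i,\nabla x^a\rangle(p_i)=0$ for all $a$; the assumption that $u_i$ is not a linear combination of coordinates then means the resulting $u_i$ is non-constant, so we may normalize $\sup|\nabla u_i|^2=1$ (this sup is finite and positive by the gradient estimate, and is attained as a limit of averages by the mean value property for subharmonic functions on $\mathrm{RCD}(0,N)$, as in the proof of Theorem~\ref{thm-split-infin}).

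Next I would extract a tangent cone at infinity: pass to blow-downs $(\tilde X_i, \tilde p_i, r_i^{-1}d_i, m(B_{r_i})^{-1}m_i)$ for a suitable scale sequence $r_i\to\infty$, converging in pointed measured Gromov--Hausdorff sense to a limit $\mathrm{RCD}(0,N)$ space $X_\infty$. The rescaled functions $r_i^{-1}u_i$ are $C(n)$-Lipschitz with controlled value at the base point (using the almost-linear growth bound with $\epsilon_i\to 0$), so by Theorems~\ref{AA} and~\ref{2.7777777} a subsequence converges locally uniformly and locally in $W^{1,2}$ to a harmonic $u_\infty:X_\infty\to\mathbb{R}$ with \emph{linear} growth and $|\nabla u_\infty|\le 1$. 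The key point is that $X_\infty$ is still $k$-splitting: each $\mathbb{R}^k$-coordinate survives the blow-down as a splitting function, so $X_\infty=\mathbb{R}^k\times Y_\infty$. Now I invoke Corollary~\ref{cor3.6} (or directly Proposition~\ref{thm126}/Remark~\ref{cor126}): a harmonic function of subquadratic growth on a $k$-splitting $\mathrm{RCD}(0,N)$ space has the form $u_\infty=\sum_a c_a x^a + \bar Q_0(y)$, and moreover each $\langle\nabla u_\infty,\nabla x^a\rangle$ is constant. But $\langle\nabla u_i,\nabla x^a\rangle$ is itself harmonic with \emph{linear} growth on $X_i$ (gradient estimate plus Lemma~\ref{partial_deri}), hence after blow-down $\langle\nabla u_\infty,\nabla x^a\rangle$ equals $\lim_i \langle\nabla u_i,\nabla x^a\rangle(p_i)=0$. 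So all $c_a=0$, forcing $u_\infty=\bar Q_0(y)$ to depend only on $Y_\infty$; combined with $|\nabla u_\infty|^2=\lim\bbint|\nabla(r_i^{-1}u_i)|^2$ being a genuine constant (mean value property survives the limit), one concludes $|\nabla u_\infty|^2$ is a constant $c^2$.

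If $c>0$, then $u_\infty$ realizes an extra splitting direction on $X_\infty=\mathbb{R}^k\times Y_\infty$ orthogonal to the $\mathbb{R}^k$-factor (orthogonality: $\langle\nabla u_\infty,\nabla x^a\rangle\equiv 0$), so by Theorem~\ref{thm-split-RCD} $X_\infty$ is $(k{+}1)$-splitting, hence $B_1(\tilde p_\infty)$ is $(0,k+1)$-Euclidean. By Theorem~\ref{thm-split-GHisom}(1) this pulls back to show $B_R(p_i)$ is $(\Psi(\epsilon_i|N),k+1)$-Euclidean for some large fixed $R$ and $i$ large, contradicting the hypothesis that $B_r(p_i)$ is not $(\eta,k+1)$-Euclidean for any $r\ge 1$. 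If instead $c=0$, then $u_\infty$ is constant; I would need to rule this out by a sharper choice of blow-down scale $r_i$ so that the normalization $\sup|\nabla u_i|^2=1$ survives with positive mass in the limit — concretely, choosing $r_i$ so that $\bbint_{B_{r_i}(p_i)}|\nabla u_i|^2$ stays bounded below, which is possible since $\sup|\nabla u_i|^2=1$ and the mean value property gives $\bbint_{B_R(p_i)}|\nabla u_i|^2\to 1$ as $R\to\infty$. This ``non-degeneracy of the blow-down limit'' is the main obstacle: one must arrange the scales so that the limiting harmonic function is genuinely non-constant while simultaneously keeping $X_\infty$ $k$-splitting; the almost-linear (rather than exactly linear) growth is what makes this delicate, and it is precisely here that one uses the estimate quantifying how $\epsilon_i$-almost-linearity degenerates to linearity in the limit. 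Once both cases are handled, the contradiction is complete. Finally, for the last sentence $h_{1+\epsilon}(X)=k+1$: the $\mathbb{R}^k$-coordinates together with the constant function give $h_{1+\epsilon}(X)\ge k+1$, while the first part of the theorem (applied to $u-u(p)$ for $u\in\mathcal{H}_{1+\epsilon}(X)$) shows every element of $\mathcal{H}_{1+\epsilon}(X)$ lies in the $(k+1)$-dimensional span of the coordinates and constants, giving the reverse inequality.
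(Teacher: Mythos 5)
There is a genuine gap at the normalization step, and it is exactly the issue the paper's proof is designed to sidestep. You write that the normalization $\sup|\nabla u_i|^2=1$ is possible because ``this sup is finite\dots\ by the gradient estimate,'' citing the proof of Proposition~\ref{thm-split-infin}. But that proposition concerns functions in $\mathcal{H}_1(X,p)$, i.e.\ of \emph{exactly} linear growth, for which the gradient estimate does give a finite uniform bound on $|\nabla f|$. For a harmonic $u$ with $\epsilon$-almost linear growth, $|u(x)|\le C\,d(x,p)^{1+\epsilon}+C$, the gradient estimate applied on $B_{d/2}(x)\subset B_{2d}(p)$ only yields $|\nabla u(x)|\lesssim d(x,p)^{\epsilon}$. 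When $\epsilon>0$ this is unbounded, so $\sup|\nabla u_i|^2$ may well be infinite and the normalization cannot be performed. Your subsequent choice of blow-down scales via $\bbint_{B_{r_i}}|\nabla u_i|^2\ge 1-1/i$, and the conclusion $|\nabla u_\infty|^2\equiv c^2$, all rest on this invalid normalization. You do flag this as ``the main obstacle'' and ``delicate,'' but you do not close the gap — and in fact no tweak of the blow-down scale can repair it, because the missing quantity is a bound that does not exist.

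The paper resolves this with a genuinely different mechanism. After subtracting off the $\R^k$-linear part (just as you do), it does \emph{not} normalize by the gradient. Instead it proves a quantitative \textbf{Claim}: for $\epsilon$ small, any harmonic $v$ with $v(p)=0$, $\langle\nabla v,\nabla x^j\rangle\equiv 0$, and $|v(x)|\le d(x,p)^{1+\epsilon}+4$ must satisfy $|v(x)|\le\tfrac12 d(x,p)^{1+\epsilon}$ for $x\notin B_1(p)$. The Claim is proved by contradiction, rescaling at scale $R_i=d_i(x_i,p_i)$, where $x_i$ is a \emph{bad point} at which the bound fails with $|v_i(x_i)|>\tfrac12 R_i^{1+\epsilon_i}$. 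Rescaling at the bad point forces the rescaled limit $\tilde v_\infty$ to satisfy $|\tilde v_\infty(\tilde x_\infty)|\ge\tfrac12$ at a point on $\partial B_1(\tilde p_\infty)$, so non-constancy of the limit comes for free, with no gradient normalization whatsoever; the rescaled sequence has linear growth with a fixed constant, hence locally Lipschitz bounds suffice for Arzela--Ascoli. Then Proposition~\ref{thm-split-infin} (applied to the \emph{tangent cones at infinity} of $\tilde X_\infty$, not to $\tilde X_\infty$ itself) gives the $(k+1)$-splitting contradiction. Finally, once the Claim holds, an iteration with the maximum principle — replace $v$ by $2v$, apply the Claim again, repeat — forces $v\equiv 0$. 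It is this doubling iteration, not a direct compactness argument, that converts the almost-linear decay estimate into exact rigidity. That is the structure your proposal is missing.
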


\begin{proof}
Without loss of generality, we assume $u$ is a non-constant function and $u(p)=0$.
Let $x^{1},\ldots,x^{k}$ be the standard coordinates in $\mathbb{R}^{k}$-factor such that $x^{j}(p)=0$.
Then by Corollary \ref{cor3.6}, $\langle \nabla u,\nabla x^{j}\rangle$ is a constant function.
Denote by $a^{j}=\langle \nabla u,\nabla x^{j}\rangle$, and we choose suitable $C'$ such that
$$v(x):=\frac{1}{C'}(u(x)-\Sigma_{j=1}^{k}a^{j}x^{j})$$
satisfies the followings:
\begin{align}\label{0.111111}
v(p)=0,
\end{align}
\begin{align}\label{0.111112}
|v(x)|\leq d(x,p)^{1+\epsilon}+4,
\end{align}
\begin{align}\label{0.111113}
\langle \nabla v, \nabla x^{j}\rangle\equiv0  \text{ for every  } j\in\{1,\ldots,k\}.
\end{align}

The theorem is proved provided we can prove $v\equiv0$.

We claim that, there exists a positive constant $\epsilon(n,\eta)\ll 1$, depending on $n$, $\eta$, such that, for any $\epsilon\in(0,\epsilon(n,\eta))$, if $(X,d,m)$ is an $\mathrm{RCD}(0,N)$ space which is $k$-splitting and $B_{r}(p)$ is not $(\eta, k+1)$-Euclidean for any $r\geq 1$,
and $v: X\rightarrow \mathbb{R}$ is a harmonic function satisfying (\ref{0.111111}), (\ref{0.111112}) and (\ref{0.111113}), then
\begin{align} \label{1.25852}
|v(x)|\leq\frac{1}{2}d(x,p)^{1+\epsilon} \text{ for any } x\in X\setminus B_{1}(p).
\end{align}

Suppose the claim does not hold.
Then there exist $\epsilon_{i}\downarrow0$, $\{(X_{i},d_{i},m_{i})\}$, and $\{v_{i}\}$ such that, each $(X_{i},d_{i},m_{i})$ is an $\mathrm{RCD}(0,N)$ space which is $k$-splitting and $B_{r}(p_{i})$ is not $(\eta,k+1)$-Euclidean for any $r\geq 1$ (where $p_{i}\in X_{i}$), each $v_{i}:X_{i}\rightarrow \mathbb{R}$ is a harmonic function satisfying (\ref{0.111111}), (\ref{0.111113}) and
\begin{align}\label{0.111114}
|v_{i}(x)|\leq d_{i}(x,p_{i})^{1+\epsilon_{i}}+4,
\end{align}
but there exist $x_{i}$ satisfying $R_{i}:=d_{i}(x_{i},p_{i})\geq1$ and $|v_{i}(x_{i})|>\frac{1}{2}d_{i}(x_{i},p_{i})^{1+\epsilon_{i}}$.

Denote by $(\tilde{X}_{i},\tilde{p}_{i},\tilde{d}_{i},\tilde{m}_{i}):= (X_{i},p_{i},\frac{1}{R_{i}}d_{i},\frac{1}{m_{i}(B_{R_{i}}(p_{i}))}m_{i})$.
Passing to a subsequence, we assume $(\tilde{X}_{i},\tilde{p}_{i},\tilde{d}_{i},\tilde{m}_{i})$ converges to an $\mathrm{RCD}(0,N)$ space $(\tilde{X}_{\infty},\tilde{p}_{\infty},\tilde{d}_{\infty},\tilde{m}_{\infty})$ in the pointed measured Gromov-Hausdorff distance.
Then $\tilde{X}_{\infty}$ is $k$-splitting, and for any $r\geq 1$, $B_{r}(\tilde{p}_{\infty})$ is not $(\frac{\eta}{2},k+1)$-Euclidean.
We may assume that, the standard coordinates $\tilde{x}^{i,1},\ldots,\tilde{x}^{i,k}$ of $\mathbb{R}^{k}$-factor in $\tilde{X}_{i}$ with $\tilde{x}^{i,j}(p_{i})=0$, converge in locally uniform and locally $W^{1,2}$-sense to coordinates systems $\tilde{x}^{\infty,1},\ldots,\tilde{x}^{\infty,k}$ of the $\mathbb{R}^{k}$-factor in $\tilde{X}_{\infty}$.

Let $\tilde{v}_{i}:\tilde{X}_{i}\rightarrow\mathbb{R}$ be given by $\tilde{v}_{i}(x)=\frac{1}{R_{i}^{1+\epsilon_{i}}}{v}_{i}(x)$, then every $\tilde{v}_{i}$ is harmonic, and
\begin{align}
|\tilde{v}_{i}(x)|\leq \frac{1}{R_{i}^{1+\epsilon_{i}}}(d_{i}(x,p_{i})^{1+\epsilon_{i}}+4)\leq \tilde{d}_{i}(x,\tilde{p}_{i})^{1+\epsilon_{i}}+4.
\end{align}
By gradient estimate,
\begin{align}
\sup_{x\in {B}_{r}(\tilde{p}_{i})}|\nabla\tilde{v}_{i}(x)|\leq C+Cr^{\epsilon_{i}}
\end{align}
for every $r>0$, where $C$ is a positive constant depending only on $N$.
Thus by Theorems \ref{AA} and \ref{2.7777777}, up to passing to a subsequence, $\tilde{v}_{i}$ converges in locally uniform and locally $W^{1,2}$-sense to a harmonic function $\tilde{v}_{\infty}:\tilde{X}_{\infty}\rightarrow\mathbb{R}$ with
\begin{align}\label{0.69955}
|\tilde{v}_{\infty}(x)|\leq \tilde{d}_{\infty}(x,\tilde{p}_{\infty})+4.
\end{align}
Note that $\tilde{v}_{\infty}$ is not a constant function because $\tilde{v}_{\infty}(\tilde{p}_{\infty})=0$ and there exists some $\tilde{x}_{\infty}\in \partial {B}_{1}(\tilde{p}_{\infty})$ such that $\tilde{v}_{\infty}(\tilde{x}_{\infty})\geq\frac{1}{2}$.
By Corollary \ref{cor3.6},
$\langle \nabla \tilde{v}_{\infty}, \nabla \tilde{x}^{\infty,j}\rangle$ is a constant function for every $j\in\{1,\ldots,k\}$,
and thus
\begin{align}
&\langle \nabla \tilde{v}_{\infty}, \nabla \tilde{x}^{\infty,j}\rangle=\bbint_{{B}_{1}(\tilde{p}_{\infty})}\langle \nabla \tilde{v}_{\infty}, \nabla \tilde{x}^{\infty,j}\rangle d\tilde{m}_{\infty} \\
=&\lim_{i\rightarrow\infty} \bbint_{{B}_{1}(\tilde{p}_{i})}\langle \nabla \tilde{v}_{i}, \nabla \tilde{x}^{i,j}\rangle d\tilde{m}_{i}=0. \nonumber
\end{align}
Hence $\tilde{v}_{\infty}\notin \mathrm{span}\{\tilde{x}^{\infty,1},\ldots,\tilde{x}^{\infty,k}\}$.
Then by Proposition \ref{thm-split-infin}, every tangent cone at infinity of $\tilde{X}_{\infty}$ is $(k+1)$-splitting, and thus $B_{r}(\tilde{p}_{\infty})\subset \tilde{X}_{\infty}$ is $(\frac{\eta}{4},k+1)$-Euclidean for some large $r$, which is a contradiction.
This completes the proof of the claim.

By (\ref{1.25852}), $\sup_{x\in\partial B_{2}(p)}|v(x)|\leq 2$, and by the maximum principle (see Theorem 7.17 in \cite{C99}), $\sup_{x\in B_{2}(p)}|v(x)|\leq 2$.
Hence
\begin{align}
|v(x)|\leq \frac{1}{2}(d(x,p)^{1+\epsilon}+4) \text{ on }X.
\end{align}

Let $\hat{v}=2v$, then $\hat{v}$ is a harmonic function satisfying (\ref{0.111111}), (\ref{0.111112}) and (\ref{0.111113}).
Then by the above claim and the maximum principle again, we have $|\hat{v}(x)|\leq \frac{1}{2}(d(x,p)^{1+\epsilon}+4)$, i.e. $|v(x)|\leq \frac{1}{2^{2}}(d(x,p)^{1+\epsilon}+4)$.
By induction, we can prove $|v(x)|\leq \frac{1}{2^{j}}(d(x,p)^{1+\epsilon}+4)$ for any $j\in\mathbb{N}^{+}$.
Thus $v(x)\equiv0$.

The proof is completed.
\end{proof}

Using the method as in the proof of Theorem \ref{thm-gap-har-RCD}, we can prove the following Liouville-type theorem, whose proof is omitted here.

\begin{prop}\label{thm-liou-har-RCD}
Given $N\geq 1$ and $\eta> 0$, there exists $\epsilon= \epsilon(N,\eta) > 0$ such that the following holds.
Suppose $(X,p,d,m)$ is an $\mathrm{RCD}(0,N)$ space such that $B_{R}(p)$ is not $(\eta,1)$-Euclidean for any $R\geq 1$.
If $u : X\to \mathbb{R}$ is a harmonic function with $\epsilon$-almost linear growth,
then $u$ is constant.
\end{prop}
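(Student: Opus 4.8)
The statement to prove is Proposition \ref{thm-liou-har-RCD}: on a noncompact $\mathrm{RCD}(0,N)$ space $(X,p,d,m)$ with $B_R(p)$ never $(\eta,1)$-Euclidean for $R\geq 1$, every harmonic $u$ with $\epsilon$-almost linear growth is constant, provided $\epsilon=\epsilon(N,\eta)$ is small enough. I would follow the argument of Theorem \ref{thm-gap-har-RCD} almost verbatim, with $k=0$: there is no $\mathbb{R}^k$-factor to split off, the normalization step becomes trivial, and the "span of coordinates" is just the constants.

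\smallskip

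First I would reduce, by contradiction and normalization, to the following claim: there is $\epsilon(N,\eta)\ll 1$ such that for all $\epsilon\in(0,\epsilon(N,\eta))$, if $(X,d,m)$ is $\mathrm{RCD}(0,N)$ with $B_r(p)$ not $(\eta,1)$-Euclidean for all $r\geq 1$, and $v:X\to\mathbb{R}$ is harmonic with $v(p)=0$ and $|v(x)|\leq d(x,p)^{1+\epsilon}+4$, then $|v(x)|\leq\tfrac12 d(x,p)^{1+\epsilon}$ for all $x\in X\setminus B_1(p)$. To prove the claim, suppose it fails: get $\epsilon_i\downarrow 0$, spaces $(X_i,d_i,m_i,p_i)$ and harmonic $v_i$ with $v_i(p_i)=0$, $|v_i|\leq d_i(\cdot,p_i)^{1+\epsilon_i}+4$, and points $x_i$ with $R_i:=d_i(x_i,p_i)\geq 1$ and $|v_i(x_i)|>\tfrac12 R_i^{1+\epsilon_i}$. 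Rescale by $R_i$: set $(\tilde X_i,\tilde p_i,\tilde d_i,\tilde m_i)=(X_i,p_i,R_i^{-1}d_i,m_i(B_{R_i}(p_i))^{-1}m_i)$, pass to a pointed-measured-Gromov--Hausdorff limit $\tilde X_\infty$, which is again $\mathrm{RCD}(0,N)$ and, being a limit of spaces whose unit-radius-through-$R$ balls are not $(\eta,1)$-Euclidean, satisfies: $B_r(\tilde p_\infty)$ is not $(\tfrac\eta2,1)$-Euclidean for all $r\geq 1$. Set $\tilde v_i=R_i^{-(1+\epsilon_i)}v_i$; these are harmonic with $|\tilde v_i|\leq\tilde d_i(\cdot,\tilde p_i)^{1+\epsilon_i}+4$, so by the gradient estimate they have uniformly (locally) bounded Lipschitz constants, and by Theorems \ref{AA} and \ref{2.7777777} a subsequence converges locally uniformly and locally $W^{1,2}$ to a harmonic $\tilde v_\infty:\tilde X_\infty\to\mathbb{R}$ with $|\tilde v_\infty(x)|\leq\tilde d_\infty(x,\tilde p_\infty)+4$ — so $\tilde v_\infty$ has linear growth. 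Moreover $\tilde v_\infty(\tilde p_\infty)=0$ and $\tilde v_\infty(\tilde x_\infty)\geq\tfrac12$ at a limit point $\tilde x_\infty\in\partial B_1(\tilde p_\infty)$, so $\tilde v_\infty$ is non-constant.

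\smallskip

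Now apply Proposition \ref{thm-split-infin} with $k=h_1(\tilde X_\infty,\tilde p_\infty)\geq 1$ (the $\mathcal{H}_1$-space is nonzero since it contains $\tilde v_\infty$): every tangent cone at infinity of $\tilde X_\infty$ is $k$-splitting, hence in particular $1$-splitting, hence isometric to $\mathbb{R}\times(\text{something})$. Consequently $B_r(\tilde p_\infty)$ is $(\tfrac\eta4,1)$-Euclidean for some large $r$ (a large rescaled ball is GH-close to a ball in the splitting tangent cone at infinity, which splits an $\mathbb{R}$-factor), contradicting the non-Euclidean property derived above. This proves the claim. Finally, from the claim one bootstraps: $\sup_{\partial B_2(p)}|v|\leq 2$, so by the maximum principle (Theorem 7.17 in \cite{C99}) $\sup_{B_2(p)}|v|\leq 2$, hence $|v(x)|\leq\tfrac12(d(x,p)^{1+\epsilon}+4)$ everywhere; replacing $v$ by $2v$, $4v$, \dots and reapplying the claim plus the maximum principle yields $|v(x)|\leq 2^{-j}(d(x,p)^{1+\epsilon}+4)$ for every $j$, forcing $v\equiv 0$. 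Since the original $u$ with $\epsilon$-almost linear growth, after subtracting $u(p)$ and rescaling, is exactly such a $v$ (here no coordinate functions need to be subtracted off because $k=0$), we conclude $u$ is constant.

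\smallskip

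\textbf{Main obstacle.} The delicate point, exactly as in Theorem \ref{thm-gap-har-RCD}, is the stability of the "not $(\eta,1)$-Euclidean for all $r\geq 1$" hypothesis under the rescaling-and-limit procedure: one must check that after dividing distances by $R_i$ the balls of radius $r\in[R_i^{-1},1]$ (and beyond) retain a quantitative non-Euclidean property, so that the limit space $\tilde X_\infty$ genuinely inherits "not $(\tfrac\eta2,1)$-Euclidean for all $r\geq 1$" — and then that this contradicts the $1$-splitting of its tangent cones at infinity. This is where the loss from $\eta$ to $\tfrac\eta2$ to $\tfrac\eta4$ is absorbed and where the choice of $\epsilon(N,\eta)$ is pinned down; everything else (gradient estimates, $W^{1,2}$-compactness, the maximum-principle bootstrap) is routine given the tools assembled in Section \ref{sec-2} and Proposition \ref{thm-split-infin}.
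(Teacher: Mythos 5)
Your proposal is correct and is exactly the argument the paper has in mind: the paper explicitly omits the proof, remarking only that it follows ``the method as in the proof of Theorem \ref{thm-gap-har-RCD},'' and your write-up is precisely that argument specialized to $k=0$ (no coordinate functions to subtract off, $h_1(\tilde X_\infty,\tilde p_\infty)\geq 1$ because $\tilde v_\infty$ is non-constant of linear growth, Proposition \ref{thm-split-infin} forces the tangent cone at infinity to split, contradicting the inherited non-$(\eta/2,1)$-Euclidean property, followed by the maximum-principle bootstrap).
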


Proposition \ref{thm-liou-har-RCD} can be strengthened on Alexandrov spaces with nonnegative curvature:

\begin{prop}\label{thm-liouville-Alex}
For any $N\geq 1$ and $\eta> 0$, there exists $\epsilon= \epsilon(N,\eta) > 0$ such that the following holds.
Suppose $(X,d,m)$ is an $\RCD(0,N)$ space and $(X,d)\in \mathrm{Alex}^{n}(0)$ ($n\leq N$).
Suppose in addition $B_{1}(p)\subset X$ is not $(\eta,1)$-Euclidean and $u : X\to \R$ is a harmonic function with $\epsilon$-almost linear growth, then $u$ is constant.
\end{prop}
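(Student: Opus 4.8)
The plan is to run the contradiction-and-blow-down argument from the proof of Theorem~\ref{thm-gap-har-RCD} (equivalently, of Proposition~\ref{thm-liou-har-RCD}), but to replace the appeal to Proposition~\ref{thm-split-infin} at the limiting step by Proposition~\ref{thm-split-alex} together with the scale-invariant propagation of almost Euclidean-ness on Alexandrov spaces, Theorem~\ref{thm-LN20}. As in those proofs, after replacing $u$ by $(u-u(p))/C$ for a suitable $C>0$ we may assume $u(p)=0$ and $|u(x)|\leq d(x,p)^{1+\epsilon}+4$, and everything reduces to the following gap statement: there is $\epsilon=\epsilon(N,\eta)>0$ so that, under the hypotheses of the proposition, any harmonic $u$ with $u(p)=0$ and $|u(x)|\leq d(x,p)^{1+\epsilon}+4$ in fact satisfies $|u(x)|\leq\tfrac12 d(x,p)^{1+\epsilon}$ for all $x\in X\setminus B_1(p)$. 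Granting this, $\sup_{\partial B_2(p)}|u|\leq 2$, so by the maximum principle (Theorem~7.17 of \cite{C99}) $\sup_{B_2(p)}|u|\leq 2$, whence $|u|\leq\tfrac12(d(\cdot,p)^{1+\epsilon}+4)$ on $X$; applying the gap statement to $2u$ and iterating exactly as in the proof of Theorem~\ref{thm-gap-har-RCD} forces $u\equiv0$, proving the proposition.

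To prove the gap statement, suppose it fails: there are $\epsilon_i\downarrow0$, $\RCD(0,N)$ spaces $(X_i,d_i,m_i,p_i)$ with $(X_i,d_i)\in\mathrm{Alex}^{n}(0)$ (passing to a subsequence the Alexandrov dimension $n\leq N$ may be taken fixed) and $B_1(p_i)$ not $(\eta,1)$-Euclidean, harmonic $u_i$ with $u_i(p_i)=0$ and $|u_i(x)|\leq d_i(x,p_i)^{1+\epsilon_i}+4$, but points $x_i$ with $R_i:=d_i(x_i,p_i)\geq1$ and $|u_i(x_i)|>\tfrac12 R_i^{1+\epsilon_i}$. Rescale to unit scale: put $\tilde d_i:=R_i^{-1}d_i$, $\tilde m_i:=m_i(B_{R_i}(p_i))^{-1}m_i$, $\tilde p_i:=p_i$, $\tilde u_i:=R_i^{-(1+\epsilon_i)}u_i$; then $(\tilde X_i,\tilde p_i,\tilde d_i,\tilde m_i)$ is again $\RCD(0,N)$ and lies in $\mathrm{Alex}^{n}(0)$, $\tilde u_i$ is harmonic with $\tilde u_i(\tilde p_i)=0$, $|\tilde u_i(x)|\leq\tilde d_i(x,\tilde p_i)^{1+\epsilon_i}+4$, and $\tilde d_i(x_i,\tilde p_i)=1$, $|\tilde u_i(x_i)|>\tfrac12$. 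Passing to a subsequence, $(\tilde X_i,\tilde p_i,\tilde d_i,\tilde m_i)\xrightarrow{pmGH}(\tilde X_\infty,\tilde p_\infty,\tilde d_\infty,\tilde m_\infty)$, an $\RCD(0,N)$ space with $(\tilde X_\infty,\tilde d_\infty)\in\mathrm{Alex}^{n'}(0)$ for some $n'\leq n$. By the gradient estimate the $\tilde u_i$ are uniformly Lipschitz on each fixed ball, so by Theorems~\ref{AA} and~\ref{2.7777777} a further subsequence converges locally uniformly and locally in $W^{1,2}$ to a harmonic $\tilde u_\infty:\tilde X_\infty\to\R$ with $|\tilde u_\infty(x)|\leq\tilde d_\infty(x,\tilde p_\infty)+4$ (the exponent $1+\epsilon_i\to1$ in the limit), and $\tilde u_\infty(\tilde p_\infty)=0$ while $|\tilde u_\infty|\geq\tfrac12$ at the limit point of the $x_i$, so $\tilde u_\infty$ is non-constant.

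Now $\tilde u_\infty$ is a non-constant harmonic function of linear growth on the nonnegatively curved Alexandrov $\RCD(0,N)$ space $\tilde X_\infty$, so Proposition~\ref{thm-split-alex} shows $\tilde X_\infty$ splits as $\R\times Y$; in particular $B_3(\tilde p_\infty)$ is $(0,1)$-Euclidean, hence for $i$ large $B_3(\tilde p_i)$ is $(\delta_0,1)$-Euclidean, where $\delta_0:=\delta(n,\eta/2)$ is the constant furnished by Theorem~\ref{thm-LN20} with $k=1$. Since $(\tilde X_i,\tilde d_i)\in\mathrm{Alex}^{n}(0)\subset\mathrm{Alex}^{n}(-\delta_0)$, Theorem~\ref{thm-LN20} gives that $B_r(x)$ is $(\eta/2,1)$-Euclidean for every $x\in B_1(\tilde p_i)$ and every $r\in(0,1]$; taking $x=\tilde p_i=p_i$ and $r=R_i^{-1}\in(0,1]$ and scaling lengths back by the factor $R_i$ (note $B_{R_i^{-1}}^{\tilde d_i}(p_i)=B_1^{d_i}(p_i)$) shows $B_1(p_i)\subset X_i$ is $(\eta/2,1)$-Euclidean, contradicting the choice of $X_i$. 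This proves the gap statement, and hence the proposition.

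The main difficulty is exactly this last transfer: the almost-splitting is produced at the auxiliary scale $R_i$, which may tend to $\infty$, and it must be pulled back to the fixed unit scale. On a general $\RCD(0,N)$ space there is no mechanism for this, and that is precisely why Proposition~\ref{thm-liou-har-RCD} must assume $B_R(p)$ is not $(\eta,1)$-Euclidean for \emph{all} $R\geq1$; here the scale-invariant propagation of almost Euclidean-ness on Alexandrov spaces (Theorem~\ref{thm-LN20}), which carries information from a single large scale down to every smaller scale without loss, lets us get away with the single hypothesis at scale $1$. The remaining points — that the rescaled limit stays a nonnegatively curved Alexandrov space of dimension $\leq n$ (so Proposition~\ref{thm-split-alex} applies), and that harmonicity, the gradient bound and the growth bound survive the limit despite $\epsilon_i\to0$ — are routine consequences of the scale-invariance of the $\RCD$ and Alexandrov conditions and of Theorems~\ref{AA} and~\ref{2.7777777}.
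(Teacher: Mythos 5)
Your proof is correct and follows essentially the same route as the paper's: normalize and reduce to a gap statement, run a blow-down contradiction argument to produce a non-constant linear-growth harmonic function on a limiting nonnegatively curved Alexandrov space, invoke Proposition~\ref{thm-split-alex} to split the limit, and then use Theorem~\ref{thm-LN20} to propagate the almost-splitting at scale $R_i$ back down to the fixed unit scale to get the contradiction. The only cosmetic difference is that you carry a fixed threshold $\delta_0=\delta(n,\eta/2)$ through the $B_3$-scale application of Theorem~\ref{thm-LN20}, while the paper lets $\delta_i\downarrow0$; both variants are equivalent.
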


\begin{proof}
The key is to prove the following claim:

\textbf{Claim: }For any $N\geq 1$ and $\eta> 0$, there exists $\epsilon= \epsilon(N,\eta) > 0$ such that the following holds. Suppose $(X,d,m)$ is an $\RCD(0,N)$ space, $(X,d)\in \mathrm{Alex}^{n}(0)$ ($n\leq N$), and  $B_{1}(p)$ is not $(\eta,1)$-Euclidean, then any non-constant harmonic function $u : X\to\R$ with $u(p)=0$ and
	\begin{align}\label{2.114455123}
		|u(x)|\leq d(x,p)^{1+\epsilon}+ 4
	\end{align}
must satisfy
\begin{align} \label{1.25852755545}
|u(x)|\leq\frac{1}{2}d(x,p)^{1+\epsilon} \text{ for any } x\in X\setminus B_{1}(p).
\end{align}

Once this claim is proved, then together with the maximum principle, and by an induction argument as in the proof of Theorem \ref{thm-gap-har-RCD}, we derive that
$u(x)$ is identical to $0$, which is a contradiction.

We suppose the claim does not hold.
Then for some $\eta>0$, $N\geq 1$, we have a sequence of positive numbers $\epsilon_{i}\rightarrow 0$ and contradiction sequences $\{(X_{i},p_{i}, d_{i},m_{i})\}$, $\{u_{i}\}$ with $u_{i}(p_{i})=0$ and $|u_{i}(x)|\leq d_{i}(x,p_{i})^{1+\epsilon_{i}}+ 4$, but $|u_{i}(x_{i})|>\frac{1}{2}d_{i}(x_{i},p_{i})^{1+\epsilon_{i}}$ for some $x_{i}$ with $R_{i}:=d_{i}(x_{i},p_{i})\geq1$.
Up to passing to a subsequence, we assume $(\tilde{X}_{i},\tilde{p}_{i},\tilde{d}_{i},\tilde{m}_{i}):= (X_{i},p_{i},\frac{1}{R_{i}}d_{i},\frac{1}{m_{i}(B_{R_{i}}(p_{i}))}m_{i})$ converges in the pointed measured Gromov-Hausdorff distance to some $\mathrm{RCD}(0,N)$ space $(\tilde{X}_{\infty},\tilde{p}_{\infty},\tilde{d}_{\infty},\tilde{m}_{\infty})$ and $(\tilde{X}_{\infty},\tilde{d}_{\infty})\in \mathrm{Alex}^{n'}(0)$ for some $n'\leq n\leq N$.
Similar to the proof of Theorem \ref{thm-gap-har-RCD}, suitable scaling of the above $\{u_{i}\}$ converges to a non-constant harmonic function of linear growth on $\tilde{X}_{\infty}$.
By Proposition \ref{thm-split-alex}, we conclude that $\tilde{X}_{\infty}$ splits.
Thus for sufficiently large $i$, $B_{R_{i}}(p_{i})=B_{1}(\tilde{p}_{i})$ is $(\delta_{i},1)$-Euclidean with $\delta_{i}\downarrow 0$.
Then by Theorem \ref{thm-LN20}, $B_{1}(p_{i})$ is $(\Psi(\delta_{i}|N),1)$-Euclidean, and we obtain a contradiction.
\end{proof}

Similarly, we can prove the following proposition, which strengthens Theorem \ref{thm-gap-har-RCD} in the case that $(X,d)$ is an Alexandrov space with nonnegative curvature.

\begin{prop}\label{thm-gap-har-Alex}
Given $\eta> 0$ and $N\geq n\geq k\geq 1$ with $k,n\in\mathbb{Z}^{+}$, there exists $\epsilon= \epsilon(N,\eta) > 0$ such that the following holds.
Suppose $(X,d,m)$ is an $\RCD(0,N)$ space and $(X,d)\in \mathrm{Alex}^{n}(0)$.
Suppose in addition $X$ is $k$-splitting and $B_{1}(p)\subset X$ is not $(\eta,k+1)$-Euclidean.
Let $u : X\to \mathbb{R}$ be a non-constant harmonic function with $\epsilon$-almost linear growth,
then $u$ is a linear function.
In particular, $h_{1+\epsilon}(X)=k+1$.
\end{prop}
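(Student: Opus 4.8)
The plan is to reduce Proposition \ref{thm-gap-har-Alex} to the already-established Theorem \ref{thm-gap-har-RCD}; the only extra input needed is that, on a nonnegatively curved Alexandrov space, being ``$(\delta,k+1)$-Euclidean'' propagates across scales via Theorem \ref{thm-LN20}. Concretely, I would first prove the following scale-propagation claim: there is $\eta'=\eta'(n,\eta)>0$ such that, if $(X,d)\in\mathrm{Alex}^{n}(0)$ and $B_{1}(p)$ is not $(\eta,k+1)$-Euclidean, then $B_{r}(p)$ is not $(\eta',k+1)$-Euclidean for every $r\ge1$. Granting this, one applies Theorem \ref{thm-gap-har-RCD} with $\eta'$ in place of $\eta$: there is $\epsilon=\epsilon(N,\eta')$, hence depending only on $N$ and $\eta$ (since $n\le N$), such that every harmonic function $u$ on $X$ with $\epsilon$-almost linear growth is a linear combination of the $\mathbb{R}^{k}$-coordinates of $X$ plus a constant — i.e. a linear function — and in particular $h_{1+\epsilon}(X)=k+1$. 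This is exactly the conclusion.

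To prove the claim I would argue by contradiction, using that scaling a metric by a positive constant preserves membership in $\mathrm{Alex}^{n}(0)$ and that being $(\delta,k+1)$-Euclidean is a scale-invariant notion. Suppose $B_{r_{0}}(p)$ were $(\eta',k+1)$-Euclidean for some $r_{0}\ge1$. If $r_{0}\ge3$, I rescale the metric by the factor $3/r_{0}\le1$; then $B_{r_{0}}(p)$ becomes the ball of radius $3$ about $p$ in the new (still nonnegatively curved) metric and is $(\eta',k+1)$-Euclidean, so by Theorem \ref{thm-LN20}, applied with Euclidean-accuracy target $\eta$ and provided $\eta'\le\delta(n,\eta)$, every ball of new-radius $\le1$ centered in the new unit ball about $p$ is $(\eta,k+1)$-Euclidean; taking the ball of new-radius $3/r_{0}\le1$ about $p$, i.e. the original $B_{1}(p)$, contradicts the hypothesis. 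If $1\le r_{0}<3$, then Lemma \ref{lem2.11} shows $B_{1}(p)$ is $(\sqrt{\eta'},k+1)$-Euclidean, because $1\in[r_{0}\sqrt{\eta'},r_{0}]$ once $\eta'<1/9$; this again contradicts the hypothesis as soon as $\sqrt{\eta'}<\eta$. So $\eta':=\min\{\delta(n,\eta),\eta^{2}/4,1/9\}$ works.

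Alternatively, and more in the spirit of the proofs of Propositions \ref{thm-liouville-Alex} and \ref{thm-split-alex}, one can run a direct blow-down argument: a contradicting sequence $(X_{i},d_{i},m_{i})$ ($k$-splitting, in $\mathrm{Alex}^{n}(0)$, with $B_{1}(p_{i})$ not $(\eta,k+1)$-Euclidean) yields, after orthogonalizing against the $\mathbb{R}^{k}$-coordinates exactly as in the proof of Theorem \ref{thm-gap-har-RCD} and rescaling by $R_{i}=d_{i}(x_{i},p_{i})\ge1$, a nonconstant harmonic function $\tilde v_{\infty}$ of linear growth on a limit $\mathrm{RCD}(0,N)$ space $\tilde X_{\infty}=\mathbb{R}^{k}\times Y\in\mathrm{Alex}^{n'}(0)$ which is orthogonal to all $\mathbb{R}^{k}$-coordinates; by Proposition \ref{thm126} with $s=1$ one has $\tilde v_{\infty}=\sum_{i}c_{i}\tilde x^{\infty,i}+\bar Q_{0}(y)$, and orthogonality forces every $c_{i}=0$, so $\bar Q_{0}$ is a nonconstant harmonic function of linear growth on the nonnegatively curved Alexandrov space $Y$; Proposition \ref{thm-split-alex} then makes $Y$ split off an $\mathbb{R}$-factor, hence $\tilde X_{\infty}$ is $(k+1)$-splitting and $B_{1}(\tilde p_{i})=B_{R_{i}}(p_{i})$ is $(\delta_{i},k+1)$-Euclidean with $\delta_{i}\downarrow0$, whereupon Theorem \ref{thm-LN20} (with Lemma \ref{lem2.11} to handle small $R_{i}$) forces $B_{1}(p_{i})$ to be $(\Psi(\delta_{i}|n,N),k+1)$-Euclidean, a contradiction for large $i$. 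In either route the only genuinely new ingredient beyond Theorem \ref{thm-gap-har-RCD} is the cross-scale propagation of Euclideanness furnished by Theorem \ref{thm-LN20} — which is precisely what the Alexandrov hypothesis buys — so the main point requiring care is purely bookkeeping: the split into $r_{0}\ge3$ versus $1\le r_{0}<3$ (respectively the small-$R_{i}$ subtlety in the blow-down route) when invoking Theorem \ref{thm-LN20} at the correct scale.
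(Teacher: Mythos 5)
Your proposal is correct, and both routes you sketch work. The paper omits the proof of Proposition \ref{thm-gap-har-Alex}, signalling only (via ``Similarly, we can prove\ldots'') that it should be modelled on the argument given for Proposition \ref{thm-liouville-Alex}: run the blow-down and iteration argument of Theorem \ref{thm-gap-har-RCD}, but at the final step replace Proposition \ref{thm-split-infin} (tangent cone at infinity splits) by Proposition \ref{thm-split-alex} (the space $Y$ itself splits), and then pull the resulting $(k+1)$-Euclideanness of $B_{R_i}(p_i)$ back down to scale $1$ via Theorem \ref{thm-LN20} and Lemma \ref{lem2.11}. That is exactly your ``alternatively'' route, and your sketch of it is faithful, including the small-$R_i$ adjustment that the paper glosses over. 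Your primary route is genuinely different and arguably cleaner: instead of re-running the contradiction argument, you first isolate the scale-propagation property that the Alexandrov hypothesis actually buys — that $B_1(p)$ not $(\eta,k+1)$-Euclidean forces $B_r(p)$ not $(\eta',k+1)$-Euclidean for all $r\ge1$, which is the contrapositive of Theorem \ref{thm-LN20} packaged with Lemma \ref{lem2.11} — and then invoke Theorem \ref{thm-gap-har-RCD} as a black box with $\eta'$ in place of $\eta$. This modular reduction is essentially an application of the paper's own notion of generalized Reifenberg condition in the Alexandrov setting (compare Proposition \ref{prop-smp-1}), so it fits well with the paper's framework; what it buys is that one never needs to re-derive the gap estimate or the maximum-principle iteration. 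The only point of care in your primary route, which you handle, is that $\eta'$ depends on $n$ and $\eta$ while the proposition advertises $\epsilon=\epsilon(N,\eta)$; since $n\le N$ and $n$ is a positive integer, taking the minimum over $n\in\{1,\ldots,\lfloor N\rfloor\}$ resolves this.
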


The proof is omitted.

\section{Transformation theorems and the non-degeneracy theorem} \label{sec-4}

\subsection{Proof of Theorem \ref{thm-splitting-stable}}\label{sec-4.1}

In this subsection, we prove the transformation theorem \ref{thm-splitting-stable}. In fact, we will prove the following theorem, which is equivalent to Theorem \ref{thm-splitting-stable}, see Remark \ref{rem4.1}.

\begin{thm}\label{thm-splitting-stable-full}
Given any $N \geq 1$, $\epsilon>0$ and $\eta>0$, there exists $\delta_{0}=\delta_0(N, \epsilon, \eta)$ such that, for any $\delta\in(0,\delta_{0})$, the following holds.
	Suppose $(X, d, m)$ is an $\mathrm{RCD}(-(N-1)\delta,N)$ space, and there is some $s\in(0,1)$ such that, for any $r\in[s,1]$, $B_{r}(p)$ is $(\delta, k)$-Euclidean but not $(\eta, k+1)$-Euclidean (where $k\leq N$).
	Let $u : (B_{1}(p),p)\rightarrow(\mathbb{R}^{k},0^k)$ be a $(\delta, k)$-splitting map,
	then for each $r\in[s,1]$, there exists a $k\times k$ lower triangle matrix $T_{r}$ with positive diagonal entries satisfying the followings:
\begin{description}
  \item[(1)] $T_{r}u : B_{r}(p)\rightarrow\mathbb{R}^{k}$ is a $(\epsilon,k)$-splitting map;
  \item[(2)] $\bbint_{B_{r}(p)}\langle\nabla(T_{r}u)^{a},\nabla(T_{r}u)^{b}\rangle=\delta^{ab}$;
  \item[(3)] $|T_{r}\circ T_{2r}^{-1}-\mathrm{Id}|\leq\epsilon$, here $|\cdot|$ means $L^{\infty}$-norm of a matrix;
  \item[(4)] for any $t, r \in [s,1]$,
\begin{align}
| T_{r}\circ T_{t}^{-1}|\leq  (1+C\epsilon) \max\{\bigl(\frac{t}{r}\bigr)^{C\epsilon}, \bigl(\frac{r}{t}\bigr)^{C\epsilon}\}
\end{align}
holds for a constant $C=C(N) > 1$.
\end{description}
\end{thm}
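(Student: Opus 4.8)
The plan is to follow the strategy of Cheeger--Jiang--Naber for Proposition 7.7 of \cite{CJN21}, but to replace the metric-cone input (available only in the non-collapsed setting) by the gap theorem for harmonic functions with almost linear growth, Theorem \ref{thm-gap-har-RCD}. The natural candidate for $T_r$ is the Gram--Schmidt (Cholesky) normalization of the gradients: for $r\in[s,1]$ put $G_r=\bigl(\bbint_{B_r(p)}\langle\nabla u^a,\nabla u^b\rangle\,dm\bigr)_{a,b}$, and, whenever $G_r$ is positive definite, let $T_r$ be the unique $k\times k$ lower triangular matrix with positive diagonal entries such that $T_rG_rT_r^{\top}=\mathrm{Id}$. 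With this choice (2) holds by construction; by the remark following Definition \ref{def-harm-split}, to get (1) it suffices to prove $|\nabla(T_ru)^a|\le 1+\epsilon$ together with $\bbint_{B_r(p)}|\langle\nabla(T_ru)^a,\nabla(T_ru)^b\rangle-\delta^{ab}|\,dm<\epsilon$. At the top scale $r=1$, since $u$ is $(\delta,k)$-splitting, $G_1$ is $\Psi(\delta\,|\,N)$-close to $\mathrm{Id}$, hence $T_1$ is $\Psi(\delta\,|\,N)$-close to $\mathrm{Id}$ and all of (1), (2), (3) hold at $r=1$ once $\delta$ is small.

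Next I would set up a downward induction on scales coupled with a blow-up argument. Suppose the theorem fails for some $N,\epsilon,\eta$: there are $\delta_i\downarrow 0$, $\mathrm{RCD}(-(N-1)\delta_i,N)$ spaces $(X_i,d_i,m_i)$, points $p_i$, scales $s_i\in(0,1)$, and $(\delta_i,k)$-splitting maps $u_i:(B_1(p_i),p_i)\to(\mathbb{R}^k,0^k)$ with $B_r(p_i)$ being $(\delta_i,k)$-Euclidean but not $(\eta,k+1)$-Euclidean for every $r\in[s_i,1]$, for which the conclusion fails. Using continuity of $G_r$ in $r$ and the $r=1$ discussion, let $\bar r_i\in[s_i,1)$ be the largest scale at which one of (1), (2), (3) fails; then (1), (2), (3) hold on $(\bar r_i,1]$ for $i$ large. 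On that range property (3) telescopes: writing $T_{\bar r_i}=(T_{\bar r_i}T_{2\bar r_i}^{-1})(T_{2\bar r_i}T_{4\bar r_i}^{-1})\cdots T_1$ with $O(\log_2(1/\bar r_i))$ factors each within $\epsilon$ of $\mathrm{Id}$, one obtains $|T_r|+|T_r^{-1}|\le C(N)\,r^{-\Psi(\epsilon)}$ for all $r\in[\bar r_i,1]$ (in particular $G_{\bar r_i}>0$, so $T_{\bar r_i}$ exists). Consequently each component of $T_{\bar r_i}u_i$ is harmonic, and the identity $\bbint_{B_{\bar r_i}(p_i)}|\nabla(T_{\bar r_i}u_i)^a|^2=1$ together with the pointwise bound $|\nabla(T_{\bar r_i}u_i)^a|\le|T_{\bar r_i}|(1+\delta_i)\le\bar r_i^{-\Psi(\epsilon)}$ gives, via the gradient estimate, that the rescaled and normalized map $\tilde v_i:=\bar r_i^{-1}T_{\bar r_i}u_i$ on $(X_i,\bar r_i^{-1}d_i,\,m_i(B_{\bar r_i}(p_i))^{-1}m_i,\,p_i)$ is harmonic with $|\tilde v_i^a(x)|\le C\,d(x,p_i)^{1+\Psi(\epsilon)}+C$ on $B_{1/\bar r_i}(p_i)$, i.e.\ with \emph{almost linear growth and a uniform constant}. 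This slow-growth estimate, obtained precisely by carrying along (3), is the structural fact that makes the gap theorem applicable.

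Then I would blow up at the worst scale. Passing to the pointed measured Gromov--Hausdorff limit one gets an $\mathrm{RCD}(0,N)$ space $(X_\infty,p_\infty,d_\infty,m_\infty)$; since $B_r(p_i)$ is $(\delta_i,k)$-Euclidean on $[s_i,1]$ with $\delta_i\to0$, Theorems \ref{thm-split-GHisom} and \ref{thm-split-RCD} show $X_\infty$ is $k$-splitting, and since $B_r(p_i)$ is never $(\eta,k+1)$-Euclidean on $[s_i,1]$ while $1/\bar r_i\to\infty$ (the easier case $\bar r_i\ge c>0$, where only boundedly many scales intervene and $T_{\bar r_i}$ stays bounded, is treated separately), $B_R(p_\infty)$ is not $(\eta/2,k+1)$-Euclidean for every $R\ge1$. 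By Theorems \ref{AA} and \ref{2.7777777} the $\tilde v_i^a$ converge locally uniformly and locally in $W^{1,2}$ to harmonic functions $w^1,\dots,w^k$ on $X_\infty$ with $\epsilon$-almost linear growth, and the scale-invariant identity (2) passes to the limit to give $\bbint_{B_1(p_\infty)}\langle\nabla w^a,\nabla w^b\rangle\,dm_\infty=\delta^{ab}$. The gap theorem, Theorem \ref{thm-gap-har-RCD}, now forces each $w^a$ to be a linear combination of the $\mathbb{R}^k$-coordinates of $X_\infty$; combined with the averaged orthonormality this upgrades to $\langle\nabla w^a,\nabla w^b\rangle\equiv\delta^{ab}$ and $|\nabla w^a|\equiv1$ pointwise on $X_\infty$, so $(w^1,\dots,w^k)$ is a genuine splitting map. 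Feeding this back through the $L^1$-convergence of squared gradients (Theorem \ref{2.7777777}(3)) and Theorem \ref{2.7777777iruiquo}, and using scale invariance of the splitting-map conditions, one concludes that for $i$ large $T_{\bar r_i}u_i$ is a $(\epsilon,k)$-splitting map on $B_{\bar r_i}(p_i)$ and that (3) holds at $\bar r_i$ — contradicting the choice of $\bar r_i$. Equivalence with Theorem \ref{thm-splitting-stable} is Remark \ref{rem4.1}.

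The main obstacle, and the only real difference from the non-collapsed proof of \cite{CJN21}, is the coupled scale-bookkeeping: one must simultaneously keep $G_r$ positive definite, maintain the slow-variation estimate (3) so that the telescoped bound $|T_r|,|T_r^{-1}|\le C r^{-\Psi(\epsilon)}$ holds, and thereby guarantee that the limiting functions have \emph{genuine} almost linear growth rather than merely being bounded on each fixed ball — only then is Theorem \ref{thm-gap-har-RCD} applicable. Getting the quantifiers right (the final $\delta_0(N,\epsilon,\eta)$ must be chosen after the gap theorem's $\epsilon(N,\eta)$), and treating uniformly both regimes $\bar r_i\to0$ and $\bar r_i$ bounded below, is where the care goes; the remaining ingredients are the standard $\mathrm{RCD}$ convergence machinery recalled in Section \ref{sec-2}.
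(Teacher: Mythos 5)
Your proposal follows essentially the same route as the paper's proof: extract a contradicting sequence, blow up at the critical scale, establish polynomial gradient/growth control by telescoping the estimate $(3)$ (the paper packages this as Lemma \ref{lem-Ts-growth-estimate} and then invokes the mean value inequality, whereas you use the pointwise bound $|\nabla(T_ru)^a|\le 1+\epsilon$ directly, but these give the same conclusion), pass to a $k$-splitting but not $(\eta/2,k+1)$-Euclidean $\mathrm{RCD}(0,N)$ limit, apply Theorem \ref{thm-gap-har-RCD} to force the limit map to be linear, and feed the resulting pointwise orthonormality back through the $W^{1,2}$/$L^1$ convergence machinery to contradict the choice of critical scale. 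The one place the paper is cleaner is the blow-up bookkeeping: rather than splitting into two cases depending on whether the critical scale $\bar r_i$ tends to zero, the paper first rescales the contradicting sequence so that the $(\delta_i,k)$-Euclidean/non-$(\eta,k+1)$-Euclidean condition holds up to radius $\delta_i^{-1}$; then Lemma \ref{lem2.12} automatically forces the critical scale to go to zero, avoiding the case distinction.
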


The proof of Theorem \ref{thm-splitting-stable-full} follows closely that of Proposition 7.7 in \cite{CJN21}.
In fact, the difference between the two proofs is that we use the gap theorem \ref{thm-gap-har-RCD} to replace Lemma 7.8 in \cite{CJN21}.
We provide a detailed proof here, one reason is for convenience of the readers, another reason is that some ingredients of its proof will be used later in this paper.

Firstly, we give some preliminary lemmas.

\begin{lem}\label{lem-Ts}
Under the assumptions of Theorem \ref{thm-splitting-stable-full}, there exists a constant $C =C(N) > 1$ such that, if $T_{r}$ and $T_{t}$ are matrixes verifying $\mathbf{(1)}$ and $\mathbf{(2)}$ at scale $r$ and $t\in[r,2r]$ respectively, then automatically
\begin{align}\label{4.11}
|T_{r}\circ T_{t}^{-1}-\mathrm{Id}|\leq C\epsilon.
\end{align}
\end{lem}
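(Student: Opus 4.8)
The plan is to compare the two splitting maps $T_r u$ and $T_t u$ directly on the ball $B_r(p)$, using the fact that both are $(\epsilon,k)$-splitting maps there (since $t\in[r,2r]$, a scale-change argument via Lemma \ref{lem2.12} shows $T_t u$ restricted to $B_r(p)$ is $(C(N)\epsilon,k)$-splitting, possibly after adjusting $\epsilon$ by a dimensional factor, because $r\ge t/2$ and the splitting conditions only degrade by a controlled amount when passing to a ball whose radius is at least a fixed fraction of the original). Set $A:=T_r\circ T_t^{-1}$, so that $T_r u = A\,(T_t u)$. Writing $v=(v^1,\dots,v^k):=T_t u$ and $w=(w^1,\dots,w^k):=T_r u = Av$, I would exploit the normalization condition $\mathbf{(2)}$ at scale $r$ for both maps: $\bbint_{B_r(p)}\langle\nabla w^a,\nabla w^b\rangle = \delta^{ab}$, and $\bbint_{B_r(p)}\langle\nabla v^a,\nabla v^b\rangle = \delta^{ab} + E^{ab}$ where $|E^{ab}|\le C(N)\epsilon$ by the $(C(N)\epsilon,k)$-splitting property of $v$ on $B_r(p)$ (condition (iii) of Definition \ref{def-harm-split}).

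The key computation is then purely linear-algebraic: let $G$ be the $k\times k$ matrix with entries $G^{ab}=\bbint_{B_r(p)}\langle\nabla v^a,\nabla v^b\rangle$, so $G = \mathrm{Id} + E$ with $|E|\le C(N)\epsilon$. Since $w=Av$, the Gram matrix of $w$ is $A G A^{T}$, and condition $\mathbf{(2)}$ at scale $r$ gives $A G A^{T} = \mathrm{Id}$. Hence $A A^{T} = \mathrm{Id} + (A A^{T} - A G A^{T}) = \mathrm{Id} + A(\mathrm{Id}-G)A^{T} = \mathrm{Id} - A E A^{T}$. To turn this into the bound $|A-\mathrm{Id}|\le C\epsilon$ I also need that $A$ is close to $\mathrm{Id}$ qualitatively, or at least that its norm is a priori bounded; for this I would use that $A$ is lower triangular with positive diagonal entries (being the product of two such matrices, as $T_r, T_t^{-1}$ are lower triangular with positive diagonal — note $T_t^{-1}$ is lower triangular since $T_t$ is), so $A A^{T} = \mathrm{Id} - AEA^{T}$ together with lower-triangularity pins down $A$: the bottom-right $1\times1$ block gives $A_{kk}^2$ within $C\epsilon|A|^2$ of $1$, then inducting up the diagonal and across the rows using the triangular structure yields first an a priori bound $|A|\le C(N)$ and then $|A-\mathrm{Id}|\le C(N)\epsilon$. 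Concretely, one shows the unique lower-triangular positive-diagonal square root of a matrix $\mathrm{Id}+S$ (Cholesky factor) depends Lipschitz-continuously on $S$ near $S=0$, and here $\mathrm{Id} = A A^T + AEA^T$ rearranges to exhibit $A^{-1}$ (or $A^{-T}$) as such a factor of a matrix within $O(\epsilon)$ of $\mathrm{Id}$.

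I expect the main obstacle to be the bookkeeping in the first reduction — verifying that $T_t u$, which is $(\epsilon,k)$-splitting on $B_t(p)$, remains $(C(N)\epsilon,k)$-splitting after restriction to the smaller ball $B_r(p)$ with $r\ge t/2$. This uses that the harmonicity and the gradient bound (i) pass verbatim, the Hessian bound (ii) picks up at most a bounded factor from the change of the averaging domain (via the volume doubling inherited from $\mathrm{RCD}(-(N-1)\delta,N)$ with $\delta$ small, so the doubling constant is $\le 2^{N+1}$ say), and condition (iii), the average of $\langle\nabla v^a,\nabla v^b\rangle-\delta^{ab}$, likewise changes by at most a doubling factor; this is essentially the content of Lemma \ref{lem2.12} applied with $s=r\in[t/2,t]$ (note $t/2 \ge t\,\delta^{1/2N}$ once $\delta$ is small), so no new idea is needed, only care with constants. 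The remaining linear-algebra step is standard and robust, so the lemma follows with $C=C(N)$.
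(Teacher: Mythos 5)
Your proof is correct and follows essentially the same route as the paper's (which cites Lemma 7.9 of \cite{CJN21} and the uniqueness/stability of the Cholesky factorization): the normalization $\mathbf{(2)}$ at scale $r$ gives $A\,G\,A^{T}=\mathrm{Id}$ with $A=T_rT_t^{-1}$ lower triangular with positive diagonal and $G=\mathrm{Id}+E$, $|E|\le C(N)\epsilon$ by doubling, so $A^{-1}$ is the lower Cholesky factor of $G$ and Lipschitz dependence of Cholesky near $\mathrm{Id}$ gives $|A-\mathrm{Id}|\le C(N)\epsilon$. One small slip in the intermediate discussion: for a lower triangular $A$, it is the \emph{top-left} entry of $AA^T$ that equals $A_{11}^2$ (not the bottom-right, since $(AA^T)_{kk}=\sum_j A_{kj}^2$), so any direct induction on the diagonal should start at the $(1,1)$ entry — but this is superseded by the cleaner Cholesky framing you give, so no actual gap remains.
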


The proof of Lemma \ref{lem-Ts} is the same as that of Lemma 7.9 in \cite{CJN21}.
The key in the proof is the uniqueness of Cholesky decomposition of a positive definite symmetric matrix, see \cite{CJN21} for details.

\begin{lem}\label{lem-Ts-growth-estimate}
Under the assumptions of Theorem \ref{thm-splitting-stable-full}, there exists a constant $C=C(N) > 1$ such that, if $T_{t}$ and $T_{r}$ are matrixes verifying $\mathbf{(1)}$ and $\mathbf{(2)}$ at scales $t, r \in [s,1]$ respectively, then
\begin{align}\label{holdergrowth}
| T_{r}\circ T_{t}^{-1}|\leq (1+C\epsilon) \max\{\bigl(\frac{t}{r}\bigr)^{C\epsilon}, \bigl(\frac{r}{t}\bigr)^{C\epsilon}\}.
\end{align}
\end{lem}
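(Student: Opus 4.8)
The plan is to derive \eqref{holdergrowth} from Lemma \ref{lem-Ts}, the elementary matrix estimate \eqref{4.2222}, and a dyadic induction, organizing the discussion immediately preceding the lemma into a clean argument. Since the right-hand side of \eqref{holdergrowth} is $\max\{(t/r)^{C\epsilon},(r/t)^{C\epsilon}\}$, it suffices to prove the one-sided bound $|T_{r}\circ T_{t}^{-1}|\le(t/r)^{C\epsilon}$ whenever $s\le r\le t\le 1$; applying the same statement with the roles of $r$ and $t$ exchanged handles the case $t\le r$, and the two together give \eqref{holdergrowth}. Moreover, since $|A|\le 1+|A-\mathrm{Id}|$ for any $k\times k$ matrix $A$ and $k=k(N)$ is fixed, it is enough to bound $|T_{r}\circ T_{t}^{-1}-\mathrm{Id}|$.

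First I would set up the dyadic chain. Assuming $\epsilon$ small enough that $kC\epsilon<1$, with $C=C(N)$ the constant from Lemma \ref{lem-Ts}, I choose $l\in\mathbb{Z}^{+}$ with $2^{l-1}r\le t<2^{l}r$, so $l\le 1+\log_{2}(t/r)$, and interpolate the scales $r=r_{0}<r_{1}=2r<\cdots<r_{l-1}=2^{l-1}r\le t=r_{l}$, all lying in $[s,1]$; at each such scale a matrix $T_{r_{j}}$ verifying $\mathbf{(1)}$ and $\mathbf{(2)}$ is available from the construction carried out in the proof of Theorem \ref{thm-splitting-stable-full}. Lemma \ref{lem-Ts} applied to each consecutive pair gives $|T_{r_{j}}\circ T_{r_{j+1}}^{-1}-\mathrm{Id}|\le C\epsilon$, and writing $T_{r}\circ T_{t}^{-1}=\prod_{j=0}^{l-1}T_{r_{j}}\circ T_{r_{j+1}}^{-1}$ and iterating \eqref{4.2222} yields, by induction on $l$, the estimate \eqref{4.2223}, that is $|T_{r}\circ T_{t}^{-1}-\mathrm{Id}|\le(1+3C\epsilon)^{l}-1$.

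Next I would convert this into power growth: using $l\le 1+\log_{2}(t/r)$ and $1+3C\epsilon\le e^{3C\epsilon}$ gives $(1+3C\epsilon)^{l}\le e^{3C\epsilon}(t/r)^{3C\epsilon/\ln 2}$, and since $\epsilon$ is bounded and $t/r\ge1$ the prefactor $e^{3C\epsilon}$ is absorbed by enlarging $C$, producing \eqref{holdergrowth-2}. The opposite direction, for $s\le t\le r\le 1$, is obtained identically: Lemma \ref{lem-Ts} yields \eqref{4.4} on every pair of scales within a factor two in the chain descending from $r$ to $t$, and the same induction gives \eqref{4.2223321} and hence \eqref{holdergrowth-1}. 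Taking the maximum of the two one-sided estimates gives \eqref{holdergrowth}.

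The hard part is not any individual step but the bookkeeping of the constant: one must check that $C$ still depends only on $N$ after the iteration, which works because the per-step error in Lemma \ref{lem-Ts} is a fixed $C(N)\epsilon$, the number of steps is $l=O(\log(t/r))$, and exponentiating a quantity linear in $l$ produces exactly a power of $t/r$; the smallness condition $kC\epsilon<1$ is what prevents the induction through \eqref{4.2222} from degenerating. A further point to keep track of is that the dyadic argument uses the existence of matrices verifying $\mathbf{(1)}$ and $\mathbf{(2)}$ at the intermediate scales $r_{j}$, which is not literally part of the lemma's hypothesis but is supplied inside the proof of Theorem \ref{thm-splitting-stable-full}.
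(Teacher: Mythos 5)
Your derivation reproduces the paper's own proof closely: the dyadic chain, Lemma \ref{lem-Ts} applied to each consecutive pair of scales, iteration of (\ref{4.2222}) to reach $|T_r\circ T_t^{-1}-\mathrm{Id}|\le(1+3C\epsilon)^{l}-1$, and then exponentiation; you also correctly note that the existence of $T_{r_j}$ at the intermediate dyadic scales is supplied by the surrounding proof of Theorem \ref{thm-splitting-stable-full} rather than by the lemma's literal hypotheses.

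The one step that does not actually go through is the final absorption, and you inherit it from the paper's own "up to replacing $C$ by a larger one''. From $|T_r\circ T_t^{-1}|\le(1+3C\epsilon)^l\le e^{3C\epsilon}(t/r)^{3C\epsilon/\ln 2}$ you claim the prefactor $e^{3C\epsilon}>1$ is absorbed by enlarging $C$, but this cannot be done uniformly: when $t/r\in(1,2)$ one has $l=1$ and the iteration only gives $|T_r\circ T_t^{-1}|\le 1+C\epsilon$, while $(t/r)^{C'\epsilon}\to 1$ as $t/r\to 1^{+}$, so no $C'=C'(N)$ makes $1+C\epsilon\le(t/r)^{C'\epsilon}$ hold for all such $t,r$. (At $t=r$ exactly, uniqueness of the Cholesky factorization forces $T_t=T_r$ and there is nothing to prove, but Lemma \ref{lem-Ts} gives no quantitative modulus of continuity for $t$ slightly larger than $r$.) The correct formulation is either $|T_r\circ T_t^{-1}|\le C_1\max\{(t/r)^{C\epsilon},(r/t)^{C\epsilon}\}$ with an extra multiplicative $C_1=C_1(N)$, or the same inequality restricted to $\max\{t/r,r/t\}\ge 2$; this is also how the bound is actually invoked later in the paper, for example the $Cr^{C\epsilon}$ prefactor feeding into (\ref{4.7}). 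Either repair closes your argument, and none of the applications in the paper is sensitive to the extra constant.
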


The proof of Lemma \ref{holdergrowth} is the same as \cite{CJN21}.
We provide its proof for completeness.

\begin{proof}
We only prove the case $s\leq t\leq r\leq 1$, the other case is proved similarly.
Firstly, by Lemma \ref{lem-Ts}, it is easy to see that, there is some $C=C(N)>1$ such that, for any $r,t$ with $s\leq \frac{r}{2}\leq t \leq r$, it holds
\begin{align}\label{4.4}
|T_{r}\circ T_{t}^{-1}-\mathrm{Id}|\leq C\epsilon.
\end{align}
Note that for $k\times k$ matrices $A_{1}, A_{2}$, by triangle inequality, we have
\begin{align}\label{4.2222}
|A_{1}A_{2}-\mathrm{Id}|\leq |A_{1}-\mathrm{Id}| + |A_{2}-\mathrm{Id}| + k|A_{1}-\mathrm{Id}||A_{2}-\mathrm{Id}|.
\end{align}
For any $t\in[s,r)$, we choose $l\in \mathbb{Z}^{+}$ such that $2^{-l}r\leq t <2^{-l+1}r$.
Thus by (\ref{4.4}), (\ref{4.2222}) and an induction argument, up to replacing $C$ by a larger one (depending only on $N$), we can prove
\begin{align}\label{4.2223321}
|T_{r}\circ T_{t}^{-1}-\mathrm{Id}|\leq  (1+C\epsilon)^{l}-1,
\end{align}
and hence
\begin{align}\label{holdergrowth-1}
| T_{r}\circ T_{t}^{-1}|\leq (1+C\epsilon)^{\log_{2}(\frac{r}{t})+1}\leq (1+C\epsilon) \bigl(\frac{r}{t}\bigr)^{C\epsilon}
\end{align}
for suitable $C=C(N)>1$.
\end{proof}

\begin{rem}\label{rem4.1}
Note that once we have proved that there exists $T_{r}$ satisfying $\mathbf{(1)}$, then we can find a $k\times k$ lower triangle matrix $A_{r}$ with positive diagonal entries and $|A_{r}-\mathrm{Id}|\leq \epsilon$ such that $\tilde{T}_{r}=A_{r}T_{r}$ satisfies $\mathbf{(1)}$ (with $\epsilon$ being replaced by $C(N)\epsilon$ for some $C(N)>1$) and $\mathbf{(2)}$.
In addition, according to Lemmas \ref{lem-Ts} and \ref{lem-Ts-growth-estimate}, $\mathbf{(3)}$ and $\mathbf{(4)}$ hold for $\tilde{T}_{r}$.
Thus Theorem \ref{thm-splitting-stable} implies Theorem \ref{thm-splitting-stable-full}.
And it is obvious that Theorem \ref{thm-splitting-stable-full} implies Theorem \ref{thm-splitting-stable}, hence these two theorems are equivalent.
\end{rem}

Now we prove Theorem \ref{thm-splitting-stable-full}.
By Lemmas \ref{lem-Ts} and \ref{lem-Ts-growth-estimate}, we only need to verify $\mathbf{(1)}$ and $\mathbf{(2)}$.

\begin{proof}[Proof of Theorem \ref{thm-splitting-stable-full}]
Suppose the conclusion in the theorem does not hold, then we have contradicting sequences of $\RCD$ spaces and harmonic maps on them.
Making use of Lemma \ref{lem2.12}, we can suitably blow up this contradiction sequence to obtain the followings:
\begin{description}
  \item[(i)] there exist $\eta>0$, $\epsilon\ll1$, $\delta_{i}\downarrow0$, $r_{i}>0$ and $\mathrm{RCD}(-(N-1)\delta_{i},N)$ space $(X_{i}, p_{i}, d_{i}, m_{i})$ such that $B_{r}(p_{i})$ is $(\delta_{i},k)$-Euclidean but not $(\eta,k+1)$-Euclidean for every $r\in [r_{i},\delta_{i}^{-1}]$;
  \item[(ii)] there exist $(\delta_{i},k)$-splitting maps $u_{i}:(B_{2} (p_{i}),p_i) \rightarrow(\mathbb{R}^{k},0^k)$ and there exists $s_{i} > r_{i}$ such that for every $r\in [s_{i},1]$, there exists a lower triangle matrix $T_{p_{i},r}$ with positive diagonal entries such that $T_{p_{i},r}u_{i}$ is a $(\epsilon,k)$-splitting on $B_{r}(p_{i})$ with $\bbint_{B_{r}(p_{i})}\langle\nabla(T_{p_{i},r}u_{i})^{a},\nabla(T_{p_{i},r}u_{i})^{b}\rangle=\delta^{ab}$;
  \item[(iii)] no such matrix $T_{i}=T_{p_{i},\frac{s_{i}}{10}}$ exists on $B_{\frac{s_{i}}{10}}(p_{i})$.
\end{description}
Since $\delta_{i}\rightarrow 0$, by Lemma \ref{lem2.12} again, we have $s_{i}\rightarrow 0$.

Denote by $(\tilde{X}_{i}, \tilde{p}_{i}, \tilde{d}_{i}, \tilde{m}_{i}):=(X_{i}, p_{i}, \frac{1}{s_{i}}d_{i}, \frac{1}{m_{i}(B_{s_{i}}(p_{i}))}m_{i})$.
Up to passing to a subsequence, we may assume $(\tilde{X}_{i}, \tilde{p}_{i}, \tilde{d}_{i}, \tilde{m}_{i})$ converges in the pointed measured Gromov-Hausdorff sense to an $\RCD(0,N)$ space $(\tilde{X}_{\infty}, \tilde{p}_{\infty}, \tilde{d}_{\infty}, \tilde{m}_{\infty})$.
Since $B_{r}(p_{i})$ is $(k,\delta_{i})$-Euclidean but not $(\eta,k+1)$-Euclidean for every $r\in [r_{i},\delta_{i}^{-1}]$, $\tilde{X}_{\infty}$ is $k$-splitting and $B_{r}(\tilde{p}_{\infty})$ is not $(k+1,\frac{\eta}{2})$-Euclidean for every $r\geq 1$.

Define $v_{i}=s_{i}^{-1}T_{p_{i},s_{i}}u_{i}$, then
\begin{align}\label{4.5555}
\bbint_{B_{1}(\tilde{p}_{i})}\langle\nabla v_{i}^{a},\nabla v_{i}^{b}\rangle=\delta^{ab}.
\end{align}
By Lemma \ref{lem-Ts-growth-estimate}, (\ref{4.5555}) and assumption $(ii)$, for every $r\in[1,\frac{1}{s_{i}}]$, we have
\begin{align}
&\bbint_{B_{r}(\tilde{p}_{i})}|\nabla v_{i}^{a}|^{2} d \tilde{m}_{i}= \bbint_{B_{rs_{i}}({p}_{i})}|\nabla (T_{p_{i},s_{i}}u_{i})|^{2} d {m}_{i} \\
=&  \bbint_{B_{rs_{i}}({p}_{i})}|T_{p_{i},s_{i}}\circ T_{p_{i},rs_{i}}^{-1}(\nabla (T_{p_{i},rs_{i}}u_{i}))|^{2} d {m}_{i}\nonumber
\\ \leq & C r^{C\epsilon}\bbint_{B_{rs_{i}}({p}_{i})}|\nabla (T_{p_{i},rs_{i}}u_{i})|^{2} d {m}_{i} = C r^{C\epsilon}.\nonumber
\end{align}
Here and in the following, $C$ denotes a constant depending only on $N$ but it may change in different lines.
By the mean value inequality on $\RCD(-(N-1)\delta_{i},N)$ spaces (see Lemma 3.6 in \cite{MN19}), we have
\begin{align}\label{4.7}
|\nabla v_{i}(x)|\leq C\tilde{d}_{i}(x,\tilde{p}_{i})^{C\epsilon}+C
\end{align}
for every $x\in B_{\frac{1}{2s_{i}}}(\tilde{p}_{i})$.
Thus by Theorems \ref{AA} and \ref{2.7777777}, up to passing to a subsequence, $v_{i}$ converges locally uniformly and locally $W^{1,2}$ to a harmonic map $v_{\infty}=(v_{\infty}^{1},\ldots ,v_{\infty}^{k}):\tilde{X}_{\infty}\to \R^k$ with $v_{\infty}(\tilde{p}_{\infty})=0^k$ and
\begin{align}
|\nabla v_{\infty}(\tilde{x})|\leq C\tilde{d}_{\infty}(\tilde{x},\tilde{p}_{\infty})^{C\epsilon}+C.
\end{align}

If $\epsilon$ is small as in Theorem \ref{thm-gap-har-RCD}, then every $v_{\infty}^{a}$ is actually linear with respect to the $\mathbb{R}$-factors in $\tilde{X}_{\infty}$.
Moreover, by the $W^{1,2}_{\mathrm{loc}}$-convergence, we have
\begin{align}
\bbint_{B_{1}(\tilde{p}_{\infty})}\langle\nabla v_{\infty}^{a},\nabla v_{\infty}^{b}\rangle=\delta^{ab}
\end{align}
for every $1\leq a, b\leq k$.
Hence $v_{\infty}^{1},\ldots ,v_{\infty}^{k}$ forms a basis of linear functions on $\mathbb{R}^{k}$.
Without loss of generality we assume $v_{\infty}=(x^{1},\ldots,x^{k})$ are the standard coordinates.
By the $W^{1,2}_{\mathrm{loc}}$-convergence again, we have
\begin{align}\label{4.14}
&\lim_{i\rightarrow\infty}4\bbint_{B_{4}(\tilde{p}_{i})}|\langle\nabla v_{i}^{a},\nabla v_{i}^{b}\rangle-\delta^{ab}|\\
=&\lim_{i\rightarrow\infty}\bbint_{B_{4}(\tilde{p}_{i})}\bigl||\nabla (v_{i}^{a}+ v_{i}^{b})|^{2}-|\nabla (v_{i}^{a}- v_{i}^{b})|^{2}-4\delta^{ab}\bigr|\nonumber\\
=&\bbint_{B_{4}(\tilde{p}_{\infty})}\bigl||\nabla (v_{\infty}^{a}+ v_{\infty}^{b})|^{2}-|\nabla (v_{\infty}^{a}- v_{\infty}^{b})|^{2}-4\delta^{ab}\bigr|=0. \nonumber
\end{align}
Thus $v_{i}$ is a $(\epsilon_{i},k)$-splitting map on $B_{1}(\tilde{p}_{i})$ with $\epsilon_{i}\rightarrow0$.
Hence for every $\frac{1}{10}\leq r \leq 1$ and every sufficiently large $i$, there exists a lower triangle matrix $A_{r,i}$ with positive diagonal entries such that $|A_{r,i}-\mathrm{Id}|\leq C \epsilon_{i}$,
\begin{align}
\bbint_{B_{r}(\tilde{p}_{i})}\langle\nabla (A_{r,i}v_{i})^{a},\nabla (A_{r,i}v_{i})^{b}\rangle=\delta^{ab},
\end{align}
and $A_{r,i}v_{i}: B_{r}(\tilde{p}_{i})\rightarrow \mathbb{R}^{k}$ is $(\frac{\epsilon}{10},k)$-splitting for every $r\in[\frac{1}{10}, 1]$, which contradicts to the assumptions $\mathbf{(i)}$-$\mathbf{(iii)}$.
The proof is completed.
\end{proof}

\begin{cor}\label{cor-splitting-stable}
Given any $N \in \mathbb{Z}^{+}$, $\epsilon>0$ and $\eta>0$, there exists $\delta_{0}=\delta(N, \epsilon, \eta)$ such that, for every $\delta\in(0,\delta_{0})$ the following holds.
Let $(X, d, m)$ be an $\mathrm{RCD}(-(N-1)\delta,N)$ space.
Suppose there is some $s\in(0,1)$ such that, for any $r\in[s,1]$, $B_{r}(p)$ is $(\delta, k)$-Euclidean but not $(\eta, k+1)$-Euclidean,
and let $u : (B_{1}(p),p)\rightarrow (\mathbb{R}^{k_{1}},0^{k_{1}})$ be a $(\delta, k_{1})$-splitting map (where $k_{1}\leq k\leq N$),
then for every $r\in[s,1]$, there exists a $k_{1}\times k_{1}$ lower triangle matrix $T_{r}$ with positive diagonal entries such that
\begin{description}
  \item[(i)] $T_{r}u : B_{r}(p)\rightarrow\mathbb{R}^{k_{1}}$ is a $(\epsilon,k_{1})$-splitting map;
  \item[(ii)] $\bbint_{B_{r}(p)}\langle\nabla(T_{r}u)^{a},\nabla(T_{r}u)^{b}\rangle=\delta^{ab}$;
  \item[(iii)] $|T_{r}\circ T_{2r}^{-1}-\mathrm{Id}|\leq\epsilon$;
  \item[(iv)] for any $t, r \in [s,1]$,
\begin{align}
| T_{r}\circ T_{t}^{-1}|\leq (1+C\epsilon) \max\{\bigl(\frac{t}{r}\bigr)^{C\epsilon}, \bigl(\frac{r}{t}\bigr)^{C\epsilon}\}
\end{align}
holds for a constant $C=C(N) > 1$.
\end{description}

\end{cor}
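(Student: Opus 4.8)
The plan is to repeat the proof of Theorem~\ref{thm-splitting-stable-full} essentially verbatim, the only new point being that the integer $k$ governing the $(\delta,k)$-Euclidean / not-$(\eta,k+1)$-Euclidean behaviour of the balls $B_{r}(p)$ is \emph{decoupled} from the number $k_{1}$ of components of the splitting map $u$. First I would observe that Lemmas~\ref{lem-Ts} and~\ref{lem-Ts-growth-estimate} hold word for word with $k\times k$ matrices replaced by $k_{1}\times k_{1}$ ones: their proofs use only the uniqueness of the Cholesky decomposition of a symmetric positive definite matrix together with the elementary inequality (\ref{4.2222}), and neither of these refers to the dimension of the ambient space. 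Hence, once a family $\{T_{r}\}_{r\in[s,1]}$ of $k_{1}\times k_{1}$ lower triangular matrices with positive diagonal entries satisfying $\mathbf{(i)}$ and $\mathbf{(ii)}$ has been produced, one automatically obtains $|T_{r}\circ T_{t}^{-1}|\le\max\{(t/r)^{C\epsilon},(r/t)^{C\epsilon}\}$ for all $s\le r,t\le1$ with $C=C(N)$, and $\mathbf{(iii)}$ follows from Lemma~\ref{lem-Ts} after shrinking $\epsilon$ by the factor $C$. So it suffices to construct, for each $r\in[s,1]$, a matrix $T_{r}$ verifying $\mathbf{(i)}$ and $\mathbf{(ii)}$.

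Next I would run the blow-up and contradiction argument exactly as in the proof of Theorem~\ref{thm-splitting-stable-full}. If the statement fails, one extracts $\delta_{i}\downarrow0$, $\RCD(-(N-1)\delta_{i},N)$ spaces $(X_{i},p_{i},d_{i},m_{i})$ and $(\delta_{i},k_{1})$-splitting maps $u_{i}:(B_{2}(p_{i}),p_{i})\to(\R^{k_{1}},0^{k_{1}})$ such that $B_{r}(p_{i})$ is $(\delta_{i},k)$-Euclidean but not $(\eta,k+1)$-Euclidean for all $r\in[r_{i},\delta_{i}^{-1}]$, together with a first bad scale $s_{i}$ at which no admissible $k_{1}\times k_{1}$ matrix exists; by Lemma~\ref{lem2.12}, $s_{i}\to0$. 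Rescaling by $s_{i}$, the spaces converge to an $\RCD(0,N)$ space $\tilde{X}_{\infty}$ which --- since the hypothesis on the balls concerns only the geometry and not $u_{i}$ --- is still $k$-splitting and has $B_{r}(\tilde{p}_{\infty})$ not $(\tfrac{\eta}{2},k+1)$-Euclidean for every $r\ge1$. Setting $v_{i}=s_{i}^{-1}T_{p_{i},s_{i}}u_{i}$, the H\"older growth estimate for $T_{p_{i},s_{i}}\circ T_{p_{i},rs_{i}}^{-1}$ from Lemma~\ref{lem-Ts-growth-estimate}, the gradient estimate and the mean value inequality give $|\nabla v_{i}(x)|\le C\tilde{d}_{i}(x,\tilde{p}_{i})^{C\epsilon}+C$, so by Theorems~\ref{AA} and~\ref{2.7777777} (after passing to a subsequence) $v_{i}$ converges locally uniformly and in $W^{1,2}_{\mathrm{loc}}$ to a harmonic map $v_{\infty}=(v_{\infty}^{1},\ldots,v_{\infty}^{k_{1}}):\tilde{X}_{\infty}\to\R^{k_{1}}$ with $v_{\infty}(\tilde{p}_{\infty})=0^{k_{1}}$ and, by integration, with $C\epsilon$-almost linear growth.

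This is the only place where the dimension bookkeeping intervenes: I apply the gap theorem~\ref{thm-gap-har-RCD} to each component $v_{\infty}^{a}$ on the $k$-splitting, not-$(\tfrac{\eta}{2},k+1)$-Euclidean space $\tilde{X}_{\infty}$, with $\epsilon$ chosen below the threshold $\epsilon(N,\eta)$ of that theorem (which, exactly as in the proof of Theorem~\ref{thm-splitting-stable-full}, we may assume), to conclude that every $v_{\infty}^{a}$ is a linear combination of the $\R^{k}$-coordinates of $\tilde{X}_{\infty}$. Since $\bbint_{B_{1}(\tilde{p}_{\infty})}\langle\nabla v_{\infty}^{a},\nabla v_{\infty}^{b}\rangle=\delta^{ab}$ by the $W^{1,2}_{\mathrm{loc}}$-convergence, the functions $v_{\infty}^{1},\ldots,v_{\infty}^{k_{1}}$ form an orthonormal system of linear functions on the $\R^{k}$-factor; in particular $v_{i}$ is an $(\epsilon_{i},k_{1})$-splitting map on $B_{1}(\tilde{p}_{i})$ with $\epsilon_{i}\to0$. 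As in Remark~\ref{rem4.1}, for each $r\in[\tfrac{1}{10},1]$ one then finds a $k_{1}\times k_{1}$ lower triangular matrix $A_{r,i}$ with positive diagonal entries and $|A_{r,i}-\mathrm{Id}|\le C\epsilon_{i}$ such that $A_{r,i}v_{i}$ is $(\tfrac{\epsilon}{10},k_{1})$-splitting on $B_{r}(\tilde{p}_{i})$ with $\bbint_{B_{r}(\tilde{p}_{i})}\langle\nabla(A_{r,i}v_{i})^{a},\nabla(A_{r,i}v_{i})^{b}\rangle=\delta^{ab}$, contradicting the choice of $s_{i}$. The main --- and essentially the only --- obstacle is to make sure that the gap theorem is invoked in the correct dimension $k$ rather than $k_{1}$; this is legitimate precisely because the non-$(\eta,k+1)$-Euclidean hypothesis is imposed on the balls and survives the rescaling regardless of how many components $u$ has, while everything else is a line-by-line transcription of the proof of Theorem~\ref{thm-splitting-stable-full} with $k_{1}\times k_{1}$ matrices in place of $k\times k$ ones.
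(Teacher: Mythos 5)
Your proof is correct, and it genuinely diverges from the paper's own proof of Corollary~\ref{cor-splitting-stable} at the very last stage. You and the paper agree up to the point where the rescaled maps $v_i=s_i^{-1}T_{p_i,s_i}u_i$ converge locally uniformly and in $W^{1,2}_{\mathrm{loc}}$ to a harmonic map $v_\infty=(v_\infty^1,\ldots,v_\infty^{k_1})$ on the $k$-splitting limit $\tilde X_\infty$, and the gap theorem~\ref{thm-gap-har-RCD} (applied in dimension $k$, as you stress) forces each $v_\infty^a$ to be a linear function on the $\R^k$-factor. At that point you observe that the normalisation $\bbint_{B_1(\tilde p_\infty)}\langle\nabla v_\infty^a,\nabla v_\infty^b\rangle=\delta^{ab}$, together with the fact that $\langle\nabla v_\infty^a,\nabla v_\infty^b\rangle$ is constant, forces $v_\infty$ to be an isometric coordinate projection onto a copy of $\R^{k_1}\subset\R^k$; the $W^{1,2}_{\mathrm{loc}}$-convergence, the computation as in (\ref{4.14}), and the Bochner/mean-value estimate then yield that $v_i$ is an $(\epsilon_i,k_1)$-splitting map on $B_1(\tilde p_i)$, and you finish with a $k_1\times k_1$ Cholesky normalisation as in Remark~\ref{rem4.1}. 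The paper instead extends $v_\infty^1,\ldots,v_\infty^{k_1}$ to a full orthonormal frame $\{v_\infty^a\}_{a\le k_1}\cup\{x^a\}_{k_1<a\le k}$ of linear functions on $\R^k$, transplants the extra $k-k_1$ coordinates back onto $\tilde X_i$ via Theorem~\ref{thm-transplant}, shows the assembled $k$-tuple $w_i=(v_i^1,\ldots,v_i^{k_1},w_i^{k_1+1},\ldots,w_i^k)$ is $(\epsilon_i,k)$-splitting, normalises it with a $k\times k$ matrix $A_i$, and only then extracts the top-left $k_1\times k_1$ sub-block of $A_i$ (using the lower-triangular structure so that this sub-block acts on $v_i$ alone). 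Your shortcut eliminates the transplantation step and the sub-block bookkeeping; the paper's longer route buys nothing beyond structural parallelism with Theorem~\ref{thm-splitting-stable-full} and the explicit exhibition of a full $k$-dimensional almost-splitting map on the approximating spaces, which is never used again. Both arguments are valid, and yours is the more direct of the two.
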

\begin{proof}
Suppose the conclusion does not hold, then we have contradicting sequences of $\RCD$ spaces and harmonic maps on them.
Then by rescaling them with a slowly blow up rate, we have the followings:
\begin{description}
  \item[(i)] there exist $k_{1}\leq k\leq N$, $\eta>0$, $\epsilon\ll1$, $\delta_{i}\downarrow0$, $r_{i}>0$ and $\mathrm{RCD}(-(N-1)\delta_{i},N)$ space $(X_{i}, p_{i}, d_{i}, m_{i})$ such that $B_{r}(p_{i})$ is $(\delta_{i},k)$-Euclidean but not $(\eta,k+1)$-Euclidean for every $r\in [r_{i},\delta_{i}^{-1}]$;
  \item[(ii)] there exist $(\delta_{i},k_{1})$-splitting maps $u_{i}:(B_{2}(p_{i}),p_i)\rightarrow (\mathbb{R}^{k_{1}},0^{k_{1}})$ and there exists $s_{i} > r_{i}$ such that for every $r\in [s_{i},1]$, there exists a $k_{1}\times k_{1}$ lower triangle matrix $T_{p_{i},r}$ with positive diagonal entries such that $T_{p_{i},r}u_{i}$ is a $(\epsilon,k_{1})$-splitting on $B_{r}(p_{i})$ with $\bbint_{B_{r}(p_{i})}\langle\nabla(T_{p_{i},r}u_{i})^{a},\nabla(T_{p_{i},r}u_{i})^{b}\rangle=\delta^{ab}$;
  \item[(iii)] no such matrix $T_{i}=T_{p_{i},\frac{s_{i}}{10}}$ exists on $B_{\frac{s_{i}}{10}}(p_{i})$.
\end{description}

By the above assumptions and Lemma \ref{lem2.12}, we have $s_{i}\rightarrow 0$.
Up to a subsequence, we assume $(\tilde{X}_{i}, \tilde{p}_{i}, \tilde{d}_{i}, \tilde{m}_{i}):=(X_{i}, p_{i}, \frac{1}{s_{i}}d_{i}, \frac{1}{m_{i}(B_{s_{i}}(p_{i}))}m_{i})\xrightarrow{pmGH}(\tilde{X}_{\infty}, \tilde{p}_{\infty}, \tilde{d}_{\infty}, \tilde{m}_{\infty})$, which is a $k$-splitting $\RCD(0,N)$ space and $B_{r}(\tilde{p}_{\infty})$ is not $(\frac{\eta}{2},k+1)$-Euclidean for every $r\geq 1$.
Define $v_{i}=s_{i}^{-1}T_{p_{i},s_{i}}u_{i}$, then
\begin{align}\label{4.55559988}
\bbint_{B_{1}(\tilde{p}_{i})}\langle\nabla v_{i}^{a},\nabla v_{i}^{b}\rangle=\delta^{ab}.
\end{align}
Following the argument before (\ref{4.7}), we derive that
\begin{align}
|\nabla v_{i}(x)|\leq C\tilde{d}_{i}(x,\tilde{p}_{i})^{C\epsilon}+C
\end{align}
for every $x\in B_{\frac{1}{2s_{i}}}(\tilde{p}_{i})$.
Thus, up to passing to a subsequence, $v_{i}$ converges locally uniformly and locally $W^{1,2}$ to a harmonic map $v_{\infty}=(v_{\infty}^{1},\ldots ,v_{\infty}^{k_{1}}):\tilde{X}_{\infty}\to \mathbb{R}^{k_{1}}$ with $v_{\infty}(\tilde{p}_{\infty})=0^{k_{1}}$,
\begin{align}
\bbint_{B_{1}(\tilde{p}_{\infty})}\langle\nabla v_{\infty}^{a},\nabla v_{\infty}^{b}\rangle=\delta^{ab},
\end{align}
\begin{align}
|\nabla v_{\infty}(\tilde{x})|\leq C\tilde{d}_{\infty}(\tilde{x},\tilde{p}_{\infty})^{C\epsilon}+C.
\end{align}
Thus if $\epsilon$ is small as in Theorem \ref{thm-gap-har-RCD}, then $v_{\infty}^{1},\ldots ,v_{\infty}^{k_{1}}$ forms standard coordinates on $\mathbb{R}^{k_{1}}\subset \tilde{X}_{\infty}=\mathbb{R}^{k} \times \tilde{Y}$.
Note that $\tilde{Y}$ dose not splits off any $\mathbb{R}$-factor.

We choose linear harmonic functions $x^{k_{1}+1},\ldots, x^{k}$ on $\tilde{X}_{\infty}$ such that $v_{\infty}^{1},\ldots ,v_{\infty}^{k_{1}},x^{k_{1}+1},\ldots, x^{k}$ forms standard coordinates on $\mathbb{R}^{k}$.
In particular,
\begin{align}
\bbint_{B_{1}(\tilde{p}_{\infty})}\langle\nabla x^{a},\nabla x^{b}\rangle=\delta^{ab},
\end{align}
\begin{align}
\bbint_{B_{1}(\tilde{p}_{\infty})}\langle\nabla v_{\infty}^{c},\nabla x^{a}\rangle=\delta^{ca},
\end{align}
for any $a,b\in\{k_{1}+1,\ldots,k\}$, $c\in\{1,\ldots,k_{1}\}$.
Then we apply Theorem \ref{thm-transplant} to transplant $x^{k_{1}+1},\ldots, x^{k}$ to harmonic functions $w^{a}_{i}:B_{2} (\tilde{p}_{i})\rightarrow\mathbb{R}$ with $w^{a}_{i}(\tilde{p}_{i})=0$ such that $w^{a}_{i}$ converges locally uniformly and locally $W^{1,2}$ to $x^{a}$.

Thus similar to (\ref{4.14}), by the $W^{1,2}$-convergence, $w_{i}:=(v_{i}^{1},\ldots ,v_{i}^{k_{1}},w^{k_{1}+1}_{i},\ldots, w^{k}_{i})$ is an $(\epsilon_{i},k)$-splitting map on $B_{1}(\tilde{p}_{i})$ with $\epsilon_{i}\rightarrow0$.
Hence for sufficiently large $i$, there exists $k\times k$ lower triangle matrix $A_{i}$ with positive diagonal entries such that $|A_{i}-\mathrm{Id}|\leq C \epsilon_{i}$ and
\begin{align}
\bbint_{B_{\frac{1}{10}}(\tilde{p}_{i})}\langle\nabla (A_{i}w_{i})^{a},\nabla (A_{i}w_{i})^{b}\rangle=\delta^{ab},
\end{align}
and $A_{i}w_{i}: B_{\frac{1}{10}}(\tilde{p}_{i})\rightarrow \mathbb{R}^{k}$ is $(\frac{\epsilon}{10},k)$-splitting.
Finally, we take $T_{i}$ to be the top left $k_{1}\times k_{1}$ sub-matrix of $A_{i}$ to obtain a contradiction.
The proof is completed.
\end{proof}

\subsection{Proof of Theorem \ref{TransformationThmUderSplittingMonotonicity}}\label{sec-4.2}

In this subsection, we will prove Theorem \ref{thm-splitting-unstable}, which is equivalent to Theorem \ref{TransformationThmUderSplittingMonotonicity} by Remark \ref{rem4.2}.

\begin{thm}\label{thm-splitting-unstable}
Given $\epsilon>0$, $n, k\in \mathbb{Z}^{+}$ with $k\leq n$, and a function $\Phi:(0,1)\to \R^+$ with $\lim_{\delta\to0}\Phi(\delta)=0$, there exists $\delta_{0}>0$ depending on $n$, $\epsilon$ and $\Phi$ such that the following holds for every $\delta\in(0,\delta_{0})$.
Suppose $(M,g)$ is an $n$-dimensional Riemannian manifold with $\Ric\geq -(n-1)\delta$ and
\begin{equation}\label{4.8888888888888}
\Theta^{(M,g)}_{p,k'}(s)\le \Phi(\max\{\delta,\theta^{(M,g)}_{p,k'}(s)\})
\end{equation}
holds for every $s\in(0,1)$ and every integer $k\leq k'\leq n$.
Suppose in addition $B_{2}(p)$ is $(\delta,k)$-Euclidean, and $u : (B_{2}(p),p)\rightarrow (\mathbb{R}^{k_{1}},0^{k_{1}})$ is a $(\delta,k_{1})$-splitting map (with $k_{1}\leq k\leq n$), then for any $r\in(0,1]$, there exists a $k_{1}\times k_{1}$ lower triangle matrix $T_{r}$ with positive diagonal entries such that $T_{r}u : B_{r}(p)\rightarrow\mathbb{R}^{k_{1}}$ is an $(\epsilon,k_{1})$-splitting map.
\end{thm}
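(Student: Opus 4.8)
The plan is to cut the range of scales $(0,1]$ into finitely many consecutive intervals on each of which the ball $B_r(p)$ has a fixed ``effective Euclidean dimension'' $m_j$ (with $k=m_0<m_1<\cdots$), to run Corollary \ref{cor-splitting-stable} (the version of Theorem \ref{thm-splitting-stable} that allows a $k_1$-dimensional splitting map with $k_1\le m_j$) on each interval, and to chain the resulting lower-triangular matrices across the intervals. A preliminary reduction: since $B_2(p)$ is $(\delta,k)$-Euclidean, Lemma \ref{lem2.11} gives $\theta_{p,k}(1)\le\sqrt\delta$, and then \eqref{4.8888888888888} with $k'=k$ and $s=1$ yields $\Theta_{p,k}(1)\le\Phi(\sqrt\delta)$; hence $B_t(p)$ is $(\Phi(\sqrt\delta),k)$-Euclidean for every $t\in(0,1]$, so at every scale the ball is at least $k$-dimensional Euclidean with an error we can make arbitrarily small by shrinking $\delta$. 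By Lemma \ref{lem2.12} we may also assume $u$ is a $(C(n)\sqrt\delta,k_1)$-splitting map already on $B_1(p)$ with $C(n)\sqrt\delta$ small.

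Next I would fix, depending only on $n,\epsilon,\Phi$, threshold constants $\eta_n<\cdots<\eta_k$ (all below the dimension-gap constant $\theta(n)$ of \cite{MN19}, so that an $n$-manifold is never $(\eta_m,m+1)$-Euclidean once $m\ge n$) and a decreasing sequence of errors $\epsilon^{(0)}\le\cdots\le\epsilon^{(n-k)}\le\epsilon$, chosen in the order forced by the at most $n-k+1$ successive applications of Corollary \ref{cor-splitting-stable}: $\eta_m$ so small that $\Phi(\eta_m)$ — and the splitting-map error it produces through Lemma \ref{lem2.11} and the splitting theorem — lies below the threshold $\delta_0$ that Corollary \ref{cor-splitting-stable} demands one level further down, and $\epsilon^{(j-1)}$ below the $\delta_0$ needed at level $j$. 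With $\delta$ smaller than all of these, I build inductively scales $1=s_0>s_1>\cdots$ and dimensions $k=m_0<m_1<\cdots$ as follows. Given $m_j$ and $s_j$ with $B_{s_j}(p)$ close to an $m_j$-dimensional Euclidean factor, set $s_{j+1}=\inf\{r\le s_j:B_r(p)\text{ is }(\eta_{m_j},m_j+1)\text{-Euclidean}\}$. On $(s_{j+1},s_j]$ the ball is $(\text{small},m_j)$-Euclidean (iterating \eqref{4.8888888888888} with $k'=m_j$ downward from $s_j$) but not $(\eta_{m_j},m_j+1)$-Euclidean, so, after rescaling $s_j$ to unit size and feeding in the matrix obtained at the previous level, Corollary \ref{cor-splitting-stable} supplies the required $k_1\times k_1$ matrix $T_r$ for every $r$ in this interval. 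Moreover, just below $s_{j+1}$ there is a scale at which $B_r(p)$ is $(\eta_{m_j},m_j+1)$-Euclidean, so \eqref{4.8888888888888} with $k'=m_j+1$ forces $B_t(p)$ to be $(\Phi(\eta_{m_j}),m_j+1)$-Euclidean for all smaller $t$; letting $m_{j+1}\ge m_j+1$ be the largest integer for which $B_{s_{j+1}}(p)$ is $(\Phi(\eta_{m_j}),m_{j+1})$-Euclidean, the loop restarts. Since $m_j$ strictly increases and is capped by $n$, it stops after $\le n-k+1$ steps; the last interval either reaches $r\to 0$ (Corollary \ref{cor-splitting-stable} with $s\to 0^+$) or sits at $m_j=n$, where the ``not $(\eta_n,n+1)$-Euclidean'' hypothesis is automatic, so the intervals (with the obvious half-open/closed conventions at the $s_j$) cover all of $(0,1]$ and we obtain an admissible $T_r$ for every $r\in(0,1]$.

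The essential point — and the only place the hypothesis is really used — is the use of \eqref{4.8888888888888} in the contrapositive to prevent the effective Euclidean dimension from oscillating: once $B_r(p)$ comes within a fixed small $\eta_m$ of an $(m+1)$-dimensional Euclidean factor at one scale, it stays that close at all smaller scales, so the dimensions $m_j$ are genuinely monotone and the decomposition into intervals is well defined. The rest is bookkeeping: the Hessian and gradient conditions of Definition \ref{def-harm-split} are scale covariant, and the transformed map degrades only by a controlled amount through the $\le n-k$ invocations of Corollary \ref{cor-splitting-stable}, so the whole argument goes through once the finitely many thresholds $\eta_m$ and $\epsilon^{(j)}$ are chosen so that iterating $\Phi$ across all levels keeps every error below the relevant $\delta_0$. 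Note that Lemma \ref{lem2.12} by itself cannot carry the splitting property of $u$ down to arbitrarily small scales — it loses a definite factor each time — which is exactly why the transformation theorem must be re-invoked afresh on every level rather than simply restricting $u$. The argument can equivalently be phrased as a single proof by contradiction, extracting a limit from a hypothetical sequence of counterexamples rescaled at the bad scales; the content is the same.
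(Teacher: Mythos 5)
Your argument is sound in outline and uses the same two engines as the paper --- Corollary \ref{cor-splitting-stable} on each interval of fixed effective Euclidean dimension, and the generalized Reifenberg hypothesis (\ref{4.8888888888888}) to guarantee that this effective dimension is monotone non-decreasing as the scale shrinks --- but the presentation is genuinely different. The paper proceeds by \emph{reverse induction} on $k$ (base case $k=n$, where the ``not $(\eta,n+1)$-Euclidean'' hypothesis is automatic), and inside the inductive step runs a contradiction/compactness argument: a hypothetical sequence of counterexamples is rescaled and passed to a limit, the dichotomy is whether the limit is $(k+1)$-Euclidean, and the induction hypothesis is invoked exactly once at the transition scale $s_i$. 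Your version ``unrolls'' that induction into a direct, constructive decomposition of $(0,1]$ into at most $n-k+1$ intervals, chaining the matrices level by level with explicitly chosen thresholds $\eta_m$, $\epsilon^{(j)}$. What the paper's packaging buys is that the compactness argument absorbs almost all of the error bookkeeping (one only needs to rule out that the splitting error fails to go to zero along a sequence), whereas your version makes the monotonicity mechanism and the finite chaining fully transparent, at the cost of having to fix the thresholds in the correct order up front.

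One concrete slip: in defining the transition scales you wrote $s_{j+1}=\inf\{r\le s_j: B_r(p)\text{ is }(\eta_{m_j},m_j+1)\text{-Euclidean}\}$, but this should be a $\sup$, not an $\inf$. With $\inf$, there could be intermediate scales $r\in(s_{j+1},s_j]$ at which the ball \emph{is} $(\eta_{m_j},m_j+1)$-Euclidean, and ``just below $s_{j+1}$'' there would be no such scale, so the two claims you make in the next two sentences would both fail. With $\sup$, you recover exactly the paper's definition of $s_i$: on $(s_{j+1},s_j]$ the ball is not $(\eta_{m_j},m_j+1)$-Euclidean, and at (or just below) $s_{j+1}$ it is. Note also that the persistence at smaller scales is with the degraded error $\Phi(\eta_{m_j})$ rather than $\eta_{m_j}$ --- you state this correctly when you invoke (\ref{4.8888888888888}), but the earlier phrase ``stays that close at all smaller scales'' could mislead a reader; the whole reason the $\eta_m$'s must be chosen in a nested order is precisely that $\Phi(\eta_{m_j})$, not $\eta_{m_j}$, has to sit below the admissible threshold at the next level. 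Finally, you should apply Corollary \ref{cor-splitting-stable} at a scale like $2s_{j+1}$ rather than $s_{j+1}$ itself (as the paper does), since the ``not $(\eta,\cdot+1)$-Euclidean'' hypothesis is only guaranteed on the open-ended interval above the $\sup$, and restriction to a half-scale ball keeps the splitting-map error controlled by Lemma \ref{lem2.12}.
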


\begin{rem}\label{rem4.2}
By the same reasons as in Remark \ref{rem4.1}, up to composing $T_{r}$ by another lower triangle matrix with positive diagonal entries, we may assume the $T_{r}$'s in Theorems \ref{TransformationThmUderSplittingMonotonicity} and \ref{thm-splitting-unstable} satisfy the following properties:
\begin{description}
  \item[(1)] $T_{r}u : B_{r}(p)\rightarrow\mathbb{R}^{k}$ is a $(\epsilon,k)$-splitting map;
  \item[(2)] $\bbint_{B_{r}(p)}\langle\nabla(T_{r}u)^{a},\nabla(T_{r}u)^{b}\rangle=\delta^{ab}$;
  \item[(3)] $|T_{r}\circ T_{2r}^{-1}-\mathrm{Id}|\leq\epsilon$;
  \item[(4)] for any $t, r \in (0,1]$,
\begin{align}
| T_{r}\circ T_{t}^{-1}|\leq (1+C\epsilon) \max\{\bigl(\frac{t}{r}\bigr)^{C\epsilon}, \bigl(\frac{r}{t}\bigr)^{C\epsilon}\}
\end{align}
holds for a constant $C=C(n) > 1$.
\end{description}
In addition, note that if there exists a $(\delta,k)$-splitting map $u : B_{2}(p)\rightarrow \mathbb{R}^{k}$ as in the statement of Theorem \ref{TransformationThmUderSplittingMonotonicity}, then by Theorem \ref{thm-split-GHisom},
$B_{1}(p)$ is $(\Psi(\delta|n),k)$-Euclidean, thus the conclusion of Theorem \ref{TransformationThmUderSplittingMonotonicity} follows from Theorem \ref{thm-splitting-unstable}.
On the other hand, following the argument in Corollary \ref{cor-splitting-stable}, one can check that Theorem \ref{TransformationThmUderSplittingMonotonicity} implies Theorem \ref{thm-splitting-unstable}.
We will prove Theorem \ref{thm-splitting-unstable} because we can apply Theorem \ref{thm-splitting-stable} and Corollary \ref{cor-splitting-stable} more directly.

\end{rem}

\begin{proof}[Proof of Theorem \ref{thm-splitting-unstable}]

The proof is by a reverse induction over $k$.
For the case $k = n$, the conclusion follows from Corollary \ref{cor-splitting-stable}.
In the following we assume the conclusion holds for every $k\in \{\bar{k}+1,\ldots,n\}$.

Suppose the conclusion does not hold for $k =\bar{k}$.
Thus there exist $k_{1}\in\mathbb{Z}^{+}$ with $k_{1}\leq k$, and $\epsilon_{0} > 0$ sufficiently small, $\delta_{i}\downarrow 0$, and a sequence of $n$-dimensional manifolds $(M_{i},g_{i},p_{i})$ with $\Ric\geq -(n-1)\delta_{i}$, and a sequence of $(\delta_{i},k_{1})$-splitting maps $u_{i}: (B_{2}(p_{i}),p_i)\rightarrow(\mathbb{R}^{k_{1}},0^{k_1})$ such that
$B_{2}(p_{i})$ is $(\delta_{i},k)$-Euclidean and (\ref{4.8888888888888}) holds for every $p_{i}$, every $s\in(0,1)$ and every integer $k\leq k'\leq n$.
In addition, for each $i$, there exists $r_{i}>0$, such that there exists no $k_{1}\times k_{1}$ lower triangle matrix $T_{r_{i}}$ with positive diagonal entries
such that $T_{r_{i}}u_{i}: B_{r_{i}}(p_{i})\rightarrow\mathbb{R}^{k_{1}}$ is an $(\epsilon_{0},k_{1})$-splitting map.
By these assumptions, it is easy to see that $r_{i}\rightarrow 0$.

Passing to a subsequence, we may assume $(M_{i},g_{i},p_{i})$ converges to an $\RCD(0,n)$ space $(X,x,d,m)$ in the pointed measured Gromov-Hausdorff distance.
Note that $B_{2}(x)$ is $k$-Euclidean.

If $B_{2}(x)$ is $(k+1)$-Euclidean, then by the induction assumption, we obtain a contradiction.
Thus in the following we assume there exists $\eta_{0}>0$ such that $d_{GH}(B_{1}(x), B_{1}((0^{k+1},z)))\geq \eta_{0}$ for any $B_{1}((0^{k+1},z))\subset\mathbb{R}^{k+1}\times Z$, with $Z$ being a metric space and $z\in Z$.

For $\eta\in (0,\eta_{0})$ which will be determined later, define
\begin{align}
s_{i}:= \inf\{s\in(0,1]\mid B_{r}(p_{i}) \text{ is not } (\eta,k+1)\text{-Euclidean, for any } r\in(s,1]\}.
\end{align}

Since (\ref{4.8888888888888}) holds and $B_{2}(p_{i})$ is $(\delta_{i},k)$-Euclidean, by definition, we know $B_{r}(p_{i})$ is $(\Phi(\delta_{i}),k)$-Euclidean for every $r\in(0,1]$.
Then by Corollary \ref{cor-splitting-stable}, for any $r\in(s_{i},1]$, there exists a $k_{1}\times k_{1}$ lower triangle matrix $T_{i,r}$ with positive diagonal entries such that
$T_{i,r}u_{i} : B_{r}(p_{i})\rightarrow\mathbb{R}^{k_{1}}$ is a $(\Psi(\Phi(\delta_{i})|n,\eta),k_{1})$-splitting map.
Suppose there is a subsequence of $i$ such that $s_{i}<r_{i}$, then we have obtained a contradiction.
Thus in the following we assume $s_{i}\geq r_{i}$ for every $i$.

By the choice of $s_{i}$ and Corollary \ref{cor-splitting-stable}, there exists a $k_{1}\times k_{1}$ lower triangle matrix $A_{i}$ with positive diagonal entries such that $A_{i}u_{i}:B_{2s_{i}}(p_{i})\rightarrow \mathbb{R}^{k_{1}}$ is a $(\Psi(\Phi(\delta_{i})| n,\eta),k_{1})$-splitting map,
hence $A_{i}u_{i}: B_{s_{i}}(p_{i})\rightarrow \mathbb{R}^{k_{1}}$ is $(\Psi(\Phi(\delta_{i})| n,\eta),k_{1})$-splitting.
Also by the definition of $s_{i}$, $B_{s_{i}}(p_{i})$ is $(\eta,k+1)$-Euclidean.
Thus by the induction assumption, we can find a $\eta(\frac{\epsilon_{0}}{2},n)>0$ such that, if $\eta<\min\{\eta(\frac{\epsilon_{0}}{2},n),\eta_{0}\}$ is fixed, then there exists a sufficiently large $i$ (in order to assure that $\delta_{i}$ is sufficiently small) such that, for every $r\in(0,s_{i}]$ there exists a $k_{1}\times k_{1}$ lower triangle matrix $S_{i,r}$ with positive diagonal entries so that $S_{i,r}(A_{i}u_{i}):B_{r}(p_{i})\rightarrow \mathbb{R}^{k_{1}}$ is a  $(\frac{\epsilon_{0}}{2},k_{1})$-splitting.
This contradicts to $s_{i}\geq r_{i}$ and we have finished the proof of $k=\bar{k}$ case.

The proof is completed.
\end{proof}

\subsection{Non-degeneration of $(\delta,k)$-splitting map on manifolds}\label{sec-4.3}

In this subsection, we prove Theorem \ref{NonDegeneracyofSplittingMaps-RicCase}.
Note that by Proposition \ref{prop-smp-1}, Theorem \ref{NonDegeneracyofSplittingMaps} is a special case of Theorem \ref{NonDegeneracyofSplittingMaps-RicCase}.

\begin{proof}[Proof of Theorem \ref{NonDegeneracyofSplittingMaps-RicCase}]
We first prove the non-degeneration of $(\delta,k)$-splitting maps.
Suppose the non-degeneration does not hold.
Then there exist $k\in \mathbb{Z}^{+}\cap[1,n]$ and a sequence $\delta_{i}\downarrow 0$,
a sequence of $n$-manifolds $(M_{i},g_{i},p_{i})$ with $\mathrm{Ric}_{g_{i}}\geq -\delta_{i}$ and $B_{1}(p_{i})$ satisfying the $(\Phi; k,\delta_{i})$-generalized Reifenberg condition,
and there exists a sequence of $(\delta_{i},k)$-splitting map $u_{i} : B_{3}(p_{i})\rightarrow\mathbb{R}^{k}$, and  $z_{i}\in B_{1}(p_{i})$ such that $du_{i} : T_{z_{i}} M\rightarrow\mathbb{R}^{k}$ has rank $\leq k-1$.
Without loss of generality, we assume $u_{i}(z_{i})=0^{k}$.

For every $i$, we find a sufficiently small $r_{i}$, with $r_{i}\downarrow0$, such that $(\tilde{M}_{i},\tilde{g}_{i},\tilde{z}_{i}):=(M_{i},r_{i}^{-2}g_{i},z_{i})$ converges to $(\mathbb{R}^{n},g_{\mathrm{Eucl}},0^{n})$ in the $C^{\infty}$-Cheeger-Gromov sense.
Furthermore, by the theory of harmonic radius, we may assume $r_{i}$ is sufficiently small such that there exists a map $\psi_{i}=(\tilde{x}_{i}^{1},\ldots,\tilde{x}^{n}_{i}) : B_{\frac{1}{\sqrt{r_{i}}}}(\tilde{z}_{i})\rightarrow \mathbb{R}^{n}$ with the following properties:
\begin{description}
  \item[(1)] $\Delta \tilde{x}_{i}^{a}=0$ for $a=1,\ldots, n$;
  \item[(2)] $\psi_{i}$ is a diffeomorphism onto its image with $B_{\frac{1}{\sqrt{r_{i}}}}(0^{n})\subset \psi_{i}(B_{\frac{1}{\sqrt{r_{i}}}}(\tilde{z}_{i}))$, and hence defines a coordinate chart;
  \item[(3)] the coordinate metric $g^{(i)}_{ab}= \langle\nabla \tilde{x}_{i}^{a},\nabla \tilde{x}_{i}^{b}\rangle$ on $B_{\frac{1}{\sqrt{r_{i}}}}(\tilde{z}_{i})$ satisfies $|g^{(i)}_{ab}-\delta_{ab}|_{C^{0}(B_{\frac{1}{\sqrt{r_{i}}}}(\tilde{z}_{i}))} +\frac{1}{\sqrt{r_{i}}}\cdot\sum_{c=1}^{n}|\frac{\partial}{\partial \tilde{x}_{i}^{c}} g_{ab}^{(i)}|_{C^{0}(B_{\frac{1}{\sqrt{r_{i}}}}(\tilde{z}_{i}))} < \epsilon_{i}$, with $\epsilon_{i}\downarrow0$.
\end{description}
We also assume $\tilde{x}_{i}^{a}$ converges to $x^{a}$ smoothly, with $x^{1},\ldots,x^{n}$ forming a standard coordinate in $\mathbb{R}^{n}$.

Applying Theorem \ref{TransformationThmUderSplittingMonotonicity} to the $(\delta_{i},k)$-splitting map $u_{i}: B_{2}(z_{i})\rightarrow\mathbb{R}^{k}$,
we can find a $k\times k$ lower triangle matrix $T^{(i)}$ with positive diagonal entries so that $T^{(i)}u_{i} : B_{100r_{i}}(z_{i})\rightarrow\mathbb{R}^{k}$ is $(\Psi(\delta_{i}|n),k)$-splitting.
Hence up to passing to a subsequence, $\tilde{v}_{i}:=\frac{1}{r_{i}}T^{(i)}u_{i}: B_{100}(\tilde{z}_{i})\rightarrow\mathbb{R}^{k}$ converges locally uniformly and locally $W^{1,2}$ to a map $\tilde{v}_{\infty}=(\tilde{v}_{\infty}^{1},\ldots,\tilde{v}_{\infty}^{k}):B_{100}(0^{n})(\subset\mathbb{R}^{n})\rightarrow \mathbb{R}^{k}$ with $\Delta \tilde{v}_{\infty}^{a}=0$, $\langle\nabla \tilde{v}_{\infty}^{a},\nabla \tilde{v}_{\infty}^{b}\rangle\equiv\delta_{ab}$ for every $a,b\in\{1,\ldots,k\}$.
Without loss of generality, we may assume $\tilde{v}_{\infty}^{a}=x^{a}$ for every $a\in\{1,\ldots,k\}$.

Recall that under the harmonic coordinate $\psi_{i}$, for every $l\in\{1,\ldots,k\}$, we have
\begin{align}
0=\Delta (\tilde{v}_{i}^{l}-\tilde{x}_{i}^{l}) =\sum_{a,b=1}^{n}g^{(i),ab}\frac{\partial^{2}(\tilde{v}_{i}^{l}-\tilde{x}_{i}^{l})}{\partial \tilde{x}^{a}_{i}\partial \tilde{x}^{b}_{i}}
\end{align}
on $B_{100}(\tilde{z}_{i})$.
By standard Schauder estimates, we know that for any fixed $\alpha\in(0,1)$, for sufficiently large $i$, $\tilde{v}_{i}^{l}-\tilde{x}_{i}^{l}$ has a uniform (i.e. independent of $i$) $C^{1,\alpha}$ bound on ${B}_{90}(\tilde{z}_{i})$.
Thus the convergence of $\tilde{v}_{i}^{l}-\tilde{x}_{i}^{l}$ to $0$ is in uniformly $C^{1,\alpha}$ sense for any $\alpha\in(0,1)$.
In particular,
\begin{align}
|\langle \nabla \tilde{v}_{i}^{a},\nabla \tilde{v}_{i}^{b} \rangle-\delta_{ab}|\rightarrow 0
\end{align}
uniformly in ${B}_{90}(\tilde{z}_{i})$.
In particular, $\mathrm{rank}(d\tilde{v}_{i}|_{\tilde{z}_{i}})= k$.
Thus for sufficiently large $i$, we have $\mathrm{rank}(du_{i}|_{z_{i}})= k$, which gives a contradiction.
This finishes the proof of the non-degeneracy of almost-splitting maps.
	
Now we go to verify the inequality (\ref{BiHolder}).

Given any $x,y\in B_{\frac12}(p)$, without loss of generality, we assume $u(x)\neq u(y)$.
Take $z\in u^{-1}(u(y))$ so that $d(x,z)=d(x,u^{-1}(u(y)))$.
Then $r:=d(x,z)\le1$.
Let $P=\frac{u(z)-u(x)}{|u(z)-u(x)|}$, then it is easy to check that the function $\tilde{u}: B_{\frac{5}{2}}(x)\rightarrow \R$ defined by $\tilde{u}(w)=\langle P,u(w)-u(x)\rangle$ is $\Psi(\delta|n)$-splitting.
In particular, $|\nabla \tilde{u}|\le 1+\Psi(\delta|n)$, and hence
\begin{align}
&|u(x)-u(y)|=|u(x)-u(z)|=|\tilde{u}(z)-\tilde{u}(x)|
\\ \le &(1+\Psi(\delta|n))d(x,z)=(1+\Psi(\delta|n))d(x,u^{-1}(u(y))).\nonumber
\end{align}
For the other side of (\ref{BiHolder}), without loss of generality, we assume $u(x)=0^k$.
By Theorem \ref{TransformationThmUderSplittingMonotonicity}, there exists a lower triangle matrix $T_r$ with positive diagonal entries and $|T_r|\le (1+\Psi(\delta|n,\Phi))r^{-\Psi(\delta|n,\Phi)}$, such that $T_ru:B_r(x)\to\R^k$ is a $(\Psi(\delta|n,\Phi),k)$-splitting map.
Then we have
\begin{align}
&(1+\Psi(\delta|n,\Phi))r^{-\Psi(\delta|n,\Phi)-1}|u(x)-u(z)|\ge r^{-1}|T_ru(x)-T_ru(z)|\\
\ge & r^{-1}d(x,u^{-1}(u(z)))-\Psi(\delta|n,\Phi),\nonumber
\end{align}
where we use Lemma \ref{Surjectivity20220411} in the second inequality.
Thus we obtain
\begin{align}
&|u(x)-u(y)|\ge (1-\Psi(\delta|n,\Phi))r^{1+\Psi(\delta|n,\Phi)} \\ =&(1-\Psi(\delta|n,\Phi))d(x,u^{-1}(u(y)))^{1+\Psi(\delta|n,\Phi)}.\nonumber
\end{align}

The proof is completed.
\end{proof}

In the remaining of this subsection, we prove Lemma \ref{Surjectivity20220411}.

\begin{lem}\label{20220411}
Suppose $(M,g,p)$ is an $n$-dimensional manifold with $\Ric\geq -(n-1)\delta$ and $u:(B_3(p),p)\to(\R^k,0^k)$ is a non-degenerate $\delta$-splitting map which gives a $\delta$-Gromov-Hausdorff approximation from $B_3(p)$ to its image,
then $\overline{B_1(0^k)}\subset u(B_{1+\Psi(\delta|n)}(p))$.
\end{lem}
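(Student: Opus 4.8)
The plan is to argue by contradiction and compactness, exactly in the spirit of the proofs in Section~\ref{sec-4.3}. Suppose the statement fails; then there is a sequence $\delta_i\downarrow 0$, $n$-manifolds $(M_i,g_i,p_i)$ with $\Ric\ge-(n-1)\delta_i$, non-degenerate $\delta_i$-splitting maps $u_i:(B_3(p_i),p_i)\to(\R^k,0^k)$ giving $\delta_i$-Gromov--Hausdorff approximations onto $B_3(0^k)$, and points $q_i\in\overline{B_1(0^k)}$ with $q_i\notin u_i(B_{1+\epsilon_i}(p_i))$ for some fixed $\epsilon_i=\Psi(\delta_i|n)$ that we are free to choose (say $\epsilon_i=\sqrt{\delta_i}$). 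Since each $u_i$ is a $\delta_i$-GHA, we may pass to the limit: $(M_i,g_i,p_i)\xrightarrow{pmGH}(\R^k,d_{\mathrm{Eucl}},0^k)$ after normalizing measures, and by Theorem~\ref{AA} together with $|\nabla u_i|\le 1+\delta_i$ the maps $u_i$ converge locally uniformly to a $1$-Lipschitz map $u_\infty:B_3(0^k)\to\R^k$ which is itself a (limit of) GHA, hence an isometry of $B_3(0^k)$ — in fact, after an orthogonal change of coordinates, $u_\infty=\mathrm{id}$. We may also assume $q_i\to q_\infty\in\overline{B_1(0^k)}$.

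The first real step is to show that for large $i$ the point $q_\infty$ — equivalently $q_i$, up to a small error — is actually attained by $u_i$ at some point close to $u_\infty^{-1}(q_\infty)$. Consider the level set $u_i^{-1}(q_i)$: the open mapping behaviour of $u_i$ follows from non-degeneracy, so $u_i(B_{1+\epsilon_i}(p_i))$ is open in $\R^k$. The boundary argument is this: pick $\tilde q_i\in M_i$ with $d(u_\infty\text{-preimage of }q_\infty,\tilde q_i)$ small and $|u_i(\tilde q_i)-q_i|$ small (possible by uniform convergence $u_i\to u_\infty=\mathrm{id}$, since $q_\infty\in \overline{B_1(0^k)}$ has an honest preimage in the interior of $B_3(0^k)$); then $\tilde q_i\in B_{1+\epsilon_i/2}(p_i)$ for large $i$, so $u_i(\tilde q_i)$ lies in the open set $u_i(B_{1+\epsilon_i}(p_i))$ but is only $o(1)$-close to $q_i$. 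To upgrade ``$q_i$ is close to the image'' to ``$q_i$ is in the image'', I would use that $u_i$ restricted to $\overline{B_{1+\epsilon_i}(p_i)}$ is a proper map to its image with no critical values (by non-degeneracy) and a degree/homotopy argument: the map $u_i:(\overline{B_{1+\epsilon_i/2}(p_i)},\partial)\to(\R^k,\R^k\setminus\{q_i\})$ has nonzero local degree near $u_\infty^{-1}(q_\infty)$ because $u_i$ is uniformly $C^0$-close to the identity there while $\partial B_{1+\epsilon_i/2}(p_i)$ maps (via the GHA property) $\delta_i$-close to $\partial B_{1+\epsilon_i/2}(0^k)$, which is at Euclidean distance $\ge \epsilon_i/2\gg\delta_i$ from $q_i\in\overline{B_1(0^k)}$; hence $q_i$ is omitted on the boundary, the degree is defined and equals $\pm1$, so $q_i\in u_i(B_{1+\epsilon_i/2}(p_i))\subset u_i(B_{1+\epsilon_i}(p_i))$, contradicting the choice of $q_i$.

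I expect the main obstacle to be the degree-theoretic step: one must be careful that the level set $u_i^{-1}(q_i)$ stays inside the ball $B_{1+\epsilon_i}(p_i)$ and does not escape through the boundary, and this is exactly where the quantitative gap between the GHA error $\delta_i$ and the slack $\epsilon_i=\Psi(\delta_i|n)$ is used — any point $w\in M_i$ with $u_i(w)=q_i$ and $d(p_i,w)\ge 1+\epsilon_i$ would have $|u_i(w)|=|q_i|\le 1$ while the GHA property forces $|u_i(w)|\ge d(p_i,w)-\delta_i\ge 1+\epsilon_i-\delta_i>1$, a contradiction; so the level set is automatically confined, and properness on $\overline{B_{1+\epsilon_i}(p_i)}$ is genuine. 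With confinement in hand the local degree of $u_i$ near the preimage point is well defined and, by $C^0$-closeness to the identity (which we have from uniform convergence, upgraded to $C^{1,\alpha}$ near that point by the Schauder estimate as in the proof of Theorem~\ref{NonDegeneracyofSplittingMaps-RicCase} if one wants non-degeneracy of the limit differential), equals that of the identity, namely $1$. This yields $q_i\in u_i(B_{1+\epsilon_i}(p_i))$ for all large $i$, the desired contradiction, completing the proof. Since $q_\infty$ ranges over all of $\overline{B_1(0^k)}$ this gives $\overline{B_1(0^k)}\subset u(B_{1+\Psi(\delta|n)}(p))$ as claimed.
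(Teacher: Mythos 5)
The key quantitative observation in your proposal — that the Gromov–Hausdorff approximation property forces $u^{-1}(\overline{B_1(0^k)})$ to lie inside $B_{1+\Psi(\delta|n)}(p)$, because any $w$ with $|u(w)|\le 1$ must have $d(p,w)\le |u(w)|+\Psi(\delta|n)\le 1+\Psi(\delta|n)$ — is exactly the crux of the paper's proof. From there, however, you take a considerably longer route, and the specific tool you rely on has a real gap.

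The paper's argument after confinement is a short, single-manifold, open-closed argument and needs no contradiction sequence or limit: the set $u\bigl(u^{-1}(\overline{B_1(0^k)})\bigr)$ is closed in $\overline{B_1(0^k)}$ because $u^{-1}(\overline{B_1(0^k)})$ is compact (it is a closed subset of $\overline{B_{1+\Psi(\delta|n)}(p)}$); it is open in $\overline{B_1(0^k)}$ because non-degeneracy of $du$ makes $u$ a submersion, hence an open map; it is nonempty since $u(p)=0^k$. By connectedness of $\overline{B_1(0^k)}$, it equals the whole closed ball. In contrast, your proposal passes to a pmGH limit and then invokes a local-degree argument. This is where the gap is: the lemma allows $1\le k\le n$ and in general $k<n$, so $u:M^n\to\R^k$ is a submersion with $(n-k)$-dimensional level sets, not a map between spaces of equal dimension. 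The "local degree near $u_\infty^{-1}(q_\infty)$ equals $\pm 1$" reasoning does not make sense in this setting — the relevant relative homology group $H_n\bigl(\overline{B_{1+\epsilon/2}(p_i)},\partial\bigr)\to H_n(\R^k,\R^k\setminus\{q_i\})$ is zero for $n>k$ — so the step by which you want to "upgrade `close to the image' to `in the image'" doesn't go through as stated. (You could rescue it with an Ehresmann-type argument for proper submersions, but that is more machinery than the open-closed argument, which handles all $k$ uniformly.) There is also a smaller issue in the negation: you fix $\epsilon_i=\sqrt{\delta_i}\to 0$ and then use only qualitative uniform convergence $u_i\to u_\infty$, which gives $o(1)$ rates, not $\sqrt{\delta_i}$-rates; the standard and correct negation is to fix an $\epsilon_0>0$ and let $\delta_i\to 0$. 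In short: you found the key confinement estimate, but you should replace the compactness-plus-degree machinery by the direct open-closed argument (which also eliminates the need for a contradiction sequence entirely).
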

\begin{proof}
Since $u$ is a $\delta$-Gromov-Hausdorff approximation, we have
$d(x,p)\leq |{u}(x)|+\Psi(\delta|n)$  for every $x\in B_{3}(p)$.
In particular, ${u}^{-1}(\overline{B_{1}(0^{k})})\subset B_{1+\Psi(\delta|n)}(p)$, and it is easy to check that $u(u^{-1}(\overline{B_{1}(0^{k})}))$ is closed in $\overline{B_{1}(0^{k})}$.
On the other hand, by the non-degeneracy of $d{u}$, $u(u^{-1}(\overline{B_{1}(0^{k})}))$ is open in $\overline{B_{1}(0^{k})}$.
Thus $u(u^{-1}(\overline{B_{1}(0^{k})}))=\overline{B_{1}(0^{k})}$ and hence $\overline{B_1(0^k)}\subset u(B_{1+\Psi(\delta|n)}(p))$.
\end{proof}

\begin{lem}\label{lem4.9}
Suppose $(M,g,p)$ is an $n$-dimensional manifold with $\Ric\geq -(n-1)\delta$ and $u:(B_8(p),p)\to(\R^k,0^k)$ is a $(\delta,k)$-splitting map (with $1\leq k\leq n$).
Suppose in addition that $B_{4}(p)$ satisfies the $(\Phi; k,\delta)$-generalized Reifenberg condition for some positive function $\Phi(\cdot)$ with $\lim_{\delta'\to0^+}\Phi(\delta')=0$.
Then $\overline{B_1(0^k)}\subset u(B_{1+\Psi(\delta|n,\Phi)}(p))$.
\end{lem}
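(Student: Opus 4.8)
The plan is to combine the non-degeneracy of $du$ furnished by Theorem~\ref{NonDegeneracyofSplittingMaps-RicCase}, the Gromov--Hausdorff approximation structure of $u$ furnished by Theorem~\ref{thm-split-GHisom}, and a blow-down compactness argument which makes the enlargement $\Psi(\delta|n,\Phi)$ come out uniform automatically. First I would record two reductions. By the volume comparison theorem (using $\Ric\geq-(n-1)\delta$ with $\delta$ small), the restriction of the $(\delta,k)$-splitting map $u:B_8(p)\to\R^k$ to any ball $B_3(q)$ with $q\in B_3(p)$ is a $(C(n)\delta,k)$-splitting map there: condition (i) of Definition~\ref{def-harm-split} is pointwise, while the averages in (ii) and (iii) grow only by the volume ratio $\mathrm{vol}(B_{11}(q))/\mathrm{vol}(B_3(q))\leq C(n)$. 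Since $B_3(p)\subset B_4(p)$, the generalized Reifenberg condition is available around each such $q$, so applying Theorem~\ref{NonDegeneracyofSplittingMaps-RicCase} around the centers $q\in B_2(p)$ shows that $du$ is non-degenerate at every point of $B_3(p)$; in particular $u$ is an open map there. Similarly, after a routine rescaling, Theorem~\ref{thm-split-GHisom}(2) provides $f:B_3(p)\to Z$ such that $(u,f):B_3(p)\to B_3^{\R^k\times Z}((0^k,z))$ is a $\Psi_0$-Gromov--Hausdorff isometry with $\Psi_0=\Psi(\delta|n)$; consequently $d(q,p)\leq\sqrt{|u(q)|^2+d_Z(f(q),z)^2}+\Psi_0$ for $q\in B_3(p)$, and the image of $(u,f)$ is $\Psi_0$-dense in $B_3^{\R^k\times Z}((0^k,z))$.

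Next I would argue by contradiction. Suppose the conclusion fails: there are $\eta>0$, $\delta_i\downarrow0$, manifolds $(M_i,g_i,p_i)$ and $(\delta_i,k)$-splitting maps $u_i:(B_8(p_i),p_i)\to(\R^k,0^k)$ satisfying all hypotheses, but points $\xi_i\in\overline{B_1(0^k)}$ with $\xi_i\notin u_i(B_{1+\eta}(p_i))$. Passing to a subsequence, $(M_i,g_i,p_i)\xrightarrow{pmGH}(X,x_\infty,d_\infty,m_\infty)$, an $\RCD(0,n)$ space, and by Theorems~\ref{AA} and~\ref{2.7777777} we may assume $u_i\to u_\infty$ locally uniformly and in $W^{1,2}_{\mathrm{loc}}$, with $u_\infty$ harmonic, $|\nabla u_\infty^a|\leq1$ and, from the $L^1$-convergence of $\langle\nabla u_i^a,\nabla u_i^b\rangle-\delta^{ab}$, $\langle\nabla u_\infty^a,\nabla u_\infty^b\rangle\equiv\delta^{ab}$. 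By the local functional splitting theorem (Theorem 3.4 of \cite{BNS22}; cf. Theorem~\ref{thm-split-RCD}), $B_3(x_\infty)$ is isometric to $B_3^{\R^k\times Y}((0^k,y_0))$, and after composing with an orthogonal transformation of $\R^k$ we may take $u_\infty$ to be the coordinate projection $\R^k\times Y\to\R^k$. Finally $\xi_i\to\xi\in\overline{B_1(0^k)}$ along the subsequence.

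The crux is to transfer the surjectivity of the projection $u_\infty$ — which maps $\overline{B_{1+\eta/2}(x_\infty)}$ onto $\overline{B_{1+\eta/2}(0^k)}$ with unit ``opening rate'' — back to $u_i$ for large $i$. For every $\zeta\in\R^k$ with $|\zeta|\leq1+\eta/2$ the point $(\zeta,y_0)$ lies in $B_{1+\eta-\epsilon_i}(x_\infty)$, so pulling it back through an $\epsilon_i$-Gromov--Hausdorff approximation and using uniform convergence of $u_i$ we obtain $q_i(\zeta)\in B_{1+\eta}(p_i)$ with $u_i(q_i(\zeta))\to\zeta$ uniformly in $\zeta$; thus $u_i$ takes values on all sides of $\xi_i$ inside $B_{1+\eta}(p_i)$. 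When $k=1$ this already forces $\xi_i\in u_i(B_{1+\eta}(p_i))$ by the intermediate value theorem on the connected set $B_{1+\eta}(p_i)$, a contradiction. For $k\geq2$ one upgrades this to a degree statement: using the non-degeneracy of $du_i$ and the implicit function theorem to fill in local sections of $u_i$ over a fine triangulation of the slice $\overline{B_{1+\eta/2}(0^k)}\times\{y_0\}$ pulled back by the approximation, one produces a continuous $k$-chain $D_i$ in $B_{1+\eta}(p_i)$ with $\partial D_i$ mapping by $u_i$ to a $(k-1)$-sphere $C^0$-close to $\partial B_{1+\eta/2}(0^k)$; hence the winding number of $u_i|_{\partial D_i}$ about $\xi_i$ equals that of $u_\infty$ restricted to the round sphere about $\xi$, namely $1$, so $\xi_i\in u_i(D_i)\subset u_i(B_{1+\eta}(p_i))$ — the desired contradiction.

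The main obstacle, and the reason the non-collapsed Lemma~\ref{20220411} does not apply verbatim, is precisely the possibly large ``fiber space'' $Z$: a point of $u^{-1}(\overline{B_1(0^k)})$ may a priori sit far from $p$, deep in $Z$, so the inclusion $u^{-1}(\overline{B_1(0^k)})\subset B_{1+\Psi}(p)$ is false in the collapsed setting and one cannot run the clopen argument of Lemma~\ref{20220411}. The fix is to never track arbitrary preimages but only to produce \emph{one} preimage of each $\xi\in\overline{B_1(0^k)}$ inside $B_{1+\eta}(p)$, by following the (approximate) section that stays near the slice $Z=\{z\}$, along which the Gromov--Hausdorff distance estimate bounds $d(\cdot,p)$ by roughly $|\xi|<1+\eta$; keeping this section or $k$-chain well-defined — i.e. controlling where $f$ carries it — up to the boundary sphere $\partial B_{1+\eta/2}(0^k)$ is the technical heart, and here the slack between radius $1$ and $1+\eta$, together with the uniform bi-Hölder estimate~(\ref{BiHolder}), is what makes the continuation close up.
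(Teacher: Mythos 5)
Your overall strategy differs from the paper's in a way that introduces both a circularity and an unresolved construction. The paper's proof of Lemma~\ref{lem4.9} is also by contradiction and compactness, but the key step is a \emph{blow-up reduction to the non-collapsed case}: it selects a regular point $(a_\infty,y_\infty)\in\R^k\times X$ in the limit (Cheeger--Colding theory guarantees one at $k'$-Euclidean tangent cone, $k'\geq k$), picks $x_i\to(a_\infty,y_\infty)$ with good maximal-function control, rescales by $r_i\downarrow 0$ so that $(M_i,r_i^{-2}g_i,x_i)\to\R^{k'}$, extends the rescaled $u_i$ by $k'-k$ transplanted coordinates to a $(\delta_i',k')$-splitting map $\tilde v_i$ that is itself a GH approximation to a ball in $\R^{k'}$, verifies non-degeneracy from the already-established first part of Theorem~\ref{NonDegeneracyofSplittingMaps-RicCase}, and then applies the non-collapsed Lemma~\ref{20220411} to $\tilde v_i$. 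The clopen argument is thus only ever used in a non-collapsed blow-up, where $\tilde v_i^{-1}(\overline{B_1(0^{k'})})$ does sit in a bounded ball and your objection does not arise.

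Your degree-theoretic alternative has a genuine gap exactly where you flag it as ``the technical heart.'' First, the construction of the chain $D_i$ inside $B_{1+\eta}(p_i)$ is not secured: the vertices $q_i(\zeta)$ are produced by a GH approximation, which is not continuous, and interpolating between them in a possibly collapsing manifold runs into vanishing injectivity radius, while ``local sections from the implicit function theorem'' need not glue since the horizontal distribution of a submersion has nontrivial holonomy and the fibers are $(n-k)$-dimensional. Second and more decisively, you propose to control the $f$-slice of the chain and ``make the continuation close up'' using the bi-H\"older estimate~(\ref{BiHolder}). But (\ref{BiHolder}) is the \emph{second} assertion of Theorem~\ref{NonDegeneracyofSplittingMaps-RicCase}, whose proof invokes Lemma~\ref{Surjectivity20220411}, which in turn invokes the very Lemma~\ref{lem4.9} you are proving. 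Only the non-degeneracy part of Theorem~\ref{NonDegeneracyofSplittingMaps-RicCase} is available at this stage. So as written the argument is circular, and without (\ref{BiHolder}) you have no tool to prevent the approximate section from drifting in the $Z$-direction and escaping $B_{1+\eta}(p_i)$ before the sphere closes. The paper's blow-up avoids both problems: it never tries to build a global chain and uses only the clopen argument in a setting where the preimage is automatically bounded.
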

\begin{proof}
We argue by a contradiction.
Suppose there exist $\epsilon_{0}>0$ and a sequence of $\delta_{i}\downarrow0$ and a sequence of $n$-manifolds $(M_i,g_i,p_i)$ with $\Ric_{M_i}\ge-(n-1)\delta_i$ and $B_{4}(p_i)$ satisfying the $(\Phi; k,\delta_{i})$-generalized Reifenberg condition, and there exist $(\delta_i,k)$-splitting maps  $u_i:(B_8(p_i),p_i)\to(\R^k,0^k)$, and there are $a_i\in \overline{B_1(0^k)}$ but $a_i\notin u_i(B_{1+\epsilon_0}(p_i))$.
Up to a subsequence, we assume $a_i\rightarrow a_\infty\in \overline{B_1(0^k)}$.
Note that by the first part of Theorem \ref{NonDegeneracyofSplittingMaps-RicCase}, $u_{i}$ is non-degenerate on $B_{3}(p_{i})$.

By Cheeger-Colding's theory (see \cite{CC97} etc.), up to a subsequence, we may assume
$$(B_4(p_i),p_i,d_i,m_{i})\xrightarrow{pmGH}( B_4((0^k,x_\infty)),(0^k,x_\infty),d=d_{\mathrm{Eucl}}\times d_{X}, \mathcal{L}^k\otimes m_X),$$
where $m_{i}=\frac{1}{\vol_{i} (B_1(p_i))}\vol_i$ is the renormalized volume measure on $M_{i}$, $B_4((0^k,x_\infty))\subset \R^{k}\times X$ for some pointed $\RCD(0,n-k)$ space $(X, x_{\infty}, d_{X}, m_{X})$ (here we assume $X$ consists of more than one point, for otherwise by Lemma \ref{20220411} we will get a contradiction directly; the fact that $(X, d_{X}, m_{X})$ is an $\RCD(0,n-k)$ space is due to \cite{Gig13}); and $u_i$ converges locally uniformly and locally $W^{1,2}_{\mathrm{loc}}$ to the projective map $u_{\infty}:B_4((0^k,x_\infty))\to \R^{k}$.

By Cheeger-Colding's theory (see also \cite{MN19} etc.), there exists a regular point $y_\infty\in X$, such that $d_{X}(y_\infty,x_\infty)\le \frac{1}{10}\epsilon_0$.
By definition, there is some $k'> k$ such that every tangent cone at $(a_\infty,y_\infty)$ is $\R^{k'}$.
In addition, we have $d((a_\infty,y_\infty),(0^k,x_\infty))\le \sqrt{1+\frac{1}{100}\epsilon_0^2}<1+\frac{1}{10}\epsilon_0$.

Now we choose $\hat{x}_i\in B_{1+\frac{1}{5}\epsilon_0}(p_i)$ such that $\hat{x}_i\xrightarrow{GH}(a_\infty,y_\infty)$.

Let $f_{i}=\sum_{a,b=1}^{k}|\langle\nabla u_{i}^{a}, \nabla u_{i}^{b}\rangle-\delta^{ab}|+|\mathrm{Hess} u_{i}|^{2}$, and consider the maximal function $Mf_{i}:B_{3}(p_i)\to \R$ defined by
$Mf_{i}(w)=\sup_{0<r\leq\frac{1}{4}}\bbint_{B_{r}(w)}f_{i}$.
By the weak $(1,1)$ type inequality for maximal function operator on doubling metric measure spaces (see e.g. Theorem 2.2 in \cite{He01}), and the Bishop-Gromov volume comparison on $M_{i}$, we have
\begin{align}\label{4.31}
\frac{1}{\vol_{i} (B_1(p_i))}\mathrm{vol}_{i}(\{w|Mf_{i}(w)>\delta_{i}^{\frac{1}{2}}\})\leq \frac{C_{1}(n)}{\delta_{i}^{\frac{1}{2}}}\frac{1}{\vol_{i} (B_1(p_i))}\int_{B_3(p_i)}f d\mathrm{vol}_{i}\leq C_{2}(n)\delta_{i}^{\frac{1}{2}}.
\end{align}
Combining (\ref{4.31}) with the volume comparison, we can always find $x_i$ such that $d_{i}(x_i, \hat{x}_i)\leq C_{3}(n)\delta_{i}^{\frac{1}{2n}}$ and $Mf_{i}(x_{i})\leq\delta_{i}^{\frac{1}{2}}$.

Therefore, for each sufficiently large $i$, $x_i\in B_{1+\frac{2}{5}\epsilon_0}(p_i)$ is chosen so that
\begin{description}
  \item[(a)] $x_i\xrightarrow{GH} (a_\infty,y_\infty)$;
  \item[(b)] for any $r\in(0,\frac{1}{4})$, $u_i:B_r(x_i)\to\R^k$ is a $\Psi(\delta_i|n)$-splitting map.
\end{description}
Obviously, we have $|u_i(x_i)-a_i|\rightarrow0$.

Then we choose a sequence of $r_{i}\downarrow 0$ such that
\begin{description}
  \item[(c)] $r_i\geq |u_i(x_i)-a_i|$;
  \item[(d)] $(M_i,\tilde{g}_i,x_i):=(M_i,r_i^{-2}g_i,x_i)$ converges in the pointed measured Gromov-Hausdorff distance to the tangent cone of $\R^k\times X$ at $(a_\infty,y_\infty)$, i.e. $(\R^{k'},0^{k'},d_{\mathrm{Eucl}},c\mathcal{L}^{k'})$.
\end{description}
By \textbf{(b)} and \textbf{(d)}, $\tilde u_i:=r_i^{-1}(u_i-u_i(x_i)):B^{\tilde{g}_{i}}_{\frac{1}{4r_i}}(x_i)\rightarrow \R^{k}$ converges locally uniformly to a projection map $\tilde u_\infty: (\R^{k'},0^{k'})\to (\R^k,0^k)$.

By adding the other $k'-k$ coordinates, we get an isometry $u=(\tilde u_{\infty},\tilde u_\infty^{k+1},\ldots,\tilde u_\infty^{k'}):(\R^{k'},0^{k'})\to (\R^{k'},0^{k'})$.
By Theorem \ref{thm-transplant}, we transplant $ \tilde u_\infty^{k+1},\ldots,\tilde u_\infty^{k'}$ back onto $(M_i, \tilde{g}_i, x_i)$ to obtain harmonic functions $\tilde u_i^{k+1},\ldots,\tilde u_i^{k'}:(B^{\tilde{g}_{i}}_{16}(x_i),x_i)\to(\R,0)$, so that
$\tilde{v}_i:=(\tilde u_i,\tilde u_i^{k+1},\ldots,\tilde u_i^{k'}):B^{\tilde{g}_i}_{16}(x_i)\to\R^{k'}$ is a $(\delta'_{i},k')$-splitting map giving a $\delta'$-Gromov-Hausdorff approximation from $B^{\tilde{g}_i}_8(x_i)$ onto its image, where $\delta'_{i}\downarrow 0$.

Furthermore, it is easy to see that $B^{\tilde{g}_{i}}_{16}(x_i)$ still satisfies $(\Phi; k,\delta_{i})$-generalized Reifenberg condition.
Then according to the first part of Theorem \ref{NonDegeneracyofSplittingMaps-RicCase}, for every sufficiently large $i$,
$d\tilde{v}_i: T_{w}M_{i}\rightarrow\mathbb{R}^{k'}$ is non-degenerate for every $w\in B^{\tilde{g}_i}_{4}(x_i)$.

Note that by \textbf{(c)}, $(r_i^{-1}(a_i-u_i(x_i)),0^{k'-k})\in B_{\frac{3}{2}}(0^{k'})$.
Then according to Lemma \ref{20220411}, for every sufficiently large $i$, there exists $y_i\in B^{\tilde{g}_i}_{2}(x_i)$ such that $\tilde{v}_i( y_i)=(r_i^{-1}(a_i-u_i(x_i)),0^{k'-k})$.
In particular, we have $u_i(y_i)=a_i$ and $d_{i}(y_i,x_i) <2r_i$, and hence $y_i\in B_{1+\epsilon_0}(p_i)$, which contradicts to the assumptions on $a_i$.
This completes the proof.
\end{proof}

\begin{lem}\label{Surjectivity20220411}
Suppose $(M,g,p)$ is an $n$-dimensional manifold with $\Ric\geq -(n-1)\delta$ and $u:(B_8(p),p)\to(\R^k,0^k)$ is a $(\delta,k)$-splitting map (with $1\leq k\leq n$).
Suppose in addition that $B_{4}(p)$ satisfies the $(\Phi; k,\delta)$-generalized Reifenberg condition for some positive function $\Phi(\cdot)$ with $\lim_{\delta'\to0^+}\Phi(\delta')=0$.
Then for any $x\in B_1(p)$, it holds
\begin{align}
|d(p,u^{-1}(u(x)))-|u(p)-u(x)||\le\Psi(\delta|n,\Phi).\nonumber
\end{align}
\end{lem}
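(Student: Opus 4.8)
The plan is to prove the two one‑sided estimates
$$d(p,u^{-1}(u(x)))\ge |u(p)-u(x)|-\Psi(\delta|n),\qquad d(p,u^{-1}(u(x)))\le |u(p)-u(x)|+\Psi(\delta|n,\Phi)$$
separately, and $\Psi$ will denote a positive function with $\Psi(\cdot|\cdots)\to0$ as $\delta\to0$, changing from line to line. We may assume $u(x)\neq u(p)=0^{k}$, since otherwise $p\in u^{-1}(u(x))$ and both sides vanish; note also $d(p,u^{-1}(u(x)))\le d(p,x)<1$ and $|u(p)-u(x)|=|u(x)|\le\sqrt k(1+\delta)d(p,x)\le C(n)$. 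The lower estimate is already contained in the proof of Theorem~\ref{NonDegeneracyofSplittingMaps-RicCase}: if $z\in u^{-1}(u(x))$ realizes $d(p,u^{-1}(u(x)))=d(p,z)$ (so $z\in B_1(p)$), then, with $P=\frac{u(x)}{|u(x)|}$, the function $\tilde u(w)=\langle P,u(w)\rangle$ is, exactly as in that proof, a $\Psi(\delta|n)$-splitting map on $B_2(p)$ (using Lemma~\ref{lem2.12}), so $|\nabla\tilde u|\le 1+\Psi(\delta|n)$ along the geodesic from $p$ to $z$; since $\tilde u(z)-\tilde u(p)=|u(x)|$, this gives $|u(x)|\le(1+\Psi(\delta|n))d(p,z)$, and with $|u(x)|\le C(n)$ we conclude $d(p,u^{-1}(u(x)))\ge|u(x)|-\Psi(\delta|n)$.

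For the upper estimate I would argue by contradiction: if it fails, there are $\epsilon_0>0$, $\delta_i\downarrow0$, $n$-manifolds $(M_i,g_i,p_i)$ with $\mathrm{Ric}\ge-(n-1)\delta_i$ and $B_4(p_i)$ satisfying the $(\Phi;k,\delta_i)$-generalized Reifenberg condition, $(\delta_i,k)$-splitting maps $u_i:(B_8(p_i),p_i)\to(\mathbb R^{k},0^{k})$, and $x_i\in B_1(p_i)$ with $d(p_i,u_i^{-1}(u_i(x_i)))>|u_i(x_i)|+\epsilon_0$; in particular $|u_i(x_i)|<1-\epsilon_0$. By Lemma~\ref{lem2.12} and Theorem~\ref{thm-split-GHisom}(2) (after a harmless rescaling so that a splitting map is available on a ball of radius $4$), $B_2(p_i)$ carries a product structure that $u_i$ approximates; passing to a subsequence, $(B_2(p_i),p_i,d_i,m_i)$ converges in the pointed measured Gromov--Hausdorff sense to $(B_2(p_\infty),p_\infty)$ with $p_\infty=(0^{k},x_\infty)\in\mathbb R^{k}\times X$ (a metric–measure product), $u_i$ converging locally uniformly to the coordinate projection $u_\infty:\mathbb R^{k}\times X\to\mathbb R^{k}$, and $x_i\to(a_\infty,y_\infty)$ with $|a_\infty|=\lim_i|u_i(x_i)|=:t\le 1-\epsilon_0$.

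Put $q_\infty:=(a_\infty,x_\infty)$, so that $d(p_\infty,q_\infty)=|a_\infty|=t$ and $u_\infty(q_\infty)=a_\infty=u_\infty((a_\infty,y_\infty))$, and choose $q_i\to q_\infty$; then $\sigma_i:=|u_i(q_i)-u_i(x_i)|\to|a_\infty-a_\infty|=0$ (if $\sigma_i=0$ along a subsequence, then $q_i\in u_i^{-1}(u_i(x_i))$ and $d(p_i,u_i^{-1}(u_i(x_i)))\le d(p_i,q_i)\to t<|u_i(x_i)|+\epsilon_0$ for $i$ large, a contradiction, so assume $\sigma_i>0$). Set $\rho_i:=\max\{10\sigma_i,\delta_i^{1/(2n)}\}$, so that $\rho_i\downarrow0$ and $\rho_i\ge10\sigma_i$, and work in the rescaled spaces $(M_i,\rho_i^{-2}g_i,q_i)$ with $\hat u_i:=\rho_i^{-1}(u_i-u_i(q_i))$. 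By Bishop--Gromov, $\mathrm{vol}_{g_i}(B_8(p_i))/\mathrm{vol}_{g_i}(B_{8\rho_i}(q_i))\le C(n)\rho_i^{-n}$, so the $L^{1}$-defects of $u_i$ on $B_8(p_i)$ give $\bbint_{B_{8\rho_i}(q_i)}|\langle\nabla u_i^{a},\nabla u_i^{b}\rangle-\delta^{ab}|+\rho_i^{2}\bbint_{B_{8\rho_i}(q_i)}|\mathrm{Hess}\,u_i|^{2}\le C(n)\rho_i^{-n}\delta_i\le C(n)\delta_i^{1/2}\to0$; with $|\nabla\hat u_i|=|\nabla u_i|\le1+\delta_i$ and $\hat u_i(q_i)=0^{k}$ this makes $\hat u_i:(B_8^{\rho_i^{-2}g_i}(q_i),q_i)\to(\mathbb R^{k},0^{k})$ a $(\Psi(\delta_i),k)$-splitting map, while $B_4^{\rho_i^{-2}g_i}(q_i)$ satisfies the $(\Phi;k,\Psi(\delta_i))$-generalized Reifenberg condition because $\theta_{\cdot,k'}$, $\Theta_{\cdot,k'}$ are scale invariant and every point of $B_{4\rho_i}^{g_i}(q_i)\subset B_4^{g_i}(p_i)$ satisfies the hypothesis (and $\Phi$ may be taken nondecreasing).

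Applying Lemma~\ref{lem4.9} to $\hat u_i$ yields $\overline{B_1(0^{k})}\subset\hat u_i(B_{1+\Psi(\delta_i|n,\Phi)}^{\rho_i^{-2}g_i}(q_i))$. The vector $\xi_i:=\rho_i^{-1}(u_i(x_i)-u_i(q_i))\in\mathbb R^{k}$ satisfies $|\xi_i|=\rho_i^{-1}\sigma_i\le\frac1{10}<1$, so $\xi_i\in\overline{B_1(0^{k})}$ and there is $z_i\in B_{1+\Psi(\delta_i|n,\Phi)}^{\rho_i^{-2}g_i}(q_i)$ with $\hat u_i(z_i)=\xi_i$, i.e. $u_i(z_i)=u_i(x_i)$, and $d_{g_i}(q_i,z_i)\le\rho_i(1+\Psi(\delta_i))$. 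Since $d_{g_i}(p_i,q_i)\to t$, $\rho_i\to0$ and $|u_i(x_i)|\to t$, we get $d_{g_i}(p_i,u_i^{-1}(u_i(x_i)))\le d_{g_i}(p_i,z_i)\le d_{g_i}(p_i,q_i)+\rho_i(1+\Psi(\delta_i))<|u_i(x_i)|+\epsilon_0$ for $i$ large, the desired contradiction. The crux is the upper estimate, and within it the choice of the auxiliary scale $\rho_i$: it must simultaneously dominate $\delta_i^{1/(2n)}$ (so that Bishop--Gromov absorbs the $L^{1}$-defects and $\hat u_i$ is genuinely almost splitting at that scale, so Lemma~\ref{lem4.9} applies), be at least $10\sigma_i$ (so $\xi_i$ stays in the unit ball, so that the value $u_i(x_i)$ is attained near $q_i$), and tend to $0$ (so the produced point $z_i$ is trapped within $o(1)$ of $q_i$).
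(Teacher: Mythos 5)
Your proof is correct and follows the same overall strategy as the paper's: after splitting the estimate into two one‑sided inequalities, the upper bound is the real content, and both you and the paper establish it by a contradiction/compactness argument, identifying the point $q_\infty=(a_\infty,x_\infty)$ (the paper's $z_\infty$) that realizes $d(p_\infty,u_\infty^{-1}(u_\infty(x_\infty)))=|a_\infty|$ in the limit, and then invoking Lemma~\ref{lem4.9} on a small rescaled ball around an approximating point to produce $z_i$ with $u_i(z_i)=u_i(x_i)$ and $d(p_i,z_i)\le |u_i(x_i)|+o(1)$.

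There are two technical departures worth noting, both harmless. First, for the lower estimate the paper again passes to the GH limit (inequality (4.30)), whereas you dispatch it directly via the $\tilde u=\langle P,u\rangle$ gradient bound, which is a cleaner route and parallels what the paper does in the final part of the proof of Theorem~\ref{NonDegeneracyofSplittingMaps-RicCase}. Second, and more substantively, the paper establishes the required almost‑splitting quality of $u_i$ at the small scale near $z_i$ by a maximal‑function selection (choosing $z_i$ with $Mf_i(z_i)\le\delta_i^{1/2}$, as in Lemma~\ref{lem4.9}), and then rescales at the natural scale $r_i=|u_i(z_i)-u_i(x_i)|$; you instead floor the auxiliary scale at $\rho_i=\max\{10\sigma_i,\delta_i^{1/(2n)}\}$ so that a bare Bishop--Gromov comparison already absorbs the $L^1$ defects, avoiding the maximal‑function step entirely. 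Both devices achieve the same end; yours is slightly more self‑contained here, while the maximal‑function selection gives uniform control at \emph{every} small scale, which the paper reuses elsewhere. Your check that the generalized Reifenberg condition passes to the rescaled metric (after replacing $\Phi$ by a nondecreasing majorant) is correct and is exactly what is needed for Lemma~\ref{lem4.9} to apply.
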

\begin{proof}
We argue by a contradiction. Suppose there exist a sequence of positive numbers $\delta_{i}\to 0$ and a sequence of $n$-manifolds $M_{i}$ with $\Ric_{M_{i}}\ge-(n-1)\delta_{i}$ and $B_{4}(p_{i})\subset M_{i}$ satisfying the $(\Phi; k,\delta_{i})$-generalized Reifenberg condition, and there exists a sequence of $\delta_i$-splitting map $u_i:B_8(p_i)\to\R^k$, and there exist $\epsilon_0>0$ and $x_i\in B_1(p_i)$ so that
\begin{align}\label{6.18}
|d_i(p_i,u_i^{-1}(u_i(x_i)))-|u_i(p_i)-u_i(x_i)||>\epsilon_{0}.
\end{align}

By Cheeger-Colding's theory, up to a subsequence, we may assume $(B_8(p_i),p_i,d_i,m_{i})\xrightarrow{pmGH}(B_8(p_\infty),p_\infty, d_\infty,m_{\infty})$,
where $m_{i}=\frac{1}{\vol_{i} (B_1(p_i))}\vol_i$ is the renormalized volume measure on $M_{i}$, and  $B_8(p_\infty)\subset \R^{k}\times X$ for some pointed $\RCD(0,n-k)$ space $(X, q_{\infty}, d_{X}, m_{X})$, and $p_\infty=(0^k,q_\infty)$.
In addition, we assume $u_{i}$ converges in the locally uniformly and $W^{1,2}$ sense to a projection $u_{\infty}: B_8(p_\infty)\to \R^k$.
In particular, for any $w\in B_4(p_\infty)$, it holds
$$d_{\infty}(p_\infty,u_\infty^{-1}(u_\infty(w)))=|u_\infty(p_\infty)-u_\infty(w)|.$$

For each $i$, take $y_i\in u_i^{-1}(u_i(x_i))$ such that $d_{i}(p_i,y_i)=d_{i}(p_i,u_i^{-1}(u_i(x_i)))\le 1$.
Assuming $x_{i}\myarrow{GH} x_{\infty}$, $y_i\myarrow{GH} y_\infty$, then we have $x_{\infty},y_{\infty}\in \overline{B_1(p_\infty)}$, $u_i(x_i)=u_i(y_i)\to u_\infty(x_\infty)=u_\infty(y_\infty)$, and
\begin{align}\label{4.30}
&d_{i}(p_i,u_i^{-1}(u_i(x_i)))-|u_i(p_i)-u_i(x_i)| = d_{i}(p_i,y_i)-|u_i(p_i)-u_i(y_i)| \\
\geq &d_{\infty}(p_\infty,y_\infty)-|u_\infty(p_\infty)-u_\infty(y_\infty)|-\epsilon_{i}\ge -\epsilon_{i}\nonumber
\end{align}
for a sequence of positive numbers $\epsilon_{i}\downarrow0$.

Take $z_\infty\in u_\infty^{-1}(u_\infty(x_\infty))$ satisfying $$d_{\infty}(p_\infty,z_\infty)=d_{\infty}(p_\infty,u_\infty^{-1}(u_\infty(x_\infty))))\le 1.$$
Choose $B_{\frac{11}{10}}(p_i)\ni z_i\to z_\infty$.
By a maximal function argument as in the proof of Lemma \ref{lem4.9}, we may further assume $z_i$ is chosen so that $u_i:B_{r}(z_i)\to\R^k$ is $\Psi(\delta_i|n)$-splitting for any $r\in(0,1)$.
We claim that
	\begin{equation}\label{20220411'}
		d_{i}(z_i,u_i^{-1}(u_i(x_i)))\to0.
	\end{equation}

Assuming (\ref{20220411'}) is true, we have
\begin{align}\label{4.32}
		&d_{i}(p_i,u_i^{-1}(u_i(x_i)))-|u_i(p_i)-u_i(x_i)|\le d_{i}(p_i,z_i)-|u_i(p_i)-u_i(x_i)|+\eta_i\\
\leq&d_{\infty}(p_\infty,z_\infty)-|u_\infty(p_\infty)-u_\infty(x_\infty)|+2\eta_i=2\eta_i\nonumber
\end{align}
for a sequence of positive numbers $\eta_{i}\downarrow0$.
(\ref{4.30}) and (\ref{4.32}) contradict to (\ref{6.18}), and this finishes the proof of Lemma \ref{Surjectivity20220411}.

Now we just need to verify (\ref{20220411'}).
Take $r_i=|u_i(z_i)-u_i(x_i)|$, then $r_i\to |u_\infty(z_\infty)-u_\infty(x_\infty)|=0$.
We consider $({M}_{i},\tilde{g}_{i}):=(M_{i},r_i^{-2} g_{i})$, and $\Psi(\delta_i|n)$-splitting maps $\tilde{u}_i:B^{\tilde{g}_i}_{8}(z_i)\rightarrow \R^{k}$ defined by $\tilde{u}_i(w)=r_i^{-1}(u_i(w)-u_i(z_i))$.
Note that $B^{\tilde{g}_{i}}_8(z_i)$ satisfies $(\Phi; k,\delta_{i})$-generalized Reifenberg condition, and $|\tilde u_i(x_i)-\tilde u_i(z_i)|=1$, then according to Lemma \ref{lem4.9}, for sufficiently large $i$, there exists $x_i'$ with $\tilde{d}_{i}(z_i,x_i')<2$ such that $\tilde u_i( x_i')=\tilde u_i(x_i)$.
Thus $x_i'\in u_i^{-1}(u_i(x_i))$, and hence
$d_{i}(z_i,u_i^{-1}(u_i(x_i)))\le d_i(z_i,x_i')< 2r_i\to0$.
The proof is completed.
\end{proof}

\section{The generalized Reifenberg condition}\label{sec-5}

The following proposition considers generalized Reifenberg condition under Gromov-Hausdorff convergence. The proof follows easily from the definitions.

\begin{prop}\label{prop-smp-10}
Given a non-decreasing positive function $\Phi(\cdot)$ with $\lim_{\theta\to0^+}\Phi(\theta)=0$, and then define another function $\tilde{\Phi}(\theta)=2{\Phi}(2\theta)$.
Let $(X_{i}, p_{i}, d_{i})$ be a sequence of metric spaces such that $(X_{i}, p_{i}, d_{i})$ converges in the pointed Gromov-Hausdorff sense to $(X_{0}, p_{0}, d_{0})$.
Suppose there exist an integer $k\geq 1$ and sequences of numbers $\delta_{i} \geq 0$, $r_{i}> 0$ with  $\delta_{i} \rightarrow \delta_{0}$ and $r_{i} \rightarrow r_{0}>0$, so that $p_{i}$ satisfies the $(\Phi,r_{i}; k,\delta_{i})$-generalized Reifenberg condition.
Then $p_{0}$ satisfies the $(\tilde{\Phi},r_{0}; k,\delta_{0})$-generalized Reifenberg condition.
\end{prop}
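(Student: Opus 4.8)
The plan is to transfer the generalized Reifenberg inequality from the spaces $(X_i,p_i,d_i)$ to the limit, working scale by scale. The essential input is the continuity of the ``Euclidean defect'' functions under pointed Gromov--Hausdorff convergence: if $t_i\to t>0$, then for every integer $m\geq 1$ one has $\theta^{(X_i,d_i)}_{p_i,m}(t_i)\to\theta^{(X_0,d_0)}_{p_0,m}(t)$. I would deduce this from the convergence of metric balls, $d_{GH}(B_{t_i}(p_i),B_t(p_0))\to 0$, together with the triangle inequality for $d_{GH}$: setting $\beta_i:=d_{GH}(B_{t_i}(p_i),B_t(p_0))$, one composes a correspondence realizing a near‑optimal $(\epsilon,m)$‑Euclidean approximation of one of the two balls with a correspondence realizing $\beta_i$ between the balls, which forces $|\theta_{p_i,m}(t_i)-\theta_{p_0,m}(t)|\to 0$ as $i\to\infty$.

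Next, fix $r\in(0,r_0]$ and an integer $k'\geq k$. Since $r_i\to r_0\geq r$, set $\rho_i:=\min\{r,r_i\}$, so $\rho_i\leq r_i$ and $\rho_i\to r$. Applying the $(\Phi,r_i;k,\delta_i)$‑generalized Reifenberg condition at $p_i$ at the scale $\rho_i$ gives, for every $s\in(0,\rho_i]$, $\theta_{p_i,k'}(s)\leq\Theta_{p_i,k'}(\rho_i)\leq\Phi\bigl(\max\{\delta_i,\theta_{p_i,k'}(\rho_i)\}\bigr)$, where, crucially, the right‑hand side does not depend on $s$; this is exactly what lets one control arbitrarily small scales at $p_0$, where a direct ball‑convergence estimate at scale $s$ would degrade as $s\to 0$. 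Now fix $s\in(0,r]$. For $i$ large enough that $s\leq\rho_i$ (and for $s=r$ one uses $\theta_{p_i,k'}(\rho_i)$ in place of $\theta_{p_i,k'}(s)$, noting $\theta_{p_i,k'}(\rho_i)\to\theta_{p_0,k'}(r)$), the displayed inequality together with the continuity statement gives $\theta_{p_0,k'}(s)=\lim_i\theta_{p_i,k'}(s)\leq\limsup_i\Phi(a_i)$ with $a_i:=\max\{\delta_i,\theta_{p_i,k'}(\rho_i)\}\to a:=\max\{\delta_0,\theta_{p_0,k'}(r)\}$. If $a>0$, then $a_i<2a$ for $i$ large, so monotonicity of $\Phi$ yields $\Phi(a_i)\leq\Phi(2a)$, whence $\theta_{p_0,k'}(s)\leq\Phi(2a)\leq 2\Phi(2a)=\tilde{\Phi}(a)$. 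If $a=0$, i.e.\ $\delta_0=0$ and $\theta_{p_0,k'}(r)=0$, then $a_i\to 0^{+}$ and $\lim_{\theta\to0^{+}}\Phi(\theta)=0$ force $\theta_{p_0,k'}(s)=0=\tilde{\Phi}(0)$ (the degenerate possibility $a_i=0$ being covered by the convention $\Phi(0):=0$).

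Taking the supremum over $s\in[0,r]$ then gives $\Theta_{p_0,k'}(r)\leq\tilde{\Phi}(\max\{\delta_0,\theta_{p_0,k'}(r)\})$; since $r\in(0,r_0]$ and the integer $k'\geq k$ were arbitrary, and since $\tilde{\Phi}$ is non‑decreasing with $\lim_{\theta\to0^{+}}\tilde{\Phi}(\theta)=0$, the point $p_0$ satisfies the $(\tilde{\Phi},r_0;k,\delta_0)$‑generalized Reifenberg condition. The only genuinely non‑routine ingredient is the convergence of metric balls used in the first step, $d_{GH}(B_{t_i}(p_i),B_t(p_0))\to0$ for $t_i\to t>0$, which is standard for the length spaces (Ricci limit, Alexandrov and $\RCD$ spaces) to which this proposition is applied; the remainder is bookkeeping, and the passage from $\Phi$ to $\tilde{\Phi}(\theta)=2\Phi(2\theta)$ supplies exactly the slack needed to absorb both the possible discontinuity of $\Phi$ and the control of arbitrarily small scales at the limit point.
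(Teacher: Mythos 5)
Your proof is correct and fills in exactly the routine details that the paper omits with the remark ``the proof follows easily from the definitions'': the essential ingredient is the scale-by-scale continuity $\theta_{p_i,m}(t_i)\to\theta_{p_0,m}(t)$, obtained by combining ball convergence under pGH convergence with the triangle inequality for $d_{GH}$ and a small rescaling to match the radii $t_i$ and $t$, and the factor $2\Phi(2\theta)$ is precisely the slack needed since $a_i\to a$ only guarantees $a_i<2a$ eventually. Your caveat that $d_{GH}(B_{t_i}(p_i),B_t(p_0))\to 0$ is cleanest for length spaces is apt, but harmless here, since the proposition is only invoked for RCD, Alexandrov, and Ricci limit spaces.
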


Now we consider generalized Reifenberg condition under local diffeomorphisms.

\begin{prop}\label{prop-smp-2}
Given any $n\in\N^+$ and a positive function $\Phi_{0}(\cdot)$ with $\lim_{\delta'\to0^+}\Phi_0(\delta')=0$, there exists a positive function $\Phi_{1}(\cdot)$, depending only on $n$ and $\Phi_0$, so that $\lim_{\delta'\to0^+}\Phi_{1}(\delta')=0$ and the following holds.
Let $(M,g,p)$ be an $n$-dimensional (not necessarily complete) Riemannian manifold with $\Ric_{M}\ge-(n-1)\delta$ on $B_{8}(p)$ and $B_{8}(p)$ has compact closure in $B_{9}(p)$.
Suppose $\hat M$ is a connected $n$-manifold and there is a surjective local diffeomorphism $\sigma:\hat M\to B_{8}(p)$ with $\sigma(\hat p)=p$.
We equip $\hat M$ with the pull-back metric $\hat{g}=\sigma^{*}g$ and the induced distance $\hat{d}$.
If for some $1\leq k \leq n$, $\hat{p}$ satisfies the $(\Phi_{0}; k,\delta)$-generalized Reifenberg condition, then ${p}$ satisfies the $(\Phi_{1}; k,\delta)$-generalized Reifenberg condition.
\end{prop}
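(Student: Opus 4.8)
The plan is to compare the defect functions $\theta^{(M,g)}_{p,k'}$ and $\theta^{(\hat M,\hat g)}_{\hat p,k'}$ at comparable scales, transfer the generalized Reifenberg hypothesis through this comparison, and then package the result into a suitable $\Phi_1$. First I would record the elementary properties of $\sigma$: for $r\le 1$ the map $\sigma$ restricts to a surjective $1$-Lipschitz local isometry $B_r(\hat p)\to B_r(p)$ (short curves based at $p$ lift to $\hat M$ and stay inside $B_1(p)\subset\subset B_8(p)$ by the relative compactness assumption), the pull-back metric $\hat g=\sigma^*g$ has $\Ric_{\hat M}\ge-(n-1)\delta$ on the relevant region, the induced measure satisfies $\hat m(B_r(\hat p))\ge m(B_r(p))$, and Bishop--Gromov volume ratios are bounded on both $M$ and $\hat M$. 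We may also assume $\delta<\delta_*(n)$ for a small threshold $\delta_*(n)$ to be fixed, since otherwise the conclusion is vacuous once $\Phi_1$ is chosen large on $[\delta_*(n),\infty)$.

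The core consists of two comparison steps. \emph{(Lifting)} If $B_r(p)$ is $(\delta',k')$-Euclidean with $\delta',\delta$ small, then $B_{r/2}(\hat p)$ is $(\Psi(\max\{\delta,\delta'\}\,|\,n),k')$-Euclidean: one produces, via Cheeger--Colding theory and the Remark after Theorem \ref{thm-split-GHisom}, a $(\Psi,k')$-splitting map on a definite subball of $B_r(p)$, equivalently $k'$ independent almost-lines through $p$, and lifts each of them through the local isometry $\sigma$ starting at $\hat p$; since $\sigma$ preserves lengths and angles the almost-line and almost-orthogonality conditions are retained, and Cheeger--Colding's almost-splitting theorem applied on $\hat M$ gives the claimed Euclidean structure on $B_{r/2}(\hat p)$. \emph{(Descending)} There is $\delta_*(n)>0$ such that if $B_r(p)$ is $(\delta',k')$-Euclidean with $\delta'<\delta_*(n)$ and $B_s(\hat p)$ is $(\tilde\delta,k')$-Euclidean for every $s\le r/2$ with $\tilde\delta<\delta_*(n)$, then $B_s(p)$ is $(\Psi(\tilde\delta+\delta'+\delta\,|\,n),k')$-Euclidean for every $s\le r/2$. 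This is proved by contradiction and compactness: a blow-up of a contradicting sequence produces limit spaces in which $B_\bullet(\hat p_i)$ converges to a product $\mathbb R^{k'}\times Z$ (being uniformly almost-$k'$-Euclidean at all scales in the blow-up), $B_\bullet(p_i)$ converges to a limit $X_\infty$ which is a surjective local-isometric image (hence a metric quotient) of $\mathbb R^{k'}\times Z$, and $B_\bullet(p_\infty)$ is genuinely $(0,k')$-Euclidean at the larger scale furnished by the hypothesis on $B_r(p_i)$. One then argues that a metric quotient of $\mathbb R^{k'}\times Z$ which is still $k'$-Euclidean at a definite scale must remain $k'$-Euclidean at all smaller scales, since the quotient can collapse at most $(\dim_{\mathrm{Eucl}}-k')$ of the Euclidean directions; this contradicts the failure assumption.

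Finally I would assemble the proof. Fix $s\le r\le1$, an integer $k'\ge k$, and put $t:=\max\{\delta,\theta^{(M,g)}_{p,k'}(r)\}$. If $t\ge\delta_*(n)$ then $\theta^{(M,g)}_{p,k'}(s)\le C(n)\le\Phi_1(t)$ once $\Phi_1$ is chosen large on $[\delta_*(n),\infty)$. If $t<\delta_*(n)$: by the Lifting step $B_{r/2}(\hat p)$ is $(\Psi(t\,|\,n),k')$-Euclidean; by the $(\Phi_0;k,\delta)$-generalized Reifenberg condition at $\hat p$ (applicable since $k'\ge k$), $B_s(\hat p)$ is $(\Phi_0(\Psi'(t)),k')$-Euclidean for every $s\le r/2$, where $\Psi'(t)=\max\{t,\Psi(t\,|\,n)\}$; by the Descending step, $B_s(p)$ is $(\Psi(\Phi_0(\Psi'(t))+t\,|\,n),k')$-Euclidean for every $s\le r/2$; and for $s\in[r/2,r]$ Lemma \ref{lem2.11} gives that $B_s(p)$ is $(\Psi(t\,|\,n),k')$-Euclidean. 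Taking the supremum over $s\le r$ yields $\Theta^{(M,g)}_{p,k'}(r)\le\Phi_1(t)$, where $\Phi_1$ is built from $\Phi_0$ and the various $\Psi(\cdot\,|\,n)$, hence depends only on $n$ and $\Phi_0$ and satisfies $\lim_{t\to0^+}\Phi_1(t)=0$. The main obstacle is the Descending step, i.e.\ quantifying how many Euclidean directions a local-isometric quotient can destroy and why, in the regime where we apply it, none of the ``extra'' ones beyond the $k'$ that we need can be collapsed; the remaining parts are scale-bookkeeping together with standard Cheeger--Colding tools and Lemma \ref{lem2.11}.
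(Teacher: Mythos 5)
Your overall plan (lift to $\hat M$, invoke the generalized Reifenberg hypothesis there, descend back to $M$) is the right shape, and your lifting step is morally in line with the paper. However, the proposal has a genuine gap in the descending step, and this gap is precisely what the paper's actual argument is engineered to avoid.

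Your Descending step asserts that if $\hat X_\infty\cong\mathbb R^{k'}\times Z$ and $X_\infty$ is a surjective local-isometric image that is $(0,k')$-Euclidean at a definite scale, then $X_\infty$ is $k'$-Euclidean at all smaller scales, because ``the quotient can collapse at most $(\dim_{\mathrm{Eucl}}-k')$ of the Euclidean directions.'' This heuristic is not a proof, and the setup does not deliver the hypothesis you want to use. In the contradiction/blow-up, the scales $s_i$ at which failure occurs may satisfy $s_i \ll r_i\sqrt{\delta_i'}$, and then the $(\delta_i',k')$-Euclidean condition on $B_{r_i}(p_i)$ gives no $k'$-Euclidean control on $B_{s_i}(p_i)$ after rescaling (Lemma \ref{lem2.11} only descends to scale $r\sqrt{\delta'}$). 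So you cannot conclude that $X_\infty$ is $(0,k')$-Euclidean at any definite scale from the hypothesis on $M$ alone, and you are forced to use the covering structure $\sigma_\infty$ coming from a limit of local diffeomorphisms $\sigma_i$. But the $\sigma_i$ are merely local diffeomorphisms (not group quotients, not covering maps), and passing to a useful limit object $\sigma_\infty:\hat X_\infty\to X_\infty$ with a transparent quotient structure is not established; nor is the assertion that a local-isometric image of $\mathbb R^{k'}\times Z$ must be $k'$-Euclidean at all sub-scales. This is the missing idea, and it is not a bookkeeping issue.

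The paper sidesteps this difficulty entirely. It keeps track of an explicit almost-splitting map $u$ on $M$ rather than only GH defects. Pulling it back to $\hat u=u\circ\sigma$ on $\hat M$, the $(\delta,k')$-splitting conditions are integral quantities which transfer between $B_r(p)$ and $B_r(\hat p)$ via the generalized covering lemma (Lemma \ref{covering-lem}), which compares $\bbint_{B_r(\hat p)}f\circ\sigma$ with $\bbint_{B_r(p)}f$ in both directions. On $\hat M$ one then applies the transformation theorem (Theorem \ref{TransformationThmUderSplittingMonotonicity}), using the $(\Phi_0;k,\delta)$-generalized Reifenberg condition at $\hat p$, to produce lower-triangular matrices $T_r$ making $T_r\hat u$ a $(\Psi,k')$-splitting map at every sub-scale. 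Because $T_r\hat u=(T_r u)\circ\sigma$, the descent is automatic: the second inequality of Lemma \ref{covering-lem} shows $T_r u$ is a $(\Psi,k')$-splitting map on $B_{2r}(p)$, hence $B_r(p)$ is $(\Psi',k')$-Euclidean. This exploits both that splitting maps are defined downstairs (so they commute with post-composition by $T_r$ and pre-composition by $\sigma$) and that the covering lemma gives a two-sided, purely elementary, scale-by-scale integral comparison — no blow-up, no limit of local diffeomorphisms, no structural claim about quotients of product spaces. To repair your proposal you would essentially need to import Lemma \ref{covering-lem} and re-express your Lifting and Descending steps in terms of splitting maps, at which point you recover the paper's proof.
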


In order to prove Proposition \ref{prop-smp-2}, we need the following lemma, which generalizes the covering lemma in \cite{KW}.
We leave the proof of Lemma \ref{covering-lem} in the appendix.
\begin{lem}\label{covering-lem}
There exists $C(n)\ge1$ such that the following holds.
Suppose $(M,g,p)$ is a not necessarily complete $n$-manifold with $\Ric_{M}\ge-(n-1)$ on $B_{8}(p)$, and $B_{8}(p)$ has compact closure in $B_{10}(p)$.
Suppose $\hat M$ is a connected $n$-manifold and there is a surjective local diffeomorphism $\sigma:\hat M\to B_{10}(p)$ with $\sigma(\hat p)=p$.
We equip $\hat M$ with the pull-back metric $\hat{g}=\sigma^{*}g$ and the induced distance $\hat{d}$.
Then given any non-negative function $f:B_3(p)\to \R$,
\begin{equation}\label{ControlBetweenCover}
	\frac{1}{C(n)}\bbint_{B_r(\hat p)}f\circ\sigma\le \bbint_{B_r(p)}f\le C(n)\bbint_{B_{3r}(\hat p)}f\circ\sigma
\end{equation}
holds for any $r\le 1$.
\end{lem}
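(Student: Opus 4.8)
The plan is to reduce both halves of \eqref{ControlBetweenCover} to pointwise bounds on the \emph{multiplicity functions} $m_{\rho}(x):=\#\bigl(\sigma^{-1}(x)\cap B_{\rho}(\hat p)\bigr)$, and then to prove those bounds by counting preimages against volumes of families of lifted minimal geodesics, with Bishop--Gromov comparison supplying the quantitative input.

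Since $\hat g=\sigma^{*}g$, the map $\sigma$ is a local isometry; hence $\Ric_{\hat M}\ge-(n-1)$ on $\sigma^{-1}(B_{8}(p))$, and the change-of-variables formula gives, for every Borel $f\ge0$ and every $\rho\le3$,
\[
\int_{B_{\rho}(\hat p)}f\circ\sigma\,d\rv=\int_{B_{\rho}(p)}f(x)\,m_{\rho}(x)\,d\rv(x),\qquad \rv(B_{\rho}(\hat p))=\int_{B_{\rho}(p)}m_{\rho}\,d\rv .
\]
Thus $\bbint_{B_{\rho}(\hat p)}(f\circ\sigma)$ is the $m_{\rho}$-weighted average of $f$ over $B_{\rho}(p)$, and comparing it with the unweighted average over $B_{r}(p)$, letting $f$ run over all nonnegative Borel functions, one checks that the left inequality of \eqref{ControlBetweenCover} is equivalent to
\[
\text{(A)}\quad m_{r}(x)\le C(n)\,\frac{\rv(B_{r}(\hat p))}{\rv(B_{r}(p))}\qquad\bigl(x\in\sigma(B_{r}(\hat p))\bigr),
\]
while the right inequality is implied by
\[
\text{(B)}\quad m_{3r}(x)\ge \frac{1}{C(n)}\,\frac{\rv(B_{3r}(\hat p))}{\rv(B_{r}(p))}\qquad\text{for a.e.\ }x\in B_{r}(p).
\]

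To prove (A) it suffices to treat finitely many preimages $\hat x_{1},\dots,\hat x_{m}\in B_{r}(\hat p)$ of $x$; for each $i$ let $D_{i}$ be the set of endpoints of the $\hat x_{i}$-based lifts of the minimal geodesics emanating from $x$ of length $<r$. Each lift is a geodesic of the same length, so $D_{i}\subset B_{2r}(\hat p)$; since every point of $B_{r}(x)$ is joined to $x$ by such a geodesic, $\sigma(D_{i})=B_{r}(x)$; and since minimal geodesics from $x$ are unique off the null set $\mathrm{Cut}(x)$, $\sigma|_{D_{i}}$ is injective away from $\sigma^{-1}(\mathrm{Cut}(x))$ and the $D_{i}$ are pairwise disjoint away from that set (two distinct starting points cannot yield lifts meeting over a point outside $\mathrm{Cut}(x)$, by uniqueness of lifts). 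Hence $\rv(D_{i})=\rv(B_{r}(x))$ and $\sum_{i}\rv(D_{i})\le\rv(B_{2r}(\hat p))$; combining with $\rv(B_{r}(x))\ge C(n)^{-1}\rv(B_{r}(p))$ (Bishop--Gromov, via $B_{r}(p)\subset B_{2r}(x)$ and $2r\le2$) and $\rv(B_{2r}(\hat p))\le C(n)\rv(B_{r}(\hat p))$ gives (A). For (B) one runs the dual count: for $x\in B_{r}(p)$ and $\hat y\in B_{r}(\hat p)$, the $\hat y$-based lift of the minimal geodesic from $\sigma(\hat y)$ to $x$ has length $<2r$, so ends at one of the preimages $\hat x_{1},\dots,\hat x_{m}\in B_{3r}(\hat p)$ of $x$, defining (off a null set) a map $\Psi\colon B_{r}(\hat p)\to\{\hat x_{1},\dots,\hat x_{m}\}$ whose fiber over $\hat x_{i}$ is, by the same uniqueness, mapped injectively by $\sigma$ into $B_{2r}(x)$; thus $\rv(\Psi^{-1}(\hat x_{i}))\le\rv(B_{2r}(x))\le C(n)\rv(B_{r}(p))$, summing gives $\rv(B_{r}(\hat p))\le m_{3r}(x)\,C(n)\rv(B_{r}(p))$, and one more doubling step upstairs yields (B). The hypothesis $r\le1$ is used only to keep every comparison radius $\le3$, so all constants are dimensional.

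The point that genuinely needs the assumption that $\overline{B_{8}(p)}$ be relatively compact in $B_{10}(p)$ — and which I expect to be the main obstacle — is the very existence of the minimal-geodesic lifts used above: $\hat M$ carries only the pulled-back metric and need not be complete, so a maximal lift of a short geodesic downstairs might escape to the metric boundary of $\hat M$ (as already happens for the unrolling map $(0,3)\to S^{1}$). Since the relevant lifts project into the compact set $\overline{B_{8}(p)}$, which lies interior to the open set $B_{10}(p)$ on which $\sigma$ is a genuine local diffeomorphism, one can nonetheless arrange — by passing to the metric completion of $\hat M$ (to which $\sigma$, being $1$-Lipschitz, extends), or by a direct compactness argument — that for each $\hat x_{i}$ the set $D_{i}$ still captures a definite dimensional fraction of $\rv(B_{r}(x))$ and each fiber of $\Psi$ remains controlled, which is all the volume count needs. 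Making this robust is the technical heart of the appendix; the rest is the Bishop--Gromov bookkeeping sketched above.
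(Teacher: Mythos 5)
Your argument is correct in its overall structure, and it takes a genuinely different route from the paper's. The paper constructs explicit Voronoi-type fundamental domains $\Omega_{\hat q}\subset\hat M$, one for each preimage $\hat q\in\sigma^{-1}(p)\cap B_2(\hat p)$, consisting of points in $\sigma^{-1}(B_1(p))\cap B_1(\hat q)$ strictly closer to $\hat q$ than to any other preimage of $p$; it then verifies that these cells are pairwise disjoint, each maps by $\sigma$ isometrically onto $B_1(p)$ away from cut loci, and their union $S$ is sandwiched between $B_1(\hat p)$ and $B_3(\hat p)$, at which point a single Bishop--Gromov comparison gives both halves of \eqref{ControlBetweenCover}. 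You instead reduce the lemma, via the area formula for the local isometry $\sigma$, to pointwise two-sided bounds on the multiplicity function $m_\rho(x)=\#(\sigma^{-1}(x)\cap B_\rho(\hat p))$ and prove these by two dual volume counts (lifted minimal geodesics emanating from $x$ for the upper bound, and the endpoints-of-lifts map $\Psi$ for the lower bound), each controlled by Bishop--Gromov doubling on $M$ and on $\hat M$. The paper's approach carries out the decomposition once and has simpler bookkeeping; yours is more pointwise/measure-theoretic and needs a little care with measurability of $D_i$ and of the fibers of $\Psi$ (both Borel up to the null set $\sigma^{-1}(\mathrm{Cut}(x))$, which is a null set since $\sigma$ is a local isometry and $\hat M$ is second countable). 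Both proofs, however, rest on exactly the same technical foundation, which you correctly identify: the lifting lemma for short curves via compactness of $\overline{B_8(\hat p)}$. One remark: your closing paragraph is more cautious than it needs to be. Since every geodesic you lift has length $<2r\le 2$ and starts at a point of $B_r(\hat p)$ with $r\le 1$, the lift stays in $B_3(\hat p)\subset\overline{B_8(\hat p)}$, which is compact by the hypothesis, so the maximal lift is always a full lift and the sets $D_i$ capture $\emph{all}$ of $\rv(B_r(x))$, not merely a definite fraction; there is no need to pass to the metric completion. This lifting argument is carried out in the paper's preamble to the proof, and once it is in place your Bishop--Gromov counting goes through exactly as sketched.
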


\begin{proof}[Proof of Proposition \ref{prop-smp-2}]
Suppose $\delta>0$ is sufficiently small, and $\Ric_{M}\ge-(n-1)\delta$.
Fix $k\leq k'\leq n$.
For any $0<s\leq1$, if $\delta':=\theta^{(M,g)}_{p,k'}(s)\geq\delta$, then $B_s(p)$ is $(\delta',k')$-Euclidean. We assume $\delta'$ is also sufficiently small.
By Lemma \ref{lem2.11}, for any $r\in[\frac16s\sqrt{\delta'},s]$, $B_r(p)$ is $(\frac{1}{6}\sqrt{\delta'},k')$-Euclidean.
On the other hand, there exists a positive constant $\delta_{CC}(n)$ (depending only on $n$) which comes from Cheeger-Colding's almost splitting theorem (\cite{CC97}), so that if $\delta'\le\delta_{CC}(n)$, then there exists a $(\Psi_{0}(\delta'|n),k')$-splitting map $u:B_{10s\sqrt{\delta'}}(p)\to\R^{k'}$.
In particular,
\begin{align}
\bbint_{B_{10s\sqrt{\delta'}}(p)}\sum_{i,j=1}^{k'}|\langle \nabla u^{i}, \nabla u^{j}\rangle-\delta^{ij}|+s^{2}\delta'|\mathrm{Hess} u|^{2}\leq\Psi_0(\delta'|n).
\end{align}

Denote by $\hat{u}:=u\circ\sigma:B_{10s\sqrt{\delta'}}(\hat p)\to\R^{k'}$, which is harmonic.
Applying the first inequality of (\ref{ControlBetweenCover}) to $f=\sum_{i,j=1}^{k'}|\langle \nabla u^{i}, \nabla u^{j}\rangle-\delta^{ij}|+s^{2}\delta'|\mathrm{Hess} u|^{2}$, and then by the Bishop-Gromov volume comparison, we have
\begin{align}
\bbint_{B_{s\sqrt{\delta'}}(\hat{p})}\sum_{i,j=1}^{k'}|\langle \nabla \hat{u}^{i}, \nabla \hat{u}^{j}\rangle-\delta^{ij}|+s^{2}\delta'|\mathrm{Hess} \hat{u}|^{2}\leq C(n)\Psi_0(\delta'|n),
\end{align}
i.e. $\hat{u}:B_{s\sqrt{\delta'}}(\hat p)\to\R^{k'}$ is a $(C(n)\Psi_0({\delta'}|n),k')$-splitting map.

Since $\hat p$ satisfies the $(\Phi_{0}; k',\delta)$-generalized Reifenberg condition,
by Theorem \ref{TransformationThmUderSplittingMonotonicity}, for any $\delta'\le\delta(n,\Phi_0)$ sufficiently small and any $r\in(0,\frac{1}{6}s\sqrt{\delta'}]$,
there exists a $k'\times k'$ lower triangle matrix $T_{6r}$ with positive diagonal entries such that $\hat{w}=T_{6r}\circ \hat{u}:B_{6r}(\hat p)\to\R^{k'}$ is a $(\Psi_{1}(\delta'|n,\Phi_0),k')$-splitting map.
Denote by ${w}=T_{6r}\circ {u}$.
Applying the second inequality of (\ref{ControlBetweenCover}) to $f=\sum_{i,j=1}^{k'}|\langle \nabla {w}^{i}, \nabla {w}^{j}\rangle-\delta^{ij}|+36r^{2}|\mathrm{Hess} {w}|^{2}$,
we obtain that $w: B_{2r}(p)\to\R^{k'}$ is a $(C(n)\Psi_{1}(\delta'|n,\Phi_0),k')$-splitting map.
Then by Cheeger-Colding's theory, $B_{r}(p)$ is $(\Psi_2(\delta'|n,\Phi_0),k')$-Euclidean.

If $\theta^{(M,g)}_{p,k'}(s)<\delta$ for some $0<s\leq1$, then $B_s(p)$ is $(\delta,k')$-Euclidean, thus we reduce the problem to the $\delta'=\delta$ case.

Define the positive function $\Phi_{1}$ by
\begin{align}
\Phi_{1}(\delta')=\left\{
  \begin{array}{ll}
    \max\{\Psi_2(\delta'|n,\Phi_0),\frac{1}{6}\sqrt{\delta'}\}, & \hbox{if $\delta'\in(0,\delta(n,\Phi_0)]$;} \\
    2, & \hbox{if $\delta'>\delta(n,\Phi_0)$,}
  \end{array}
\right.
\end{align}
then $\Phi_{1}$ satisfies the required properties.
The proof is completed.
\end{proof}

In the following, we consider sufficient conditions for the generalized Reifenberg condition.

\begin{prop}\label{prop-smp-1}
For any $n\in\mathbb{Z}^{+}$, there exists a non-decreasing positive function $\Phi(\cdot)$, depending only on $n$, so that $\lim_{\delta'\to0^+}\Phi(\delta')=0$ and the following holds.
For every $\delta>0$, suppose $(X,d)\in \mathrm{Alex}^{n}(-\delta)$, then
\begin{equation}
\Theta^{(X,d)}_{p,k'}(s)\le \Phi(\max\{\delta,\theta^{(X,d)}_{p,k'}(s)\})
\end{equation}
holds for every $s\in(0,1]$ and every integer $1\leq k'\leq n$.
\end{prop}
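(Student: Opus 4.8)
The plan is to deduce Proposition \ref{prop-smp-1} from the canonical Reifenberg-type theorem for Alexandrov spaces, Theorem \ref{thm-LN20} (Corollary 5.2 of \cite{LN20}), by a rescaling argument combined with Lemma \ref{lem2.11}. Let $\delta_{LN}(n,\epsilon)>0$ be the constant furnished by Theorem \ref{thm-LN20}; since here only the finitely many indices $k'\in\{1,\dots,n\}$ occur, we may assume $\delta_{LN}(n,\epsilon)$ is independent of the index (pass to the minimum over $k'\le n$), and after shrinking it we may further assume $\epsilon\mapsto\delta_{LN}(n,\epsilon)$ is continuous, strictly increasing, satisfies $\delta_{LN}(n,\epsilon)\le\epsilon$, and tends to $0$ as $\epsilon\to0^{+}$ (one may always replace $\delta_{LN}(n,\cdot)$ by a smaller continuous strictly increasing function with these properties). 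Let $\Phi_{LN}(n,\cdot)$ denote its inverse on $(0,\delta_{LN}(n,1)]$, so $\delta_{LN}(n,\Phi_{LN}(n,t))=t$ and $\Phi_{LN}(n,t)\to0$ as $t\to0^{+}$. With $t_{n}:=\min\{\delta_{LN}(n,1),\tfrac{1}{10}\}$, I would then set
\[
\Phi(t)=
\begin{cases}
\max\{\Phi_{LN}(n,t),\sqrt{t}\}, & 0<t\le t_{n},\\[4pt]
2, & t>t_{n},
\end{cases}
\]
which depends only on $n$, is positive and non-decreasing (its value at $t_{n}$ is at most $1$), and satisfies $\lim_{t\to0^{+}}\Phi(t)=0$.

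Now fix $(X,d)\in\mathrm{Alex}^{n}(-\delta)$, $p\in X$, $s\in(0,1]$ and an integer $1\le k'\le n$, and put $\delta':=\max\{\delta,\theta^{(X,d)}_{p,k'}(s)\}$. If $\delta'>t_{n}$ there is nothing to prove: comparing $B_{t}(p)$ with $B_{t}(0^{k'})$ (two spaces of diameter $\le 2t$) gives $\theta^{(X,d)}_{p,k'}(t)\le1$ for every $t$, whence $\Theta^{(X,d)}_{p,k'}(s)\le1<2=\Phi(\delta')$. So assume $\delta'\le t_{n}$, and fix any $\delta''$ with $\delta'<\delta''\le\min\{\delta_{LN}(n,1),\tfrac19\}$; by the definition of $\theta$ the ball $B_{s}(p)$ is $(\delta'',k')$-Euclidean. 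Rescale the space: $(X,\tfrac{3}{s}d)\in\mathrm{Alex}^{n}\bigl(-\tfrac{\delta s^{2}}{9}\bigr)$, and since being $(\eta,k')$-Euclidean is scale invariant, the ball of radius $3$ about $p$ in the rescaled metric (which as a set is $B_{s}^{d}(p)$) is $(\delta'',k')$-Euclidean. Because $\delta''=\delta_{LN}\bigl(n,\Phi_{LN}(n,\delta'')\bigr)$ and $\tfrac{\delta s^{2}}{9}\le\tfrac{\delta'}{9}<\delta''$, both hypotheses of Theorem \ref{thm-LN20} hold with $\epsilon=\Phi_{LN}(n,\delta'')$ and $k=k'$; applying it at the center $x=p$ shows that every ball of radius $r\in(0,1]$ about $p$ in the rescaled metric is $(\Phi_{LN}(n,\delta''),k')$-Euclidean, i.e. $B_{t}(p)$ is $(\Phi_{LN}(n,\delta''),k')$-Euclidean for every $t\in(0,\tfrac{s}{3}]$.

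For the remaining radii $t\in[\tfrac{s}{3},s]$, Lemma \ref{lem2.11} applied to the $(\delta'',k')$-Euclidean ball $B_{s}(p)$ gives that $B_{t}(p)$ is $(\sqrt{\delta''},k')$-Euclidean for all $t\in[s\sqrt{\delta''},s]$, and $s\sqrt{\delta''}\le\tfrac{s}{3}$ since $\delta''\le\tfrac19$. Combining the two ranges, $B_{t}(p)$ is $(\max\{\Phi_{LN}(n,\delta''),\sqrt{\delta''}\},k')$-Euclidean for every $t\in(0,s]$, so $\Theta^{(X,d)}_{p,k'}(s)\le\max\{\Phi_{LN}(n,\delta''),\sqrt{\delta''}\}$. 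Letting $\delta''\downarrow\delta'$ and using continuity of $\Phi_{LN}(n,\cdot)$ and of $\sqrt{\cdot}$ gives $\Theta^{(X,d)}_{p,k'}(s)\le\max\{\Phi_{LN}(n,\delta'),\sqrt{\delta'}\}=\Phi(\delta')$, which is the claim.

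Since all the geometric content is carried by Theorem \ref{thm-LN20}, there is no deep obstacle here; the points that need care are those flagged above --- arranging $\delta_{LN}(n,\cdot)$ to be invertible so that $\Phi$ is a clean, $n$-dependent-only modulus vanishing at $0$, carrying out the rescaling by $3/s$ correctly (so that scale $s$ becomes the scale $3$ demanded by Theorem \ref{thm-LN20}), and patching the intervals $(0,\tfrac{s}{3}]$ and $[s\sqrt{\delta''},s]$ together by means of Lemma \ref{lem2.11}.
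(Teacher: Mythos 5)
Your proposal is correct and follows the same route as the paper: the paper merely observes that the proposition "follows directly from Theorem~\ref{thm-LN20}", while you have filled in the rescaling bookkeeping in full detail (choosing an invertible modulus $\delta_{LN}(n,\cdot)$, rescaling by $3/s$ to reduce the assumption to the scale-$3$ hypothesis of Theorem~\ref{thm-LN20}, and using Lemma~\ref{lem2.11} to cover the range $t\in[s/3,s]$). One trivial loose end: if $\delta_{LN}(n,1)\le\tfrac{1}{10}$ and $\delta'=t_n=\delta_{LN}(n,1)$, then no $\delta''\in(\delta',\min\{\delta_{LN}(n,1),\tfrac19\}]$ exists; in that case $\Phi_{LN}(n,t_n)=1$ so $\Phi(t_n)=1\ge\Theta^{(X,d)}_{p,k'}(s)$ anyway, and the claim still holds --- but this boundary case should be dispatched explicitly (or the threshold $t_n$ chosen strictly below $\delta_{LN}(n,1)$).
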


Proposition \ref{prop-smp-1} follows directly from Theorem \ref{thm-LN20}.

\begin{prop}\label{prop-smp-3}
For any $n\in\mathbb{Z}^{+}$, there exists a non-decreasing positive function $\Phi(\cdot)$, depending only on $n$, so that $\lim_{\delta'\to0^+}\Phi(\delta')=0$ and the following holds.
\begin{description}
  \item[(1)] If $(X,d,\mathcal{H}^{n})$ is a non-collapsed $\RCD(-(n-1)\delta,n)$ space, then $(X,d)$ satisfies the $(\Phi; n,\delta)$-generalized Reifenberg condition.
  \item[(2)] If $(X,d,\mathcal{H}^{n})$ is a non-collapsed $\RCD(-(n-1)\delta,n)$ space and  $d_{GH}(B_1(p),B_1(0^n))\le\delta$, then $p$ satisfies the $(\Phi; 1,\delta)$-generalized Reifenberg condition.
\end{description}
\end{prop}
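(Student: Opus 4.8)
The plan is to deduce the proposition from the non-collapsed $\RCD$ versions of two theorems of Cheeger--Colding -- continuity of $\mathcal H^{n}$ under Gromov--Hausdorff convergence, and the Reifenberg-type stability ``$d_{GH}(B_{r}(p),B_{r}(0^{n}))\le\delta r$ and $\Ric\ge-(n-1)\delta$ force $d_{GH}(B_{s}(p),B_{s}(0^{n}))\le\Psi(\delta|n)\,s$ for all $s\le r$'' (see \cite{DePGil18,KM20}) -- together with the dimension gap of Remark \ref{rem1.15}. Throughout we may assume $\delta\le1$ and as small as we please: on the range where the argument of $\Phi$ exceeds a threshold $\delta_{0}(n)$ we set $\Phi\equiv2$, and $\Theta^{(X,d)}_{p,k'}(s)\le2$ always. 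The indices $k'\ge n+1$ are then disposed of once and for all: since $X$ is also $\RCD(-(n-1),n)$, Remark \ref{rem1.15} combined with Lemma \ref{lem2.11} gives a positive lower bound $\theta^{(X,d)}_{p,k'}(r)\ge c(n)>0$ for every $r\in(0,1]$ and every $k'\ge n+1$, so that $\max\{\delta,\theta^{(X,d)}_{p,k'}(r)\}\ge c(n)$, and declaring $\Phi(c(n))=2$ makes the required inequality automatic for such $k'$. Hence in (1) it remains to treat $k'=n$, and in (2) to treat $k'\in\{1,\dots,n\}$.

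\emph{Proof of (1).} Fix $p\in X$ and $r\in(0,1]$, and set $\delta':=\max\{\delta,\theta^{(X,d)}_{p,n}(r)\}$; we may assume $\delta'$ small. Then $B_{r}(p)$ is $(\delta'',n)$-Euclidean for every $\delta''>\delta'$, and $\Ric\ge-(n-1)\delta'$. The first step is to upgrade ``$(\delta',n)$-Euclidean'' to honest Gromov--Hausdorff closeness to $\R^{n}$. Indeed, if this failed there would be $(\delta_{i},n)$-Euclidean balls in non-collapsed $\RCD(-(n-1)\delta_{i},n)$ spaces with $\delta_{i}\to0$ that are not $\Psi(\delta_{i}|n)$-close to $\R^{n}$; after rescaling to radius $1$ and passing to a subsequence, the balls converge to a ball inside a geodesic $\RCD(0,n)$ space $X_{\infty}$, and this ball is at the same time isometric to a ball in a metric product $\R^{n}\times Z$. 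Since a geodesic metric space with at least two points has Hausdorff dimension $\ge1$ while $\dim_{H}X_{\infty}\le n$, the factor $Z$ must be a single point, so the limiting ball is Euclidean -- contradiction. Thus $d_{GH}(B_{r}(p),B_{r}(0^{n}))\le\Psi(\delta'|n)\,r$. Now the non-collapsed Reifenberg-type stability (the $\RCD$ counterpart of the Cheeger--Colding statement recalled in the Introduction; see \cite{DePGil18,KM20}), applied with curvature lower bound $-(n-1)\delta'$, yields $d_{GH}(B_{s}(p),B_{s}(0^{n}))\le\Psi(\delta'|n)\,s$ for every $s\le r$, i.e. $\theta^{(X,d)}_{p,n}(s)\le\Psi(\delta'|n)$ for every $s\le r$, whence $\Theta^{(X,d)}_{p,n}(r)\le\Psi(\delta'|n)$. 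Taking $\Phi$ to be a non-decreasing majorant of $\delta'\mapsto\min\{\Psi(\delta'|n),2\}$ -- a function of $n$ alone tending to $0$ as $\delta'\to0^{+}$ -- finishes (1).

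\emph{Proof of (2).} Now $d_{GH}(B_{1}(p),B_{1}(0^{n}))\le\delta$ with $\delta$ small, so the same non-collapsed Reifenberg-type stability applied at scale $1$ gives $d_{GH}(B_{s}(p),B_{s}(0^{n}))\le\Psi(\delta|n)\,s$ for all $s\le1$. Since $\R^{n}$ is isometric to $\R^{k'}\times\R^{n-k'}$, this makes $B_{s}(p)$ a $(\Psi(\delta|n),k')$-Euclidean ball for every $s\le1$ and every $1\le k'\le n$; hence $\Theta^{(X,d)}_{p,k'}(r)\le\Psi(\delta|n)$ for all $r\le1$ and all such $k'$, and this is dominated by $\Phi(\max\{\delta,\theta^{(X,d)}_{p,k'}(r)\})$ because $\max\{\delta,\theta^{(X,d)}_{p,k'}(r)\}\ge\delta$ and $\Phi$ is non-decreasing with $\Phi(\delta)\ge\Psi(\delta|n)$. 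Combined with the treatment of $k'\ge n+1$ from the first paragraph, $p$ satisfies the $(\Phi;1,\delta)$-generalized Reifenberg condition, with the same $\Phi$ that worked for (1).

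The only ingredients that are not mere bookkeeping are: (a) the identification, up to an error $\Psi(\delta'|n)$, of ``$(\delta',n)$-Euclidean'' with ``$\Psi(\delta'|n)$-Gromov--Hausdorff-close to $\R^{n}$'' on an $n$-dimensional space -- the geodesicity/Hausdorff-dimension argument above; and (b) the non-collapsed $\RCD$ forms of Colding's volume convergence and of Cheeger--Colding's Reifenberg stability, which are available in the literature (\cite{DePGil18,KM20}). Both (a) and (b) genuinely use that the reference measure is $\mathcal H^{n}$ (for a collapsed space, e.g. $\R^{n}\times S^{1}$ with a short circle, ``$(\delta,n)$-Euclidean'' need not propagate to small scales), and (a) is the step I expect to require the most care to write down cleanly; everything else is routine.
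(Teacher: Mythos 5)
Your proof is correct and follows essentially the same route as the paper, which simply cites the Reifenberg-type stability from \cite{KM20} (GH-closeness of $B_1(p)$ to $B_1(0^n)$ propagates to all smaller scales on a non-collapsed $\RCD(-(n-1)\delta,n)$ space) and then asserts that it is ``easy to see'' that this property yields (1) and (2). Your step (a) --- that $(\delta',n)$-Euclidean self-improves to GH-closeness to $\R^n$ on an $n$-dimensional non-collapsed space, proved via the Hausdorff-dimension argument on the blow-up limit --- is exactly the content the paper leaves implicit, and it is correct; the only detail worth writing out is that the factor $Z$ in the limiting product $\R^n\times Z$ is geodesic near its base point (because balls in the limit $\RCD(0,n)$ space are geodesic and geodesics in an $\ell^2$-product project to geodesics in each factor), which is what lets you conclude $\dim_H Z\ge1$ unless $Z$ is a point.
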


\begin{proof}
By the proof of Theorem 3.1 of \cite{KM20}, we have:
if $(X,d,\mathcal{H}^{n})$ is a non-collapsed $\RCD(-(n-1)\delta,n)$ space and $d_{GH}(B_{1}(p),B_{1}(0^n))\le\delta$, then $d_{GH}(B_{s}(p),B_{s}(0^n))\le \Psi(\delta|n)s$ for every $0<s\leq 1$.
Note that such kind of property is known to hold on $n$-manifolds with $\Ric \geq -\delta$ by Cheeger and Colding in \cite{CC97}.

It is easy to see that this property implies (1) and (2).
\end{proof}

\begin{prop}\label{cor-smp-3}
There exists a non-decreasing positive function $\Phi$, depending on certain geometric quantities, so that $\lim_{\delta'\to0^+}\Phi(\delta')=0$ and the following holds.

Given an $n$-manifold $(M, g)$ with $\Ric\ge-(n-1)\delta$ and $p\in M$.
Suppose in addition one of the following conditions holds:
	\begin{itemize}
		\item [(i)] there exists $r_0\in(0,1]$ such that every $q\in B_{r_{0}}(p)$ has conjugate radius at least $r_{0}$;

		\item [(ii)] there exists $r_0\in(0,1]$ such that $d_{GH}(B_{r_0}(\tilde p),B_{r_0}(0^n))\le\delta r_0$, where $(\widetilde{B_{r_0}(p)},\tilde p)\to(B_{r_0}(p),p)$ is the universal cover;
		
		\item [(iii)]  $d_{GH}(B_{1}(p),B_{1}(0^k))\le\delta$, $\vol(B_{1}(\tilde p))\ge v$, where $(\widetilde{B_{1}(p)},\tilde p)\to(B_{1}(p),p)$ is the universal cover;
		
		\item [(iv)] $d_{GH}(B_1(p),B_1(0^k))\le\delta$, and the nilpotency rank $\mathrm{rank}\Gamma_{\delta}(p)=n-k$, where $\Gamma_{\delta}(p)$ is the image of the standard homomorphism $\pi_1(B_\delta(p))\to\pi_1(B_1(p))$,
		
	\end{itemize}
then
\begin{equation}
\Theta^{(M,g)}_{p,k'}(s)\le \Phi(\max\{\delta,\theta^{(M,g)}_{p,k'}(s)\})
\end{equation}
holds for every $s\in(0,1)$ and every integer $k'\geq 1$.
More precisely, in cases (i) and (ii), $\Phi$ depends only on $n$ and $r_{0}$; in case (iii), $\Phi$ depends only on $n$ and $v$; in case (iv), $\Phi$ depends only on $n$.
\end{prop}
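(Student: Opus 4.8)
The plan is to prove each of the four cases by exhibiting, after rescaling so that the relevant ball becomes $B_8(p)$, a surjective local diffeomorphism $\sigma:\hat M\to B_8(p)$ for which one can verify that a lift $\hat p$ of $p$ (i.e. $\sigma(\hat p)=p$) satisfies the generalized Reifenberg condition, and then invoking the transfer result Proposition \ref{prop-smp-2} to descend the condition to $p$. Two observations make the rescaling bookkeeping routine: the generalized Reifenberg condition is expressed through the dimensionless quantities $\theta_{p,k'}$ and $\Theta_{p,k'}$, hence is invariant under rescaling (with $r_0$ scaled accordingly); and replacing the Ricci lower bound constant by a smaller one only strengthens the condition (it decreases $\max\{\delta,\theta\}$), so the change of $\delta$ caused by rescaling to scale $8$ is harmless. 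Moreover, for scales between $r_0$ and $1$ the required inequality is the classical Cheeger-Colding statement quoted in the introduction together with Lemma \ref{lem2.12}, so it is enough to treat scales $\le r_0$. Throughout, I use Proposition \ref{prop-smp-10} to pass between Gromov-Hausdorff limits and the generalized Reifenberg condition.

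For cases (ii), (iii) and (iv) the local diffeomorphism is the universal covering $\sigma:\widetilde{B_{r_0}(p)}\to B_{r_0}(p)$ (with $r_0=1$ in (iii), (iv)). Endowed with the pull-back metric the cover is a Riemannian manifold with $\Ric\ge-(n-1)\delta$, hence a non-collapsed $\RCD(-(n-1)\delta,n)$ space; so once we know that the cover is $\Psi$-close to $B_1(0^n)$, Proposition \ref{prop-smp-3}(2) (applied with a slightly worse but still vanishing constant in place of $\delta$) shows that $\tilde p$ satisfies the $(\Phi;1,\delta)$-generalized Reifenberg condition with $\Phi=\Phi(n)$, and Proposition \ref{prop-smp-2} then gives the conclusion for $p$. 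In case (ii) the required closeness of the cover is precisely the hypothesis. In cases (iii) and (iv) it is the content of collapsing-with-lower-Ricci-bound structure theory: the deck group of $\widetilde{B_1(p)}\to B_1(p)$ is, by the generalized Margulis lemma of Kapovitch-Wilking, virtually nilpotent, and under either the volume hypothesis of (iii) or the maximal-nilpotency-rank hypothesis of (iv) the cover is non-collapsed of dimension $n$; passing to a limit as $\delta\to0$, the limit of the covers is a non-collapsed $\RCD(0,n)$ space that splits as $\mathbb{R}^k\times F$ with $F$ a non-collapsed $\RCD(0,n-k)$ space carrying a transitive isometric action of (the closure of) a virtually nilpotent group, whence $F$ is flat by Wolf's rigidity theorem and the limit is $\mathbb{R}^n$; this yields the desired $\Psi(\delta\mid n,v)$- (resp. $\Psi(\delta\mid n)$-) closeness of the cover, so $\Phi$ depends on $n$ and $v$ in case (iii) and only on $n$ in case (iv).

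Case (i) is the most delicate, and is where I expect the main obstacle to lie. Here the local diffeomorphism is $\sigma=\exp_p$: the conjugate radius bound makes $\exp_p$ non-singular on $B_{r_0}(0_p)$, ensures $B_{r_0}(p)\subset\exp_p(B_{r_0}(0_p))$, and in fact makes $\exp_p$ a covering of $B_{r_0}(p)$, so that $(\hat M,\hat g):=(B_{r_0}(0_p),\exp_p^{*}g)$ is a manifold with a pole at $\hat p=0_p$ each of whose points again has conjugate radius $\ge r_0$. It remains to verify that such a pole $\hat p$ satisfies the generalized Reifenberg condition for every $k'\ge1$ with $\Phi$ depending only on $n$ and $r_0$. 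I would establish this by contradiction and compactness: rescaling a contradicting sequence at the offending scale so that the Ricci lower bound tends to $0$ and the conjugate radius tends to $+\infty$, passing to an $\RCD(0,n)$ limit which retains a pole at its base point, and using that an $(\epsilon,k')$-Euclidean structure on a ball $B_r(0_p)$ of a pole manifold is carried by $k'$ independent almost-minimizing-ray directions issuing from the pole, and that — precisely because no conjugate points occur at scales $\le r_0$ — these directions remain almost-line directions in every concentric ball $B_s(0_p)$ with $s\le r$, so that $\theta_{0_p,k'}(s)\le\Psi(\theta_{0_p,k'}(r)\mid n,r_0)$ and hence $\Theta_{0_p,k'}(r)$ is controlled. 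The hard point is exactly this persistence statement: the direct route — comparing $\exp_p^{*}g$ to the Euclidean metric in normal coordinates — requires a two-sided sectional curvature bound, which is not available, so the persistence must instead be squeezed out of the conjugate-radius hypothesis via a Jacobi-field/Rauch-type comparison, combined with the Cheeger-Colding almost-splitting theorem to upgrade the near-line directions to an actual $(\Psi,k')$-splitting map at the smaller scales. With this lemma in hand, Proposition \ref{prop-smp-2} closes case (i).
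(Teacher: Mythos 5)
Your overall framework — transfer the generalized Reifenberg condition from a covering space $\hat M$ down to $p$ via Proposition~\ref{prop-smp-2}, after first establishing it for a lift $\hat p$ — is exactly the paper's strategy, and your treatment of case (ii) (apply Proposition~\ref{prop-smp-3}(2) to the cover, then Proposition~\ref{prop-smp-2}) matches the paper precisely. But there are genuine gaps in cases (i), (iii) and (iv).

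\textbf{Case (i).} You correctly identify $\sigma=\exp_p$ as the local diffeomorphism and correctly observe that $(\hat M,\hat g)=(B_{r_0}(0_p),\exp_p^*g)$ has a pole at $\hat p$. But you then stall and propose an argument via ``almost-minimizing-ray directions'' together with a Rauch/Jacobi-field comparison, while noting yourself that Rauch needs a two-sided sectional bound that is unavailable. This is where your proof fails, because the mechanism you reach for cannot be made to work; and you have missed the observation that actually makes case (i) go through. The point is not to use the pole structure directly, but to observe that, since every point of $B_{r_0}(p)$ has conjugate radius $\ge r_0$, the pulled-back metric $\hat g$ has conjugate radius $\ge r_0$ everywhere, and on the cover $\hat M$ (which is a ball in $T_pM$, hence simply connected) one obtains a definite \emph{injectivity radius} lower bound: $\mathrm{inj}_{\hat q}\ge r_0/2$ for all $\hat q\in B_{r_0/2}(\hat p)$. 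With a Ricci lower bound and an injectivity radius lower bound one is squarely in the Anderson--Cheeger $C^{\beta}$-harmonic-radius regime. The paper then runs a contradiction/blow-up argument: rescaling a contradicting sequence at the offending scale $s_i\to0$, the rescaled injectivity radii blow up and the rescaled Ricci lower bounds go to $0$, so by \cite{AC92} the rescaled spaces converge in $C^{\beta}$ to flat $\R^n$, which forces $(\epsilon_0,k)$-Euclideanness at scale $1$ and gives the contradiction. It is this compactness argument, not any Jacobi-field estimate, that closes case (i).

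\textbf{Cases (iii) and (iv).} Your reduction of (iv) to (iii) by Theorem~5.1 of \cite{NZ} matches the paper, and your overall plan (show the cover is quantitatively close to $\R^n$, then apply Propositions~\ref{prop-smp-3} and~\ref{prop-smp-2}) is also the paper's plan. However, your proposed proof of the key closeness statement has a gap. You invoke the Kapovitch--Wilking generalized Margulis lemma to conclude that the deck group of $\widetilde{B_1(p)}\to B_1(p)$ is virtually nilpotent and then appeal to Wolf's rigidity theorem. But the Margulis lemma controls only the subgroup of $\pi_1(B_1(p))$ generated by short loops, not the full fundamental group (and hence not the full deck group); and it is not clear that the limiting group $G$ acts transitively on the cross-section of the limit cone, which Wolf's theorem requires. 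The paper instead proves a sharper quantitative claim (closeness of the cover to $\R^n$ at scale $\sqrt\eta$) by a contradiction argument built on equivariant Gromov--Hausdorff convergence, where the covers converge (by the non-collapsing assumption $\vol(B_1(\tilde p))\ge v$ and \cite{CC96}) to a metric cone $\tilde Z=\R^m\times C(Z)$, and Lemma~2.2 of \cite{Hua} is used to rule out $\tilde Z\ne\R^n$ from the identity $\tilde Z/G=\R^k$; the paper also has to split into two sub-cases depending on whether the actual convergence rate $\xi_i$ of the covers is faster or slower than $\delta_i$, a technicality your limit-at-scale-$1$ argument does not address. In short, your strategy for these cases is the right shape, but the specific group-theoretic route you propose does not deliver the required structural conclusion about the limit.
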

\begin{proof}
\emph{Case (i): }

By assumptions, the exponential map $\exp_{p}:(T_{p}M \supset)B_{r_{0}}(\hat{p})\mapsto B_{r_{0}}({p})$ is a surjective local diffeomorphism, where $\hat{p}$ denotes the origin in $T_{p}M$.
We equip $\hat{M}:=B_{r_{0}}(\hat{p})$ with the pull-back metric $\hat{g}=\exp_{p}^{*}g$.
Then for any $\hat{q}\in B_{\frac{r_{0}}{2}}(\hat{p})$, the injectivity radius at $\hat{q}$, denoted by $\mathrm{inj}_{\hat{q}}$, is at least $\frac{r_{0}}{2}$ (see \cite{Xu18} for a proof).

\textbf{Claim:} Given $n\in\mathbb{Z}^{+}$ and $r_{0}>0$. For any $\epsilon >0$, there exists a positive constant $\tilde{\delta}_{0}$ depending only on $n$, $r_{0}$ and $\epsilon$ so that, every $\delta' \in (0,\tilde{\delta}_{0})$ satisfies the followings.
Let $(\hat{M},\hat{g})$ be an (not necessarily complete) $n$-manifold with $\Ric_{\hat{M}}\ge-(n-1)\delta'$, and for any $r'<r_{0}$, $B_{r'}(\hat{p})$ has compact closure in $\hat{M}$, and $\mathrm{inj}_{\hat{q}}\geq \frac{r_{0}}{2}$ for any $\hat{q}\in B_{\frac{r_{0}}{2}}(\hat{p})$.
Suppose $B_{r}(\hat{p})$ is $(\delta',k)$-Euclidean for some $r\in (0,r_{0}]$, then for every $s\in (0,r]$, $B_{s}(\hat{p})$ is $(\epsilon,k)$-Euclidean.

Suppose the claim does not hold.
Then there exist $\epsilon_{0}>0$, and some integer $1\leq k\leq n$, and a sequence of positive numbers ${\delta}_{i}\downarrow 0$, a sequence of $n$-manifolds $(\hat{M}_{i},\hat{g}_{i}, \hat{p}_{i})$ with $\Ric_{\hat{M}_{i}}\ge-(n-1)\delta_{i}$, $B_{1}(\hat{p}_{i})$ has compact closure in $\hat{M}_{i}$, and $\mathrm{inj}_{\hat{q}}\geq \frac{r_{0}}{2}$ for every $\hat{q}\in B_{\frac{r_{0}}{2}}(\hat{p}_{i})$,
and for every $i$ there exists $0< s_{i}< r_{i}< r_{0}$ such that $B_{r_{i}}(\hat{p}_{i})$ is $(\delta_{i},k)$-Euclidean, but $B_{s_{i}}(\hat{p}_{i})$ is not $(\epsilon_{0},k)$-Euclidean.
Note that by Lemma \ref{lem2.11}, we have $s_{i}\rightarrow0$.
Denote by $(\tilde{M}_{i},\tilde{g}_{i}, \tilde{p}_{i})=(\hat{M}_{i},s_{i}^{-2}\hat{g}_{i}, \hat{p}_{i})$, then $\Ric_{\tilde{M}_{i}}\ge-(n-1)s_{i}^{2}\delta_{i}$, and for any $R>0$ and $\tilde{q}_{i}\in B_{R}(\hat{p}_{i})$, we have $\mathrm{inj}_{\tilde{q}_{i}}\geq \frac{1}{s_{i}}r_{0}\rightarrow \infty$.
Note that $B_{1}(\tilde{p}_{i})$ is not $(\epsilon_{0},k)$-Euclidean.
By \cite{AC92}, $(\tilde{M}_{i},\tilde{g}_{i}, \tilde{p}_{i})$ converges in the $C^{\beta}$-topology to $(\mathbb{R}^{n}, g_{\mathrm{Eucl}}, 0^{n})$ for any $\beta\in(0,1)$.
Thus $B_{1}(\tilde{p}_{i})$ is $(\epsilon_{0},n)$-Euclidean for sufficiently large $i$, which is a contradiction.
Hence we complete the proof of the claim.

By the above claim, it is easy to construct a non-decreasing positive function $\Phi_{0}$, depending only on $n$ and $r_{0}$, with $\lim_{\delta'\to0^+}\Phi_{0}(\delta')=0$, so that under the assumptions of case (i), $\hat{p}$ satisfies the $(\Phi_{0}; 1,\delta)$-generalized Reifenberg condition.
Then by Proposition \ref{prop-smp-2}, we can complete the proof of case (i).

\emph{Case (ii):}

By Proposition \ref{prop-smp-3} and the assumptions, there exists a non-decreasing positive function $\Phi_{0}$, depending only on $n$ and $r_{0}$, so that $\lim_{\delta'\to0^+}\Phi_{0}(\delta')=0$ and
$\tilde p$ satisfies the $(\Phi_{0}; 1,\delta)$-generalized Reifenberg condition.
Combining this fact with Proposition \ref{prop-smp-2}, we can finish the proof of case (ii).

\emph{Case (iii):}

We first prove the following claim.

\textbf{Claim:} For any $\epsilon >0$, there exists $\delta_{0}>0$ depending only on $n$, $v$ and $\epsilon$, such that for any $\eta\in(0, \delta_{0}]$, if $\Ric \geq -(n-1)\eta$, $d_{GH}(B_{1}(p),B_{1}(0^k))\le\eta$ and $\vol(B_{1}(\tilde p))\ge v$, where $(\widetilde{B_{1}(p)},\tilde p)\to(B_{1}(p),p)$ is the universal cover, then
$d_{GH}(B_{\eta^{\frac{1}{2}}}(\tilde p),B_{\eta^{\frac{1}{2}}}(0^n))\le \eta^{\frac{1}{2}}\epsilon $.

This claim will be proved by an argument by contradiction basing on Lemma 2.2 of \cite{Hua}.
Suppose there exist $\epsilon_{0}>0$, $\delta_{i}\downarrow 0$ and a sequence of $n$-dimensional Riemannian manifolds with $\Ric_{i} \geq -(n-1)\delta_{i}$, $\vol(B_{1}(\tilde p_{i}))\ge v$, and $d_{GH}(B_{1}(p_{i}),B_{1}(0^k))\le\delta_{i}$, but $d_{GH}(B_{\delta_{i}^{\frac{1}{2}}}(\tilde p_{i}),B_{\delta_{i}^{\frac{1}{2}}}(0^n)) > \delta_{i}^{\frac{1}{2}}\epsilon_{0}$.
We denote by $M_{i}=B_{1}(p_{i})$, equipped with a metric $g_{i}$, and denote by $\tilde{M}_{i}=\widetilde{B_{1}(p_{i})}$, equipped with the pull-back metric $\tilde{g}_{i}$.
Up to a subsequence, we assume $(\tilde{M}_{i}, \tilde{p}_{i})$ converges in the point Gromov-Hausdorff sense to some $(\tilde{Y}, \tilde{y}_{\infty}, d_{\tilde{Y}})$, hence there exists a sequence of positive numbers $\xi_{i}\downarrow 0$ such that $d_{GH}(B_{1}(\tilde{p}_{i}), B_{1}(\tilde{y}_{\infty}))\leq \xi_{i}$.
Since $\vol(B_{1}(\tilde p_{i}))\ge v$, by \cite{CC96}, we know every tangent cone of $\tilde{Y}$ at $\tilde{y}_{\infty}$ is a metric cone.

Suppose there is a subsequence of $\{i\}$, still denoted by $\{i\}$, such that $\xi_{i}\leq \delta_{i}$, then we consider $(X_{i},h_{i},q_{i})=(M_{i},\delta_{i}^{-1}g_{i},p_{i})$ and $(\widetilde{X}_{i},\tilde{h}_{i},\tilde{q}_{i})=(\tilde{M}_{i},\delta_{i}^{-1}\tilde{g}_{i},\tilde{p}_{i})$.
Note that $\Ric_{h_{i}} \geq -(n-1)\delta_{i}^{2}$, $d_{GH}(X_{i},B_{\delta_{i}^{-\frac{1}{2}}}(0^k))\le\delta_{i}^{\frac{1}{2}}$, and $d_{GH}(B_{1}(\tilde q_{i}),B_{1}(0^n)) > \epsilon_{0}$.
Up to a subsequence, we assume $(\tilde{Y}, \delta_{i}^{-\frac{1}{2}} d_{\tilde{Y}}, \tilde{y}_{\infty})$ converges in the Gromov-Hausdorff sense to some metric cone $(\tilde{Z}=\R^{m}\times C(Z), d_{\tilde{Z}}, \tilde{z}_{\infty})$, where $m\leq n$.
It is not hard to see that $(\tilde{X}_{i}, \tilde{h}_{i},\tilde{q}_{i})\xrightarrow{pGH}(\tilde{Z}, d_{\tilde{Z}}, \tilde{z}_{\infty})$.
We consider the equivariant Gromov-Hausdorff convergence (see \cite{FY92}) in the following commutative diagram, where $\Gamma_{i}:=\pi_{1}(X_{i})$, and $f_{i}:\tilde{X}_{i} \to X_{i}$ is the natural projection:
\begin{align}
\xymatrix{
  (\tilde{X}_{i}, \tilde{q}_{i}, \Gamma_{i}) \ar[d]_{f_{i}} \ar[r]^{GH} & (\tilde{Z}, \tilde{z}_{\infty}, G) \ar[d]^{f_{\infty}} \\
  ({X}_{i}, q_{i}) \ar[r]^{GH} & (\R^{k},0^{k})  }
\end{align}
By \cite{FY92}, we have $\R^{k}= (\R^{m}\times C(Z))/G$.
But one can prove that this happens unless $\tilde{Z}=\R^{m}\times C(Z)=\R^{n}$, see Lemma 2.2 of \cite{Hua} for details.
Then we have $d_{GH}(B_{1}(\tilde q_{i}),B_{1}(0^n)) \rightarrow 0$, which is a contradiction.

Suppose there is a subsequence of $\{i\}$, still denoted by $\{i\}$, such that $\xi_{i}> \delta_{i}$, then we consider $(X^{(2)}_{i},h_{i}^{(2)},q_{i}^{(2)})=(M_{i},\xi_{i}^{-1}g_{i},p_{i})$ and $(\widetilde{X}_{i}^{(2)},\tilde{h}_{i}^{(2)},\tilde{q}_{i}^{(2)})=(\tilde{M}_{i},\xi_{i}^{-1}\tilde{g}_{i}, \tilde{p}_{i})$.
Note that $\Ric_{h_{i}^{(2)}} \geq -(n-1)\xi_{i}^{2}$, $d_{GH}(X_{i}^{(2)},B_{\xi_{i}^{-\frac{1}{2}}}(0^k))\le\xi_{i}^{\frac{1}{2}}$.
Up to a subsequence, we assume $(\tilde{Y}, \xi_{i}^{-\frac{1}{2}} d_{\tilde{Y}}, \tilde{y}_{\infty})$ converges in the Gromov-Hausdorff sense to some metric cone $(\tilde{Z}^{(2)}, d_{\tilde{Z}^{(2)}}, \tilde{z}_{\infty}^{(2)})$.
Then $(\tilde{X}_{i}^{(2)}, \tilde{h}_{i}^{(2)},\tilde{q}_{i}^{(2)})\xrightarrow{pGH}(\tilde{Z}^{(2)}, d_{\tilde{Z}^{(2)}}, \tilde{z}_{\infty}^{(2)})$, and by Lemma 2.2 of \cite{Hua} together with the equivariant Gromov-Hausdorff convergence
\begin{align}
\xymatrix{
  (\tilde{X}_{i}^{(2)}, \tilde{q}_{i}^{(2)}, \Gamma_{i}) \ar[d]_{} \ar[r]^{GH} & (\tilde{Z}^{(2)}, \tilde{z}_{\infty}^{(2)}, G^{(2)}) \ar[d]^{} \\
  ({X}_{i}^{(2)}, q_{i}^{(2)}) \ar[r]^{GH} & (\R^{k},0^{k})  }
\end{align}
we have $\tilde{Z}^{(2)}=\R^{n}$, and hence $d_{GH}(B_{1}(\tilde q_{i}^{(2)}),B_{1}(0^n)):=\zeta_{i}\rightarrow 0$.
Since $\Ric_{\tilde{h}_{i}^{(2)}} \geq -(n-1)\xi_{i}^{2}$ and $\xi_{i}> \delta_{i}$, by Cheeger-Colding's theory, we have
$\frac{\sqrt{\xi_{i}}}{\sqrt{\delta_{i}}} d_{GH}(B_{\frac{\sqrt{\delta_{i}}}{\sqrt{\xi_{i}}}}(\tilde q_{i}^{(2)}),B_{\frac{\sqrt{\delta_{i}}}{\sqrt{\xi_{i}}}}(0^n))\leq \Psi(\xi_{i},\zeta_{i}|n)\rightarrow0$.
Equivalently, $\delta_{i}^{-\frac{1}{2}}d_{GH}(B_{\delta_{i}^{\frac{1}{2}}}(\tilde p_{i}),B_{\delta_{i}^{\frac{1}{2}}}(0^n))=\Psi(\xi_{i},\zeta_{i}|n)\rightarrow 0$, which is a contradiction.
Thus we have complete the proof of the claim.

By the above claim, it is not hard to find a non-decreasing positive function $\Phi_{0}$, depending only on $n$ and $v$, with $\lim_{\delta'\to0^+}\Phi_{0}(\delta')=0$, so that under the assumptions of case (iii),  $\tilde{p}$ satisfies the $(\Phi_{0}; 1,\delta)$-generalized Reifenberg condition.
Then by Proposition \ref{prop-smp-2}, we can complete the proof of case (iii).

\emph{Case (iv):}

By Theorem 5.1 of \cite{NZ}, there exist $v_{0}, \delta_{0}$ depending only on $n$ so that, if $\delta'\in (0,\delta_{0}]$ and $d_{GH}(B_1(p),B_1(0^k))\le\delta'$ and $\mathrm{rank}\Gamma_{\delta'}(p)=n-k$, where $\Gamma_{\delta'}(p):=\mathrm{Image}(\pi_1(B_{\delta'}(p))\to\pi_1(B_1(p)))$, then
$\vol(B_{\frac{1}{2}}(\tilde p))\ge v_{0}$, where $(\widetilde{B_{\frac{1}{2}}(p)},\tilde p)\to(B_{\frac{1}{2}}(p),p)$ is the universal cover. Then the conclusion of case (iv) follows from case (iii).
\end{proof}

\begin{rem}
There have been many works proving smooth fibration theorems under the conditions listed in Proposition \ref{cor-smp-3}.
More precisely, case (i) in Proposition \ref{cor-smp-3} is handled in \cite{Wei97}, see also \cite{DWY} \cite{NZ} for the case that $|\Ric|\leq(n-1)$ and the conjugate radius is bounded from below.
Cases (ii) and (iii) are handled in \cite{Hua}.
Case (iv) is considered in \cite{HW20} using Ricci flow smoothing.
\end{rem}

\section{Smooth fibration theorems}\label{sec-6}

In this section, we prove the smooth fibration theorem \ref{FiberBundleThm} and a local version of fibration theorem \ref{localFiberBundleThm}.

\begin{proof}[Proof of Theorem \ref{FiberBundleThm}]
	
The construction of $f$ is literally the same as the one in \cite{Hua}, except that Lemma 2.1 of \cite{Hua} is used to verify the non-degeneracy of $f$, while we use Theorem \ref{NonDegeneracyofSplittingMaps-RicCase} instead, so we just give a sketch here.

Scaling the distances by $\delta^{-\frac{1}{2}}$, we may assume $\Ric_M\ge-\delta$, $|\sec_N|\le\delta$, $\inj_N\ge \delta^{-\frac{1}{2}}$, $d_{GH}(M,N)\le\sqrt{\delta}$, and $M$
satisfies the $(\Phi; k,\delta)$-generalized Reifenberg condition.

The first step is to define local maps.
Let $h:M\to N$ be a $\sqrt\delta$-Gromov-Hausdorff approximation.
Fix a $1$-net $\{p_\lambda|\lambda=1,2,\ldots,\Lambda\}$ on $M$. Since $|\sec_N|\le\delta$ and $\inj_N\ge \delta^{-\frac{1}{2}}$, for each $q\in N$, there exists a $C^{1,\alpha}$-harmonic coordinate $\Phi_q:(B_{\delta^{-\frac{1}{4}}}(0^k),0^k)\to(N,q)$ such that $$|\Phi_q^*g_N-g_{\R^k}|_{C^{1,\alpha}(B_{\delta^{-\frac{1}{4}}}(0^k))}\le\Psi(\delta|n).$$
For each $\lambda$, we choose one such coordinate $\Phi_\lambda:=\Phi_{h(p_\lambda)}$.
By the almost splitting theorem,
there exists a family of $(\Psi(\delta|n),k)$-splitting maps $u_\lambda: (B_4(p_\lambda),p_\lambda)\to (\R^k,0^k)$, which is $\Psi(\delta|n)$-close to $\Phi_\lambda^{-1}\circ h|_{B_4(p_\lambda)}$. Then $f_\lambda:=\Phi_\lambda\circ u_\lambda$ are the local maps we need.

The next step is to glue the local maps $\{f_\lambda|\lambda=1,\ldots,\Lambda\}$,  via the center of mass technique, to form a global map $f$.
Let $\phi:[0,\infty)\to[0,1]$ be a smooth function with $\phi|_{[0,\frac{11}{10}]}\equiv1$, and $\supp\phi\subset[0,2]$ and $|\phi'|\leq 10$.
Let $r$ be the distance function to $0^k$ on $\R^k$. Then $\phi_\lambda\triangleq\phi\circ r\circ u_\lambda$ is a smooth function on $M$ with $\supp{\phi_\lambda}\subset B_{\frac{21}{10}}(p_\lambda)$ for small $\delta$.
Define an energy function $E:M\times N\to\R$ by
\begin{align}\label{6.1}
	E(x,y)=\frac 12\sum_{\lambda} \phi_\lambda(x) d^{2}(f_\lambda(x),y).
\end{align}

It is easy to see that, for any fixed $x\in M$, in the summation (\ref{6.1}), only those $\lambda$ satisfying $d(x,p_\lambda)<\frac{25}{10}$ give non-vanishing terms.
By the volume comparison theorem, the cardinality of $\{\lambda|d(x,p_\lambda)<\frac{25}{10}\}$ is bounded from above by some $\Lambda(n)$ depending only on $n$.

By the convex radius estimate, $E(x,\cdot)$ is strictly convex on $B_1(h(x))$, and it is easy to see that $E(x,\cdot)$ achieves the global minimum at a unique point, denoted by $cm(x)$.
We define the map $f: M\to N$ by $f(x)=cm(x)$.
One can check that $f$ is smooth, and is $\Psi(\delta|n)$-close to $h(x)$.

The last step is to verify the non-degeneracy of $f$.
Since $f$ is not harmonic itself, to apply Theorem \ref{NonDegeneracyofSplittingMaps-RicCase}, we need the following lemma which roughly says that at any fixed $p$, $f$ is tangent to an almost splitting map at $p$.
\begin{lem}\label{JacobianEstimate}
Given $p\in M$, and a $C^{1,\alpha}$-harmonic coordinate $\Phi_{f(p)}^{-1}=(y^1,\ldots,y^k)$ centered at $f(p)$, for those $\lambda$ with $d(p,p_\lambda)<\frac{25}{10}$,
there exist constants $C^\alpha_{\lambda,\beta}$, $\alpha,\beta=1,\ldots,k$, and $(\Psi(\delta|n),k)$-splitting maps $v_\lambda:B_{1}(p)\to\R^k$, such that,
\begin{align}\label{JacobianEstimate-1}
\myd f^\alpha(p)=\sum_{\lambda}C^{\alpha}_{\lambda,\beta}\myd v_\lambda^\beta(p).
\end{align}
\begin{align}\label{JacobianEstimate-2}
|v_\lambda-\Phi_{f(p)}^{-1}\circ h|\le\Psi(\delta|n),
\end{align}	
\begin{align}\label{JacobianEstimate-3} |C^\alpha_{\lambda,\beta}-\delta^\alpha_\beta\phi_\lambda(p)D(p)^{-1}|\le\Psi(\delta|n), \text{ where } D(x)=\sum_\lambda\phi_\lambda(x).
\end{align}
\end{lem}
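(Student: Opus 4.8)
The plan is to differentiate the first-order optimality condition defining the center of mass $cm(x)$ and read off the coefficients $C^\alpha_{\lambda,\beta}$ from the resulting linear identity. First I would recall that $f(x)=cm(x)$ is characterized by $\nabla_y E(x,f(x))=0$, i.e. for each $\gamma=1,\ldots,k$,
\begin{equation*}
\sum_\lambda \phi_\lambda(x)\,\big\langle \exp_{f(x)}^{-1}(f_\lambda(x)),\, \partial_{y^\gamma}\big\rangle = 0,
\end{equation*}
where $(y^1,\ldots,y^k)$ is the fixed $C^{1,\alpha}$-harmonic coordinate centered at $f(p)$. Since $|\sec_N|\le\delta$, $\inj_N\ge\delta^{-1/2}$, and all the points $f_\lambda(x)$, $f(x)$ lie in a ball of radius $\Psi(\delta|n)$ around $f(p)$ on which the coordinate metric is $\Psi(\delta|n)$-close to $g_{\R^k}$ in $C^{1,\alpha}$, the exponential map and its derivatives are $\Psi(\delta|n)$-close (in these coordinates) to the Euclidean ones. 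I would therefore rewrite the optimality equation, in the $y$-coordinates, as $F(x,f(x))=0$ where $F:M\times\R^k\to\R^k$ has the form $F^\gamma(x,z)=\sum_\lambda\phi_\lambda(x)\big(y^\gamma(f_\lambda(x))-z^\gamma\big)+\text{(error of size }\Psi(\delta|n)\cdot|z-f_\lambda(x)|\cdot\dots)$. Differentiating in $x$ at $x=p$ and solving the resulting linear system for $\myd f^\gamma(p)$ gives
\begin{equation*}
D(p)\,\myd f^\alpha(p)=\sum_\lambda \phi_\lambda(p)\,\myd\big(y^\alpha\circ f_\lambda\big)(p) + (\text{terms from }\myd\phi_\lambda\text{ times }(y(f_\lambda(p))-y(f(p)))) + \Psi(\delta|n)\text{-corrections},
\end{equation*}
and the $\myd\phi_\lambda$ terms are harmless because $|y(f_\lambda(p))-y(f(p))|\le\Psi(\delta|n)$ (all these maps are $\Psi(\delta|n)$-close to $h$), so they can be absorbed into a lower-triangular-free perturbation of the coefficients; this is where the structure of \eqref{JacobianEstimate-3} comes from.

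Next I would define the auxiliary maps $v_\lambda$. Recall $f_\lambda=\Phi_\lambda\circ u_\lambda$ where $u_\lambda:B_4(p_\lambda)\to\R^k$ is $(\Psi(\delta|n),k)$-splitting and $\Psi(\delta|n)$-close to $\Phi_\lambda^{-1}\circ h$. Set $v_\lambda:=(y^1,\ldots,y^k)\circ f_\lambda=\Phi_{f(p)}^{-1}\circ\Phi_\lambda\circ u_\lambda$ on $B_1(p)$ (valid for those $\lambda$ with $d(p,p_\lambda)<25/10$, since then $B_1(p)\subset B_4(p_\lambda)$ for small $\delta$). Because $\Phi_{f(p)}^{-1}\circ\Phi_\lambda$ is a transition map between two $C^{1,\alpha}$-harmonic coordinates that are each $\Psi(\delta|n)$-close to the identity coordinate, it is an affine-plus-$\Psi(\delta|n)$ perturbation of a fixed rotation, hence post-composition preserves the $(\Psi(\delta|n),k)$-splitting property up to enlarging the constant by a dimensional factor (harmonicity is preserved exactly since the target chart is harmonic; the gradient and Hessian bounds change only by $\Psi(\delta|n)$). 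This yields the $(\Psi(\delta|n),k)$-splitting claim, and \eqref{JacobianEstimate-2} follows from $|u_\lambda-\Phi_\lambda^{-1}\circ h|\le\Psi(\delta|n)$ together with the $C^0$-closeness of the transition map to the identity. With $v_\lambda$ so defined, $\myd(y^\alpha\circ f_\lambda)(p)=\myd v_\lambda^\alpha(p)$ and the displayed identity above becomes exactly \eqref{JacobianEstimate-1} with $C^\alpha_{\lambda,\beta}:=D(p)^{-1}\phi_\lambda(p)\delta^\alpha_\beta + O(\Psi(\delta|n))$, which is \eqref{JacobianEstimate-3}.

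The main obstacle I anticipate is controlling the error terms in the differentiation of the optimality condition carefully enough to keep them of size $\Psi(\delta|n)$ — in particular, handling the Hessian-of-distance-squared factor $\nabla^2_y (\tfrac12 d^2(f_\lambda(x),y))$ that appears when one applies the implicit function theorem, showing it is $\Psi(\delta|n)$-close to the identity matrix in the chosen coordinates (this uses $|\sec_N|\le\delta$ and the smallness of the relevant distances via the Hessian comparison for $d^2$), and verifying that the resulting linear system is uniformly invertible with inverse $\Psi(\delta|n)$-close to $D(p)^{-1}\,\mathrm{Id}$. A secondary technical point is checking that the cutoff derivatives $\myd\phi_\lambda(p)$, although not small themselves, always come multiplied by the small quantity $y(f_\lambda(p))-y(f(p))$, so their contribution to $C^\alpha_{\lambda,\beta}$ is $\Psi(\delta|n)$; this requires noting that $f(p)$ itself minimizes $E(p,\cdot)$ so it is a $\phi_\lambda(p)$-weighted average of the nearby $f_\lambda(p)$. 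Once these estimates are in place the rest is bookkeeping, and the lemma then feeds into Theorem \ref{NonDegeneracyofSplittingMaps-RicCase} exactly as in the non-degeneracy argument of \cite{Hua}.
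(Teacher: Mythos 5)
Your overall strategy---differentiating the stationarity condition $\partial_{y^\alpha}E(x,f(x))=0$, invoking the implicit function theorem, and expressing $\myd f(p)$ as a $\phi_\lambda(p)$-weighted combination of the local splitting maps plus controllable error terms---is the same as the approach the paper points to, and the observation that the $\myd\phi_\lambda$ contributions come multiplied by $y(f_\lambda(p))-y(f(p))=O(\Psi(\delta|n))$ is the right way to absorb them. However, there is a real gap in your construction of the maps $v_\lambda$.

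You set $v_\lambda:=\Phi_{f(p)}^{-1}\circ\Phi_\lambda\circ u_\lambda$ and claim that harmonicity is preserved exactly because the target chart is harmonic. This is not correct. The coordinate $y^\alpha$ is harmonic as a function on $N$, i.e.\ $\Delta_N y^\alpha=0$, but $y^\alpha\circ f_\lambda$ is harmonic on $M$ only if $f_\lambda$ is a harmonic \emph{map} $M\to N$, which it is not. Equivalently, write $\tau_\lambda:=\Phi_{f(p)}^{-1}\circ\Phi_\lambda$; then $\Delta_M(\tau_\lambda^\alpha\circ u_\lambda)=\sum_{i,j}\partial_i\partial_j\tau_\lambda^\alpha\cdot\langle\nabla u_\lambda^i,\nabla u_\lambda^j\rangle$, since $\Delta u_\lambda^i=0$. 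The second derivatives $\partial_i\partial_j\tau_\lambda^\alpha$ do not vanish: $\tau_\lambda$ is only $C^{1}$-close to an affine isometry, so this expression is small but nonzero. Consequently your $v_\lambda$ fails the exact harmonicity requirement in Definition \ref{def-harm-split} and is not a $(\Psi(\delta|n),k)$-splitting map in the paper's sense.

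The fix is what the paper's sketch suggests: rather than composing with the genuine transition map $\tau_\lambda$, compose with an \emph{exact Euclidean isometry} $\omega_\lambda^{-1}$ satisfying $|\omega_\lambda-\Phi_{h(p_\lambda)}^{-1}\circ\Phi_{f(p)}|_{C^1}\le\Psi(\delta|n)$ (i.e.\ $\omega_\lambda$ approximates $\tau_\lambda^{-1}$), and define $v_\lambda:=\omega_\lambda^{-1}\circ u_\lambda$. Since $\omega_\lambda^{-1}$ is affine orthogonal, $v_\lambda$ is exactly harmonic and verbatim a $(\Psi(\delta|n),k)$-splitting map. Then $\myd v_\lambda^\beta(p)$ differs from $\myd(\tau_\lambda\circ u_\lambda)^\beta(p)=\myd(y^\beta\circ f_\lambda)(p)$ by $O(\Psi(\delta|n))\cdot|\myd u_\lambda(p)|=O(\Psi(\delta|n))$, and this discrepancy can be absorbed into the $C^\alpha_{\lambda,\beta}$ together with the other error terms, preserving both \eqref{JacobianEstimate-1} and \eqref{JacobianEstimate-3}. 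With this one change your argument goes through.
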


The detailed proof of Lemma \ref{JacobianEstimate-1} is omitted, we only give a sketch here.
The readers can refer to \cite{Hua} for more details.
In the proof, we need two basic ingredients.
One is the following fact, which comes from the basic properties of harmonic coordinate:

Given any $p$, then for each $\lambda$ with $d(p,p_{\lambda}) < \frac{25}{10}$, there exists an isometric map $\omega_{\lambda}: \R^n\to\R^n$ (respect to the standard Euclidean metrics) such that
\begin{align}
|\omega_{\lambda}-\Phi^{-1}_{h(p_{\lambda})}\circ\Phi_{f(p)}|_{C^1(B_{10}(0^n))}\leq \Psi(\delta|n).
\end{align}

Another ingredient is, $y=f(x)$ is the solutions of equations $\frac{\partial}{\partial y^\alpha }E(x,y)=0,\alpha=1,\ldots,k$.
Then by a direct calculation basing on implicit function theorem, one can prove Lemma \ref{JacobianEstimate-1}.

Once Lemma \ref{JacobianEstimate} is proved, for every $p\in M$, we can define a function on $B_{1}(p)$ by
$$v^\alpha(x)=\sum_\lambda C^\alpha_{\lambda,\beta} v_\lambda^\beta(x),\quad\alpha=1,\ldots,k.$$
Combining (\ref{JacobianEstimate-2}), (\ref{JacobianEstimate-3}), we have  $\left|v^{\alpha}(x)-v_{\lambda}^\alpha(x)\right|\le\Psi(\delta|n)$.
Hence $v=(v^1,\ldots,v^k)|_{B_{\frac{1}{2}}(p)}$
is a ($\Psi(\delta|n),k)$-splitting map.
Since $M$ satisfies the $(\Phi; k,\delta)$-generalized Reifenberg condition, by Theorem \ref{NonDegeneracyofSplittingMaps-RicCase}, $dv$ is non-degenerate at $p$, provided $\delta$ is sufficiently small.
By (\ref{JacobianEstimate-1}), $\myd f^\alpha(p)=\myd v^\alpha(p)$.
Thus we have proved the non-degeneracy of $\myd f$ at $p$.
\end{proof}

The following is a local version of the fibration theorem.

\begin{thm}\label{localFiberBundleThm}
Given $\epsilon>0$ and a positive function $\Phi$ with $\lim_{\delta\to0^+}\Phi(\delta)=0$, there exists $\delta_{0}>0$ depending on $n$, $\epsilon$ and $\Phi$ such that the following holds for every $\delta\in(0,\delta_{0})$.
Let $M$ be a not necessarily complete $n$-dimensional manifold such that $B_{3}(p)$ has compact closure in $B_{4}(p)$, and $\Ric_M\ge-(n-1)$ on $B_{3}(p)$, and $B_{2}(p)$ satisfies the $(\Phi; k,\delta)$-generalized Reifenberg condition, and $d_{GH}(B_{3}(p),B_{3}(0^{k}))\leq \delta$ (with $1\leq k\leq n$) holds.
Then there exists a smooth map $u : B_{2}(p)\rightarrow B_{2}(0^{k})$ with $u(p)=0^{k}$ such that $u$ is an $\epsilon$-Gromov-Hausdorff approximation, and $u|_{u^{-1}(B_{1}(0^{k}))}$ is a smooth fibration and $u^{-1}(B_{1}(0^{k}))$ is diffeomorphic to $B_{1}(0^{k})\times F$, where $F$ is a compact manifold of dimension $n-k$.
\end{thm}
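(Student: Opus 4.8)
The plan is to run the proof of Theorem \ref{FiberBundleThm} in the present incomplete, purely local situation, with the target manifold $N$ replaced by the flat ball $B_{3}(0^{k})$ (on which a harmonic coordinate is just an affine chart, so every estimate there is trivial), and then to add a properness argument. First I rescale the metric by $\delta^{-1/2}$ and relabel, so that, writing $R:=\delta^{-1/2}$, one has $\Ric\geq-(n-1)\delta$ on $B_{3R}(p)$, $d_{GH}(B_{3R}(p),B_{3R}(0^{k}))\leq\sqrt{\delta}$, $B_{3R}(p)$ has compact closure in $B_{4R}(p)$, and $B_{2R}(p)$ satisfies the $(\Phi;k,\delta)$-generalized Reifenberg condition for all radii up to $R$; in particular every point of $B_{2R}(p)$ satisfies the $(\Phi;k,\delta)$-generalized Reifenberg condition. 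The effect of this rescaling is that unit-scale balls now sit comfortably inside the controlled region $B_{3R}(p)$. Fix a $1$-net $\{p_{\lambda}\}$ of $B_{2R}(p)$; by the almost splitting theorem each $B_{4}(p_{\lambda})\subset B_{3R}(p)$ carries a $(\Psi(\delta|n),k)$-splitting map $u_{\lambda}:B_{4}(p_{\lambda})\to\R^{k}$ that is $\Psi(\delta|n)$-close to a fixed Gromov--Hausdorff approximation onto $B_{3R}(0^{k})$, and gluing the $u_{\lambda}$'s via the center-of-mass technique exactly as in the proof of Theorem \ref{FiberBundleThm} produces a smooth map which, after composing with a diffeomorphism of $\R^{k}$ that is the identity on $B_{(3/2)R}(0^{k})$ and maps into $B_{2R}(0^{k})$, gives a smooth $u:B_{2R}(p)\to B_{2R}(0^{k})$ with $u(p)=0^{k}$ which is a $\Psi(\delta|n)$-Gromov--Hausdorff approximation; taking $\delta_{0}$ small makes it an $\epsilon$-approximation at the original scale.

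The next step is the non-degeneracy of $du$ at every $q\in u^{-1}(B_{R}(0^{k}))$, this set being the preimage of the original ball $B_{1}(0^{k})$. Since $u$ is a $\Psi(\delta|n)$-approximation with $u(p)=0^{k}$, such a $q$ lies in $B_{R+\Psi(\delta|n)}(p)\subset B_{2R}(p)$, so $q$ satisfies the $(\Phi;k,\delta)$-generalized Reifenberg condition. As in Lemma \ref{JacobianEstimate}, near $q$ the map $u$, which is not harmonic, agrees to first order with an honest linear combination $v$ of the $u_{\lambda}$'s, and $v$ is a $(\Psi(\delta|n),k)$-splitting map on a unit ball about $q$; hence it suffices to show $dv(q)$ is non-degenerate, which is precisely the type of conclusion furnished by Theorem \ref{NonDegeneracyofSplittingMaps-RicCase}. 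Concretely, a further fixed rescaling brings $v$ into the form of a splitting map defined on the radius-$3$ ball about $q$ with $\Ric\geq-(n-1)\delta$; should the naive restriction to that ball not be accurate enough, one first replaces $v$ by $T_{r}v$ at the appropriate small radius $r$ via the transformation theorem \ref{TransformationThmUderSplittingMonotonicity}, which is legitimate exactly because $q$ satisfies the generalized Reifenberg condition, obtaining a genuine $(\epsilon,k)$-splitting map there, and then Theorem \ref{NonDegeneracyofSplittingMaps-RicCase} gives non-degeneracy of $d(T_{r}v)(q)$; since $T_{r}$ is invertible this is non-degeneracy of $dv(q)=du(q)$.

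It remains to see that $u$ restricted to $u^{-1}(B_{R}(0^{k}))$ is a proper submersion onto $B_{R}(0^{k})$. Properness is where the hypothesis that $B_{3R}(p)$ has compact closure in $B_{4R}(p)$ enters: for $r<R$ the set $u^{-1}(\overline{B_{r}(0^{k})})$ is closed in the domain of $u$ and contained in $\overline{B_{r+\Psi(\delta|n)}(p)}$, which is compact, so preimages of compact subsets of $B_{R}(0^{k})$ are compact. By Ehresmann's theorem a proper submersion is a locally trivial smooth fiber bundle, and since the base $B_{R}(0^{k})$ is convex, hence contractible, the bundle is globally trivial: $u^{-1}(B_{R}(0^{k}))$ is diffeomorphic to $B_{R}(0^{k})\times F$, where $F=u^{-1}(0^{k})$ is a compact $(n-k)$-manifold ($0^{k}$ being a regular value of the proper map $u$, and $F\ni p$). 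Undoing the initial rescaling by $\delta^{-1/2}$ yields the statement for $B_{2}(p)$ and $B_{1}(0^{k})$. The main difficulty is the non-degeneracy step: because the glued map is only first-order tangent to a splitting map, and because a splitting map can genuinely degenerate upon restriction to a much smaller ball, one cannot simply shrink to the scale required by Theorem \ref{NonDegeneracyofSplittingMaps-RicCase}; it is exactly here that the generalized Reifenberg condition, through the transformation theorem, is indispensable, and one must also keep every auxiliary ball inside $B_{3R}(p)$ where the curvature bound and compactness hold.
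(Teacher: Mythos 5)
Your proposal is correct and follows essentially the same route as the paper's proof: rescale by $\delta^{-1/2}$, glue locally defined almost splitting maps via the center-of-mass construction, use Lemma \ref{JacobianEstimate} together with Theorem \ref{NonDegeneracyofSplittingMaps-RicCase} for pointwise non-degeneracy, and deduce the fibration structure from the Gromov--Hausdorff approximation property (bounding fiber diameters) plus the compact-closure hypothesis and contractibility of the base. The only cosmetic differences are that you package the final step as properness plus Ehresmann's theorem rather than the paper's direct open-closed surjectivity argument with compact fibers, and that you insert an explicit invocation of Theorem \ref{TransformationThmUderSplittingMonotonicity} before Theorem \ref{NonDegeneracyofSplittingMaps-RicCase}, which is harmless but redundant since the non-degeneracy theorem already carries out that transformation internally.
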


\begin{proof}[Sketch of proof:]
Scale the distances by $\frac{1}{\sqrt{\delta}}$, hence we may assume $\Ric_M\ge-\delta$ on $B_{\frac{3}{\sqrt{\delta}}}(p)$, $B_{\frac{2}{\sqrt{\delta}}}(p)$ satisfies the $(\Phi; k,\delta)$-generalized Reifenberg condition, and there exists a $\sqrt{\delta}$-Gromov-Hausdorff approximation  $h:B_{\frac{3}{\sqrt{\delta}}}(p)\rightarrow B_{\frac{3}{\sqrt{\delta}}}(0^{k})$ with $h(p)=0^k$.

Similar to the proof of Theorem \ref{FiberBundleThm}, making use of the center of mass technique, we can glue the locally defined maps to form a smooth global map $f: B_{\frac{2}{\sqrt{\delta}}}(p)\rightarrow B_{\frac{2}{\sqrt{\delta}}}(0^{k})$ such that $f(p)=0^k$, $f$ is $\Psi(\delta|n)$-close to $h|_{B_{\frac{2}{\sqrt{\delta}}}(p)}$.
And by Lemma \ref{JacobianEstimate}, we can prove the non-degeneracy of $f$ on $B_{\frac{3}{2\sqrt{\delta}}}(p)$.
Since $f: B_{\frac{2}{\sqrt{\delta}}}(p)\rightarrow B_{\frac{2}{\sqrt{\delta}}}(0^{k})$ is a $2\Psi(\delta|n)$-Gromov-Hausdorff approximation, for every $x\in B_{\frac{3}{2\sqrt{\delta}}}(p)$, we have
\begin{align}\label{6.19}
\mathrm{diam}(f^{-1}(f(x)))\leq 2\Psi(\delta|n)
\end{align}
and
\begin{align}\label{6.20}
|d(x,p)-|f(x)||\leq2\Psi(\delta|n).
\end{align}
By (\ref{6.20}), it is not hard to prove that $f(f^{-1}(\overline{B_{\frac{1}{\sqrt{\delta}}}(0^k)}))$ is closed in $\overline{B_{\frac{1}{\sqrt{\delta}}}(0^k)}$.
By the non-degeneracy of $f$, we know $f(f^{-1}(\overline{B_{\frac{1}{\sqrt{\delta}}}(0^k)}))$ is open in $\overline{B_{\frac{1}{\sqrt{\delta}}}(0^k)}$.
Hence $f(f^{-1}(\overline{B_{\frac{1}{\sqrt{\delta}}}(0^k)}))=\overline{B_{\frac{1}{\sqrt{\delta}}}(0^k)}$.
By the non-degeneracy of $f$ and (\ref{6.19}), we know $f:f^{-1}({B_{\frac{1}{\sqrt{\delta}}}(0^k)})\rightarrow {B_{\frac{1}{\sqrt{\delta}}}(0^k)}$ is a smooth surjective fibration, whose fibers are  compact $(n-k)$-dimensional manifolds with diameter at most $2\Psi(\delta|n)$.
Since ${B_{\frac{1}{\sqrt{\delta}}}(0^k)}$ is contractible, this bundle map is in fact trivial.
Finally, the required map $u: B_{2}(p)\rightarrow B_{2}(0^{k})$ can be obtained by a scaling of $f$.
\end{proof}

\section{Other Applications of Transformation Theorems}\label{sec-7}

Firstly we prove Theorem \ref{BlowupTransformation}.

\begin{proof}[Proof of Theorem \ref{BlowupTransformation}]
We assume $R_{0}=1$ for simplicity.
We also assume  $\delta$ is sufficiently small.

Given a sequence of positive numbers $R_i$ with $R_{i}\rightarrow\infty$, denote by $(\tilde{X}_{i},\tilde{p}_{i},\tilde{d}_{i},\tilde{m}_{i})=(X, p, \frac{1}{R_{i}}d, \frac{1}{m(B_{R_{i}}(p))}m)$.
By assumptions, each $B_{\delta^{-\frac{1}{2}}}(\tilde{p}_{i})$ is $(\delta,k)$-Euclidean, thus by Theorem 1.5 in \cite{Gig13},
there exists a $(\Psi(\delta|N),k)$-splitting map $u_{i}:B_{1}(\tilde{p}_{i})\rightarrow\R^k$ with $u_{i}(\tilde{p}_{i})=0$.

For any fixed $R\ge 1$, denote by $(X_{R},{p}_{R},{d}_{R},{m}_{R}):=(X, p, \frac{1}{R}d, \frac{1}{m(B_{R}(p))}m)$.
By assumptions, for $i$ sufficiently large, $B_r(\tilde{p}_{i})$ is $(\delta,k)$-Euclidean and not $(\eta,k+1)$-Euclidean for each $r\in[RR_i^{-1},1]$.
Then by Theorem \ref{thm-splitting-stable} and Remark \ref{rem4.1}, there exists a lower diagonal matrix $A_{RR_i^{-1}}$ with positive diagonal entries such that $\bar u_{i,R}:=R^{-1}R_i A_{RR_i^{-1}}u_i:B_{1}(p_{R})\to\R^k$ is $(\Psi(\delta|N,\eta),k)$-splitting and satisfies
\begin{align}\label{0124Normal}
	\bbint_{B_1(p_{R})}\langle\nabla\bar u_{i,R}^\alpha,\nabla \bar u_{i,R}^{\beta}\rangle=\delta^{\alpha\beta},
\end{align}
\begin{align}\label{7.2}
|\nabla \bar u^\alpha_{i,R}(x)|\le Cd_R(x,p_{R})^{\Psi(\delta|N,\eta)}+C
\end{align}
for any $\alpha,\beta=1,\ldots,k$, $x\in B_{R^{-1}R_i}(p_{R})$, where $C$ denotes a constant depending only on $N$ but it may change in different lines (the argument in (\ref{7.2}) is similar to that in (\ref{4.7})).
Thus up to a subsequence, we may assume $\bar u_{i,R}$ converges to a harmonic function $\bar u_{\infty,R}:X_{R}\to \R^k$ in locally uniform and locally $W^{1,2}$-sense with $\bar{u}_{\infty,R}(p_{R})=0$ and
\begin{align}\label{7.4}
|\nabla \bar u^\alpha_{\infty,R}(x)|\le Cd_R(x,p_{R})^{\Psi(\delta|N,\eta)}+C,
\end{align}
\begin{align}\label{0125Normal}
\bbint_{B_1(p_{R})}\langle\nabla\bar u_{\infty,R}^\alpha,\nabla \bar u_{\infty,R}^{\beta}\rangle=\delta^{\alpha\beta}
\end{align}
for any $\alpha,\beta=1,\ldots ,k$, $x\in X_{R}$.

By (\ref{0125Normal}) and note that $B_r({p}_{R})$ is $(\delta,k)$-Euclidean and not $(\eta,k+1)$-Euclidean for each $r\geq 1$,
similar to the argument in the proof of Theorem \ref{thm-splitting-stable-full} (where we make use of Theorem \ref{thm-gap-har-RCD} and the locally $W^{1,2}$-convergence), we can prove that $\bar u_{\infty,R}:B_1(p_{R})\to \R^k$ is $(\Psi(\delta|N,\eta),k)$-splitting.

The above argument applies for $R= 1$, and note that in this case $(X_{R},{p}_{R},{d}_{R},{m}_{R})=(X, p, d, \frac{1}{m(B_{1}(p))}m)$.
Now we fix a convergent subsequence of $\bar u_{i,1}\to\bar u_{\infty,1}$, and denote by $u:=\bar u_{\infty,1}$. Hence
\begin{align}\label{7.4111}
|\nabla u^\alpha(x)|\le Cd(x,p)^{\Psi(\delta|N,\eta)}+C,
\end{align}
\begin{align}\label{0126Normal}
\bbint_{B_1(p)}\langle\nabla u^\alpha,\nabla u^{\beta}\rangle=\delta^{\alpha\beta}
\end{align}
for any $\alpha,\beta=1,\ldots ,k$, $x\in X$.
By (\ref{7.4111}), given any $\epsilon>0$, we choose $\delta$ small so that $\Psi(\delta|N,\eta)<\epsilon$, then $u^{\alpha}\in \mathcal{H}_{1+\epsilon}(X,p)$ for $\alpha=1,\ldots,k$.
By (\ref{0126Normal}), we have $\dim\mathcal{H}_{1+\epsilon}(X,p)\ge k$.

For any $R\geq1$, we have
\begin{align}\label{8.6}
	\bar u_{i,1}=R_iA_{R_i^{-1}}u_i=RA_{R_i^{-1}}A_{RR_i^{-1}}^{-1}R^{-1}R_iA_{RR_i^{-1}}u_i =RA_{R_i^{-1}}A_{RR_i^{-1}}^{-1}\bar u_{i,R}.
\end{align}
Denote by $B_{R,i}:=A_{R_i^{-1}}A_{RR_i^{-1}}^{-1}$.
By Lemma \ref{lem-Ts-growth-estimate}, we have
\begin{align}\label{8.7}
\max\{|B_{R,i}|, |B_{R,i}^{-1}|\} \le  R^{\Psi(\delta|N,\eta)}.
\end{align}
Hence up to a subsequence, we may assume $B_{R,i}$ converges to a lower diagonal matrix  $B_R$ with positive diagonal entries satisfying
\begin{align}\label{8.8}
\max\{|B_{R}|, |B_{R}^{-1}|\} \le  R^{\Psi(\delta|N,\eta)}.
\end{align}
By (\ref{8.6}), we have $\bar u_{\infty,R}=R^{-1}B_R^{-1} u$.
If we take $T_{R}$ to be $B_{R}^{-1}$, then
$u^{1},\ldots,u^{k}$, $T_{R}$ satisfy (1) and (2).

To finish the proof, we only need to prove that there exists $\epsilon(N,\eta)$ such that for any $\epsilon\in(0,\epsilon(N,\eta)]$, we have $\dim\mathcal{H}_{1+\epsilon}(X,p)= k$.
Suppose there is a non-zero function $v\in \mathcal{H}_{1+\epsilon}(X,p)\setminus\mathrm{span}\{u^{1},\ldots,u^{k}\}$.
Without loss of generality, we may assume
\begin{equation}\label{20220125Normal}
\bbint_{B_1(p)}\langle\nabla u^\alpha,\nabla v\rangle=0\quad\text{for }\alpha=1,\ldots,k,
\end{equation}
and
\begin{align}\label{20220125Normal-2}
|v(x)|\le d(x,p)^{1+\epsilon}+4 \quad\text{for every }x\in X.
\end{align}

Similar to the proof of Theorem \ref{thm-gap-har-RCD}, if we can prove the following claim, then together with the maximum principle, by induction we can derive $v\equiv0$, which is a contradiction, and this will complete the proof.

\textbf{Claim:} There exist $\epsilon(N,\eta)$, $\delta(N,\eta,\epsilon)$ such that, for any $\epsilon\le \epsilon(N,\eta)$ and $\delta\le\delta(N,\eta,\epsilon)$, if $v\in \mathcal{H}_{1+\epsilon}(X,p)$ satisfies (\ref{20220125Normal}) and (\ref{20220125Normal-2}), then
\begin{align}
|v(x)|\le\frac{1}{2}d(x,p)^{1+\epsilon}\quad\text{for any }x\in X\setminus B_{1}(p).
\end{align}

In fact, we will take $\epsilon(N,\eta)$ as the one given in Theorem \ref{thm-gap-har-RCD} and prove the above claim holds for any $\epsilon\le \epsilon(N,\eta)$.
Suppose the claim does not hold for some $\epsilon\in(0, \epsilon(N,\eta)]$, then there exist a sequence of positive numbers $\delta_{i}\downarrow 0$, and $\mathrm{RCD}(0,N)$ spaces $(X_{i},p_{i},d_{i},m_{i})$ such that $B_r(p_{i})$ is $(\delta_{i},k)$-Euclidean and not $(\eta,k+1)$-Euclidean for any $r\ge1$;
and there exist non-zero harmonic functions $v_{i}:X_{i}\rightarrow \mathbb{R}$ satisfying $v_{i}(p_{i})=0$,
\begin{align}
|v_{i}(x)|\le d_{i}(x,p_{i})^{1+\epsilon}+4 \quad\text{for any }x\in X_{i},
\end{align}
and
\begin{equation}\label{8.12}
\bbint_{B_1(p_{i})}\langle\nabla u_{i}^\alpha,\nabla v_{i}\rangle=0\quad\text{for }\alpha=1,\ldots,k,
\end{equation}
where $u_{i}^\alpha\in \mathcal{H}_{1+\epsilon}(X_{i},p_{i})$, $\alpha=1,\ldots,k$, are constructed in the previous step of the proof (especially they satisfy (\ref{7.4111}) (\ref{0126Normal}));
and there exists $x_i\in X_i\setminus B_1(p_i)$ with $R_{i}:=d_{i}(x_{i},p_{i})\geq 1$ and
\begin{align}
|v_{i}(x_{i})|> \frac{1}{2}d_{i}(x_{i},p_{i})^{1+\epsilon}.
\end{align}

We denote by $(\tilde{X}_{i},\tilde{p}_{i},\tilde{d}_{i},\tilde{m}_{i})=(X_{i}, p_{i}, \frac{1}{R_{i}}d_{i}, \frac{1}{m_{i}(B_{R_{i}}(p))}m_{i})$.
Up to passing to a subsequence, we assume $(\tilde{X}_{i},\tilde{p}_{i},\tilde{d}_{i},\tilde{m}_{i})\xrightarrow{pmGH}(\tilde{X}_{\infty},\tilde{p}_{\infty},\tilde{d}_{\infty},\tilde{m}_{\infty})$.
Note that $\tilde{X}_{\infty}$ is a $k$-splitting $\RCD(0,N)$ space, and for any $R\ge1$, $B_{R}(\tilde{p}_{\infty})$ is not $(\frac{\eta}{2},k+1)$-Euclidean.
Let $\tilde v_i:=R_i^{-(1+\epsilon)}v_i$, then
\begin{align}\label{8.14}
|\tilde{v}_{i}(x)|\le \tilde{d}_{i}(x,\tilde{p}_{i})^{1+\epsilon}+4R_i^{-(1+\epsilon)}.
\end{align}
Hence by the gradient estimate and Theorems \ref{AA} and \ref{2.7777777}, we may assume $\tilde{v}_{i}$ converges in locally uniform and locally $W^{1,2}$-sense to a harmonic function $\tilde{v}_{\infty}:\tilde{X}_{\infty}\rightarrow \mathbb{R}$ with $\tilde{v}_{\infty}\in \mathcal{H}_{1+\epsilon}(\tilde{X}_{\infty},\tilde{p}_{\infty})$.
$\tilde{v}_{\infty}$ is non-zero because $\tilde{v}_{\infty}(\tilde{x}_{\infty})\geq \frac{1}{2}$ for some $\tilde{x}_{\infty}$ with $\tilde{d}_{\infty}(\tilde{p}_{\infty},\tilde{x}_{\infty})=1$.

Suppose $\{R_{i}\}$ is a bounded set, thus up to a subsequence, we assume $R_{i}\rightarrow R\geq 1$.
By (\ref{7.4111}) and (\ref{0126Normal}), we further assume $\tilde{u}^\alpha_{i}:=\frac{1}{R_{i}}{u}_{i}^\alpha\in \mathcal{H}_{1+\epsilon}(\tilde{X}_{i},\tilde{p}_{i})$ converges in locally uniform and locally $W^{1,2}$-sense to $\tilde{u}^\alpha_{\infty}:\tilde{X}_{\infty}\rightarrow \mathbb{R}$ with $\tilde{u}^\alpha_{\infty}\in \mathcal{H}_{1+\epsilon}(\tilde{X}_{\infty},\tilde{p}_{\infty})$ and
\begin{align}\label{0129Normal}
\bbint_{B_\frac{1}{R}(\tilde{p}_{\infty})}\langle\nabla \tilde{u}_{\infty}^\alpha,\nabla \tilde{u}_{\infty}^{\beta}\rangle=\delta^{\alpha\beta}.
\end{align}
By (\ref{8.12}) and locally $W^{1,2}$-convergence, we have
\begin{equation}\label{8.13}
\bbint_{B_\frac{1}{R}(\tilde{p}_{\infty})}\langle\nabla \tilde{u}_{\infty}^{\alpha},\nabla \tilde{v}_{\infty}\rangle=0\quad\text{for }\alpha=1,\ldots,k.
\end{equation}
Thus $\mathcal{H}_{1+\epsilon}(\tilde{X}_{\infty},\tilde{p}_{\infty})$ has dimension at least $k+1$.
However, according to Theorem \ref{thm-gap-har-RCD}, $h_{1+\epsilon}(\tilde{X}_{\infty},\tilde{p}_{\infty})=k$, which is a contradiction.

So we may assume $R_i\rightarrow \infty$.
By (\ref{8.14}), for any $\tilde{x}\in\tilde{X}_{\infty}$, we have
\begin{align}\label{8.15}
|\tilde{v}_{\infty}(\tilde{x})|\le \tilde{d}_{\infty}(\tilde{x},\tilde{p}_{\infty})^{1+\epsilon}.
\end{align}
By Theorem \ref{thm-gap-har-RCD}, we know
$\tilde{v}_{\infty}$ is the linear combination of the $\mathbb{R}^{k}$-coordinates in $\tilde{X}_{\infty}$.
Without loss of generality, we assume $\tilde{v}_{\infty}((x^{1},\ldots,x^{k},y))=cx^{1}$ for some $c>0$, where $x^{1},\ldots,x^{k}$ are the standard coordinates in $\mathbb{R}^{k}$-factor of $\tilde{X}_{\infty}=\mathbb{R}^{k}\times Y$, and $\tilde{p}_{\infty}=(0,\ldots,0,y_{0})$.
Take $\tilde{x}=(t,0,\ldots,0,y_{0})$ in (\ref{8.15}) (where $t>0$), then we have
$ct\le t^{1+\epsilon}$ for any $t>0$, which is a contradiction.
The proof is completed.
\end{proof}

In the next, we prove Proposition \ref{NonnegativeSecSplitting}.
\begin{proof}[Proof of Proposition \ref{NonnegativeSecSplitting}]

We assume there exist a sequence of $\delta_{i}\downarrow0$ and a sequence of complete $n$-manifolds $(M_i,g_i,p_i)$ with non-negative sectional curvature, and assume that $C_{\infty,1}M_i$, which is the unit ball centered at the cone point $p_{\infty,i}$ of $M_{i}$'s tangent cone at infinity, is $(\delta_i,k)$-Euclidean, but there exists no non-degenerate harmonic map $u:M_i\to\R^k$.

Up to passing to a subsequence, we may assume $(C_{\infty,1}M_i,p_{\infty,i})\xrightarrow{GH} (B_{1}((0^{k+s},x_\infty)),(0^{k+s},x_\infty))$, where $(0^{k+s},x_\infty)\in \R^k\times\R^s\times C(X)$, and  $C(X)$ is a metric cone containing no lines, with $x_\infty$ being its cone point.
Hence there exists $\eta>0$ such that $d_{GH}(B_1((0^{k+s},x_\infty)), B_{1}((0^{k+s+1},z)))\geq \eta$ for any $B_{1}((0^{k+s+1},z))\subset\mathbb{R}^{k+s+1}\times Z$, with $Z$ being a metric space and $z\in Z$.
Then by the uniqueness of tangent cone at infinity of manifolds with nonnegative sectional curvature, up to a scaling down of the manifolds, we may assume for each $R\ge 1$ and $i\in \mathbb{Z}^{+}$, $B_R(p_i)$ is $(\delta_i,k+s)$-Euclidean, and not $(\frac{\eta}{4},k+s+1)$-Euclidean for $\delta_i\downarrow0$.
Hence we apply Theorem \ref{BlowupTransformation} to obtain a harmonic map $u_i=(u_i^1,\ldots,u_i^{k+s}):(M_i,p_i)\to(\R^{k+s},0^{k+s})$ with at most $(1+\Psi(\delta_i|n,\eta))$-growth, and satisfies (1)-(3) there.
In particular, for each $R\ge 1$, there exists a lower diagonal matrix $T_R$ with positive diagonal entries such that $T_R\circ u_{i}:B_R(p_{i})\rightarrow\R^{k+s}$ is $(\Psi(\delta_i|n,\eta),k+s)$-splitting.
Thus by Theorem \ref{NonDegeneracyofSplittingMaps}, for each sufficiently large $i$, and for every $R\geq 1$, $T_R \circ u_{i}$ is non-degenerate on $B_\frac{R}{3}(p_{i})$, and hence $u_{i}$ is non-degenerate on $B_\frac{R}{3}(p_{i})$.
By the arbitrariness of $R$, $u_{i}$ is non-degenerate on $M_{i}$.
This contradicts to the non-existence of non-degenerate harmonic map $u:M_i\to\R^k$, and we have finished the proof of the first part.

Now we prove the second part, where we assume $d_{GH}(C_{\infty,1}M,B_1(0^k))\le\delta$ with $\delta$ sufficiently small.
Up to a scaling down, we may assume that $B_R(p)$ is $(2\delta,k)$-Euclidean and not $(\frac{1}{2},k+1)$-Euclidean for any $R\geq 1$.
Hence we apply Theorem \ref{BlowupTransformation} to obtain a harmonic map $u=(u^1,\ldots,u^k): M\to \R^k$ with $u(p)=0^k$ and at most $(1+\Psi_{1}(\delta|n))$-growth, and satisfies (1)-(3) there.
In addition, we may assume that, for each $R\geq 1$, $\tilde{u}_{R}:=T_R\circ u:B_{\frac{1}{2}R}(p)\mapsto B_{(\frac{1}{2}+\Psi_{2}(\delta|n))R}(0^k)$ is a $(\Psi_{2}(\delta|n)R)$-Gromov-Hausdorff approximation.
In particular, for every $x\in B_{\frac{1}{2}R}(p)$, we have
\begin{align}\label{7.19}
\mathrm{diam}(\tilde{u}_{R}^{-1}(\tilde{u}_{R}(x)))\leq \Psi_{2}(\delta|n)R
\end{align}
and
\begin{align}\label{7.20}
|d(x,p)-|\tilde{u}_{R}(x)||\leq\Psi_{2}(\delta|n)R.
\end{align}

We claim that $u$ is a proper map.

Suppose on the contrary, there exists a sequence of $x_i$ such that $R_i:=d(p,x_i)\rightarrow \infty$, but $|u(x_i)|\leq R_{0}$.
We consider $\tilde{u}_{10R_{i}}$.
By (\ref{7.20}), if $\delta$ is sufficiently small, we have $|\tilde{u}_{10R_i}(x_i)|\geq9R_{i}$.
Since $|T_{10R_i}|\leq (10R_i)^{\Psi_{1}(\delta|n)}$,
we have $|u(x_i)|\geq R_{i}^{1-\Psi_{1}(\delta|n)}$, which is a contradiction.

By the properness of $u$, $u(M)$ is closed in $\R^{k}$.

On the other hand, by Theorem \ref{NonDegeneracyofSplittingMaps}, $d\tilde{u}_{R}$ is non-degenerate on $B_{\frac{1}{6}R}(p)$ for every $R\geq 1$.
By the arbitrariness of $R$, $u:M\to \R^{k}$ is non-degenerate on $M$, hence $u(M)$ is open in $\R^k$.
Thus $u: M\to \R^{k}$ is surjective.
Note that $\tilde{u}_{R}^{-1}(\tilde{u}_{R}(x))=u^{-1}(u(x))$ for every $x\in u^{-1}(B_{\frac{1}{5}R}(0^{k}))$, and by (\ref{7.19}) and the non-degeneracy of $u$, $u: M\to \R^k$ is a surjective fiber bundle map with compact fibers.
Since $\R^{k}$ is contractible, the bundle map $u$ is trivial.
This completes the proof.
\end{proof}

Now we prove Proposition \ref{thm-can-diff}.

\begin{proof}[Proof of Proposition \ref{thm-can-diff}]
By the theorems in \cite{CC96}, for any $R>0$, $B_R(p)$ is $(\Psi(\delta|n),n)$-Euclidean and not $(\frac{1}{2},n+1)$-Euclidean.
Hence we apply Theorem \ref{BlowupTransformation} to obtain a harmonic function $u=(u^1,\ldots,u^n)$ with $u(p)=0^{n}$ and at most $(1+\Psi(\delta|n))$-growth, and satisfies (1)-(3) there.
Similar to the proof of the second part of Proposition \ref{NonnegativeSecSplitting}, we can prove that $u: M\to \R^{n}$ is a proper  surjective local diffeomorphism.
And since $\R^{n}$ is simply-connected, $u$ is injective, hence a diffeomorphism.
\end{proof}

Similar to Proposition \ref{thm-can-diff}, we can prove:

\begin{prop}\label{thm-can-heom}
For any $N\in \mathbb{Z}^{+}$, there exists $\delta(N)>0$, such that the following holds for any $\delta\in(0,\delta(N))$.
Suppose $(X,d,\mathcal{H}^N)$ is a noncompact $\RCD(0,N)$ space, and
\begin{align}
\lim_{R\rightarrow+\infty}\frac{\mathcal{H}^N(B_R(p))}{\mathcal{H}^N(B_R(0^n))}\ge 1-\delta.
\end{align}
Then there exists a proper harmonic map $u:X\to \R^N$ with at most $(1+\Psi(\delta|N))$-growth so that $u$ is a homeomorphism.
\end{prop}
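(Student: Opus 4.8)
The plan is to adapt the proof of Proposition \ref{thm-can-diff} to the synthetic setting, the only genuinely new ingredient being an intrinsic substitute for the non-degeneracy theorem \ref{NonDegeneracyofSplittingMaps-RicCase}. First I would extract the geometric consequences of the hypothesis. By Bishop--Gromov monotonicity on $\RCD(0,N)$ spaces the asymptotic volume ratio is $\ge 1-\delta$ and is independent of the base point, so $\mathcal{H}^N(B_R(x))\ge(1-\delta)\mathcal{H}^N(B_R(0^N))$ for every $x\in X$ and every $R>0$; hence, by volume-cone/almost-rigidity (\cite{CC96}; \cite{DePGil18} for non-collapsed $\RCD$), every $B_R(x)$ is $\Psi(\delta|N)R$-Gromov--Hausdorff close to $B_R(0^N)$. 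In particular each $B_R(p)$ is $(\Psi(\delta|N),N)$-Euclidean for all $R>0$, while by Remark \ref{rem1.15} (with $n=N$) it is never $(\eta,N+1)$-Euclidean once $\eta<\theta(N)$. Applying Theorem \ref{BlowupTransformation} with $k=N$ (the almost-Euclidean condition holding for all $R>0$, so that $R_0$ may be taken arbitrarily small) then yields a harmonic map $u=(u^1,\dots,u^N):X\to\R^N$ with $u(p)=0^N$ and at most $(1+\Psi(\delta|N))$-growth, together with lower-triangular matrices $T_R$, $R\in(0,\infty)$, with positive diagonal, $|T_R|,|T_R^{-1}|\le\max\{R^{\Psi(\delta|N)},R^{-\Psi(\delta|N)}\}$ (after normalizing $T_1=\mathrm{Id}$), and $T_Ru:B_R(p)\to\R^N$ an $(\Psi(\delta|N),N)$-splitting map; moreover $\dim\mathcal{H}_{1+\epsilon}(X,p)=N$.

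Next I would prove $u$ is proper, as in the second part of the proof of Proposition \ref{NonnegativeSecSplitting}. By Theorem \ref{thm-split-GHisom}(2) applied to $T_Ru$ at scale $R$ --- the cross factor $Z$ being a point, since $X$ is non-collapsed and $B_R(p)$ is $(\Psi,N)$-Euclidean --- the map $T_Ru$ is a $\Psi(\delta|N)R$-Gromov--Hausdorff isometry from a fixed fraction of $B_R(p)$ onto a Euclidean ball centred at $0^N$, so $|\,d(x,p)-|T_Ru(x)|\,|\le\Psi(\delta|N)R$ there. If $u$ were not proper, choose $x_i$ with $\rho_i:=d(p,x_i)\to\infty$ and $|u(x_i)|$ bounded; at scale $R\asymp\rho_i$ this gives $|T_Ru(x_i)|\ge\rho_i(1-\Psi(\delta|N))$ while $|T_Ru(x_i)|\le|T_R|\,|u(x_i)|\lesssim\rho_i^{\Psi(\delta|N)}$, a contradiction for $\Psi(\delta|N)<1$ and $i$ large. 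Hence $u$ is proper and $u(X)$ is closed in $\R^N$.

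It remains to show $u$ is a homeomorphism; this is the analogue of the ``local diffeomorphism'' step of Proposition \ref{thm-can-diff}, where we no longer have a differential at our disposal. The uniform almost-rigidity above makes the hypotheses of Theorem \ref{thm-splitting-stable} and of Theorem \ref{BlowupTransformation} available centred at \emph{every} $x\in X$, so $\dim\mathcal{H}_{1+\epsilon}(X,x)=N$ for every $x$; since $u-u(x)$ is harmonic with $(1+\epsilon)$-growth and vanishes at $x$, it spans $\mathcal{H}_{1+\epsilon}(X,x)^N$, and running the transformation theorem at $x$ shows that, after a fixed linear change of coordinates, $u-u(x)$ is an $(\Psi(\delta|N),N)$-splitting map on $B_R(x)$ for all $R>0$. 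Combining the scale-$R$ Gromov--Hausdorff-isometry estimates of Theorem \ref{thm-split-GHisom}(2) over all $R$ with the bounds on $|T_R|,|T_R^{-1}|$ gives, exactly as in the derivation of \eqref{BiHolder},
\begin{equation*}
\tfrac12\,d(a,b)^{1+\Psi(\delta|N)}\le|u(a)-u(b)|\le 2\,d(a,b)^{1-\Psi(\delta|N)}
\end{equation*}
for $a,b$ in a fixed neighbourhood of each $x$; thus $u$ is locally injective with Hölder-continuous local inverse. Since $X$ is homeomorphic to $\R^N$ (\cite{KM20}), invariance of domain upgrades this to: $u$ is an open map, hence a local homeomorphism. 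A proper local homeomorphism onto the connected, locally connected, locally compact Hausdorff space $\R^N$ is a covering map, and $\R^N$ being simply connected together with $X$ connected forces $u$ to be a homeomorphism; the growth bound is the one supplied by Theorem \ref{BlowupTransformation}.

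The main obstacle is precisely this last step: transferring the transformed-splitting structure from the base point $p$ to every other point $x$ with control independent of $x$ --- that is, proving the intrinsic, collapsing-free version of the non-degeneracy theorem on $X$. A direct maximal-function argument only yields the splitting on $B_R(x)$ for $x$ outside a set of small measure, which is insufficient; the point is to use instead the global rigidity $\dim\mathcal{H}_{1+\epsilon}(X,x)=N$ at each $x$ (conclusion (3) of Theorem \ref{BlowupTransformation}), so that the canonical coordinate system produced there can be compared with $u-u(x)$.
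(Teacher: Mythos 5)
Your construction of $u$ via Theorem \ref{BlowupTransformation} with $k=N$, and the properness argument in your second paragraph, match the paper's intent: the paper declares its proof ``the same as Proposition \ref{thm-can-diff}'' up to one replacement. The problem is your third paragraph, where that one replacement is supposed to happen.

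You claim that re-running the transformation theorems centred at an arbitrary $x$, together with the Gromov--Hausdorff-isometry estimate of Theorem \ref{thm-split-GHisom}(2), yields a two-sided bi-Hölder estimate for $u$ near $x$ ``exactly as in the derivation of \eqref{BiHolder}.'' But the lower bound in \eqref{BiHolder} is obtained from Lemma \ref{Surjectivity20220411}, which rests on Lemma \ref{20220411}, whose proof is an open-mapping argument using the \emph{non-degeneracy of the differential} $du$ on a smooth manifold; this is precisely what is not available on an $\RCD$ space. A $(\Psi,N)$-splitting map on every ball $B_R(x)$ gives the Lipschitz upper bound $|u(a)-u(b)|\lesssim d(a,b)$, but by itself does not prevent the map from collapsing directions, and the observation that $\dim\mathcal{H}_{1+\epsilon}(X,x)=N$ for every $x$ only identifies $u-u(x)$ up to a linear change of coordinates with the canonical $N$-tuple at $x$: it supplies no quantitative information that would pin down that linear map or force local injectivity. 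Your closing paragraph correctly names the obstacle, but the proposed remedy (global rigidity of $\dim\mathcal{H}_{1+\epsilon}$) does not close it.

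The paper does not try to re-derive a lower bound from the transformation theorems. It substitutes the missing manifold ingredient by citing an external result: the bi-Hölder estimate for $\delta$-splitting maps on non-collapsed $\RCD$ spaces due to Bru\`e--Naber--Semola \cite{BNS22}, displayed as \eqref{holder}, applied to $T_Ru$ on $B_{R/3}(p)$ with a constant allowed to depend on $R$. That estimate directly gives that $T_Ru$ (hence $u$) is a local homeomorphism, after which the properness, surjectivity and covering-map argument over the simply-connected $\R^N$ go exactly as you describe. So the proposal is missing precisely one thing: in the collapsing-free $\RCD$ setting, the replacement for the non-degeneracy theorem is \cite{BNS22}'s bi-Hölder theorem, which has to be cited (its proof is a nontrivial refinement of the transformation technique, not a consequence of the dimension count).
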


\begin{proof}[Sketch of proof]
The only difference to Proposition \ref{thm-can-diff} is that, because of the lack of smooth structure, we don't have a non-degeneracy property for $T_{R}u: B_{R}(p)\to \R^{N}$.
In this case we will use a bi-H\"{o}lder estimate on non-collapsed $\RCD$ spaces proved in \cite{BNS22}: since $T_{R}u$ is $\Psi(\delta|N)$-splitting,
\begin{equation}\label{holder}
\frac{1}{C(N,R)}d(x,y)^{1+\Psi(\delta|N)}\le |T_{R}u(x)-T_{R}u(y)|\le (1+\Psi(\delta|N))d(x,y)
\end{equation}
holds for any $x,y\in B_{\frac{1}{3}R}(p)$.

By (\ref{holder}), for every $R>0$, $T_{R}u$ is a homeomorphism onto its image on $B_{\frac{1}{3}R}(p)$, and hence $u|_{B_{\frac{1}{3}R}(p)}$ is also a homeomorphism onto its image.
Then it is easy to see that $u:X\rightarrow \R^{N}$ is injective.
On the other hand, according to Theorem 1.3 of \cite{KM20}, 
$X$ is homeomorphic to $\R^{N}$, then by the invariance of domain theorem, $u$ is an open map.
Similar to the proof of the second part of Proposition \ref{NonnegativeSecSplitting}, by an open-closed argument, we can prove that $u:X\to \R^{N}$ is a proper surjective map, hence $u$ is a homeomorphism.
\end{proof}

In the following we prove Proposition \ref{prop-1.14}.
\begin{proof}[Proof of Proposition \ref{prop-1.14}]
Given any $\epsilon>0$ and $n\in \mathbb{Z}^+$, we will determine $\delta_{0}=\delta(n,\epsilon)$ later.

Given any $n$-manifold $(M,g)$ with non-negative sectional curvature with $h_{1+\delta_0}(M,p)=k$, we suppose that there exists a sequence $R_i\rightarrow\infty$ such that $B_{R_i}(p)\subset M^{n}$ is not $(\epsilon,k)$-Euclidean.
By Theorem \ref{thm-LN20}, it is easy to see that there exist $\eta_{1}=\eta_{1}(\epsilon,n)>0$ and $R^{(1)}_0\geq1$ ($R^{(1)}_0$ depends on $M$) such that $B_{R}(p)$ is not $(\eta_{1},k)$-Euclidean for any $R\geq R_0^{(1)}$.
According to Theorem \ref{BlowupTransformation}, if there exist sufficiently small $\delta_{1}>0$ and $\tilde{R}_0^{(1)}\geq 1$ with $\delta_{1}$ depending only on $\eta_{1}$ and $n$, so that $B_R(p)$ is $(\delta_{1},k-1)$-Euclidean for every $R\ge \tilde{R}_0^{(1)}$, then $\mathcal{H}_{1+\Psi(\delta_{1})}(M,p)$ has dimension $k-1$.
If we take $\delta_0<\Psi(\delta_{1})$, then $h_{1+\delta_0}(M,p)=k$ implies the existence of a sequence $R_i^{(1)}\rightarrow\infty$ such that $B_{R_i^{(1)}}(p)$ is not $(\delta_{1},k-1)$-Euclidean,
and then by Theorem \ref{thm-LN20}, there exist $\eta_{2}>0$ (depending only on $\epsilon$ and $n$) and $R^{(2)}_0\geq1$ so that $B_{R}(p)$ is not $(\eta_{2},k-1)$-Euclidean for any $R\geq R_0^{(2)}$.

Repeat the similar arguments for $k-2$ more times, we will finally conclude that, if $\delta_0$ is chosen to be a sufficiently small number depending only on $\epsilon$ and $n$,
then $h_{1+\delta_0}(M,p)=k$ implies the existence of $\eta_{k}>0$ (depending only on $\epsilon$ and $n$) and $R^{(k)}_0\geq1$ such that $B_{R}(p)$ is not $(\eta_{k},1)$-Euclidean for any $R\geq R_0^{(k)}$.
Then according to Proposition \ref{thm-liou-har-RCD}, there exists $\gamma>0$ depending only on $\eta_{k}$ and $n$ (hence depending only on $\epsilon$ and $n$) so that $h_{1+\gamma}(M,p)=0$.
If $\delta_0$ is further chosen to be smaller than $\gamma$, then $h_{1+\delta_0}(M,p)=k$ cannot happen.
Equivalently, if $\delta_0$ is chosen sufficiently small as above, then $h_{1+\delta_0}(M,p)=k$ will imply that there exists an $R_0\geq1$ such that $B_{R}(p)$ is $(\epsilon,k)$-Euclidean for every $R\geq R_0$.
The proof is completed.
\end{proof}

\begin{rem}
It is easy to see that the proof of Proposition \ref{prop-1.14} can be applied to $\RCD(0,N)$ spaces $(X,d,m)$ with $(X,d)\in \mathrm{Alex}^{n}(0)$ ($n\leq N$).
\end{rem}

\section{Appendix: A generalized covering lemma}\label{sec-8}

In this appendix, we give a detailed proof of the generalized covering lemma \ref{covering-lem}.
The proof follows the ideas in Lemma 1.6 of \cite{KW}.

In the following, suppose on a not necessarily complete $n$-manifold $(M, g)$, $B_{8}(p)$ has compact closure in $B_{10}(p)$, and there is a surjective local diffeomorphism $\sigma:\hat M\to B_{10}(p)$ with $\sigma(\hat p)=p$.
We equip $\hat M$ with the pull-back metric $\hat{g}=\sigma^{*}g$.
Let $\hat{d}$ be the length distance on $\hat M$ induced by $\hat{g}$.
Obviously, $\sigma$ is distance non-increasing.
Since $\overline{B_{8}(p)}$ is compact in $B_{10}(p)$, it is easy to see that
$\overline{B_{8}(\hat{p})}$ is compact in $B_{10}(\hat{p})$.
By the compactness of $\overline{B_{8}(\hat{p})}$, it is easy to see that, for any $\hat{x},\hat{y}\in B_{4}(\hat{p})$, $\hat{d}(\hat{x},\hat{y})$ is realized by a segment connecting $\hat{x},\hat{y}$ and contained in $B_{8}(\hat{p})$.

For any $\hat{q}\in\sigma^{-1}(p)\cap B_{2}(\hat{p})$, define
$$\Omega_{\hat{q}}:=\bigcap_{\tilde{q}\in \sigma^{-1}(p)\cap B_{4}(\hat{p})\setminus\{\hat{q}\}}\{\hat{x}\in  \sigma^{-1}(B_{1}(p))\cap B_{1}(\hat{q})| \hat{d}(\hat{x},\hat{q})< \hat{d}(\hat{x},\tilde{q})\}.$$
\textbf{Claim:} $\Omega_{\hat{q}}$ has the following properties:
\begin{description}
  \item[(1)] Every $\Omega_{\hat{q}}$ is a bounded open set.
  \item[(2)] $\Omega_{\hat{q}}\cap\Omega_{\tilde{q}}=\emptyset$ if $\hat{q}\neq \tilde{q}$.
  \item[(3)] If $\hat{x}\in{\Omega}_{\hat{q}}$ for some $\hat{q}\in \sigma^{-1}(p)\cap B_{2}(\hat{p})$, then $\hat{x}\in B_{3}(\hat{p})$.
  \item[(4)] $B_{1}(\hat{p})\subset \bigcup_{\hat{q}\in\sigma^{-1}(p)\cap B_{2}(\hat{p})} \overline{\Omega}_{\hat{q}}$.
  \item[(5)] If $\hat{x}\in{\Omega}_{\hat{q}}$ for some $\hat{q}\in \sigma^{-1}(p)\cap B_{2}(\hat{p})$, let $\hat{\gamma}: [0, \hat{d}(\hat{q},\hat{x})]\rightarrow B_{1}(\hat{q})$ be a segment connecting $\hat{q}$ and $\hat{x}$, then for any $t\in [0, \hat{d}(\hat{q},\hat{x})]$, $\hat{\gamma}(t)\in {\Omega}_{\hat{q}}$, and $\gamma:=\sigma\circ\hat{\gamma}$ is a segment connecting $p$ and $x=\sigma(\hat{x})$. In particular, we have
      \begin{align}\label{8.2}
      \hat{d}(\hat{x},\hat{q})=d(x,p),\text{ for any } \hat{x}\in{\Omega}_{\hat{q}}\text{ and }x=\sigma(\hat{x}).
      \end{align}
  \item[(6)] Let $\gamma:[0,L]\to B_{1}(p)$ (where $L=d(p,\gamma(1))<1$) be a geodesic such that $\gamma(0)=p$ and $d(\gamma(t),p)=t$ for every $t\in[0,L]$. We lift $\gamma$ to the geodesic $\hat{\gamma}: [0,L]\to B_{1}(\hat{q})$ with $\hat{\gamma}(0)=\hat{q}\in\sigma^{-1}(p)\cap B_{2}(\hat{p})$, then for any $t\in [0,L)$, $\hat{\gamma}(t)\in {\Omega}_{\hat{q}}$, and $\hat{d}(\hat{\gamma}(t),\hat{q})=t$.
      In particular, for any $t\in [0,L)$, $\hat{\gamma}(t)\in {\Omega}_{\hat{q}}\cap \mathrm{Cut}_{\hat{q}}$, and $\hat{\gamma}|_{[0,\hat{d}(\hat{\gamma}(t),\hat{q})]}$ is the unique shortest geodesic connecting $\hat{q}$ and $\hat{\gamma}(t)$, where $\mathrm{Cut}_{\hat{q}}$ denotes the cut locus of $\hat{q}$.
  \item[(7)] For any $\hat{q}\in\sigma^{-1}(p)\cap B_{2}(\hat{p})$, $\sigma|_{\Omega_{\hat{q}}\setminus \mathrm{Cut}_{\hat{q}}}$ is a diffeomorphism onto $B_{1}(p)\setminus \mathrm{Cut}_{p}$. In particular, $\mathcal{H}^{n}(\Omega_{\hat{q}})=\mathcal{H}^{n}(B_{1}(p))$.
\end{description}

\begin{proof}
\textbf{(1)} is trivial because $\sigma^{-1}(p)\cap B_{4}(\hat{p})$ has finitely many elements.
\textbf{(2)} and \textbf{(3)} are also obviously.
In the following, we prove \textbf{(4)}-\textbf{(7)}.

\textbf{Proof of (4):} For any $\hat{x}\in B_{1}(\hat{p})$, it is easy to see
$$\min\{\hat{d}(\hat{x},\tilde{q})|\tilde{q}\in\sigma^{-1}(p)\cap B_{4}(\hat{p})\}<1$$
is realized by some $\hat{q}\in \sigma^{-1}(p)\cap B_{2}(\hat{p})$, and thus $\hat{x}\in\overline{\Omega}_{\hat{q}}$.

\textbf{Proof of (5)}: For any $t\in (0, \hat{d}(\hat{q},\hat{x}))$, suppose $\hat{y}:=\hat{\gamma}(t)\notin {\Omega}_{\hat{q}}$,
then there exists $\tilde{q}\in \sigma^{-1}(p)\cap B_{4}(\hat{p})\setminus\{\hat{q}\}$ such that $\hat{d}(\hat{y},\hat{q})\geq \hat{d}(\hat{y},\tilde{q})$.
Thus $\hat{d}(\hat{x},\tilde{q})\leq \hat{d}(\hat{x},\hat{y})+ \hat{d}(\hat{y},\tilde{q})\leq \hat{d}(\hat{x},\hat{q})$, contradicting to $\hat{x}\in{\Omega}_{\hat{q}}$.
Hence $\hat{\gamma}(t)\in {\Omega}_{\hat{q}}$.

Suppose $\gamma:=\sigma\circ\hat{\gamma}$ is not a shortest geodesic connecting $p$ and $x=\sigma(\hat{x})$, then we find a segment $\eta:[0,d(x,p)]\to B_{1}(p)$ with $\eta(0)=p$, $\eta(d(x,p))=x$, and lift it to a segment $\hat{\eta}:[0,d(x,p)]\to B_{8}(\hat{p})$ with $\hat{\eta}(d(x,p))=\hat{x}\in B_{3}(\hat{p})$, $\hat{\eta}(0):=\tilde{q}\in \sigma^{-1}(p)\cap B_{4}(\hat{p})$.
Now we have
$$\hat{d}(\hat{x},\tilde{q})= \mathrm{Length}(\hat{\eta})=d(x,p)<\mathrm{Length}(\hat{\gamma})=\hat{d}(\hat{q},\hat{x}),$$
contradicting to $\hat{x}\in {\Omega}_{\hat{q}}$.
Thus $\gamma$ is a segment connecting $p$ and $x$.

\textbf{Proof of (6):} Suppose there is some $t\in(0,L)$ such that $\hat{x}:=\hat{\gamma}(t)\notin {\Omega}_{\hat{q}}$.
Note that $\hat{x}\in B_{3}(\hat{p})$.
Denote by $x=\gamma(t)$.
By definition, there exists $\tilde{q}\in \sigma^{-1}(p)\cap B_{4}(\hat{p})\setminus\{\hat{q}\}$ such that $\hat{d}(\hat{x},\tilde{q})\leq \hat{d}(\hat{x},\hat{q})=d(x,p)<1$.
We connect $\tilde{q}$ and $\hat{x}$ by a segment $\hat{c}:[0,\hat{d}(\hat{x},\tilde{q})]\to B_{8}(\hat{p})$ with $\hat{c}(0)=\tilde{q}$ and $\hat{c}(\hat{d}(\hat{x},\tilde{q}))=\hat{x}$.
Then $c=\sigma\circ\hat{c}$ is a geodesic connecting $p$ and $x$ with $\mathrm{Length}(c)=\mathrm{Length}(\hat{c}) \leq \hat{d}(\hat{x},\hat{q})\leq d(x,p)$.
Because $\gamma|_{[0,t]}$ is the unique shortest geodesic connecting $p$ and $x$, we have
$d(x,p)=\hat{d}(\hat{x},\tilde{q})=\hat{d}(\hat{x},\hat{q})$, and $c(s)=\gamma(s)$, $\hat{c}(s)=\hat{\gamma}(s)$ for every $s\in [0,t]$, contradicting to $\tilde{q}\neq\hat{q}$.
Thus $\hat{\gamma}(t)\in {\Omega}_{\hat{q}}$ holds for every $t\in [0,L)$.
The conclusion $\hat{d}(\hat{\gamma}(t),\hat{q})=t$ follows from the distance non-increasing of $\sigma$ and (\ref{8.2}).

\textbf{Proof of (7):} Note that $B_{1}(p)\setminus \mathrm{Cut}_{p}$ is an open set.
For any $x\in B_{1}(p)\setminus \mathrm{Cut}_{p}$, we can always find a segment $\gamma:[0,L]\to B_{1}(p)\setminus \mathrm{Cut}_{p}$ with $L>d(x,p)$, $\gamma(0)=p$,  $\gamma(d(x,p))=x$ and $d(\gamma(s),p)=s$ for any $s\in[0,L]$.
If we lift $\gamma$ to the geodesic $\hat{\gamma}: [0,L]\to B_{1}(\hat{q})$ with $\hat{\gamma}(0)=\hat{q}$, and denote by $\hat{x}=\hat{\gamma}(d(x,p))$.
Then by \textbf{(6)}, $\hat{\gamma}|_{[0,\frac{L+d(x,p)}{2}]}$ is a shortest geodesic, and then by \textbf{(5)(6)} again, $\hat{x}\in \Omega_{\hat{q}}\setminus \mathrm{Cut}_{\hat{q}}$.
This proves $B_{1}(p)\setminus \mathrm{Cut}_{p}\subset \sigma(\Omega_{\hat{q}}\setminus \mathrm{Cut}_{\hat{q}})$.

On the other hand, since $\Omega_{\hat{q}}\setminus \mathrm{Cut}_{\hat{q}}$ is open, for any $\hat{x}\in\Omega_{\hat{q}}\setminus \mathrm{Cut}_{\hat{q}}$, we find a geodesic
$\hat{\gamma}:[0,L]\to B_{1}(\hat{q})\setminus \mathrm{Cut}_{\hat{q}}$ with $L>\hat{d}(\hat{x},\hat{q})$, $\hat{\gamma}(0)=\hat{q}$,  $\hat{\gamma}(\hat{d}(\hat{x},\hat{q}))=\hat{x}$ and $\hat{d}(\hat{\gamma}(s),\hat{q})=s$ for any $s\in[0,L]$.
Then by \textbf{(5)}, $\sigma(\hat{x})\in B_{1}(p)\setminus \mathrm{Cut}_{p}$.
This proves $ \sigma(\Omega_{\hat{q}}\setminus \mathrm{Cut}_{\hat{q}})\subset B_{1}(p)\setminus \mathrm{Cut}_{p}$.

Similarly, by the openness of $\Omega_{\hat{q}}\setminus \mathrm{Cut}_{\hat{q}}$ and \textbf{(5)} \textbf{(6)} again, it is not hard to prove that $\sigma|_{\Omega_{\hat{q}}\setminus \mathrm{Cut}_{\hat{q}}}$ is injective, hence $\sigma|_{\Omega_{\hat{q}}\setminus \mathrm{Cut}_{\hat{q}}}$ is a diffeomorphism.
This completes the proof of \textbf{(7)}.
\end{proof}

\begin{proof}[Proof of Lemma \ref{covering-lem}]
We first prove the $r=1$ case.
The $\Omega_{\hat{q}}$ constructed before will play an important role in the proof.
We denote by $S=\bigcup_{\hat{q}\in\sigma^{-1}(p)\cap B_{2}(\hat{p})} \overline{\Omega}_{\hat{q}}$.
Firstly, note that $\mathcal{H}^{n}(\bar{\Omega}_{\hat{q}}\setminus (\Omega_{\hat{q}}\cap \mathrm{Cut}_{\hat{q}}))=0$, and by property \textbf{(7)}, we have $\mathcal{H}^{n}(B_{1}(p))=\mathcal{H}^{n}({\Omega}_{\hat{q}})=\mathcal{H}^{n}({\bar{\Omega}}_{\hat{q}})$ and
\begin{align}
\bbint_{B_{1}(p)}f=\bbint_{{\Omega}_{\hat{q}}}f\circ\sigma=\bbint_{\bar{\Omega}_{\hat{q}}}f\circ\sigma
\end{align}
for every $\hat{q}\in\sigma^{-1}(p)\cap B_{2}(\hat{p})$, and hence by property \textbf{(2)},
\begin{align}\label{8.98}
\bbint_{B_{1}(p)}f=\bbint_{S}f\circ\sigma.
\end{align}

By properties \textbf{(3)} and \textbf{(4)}, we have $B_{1}(\hat{p})\subset S \subset B_{3}(\hat{p})$.
Then by Bishop-Gromov volume comparison theorem, there exists a constant $C=C(n)$ such that
\begin{align}
\mathcal{H}^{n}(B_{1}(\hat{p}))\leq \mathcal{H}^{n}(S)\leq \mathcal{H}^{n}(B_{3}(\hat{p}))\leq C \mathcal{H}^{n}(B_{1}(\hat{p})).
\end{align}
Thus
\begin{align}\label{8.99}
\frac{1}{C}\bbint_{B_{1}(\hat{p})}f\circ\sigma\leq \bbint_{S}f\circ\sigma\leq C\bbint_{B_{3}(\hat{p})}f\circ\sigma.
\end{align}
The $r=1$ case of (\ref{ControlBetweenCover}) follows from (\ref{8.98}) and (\ref{8.99}).

For the $r<1$ case, the proof is similar: we only need to cover $B_{r}(\hat{p})$ by the union of the closures of
$$\Omega^{(r)}_{\hat{q}}:=\bigcap_{\tilde{q}\in \sigma^{-1}(p)\cap B_{4}(\hat{p})\setminus\{\hat{q}\}}\{\hat{x}\in  \sigma^{-1}(B_{r}(p))\cap B_{r}(\hat{q})| \hat{d}(\hat{x},\hat{q})< \hat{d}(\hat{x},\tilde{q})\},$$
where $\hat{q}\in\sigma^{-1}(p)\cap B_{2r}(\hat{p})$.

The proof is completed.
\end{proof}

\vspace*{20pt}

\noindent\textbf{Acknowledgments.}

The authors would like to thank Prof. X. Rong for suggestions to them which motivates this work.
They also would like to thank Prof. B.-L. Chen, X. Rong, H.-C. Zhang, X.-P. Zhu for encouragements and discussions.
The authors would like to thank the anonymous referees for careful reading and giving helpful suggestions.
The second author is partially supported by National Key R\&D Program of China (2021YFA1002100),  National Natural Science Foundation of China (12271531) and Guangdong Natural Science Foundation (2022A1515011072).

\bibliographystyle{alpha}
\bibliography{ref}

\begin{thebibliography}{AGMR15}

\bibitem[AC92]{AC92}
M.~T. Anderson and J.~Cheeger.
\newblock ${C}^{\alpha}$-compactness for manifolds with {R}icci curvature and
  injectivity radius bounded below.
\newblock {\em J. Differential Geom.}, 35:265--281, 1992.

\bibitem[AGMR15]{AGMR15}
L.~Ambrosio, N.~Gigli, A.~Mondino, and T.~Rajala.
\newblock Riemannian {R}icci curvature lower bounds in metric measure spaces
  with $\sigma$-finite measure.
\newblock {\em Trans. Amer. Math. Soc.}, 367(7):4661--4701, 2015.

\bibitem[AGS14]{AGS14-1}
L.~Ambrosio, N.~Gigli, and G.~Savar\'{e}.
\newblock Calculus and heat flow in metric measure spaces and applications to
  spaces with {R}icci bounds from below.
\newblock {\em Invent. Math.}, 195:289–391, 2014.

\bibitem[AH17]{AH17}
L.~Ambrosio and S.~Honda.
\newblock New stability results for sequences of metric measure spaces with
  uniform {R}icci bounds from below.
\newblock {\em Measure Theory in Non-Smooth Spaces, De Gruyter Open, Warsaw},
  68:1--51, 2017.

\bibitem[AH18]{AH18}
L.~Ambrosio and S.~Honda.
\newblock Local spectral convergence in {RCD(K,N)} spaces.
\newblock {\em Nonlinear Anal.}, 177:1--23, 2018.

\bibitem[And92]{An}
M.~T. Anderson.
\newblock Hausdorff perturbations of {R}icci-flat manifolds and the splitting
  theorem.
\newblock {\em Duke Math. J.}, 68(1):67--82, 1992.

\bibitem[BGHZ23]{BGHZ23}
C.~Brena, N.~Gigli, S.~Honda, and X.~Zhu.
\newblock Weakly non-collapsed {RCD} spaces are strongly non-collapsed.
\newblock {\em J. {R}eine {A}ngew. Math.}, 794:215--252, 2023.

\bibitem[BNS22]{BNS22}
E.~Bru\`{e}, A.~Naber, and D.~Semola.
\newblock Boundary regularity and stability for spaces with {R}icci bounded
  below.
\newblock {\em Invent. Math.}, 228:777--891, 2022.

\bibitem[BPS23]{BPS19}
E.~Bru\`{e}, E.~Pasqualetto, and D.~Semola.
\newblock Rectifiability of the reduced boundary for sets of finite perimeter
  over $\textmd{RCD}({K},{N})$ spaces.
\newblock {\em J. Eur. Math. Soc.}, 25:413--465, 2023.

\bibitem[CC96]{CC96}
J.~Cheeger and T.~H. Colding.
\newblock Lower bounds on {R}icci curvature and the almost rigidity of warped
  products.
\newblock {\em Ann. of Math. (2)}, 144(1):189--237, 1996.

\bibitem[CC97]{CC97}
J.~Cheeger and T.~H. Colding.
\newblock On the structure of spaces with {R}icci curvature bounded below. {I}.
\newblock {\em J. Differential Geom.}, 45:406--480, 1997.

\bibitem[CCM95]{CCM}
J.~Cheeger, T.~H. Colding, and W.~P. Minicozzi.
\newblock Linear growth harmonic functions on complete manifolds with
  nonnegative {R}icci curvature.
\newblock {\em Geom. Funct. Anal.}, 5(6):948--954, 1995.

\bibitem[Che99]{C99}
J.~Cheeger.
\newblock Differentiability of {L}ipschitz functions on metric measure spaces.
\newblock {\em Geom. Funct. Anal.}, 9:428--517, 1999.

\bibitem[CJN21]{CJN21}
J.~Cheeger, W.~Jiang, and A.~Naber.
\newblock Rectifiability of singular sets in noncollapsed spaces with {R}icci
  curvature bounded below.
\newblock {\em Ann. of Math. (2)}, 193(2):407--538, 2021.

\bibitem[CN13]{CN13}
T.~H. Colding and A.~Naber.
\newblock Characterization of tangent cones of noncollapsed limits with lower
  {R}icci bounds and applications.
\newblock {\em Geom. Funct. Anal.}, 23:134--148, 2013.

\bibitem[CN15]{CN}
J.~Cheeger and A.~Naber.
\newblock Regularity of {E}instein manifolds and the codimension 4 conjecture.
\newblock {\em Ann. of Math. (2)}, 182:1093--1165, 2015.

\bibitem[Din04]{Ding04}
Y.~Ding.
\newblock An existence theorem of harmonic functions with polynomial growth.
\newblock {\em Proc. Amer. Math. Soc.}, 132(2):543--551, 2004.

\bibitem[DPG18]{DePGil18}
G.~De~Philippis and N.~Gigli.
\newblock Non-collapsed spaces with {R}icci curvature bounded from below.
\newblock {\em J. ´Ec. polytech. Math.}, 5:613--650, 2018.

\bibitem[DWY96]{DWY}
X.~Dai, G.~Wei, and R.~Ye.
\newblock Smoothing {R}iemannian metrics with {R}icci curvature bounds.
\newblock {\em Manuscripta Math.}, 90:49--61, 1996.

\bibitem[EKS15]{EKS15}
M.~Erbar, K.~Kuwada, and K.~T. Sturm.
\newblock On the equivalence of the entropic curvature-dimension condition and
  {B}ochner's inequality on metric measure spaces.
\newblock {\em Invent. Math.}, 201:993--1071, 2015.

\bibitem[Fuk87]{Fu87}
K.~Fukaya.
\newblock Collapsing {R}iemannian manifolds to ones with lower dimension. {I}.
\newblock {\em J. Differential Geom.}, 25:139--156, 1987.

\bibitem[FY92]{FY92}
K.~Fukaya and T.~Yamaguchi.
\newblock The fundamental groups of almost non-negatively curved manifolds.
\newblock {\em Ann. of Math. (2)}, 136(2):253--333, 1992.

\bibitem[Gig13]{Gig13}
N.~Gigli.
\newblock The splitting theorem in non-smooth context.
\newblock {\em arXiv:1302.5555}, 2013.

\bibitem[Gig15]{Gig15}
N.~Gigli.
\newblock On the differential structure of metric measure spaces and
  applications.
\newblock {\em Mem. Amer. Math. Soc.}, 236, 2015.

\bibitem[GMS15]{GMS15}
N.~Gigli, A.~Mondino, and G.~Savar\'{e}.
\newblock Convergence of pointed non-compact metric measure spaces and
  stability of {R}icci curvature bounds and heat flows.
\newblock {\em Proc. Lond. Math. Soc. (3)}, 111(5):1071--1129, 2015.

\bibitem[GR18]{GR18}
N.~Gigli and C.~Rigoni.
\newblock Recognizing the flat torus among $\textmd{RCD}^*(0,{N})$ spaces via
  the study of the first cohomology group.
\newblock {\em Calc. Var. Partial Differential Equations}, 57:104,39 pp, 2018.

\bibitem[Gro78]{Gr78}
M.~Gromov.
\newblock Almost flat manifolds.
\newblock {\em J. Differential Geom.}, 13:231--241, 1978.

\bibitem[Gro81]{Gr81}
M.~Gromov.
\newblock Groups of polynomial growth and expanding maps.
\newblock {\em Publications mathematiques I.H.E.S.}, 53:53--78, 1981.

\bibitem[Han18]{Han18-2}
B.~Han.
\newblock Characterizations of monotonicity of vector fields on metric measure
  spaces.
\newblock {\em Calc. Var. Partial Differential Equations}, 57:113,35 pp, 2018.

\bibitem[Hei01]{He01}
J.~Heinonen.
\newblock Lectures on analysis on metric spaces.
\newblock {\em Universitext, Springer}, pages x+140 pp, 2001.

\bibitem[HH24]{HH}
H.~Huang and X.-T. Huang.
\newblock Fibration properties for manifolds with {R}icci curvature lower bound
  and generalized {R}eifenberg property.
\newblock {\em in preparation}, 2024.

\bibitem[HKRX20]{HKRX}
H.~Huang, L.~Kong, X.~Rong, and S.~Xu.
\newblock Collapsed manifolds with {R}icci bounded covering geometry.
\newblock {\em Trans. Amer. Math. Soc.}, 373(11):8039--8057, 2020.

\bibitem[HKX13]{HKX13}
B.~Hua, M.~Kell, and C.~Xia.
\newblock Harmonic functions on metric measure spaces.
\newblock {\em arXiv:1308.3607}, 2013.

\bibitem[HP23]{HP22}
S.~Honda and Y.~Peng.
\newblock A note on the topological stability theorem from {RCD} spaces to
  {R}iemannian manifolds.
\newblock {\em {M}anuscripta {M}ath.}, 172:971--1007, 2023.

\bibitem[Hua20]{Hua}
H.~Huang.
\newblock Fibrations and stability for compact group actions on manifolds with
  local bounded {R}icci covering geometry.
\newblock {\em Front. Math. China}, 15(1):69--89, 2020.

\bibitem[Hua24]{Hua22}
H.~Huang.
\newblock A finite topological type theorem for open manifolds with
  non-negative {R}icci curvature and almost maximal local rewinding volume.
\newblock {\em Int. Math. Res. Not.}, 10:8568--8591, 2024.

\bibitem[HW22]{HW20}
S.~Huang and B.~Wang.
\newblock Ricci flow smoothing for locally collapsing manifolds.
\newblock {\em Calc. Var. Partial Differential Equations}, 61:Paper No. 64, 32
  pp, 2022.

\bibitem[Jia14]{J14}
R.~Jiang.
\newblock Cheeger-harmonic functions in metric measure spaces revisited.
\newblock {\em J. Funct. Anal.}, 266(3):1373--1394, 2014.

\bibitem[Kit19]{Ki19}
Y.~Kitabeppu.
\newblock A sufficient condition to a regular set being of positive measure on
  {RCD} spaces.
\newblock {\em Potential Anal.}, 51:179--196, 2019.

\bibitem[KM21]{KM20}
V.~Kapovitch and A.~Mondino.
\newblock On the topology and the boundary of {N}-dimensional
  $\textmd{RCD}({K},{N})$ spaces.
\newblock {\em Geom. Topol.}, 25(1):445--495, 2021.

\bibitem[KW11]{KW}
V.~Kapovitch and B.~Wilking.
\newblock Structure of fundamental groups of manifolds with {R}icci curvature
  bounded below.
\newblock {\em arXiv:1105.5955v2}, 2011.

\bibitem[Li86]{L86}
P.~Li.
\newblock Large time behavior of the heat equation on complete manifolds with
  non-negative {R}icci curvature.
\newblock {\em Ann. of Math. (2)}, 124(1):1--21, 1986.

\bibitem[LN20]{LN20}
N.~Li and A.~Naber.
\newblock Quantitative estimates on the singular sets of {A}lexandrov spaces.
\newblock {\em Peking Mathematical Journal}, 3:203--234, 2020.

\bibitem[LT89]{LT89}
P.~Li and L.-F. Tam.
\newblock Linear growth harmonic functions on a complete manifold.
\newblock {\em J. Differential Geom.}, 29:421--425, 1989.

\bibitem[MN19]{MN19}
A.~Mondino and A.~Naber.
\newblock Structure theory of metric-measure spaces with lower {R}icci
  curvature bounds.
\newblock {\em J. Eur. Math. Soc. (JEMS)}, 21(6):1809--1854, 2019.

\bibitem[NZ16]{NZ}
A.~Naber and R.~Zhang.
\newblock Topology and $\epsilon$-regularity theorems on collapsed manifolds
  with {R}icci curvature bounds.
\newblock {\em Geom. Topol.}, 20(5):2575--2664, 2016.

\bibitem[Per02]{Per02}
G.~Perelman.
\newblock The entropy formula for the {R}icci flow and its geometric
  applications.
\newblock {\em arXiv: math/0211159v1}, 2002.

\bibitem[Pet11]{Pet11}
A.~Petrunin.
\newblock Alexandrov meets {L}ott--{V}illani--{S}turm.
\newblock {\em M\"unster J. Math.}, 4:53--64, 2011.

\bibitem[Ron18]{Ro18}
X.~Rong.
\newblock Manifolds of {R}icci curvature and local rewinding volume bounded
  below (in {C}hinese).
\newblock {\em Sci Sin Math}, 48(6):791--806, 2018.

\bibitem[Ron22]{Ro22}
X.~Rong.
\newblock Collapsed manifolds with local {R}icci bounded covering geometry.
\newblock {\em arXiv:2211.09998v1}, 2022.

\bibitem[Vil09]{Vi09}
C.~Villani.
\newblock {\em Optimal transport, old and new}, volume 338, pages xxii+973.
\newblock Springer Verlag, 2009.

\bibitem[Wan20]{Wang20}
B.~Wang.
\newblock The local entropy along {R}icci flow, part {B}: the pseudo-locality
  theorems.
\newblock {\em https://arxiv.org/abs/2010.09981}, 2020.

\bibitem[Wei97]{Wei97}
G.~Wei.
\newblock Ricci curvature and {B}etti numbers.
\newblock {\em J. Geom. Anal.}, 7(3):493--509, 1997.

\bibitem[WZ23]{WZh21}
B.~Wang and X.~Zhao.
\newblock Canonical diffeomorphisms of manifolds near spheres.
\newblock {\em J. Geom. Anal.}, 33:Paper No. 304, 31 pp, 2023.

\bibitem[Xu18]{Xu18}
S.~Xu.
\newblock Local estimate on convexity radius and decay of injectivity radius in
  a {R}iemannian manifold.
\newblock {\em Commun. Contemp. Math.}, 20:19 pp, 2018.

\bibitem[Yam91]{Y91}
T.~Yamaguchi.
\newblock Collapsing and pinching under a lower curvature bound.
\newblock {\em Ann. of Math. (2)}, 133(2):317--357, 1991.

\bibitem[ZZ10]{ZZ10}
H.-C. Zhang and X.-P. Zhu.
\newblock Ricci curvature on {A}lexandrov spaces and rigidity theorems.
\newblock {\em Comm. Anal. Geom.}, 18(3):503--553, 2010.

\bibitem[ZZ19]{ZZ17}
H.-C. Zhang and X.-P. Zhu.
\newblock Weyl's law on $\textmd{RCD}^*({K},{N})$ metric measure spaces.
\newblock {\em Comm. Anal. Geom.}, 9:1869--1914, 2019.

\end{thebibliography}

\end{document}